\title{Large implies Henselian}
\author{Will Johnson, Chieu-Minh Tran, Erik Walsberg, and Jinhe Ye}
\email{willjohnson@fudan.edu.cn, trancm@nus.edu.sg, erik.walsberg@gmail.com, jinhe.ye.maths@gmail.com}
\DeclareFontFamily{U}{fsy}{}
\DeclareFontShape{U}{fsy}{m}{n}{<->s*[.9]psyr}{}
\DeclareSymbolFont{der@m}{U}{fsy}{m}{n}
\DeclareMathSymbol{\der}{\mathord}{der@m}{182}
\DeclareFontFamily{U}{BOONDOX-calo}{\skewchar\font=45 }
\DeclareFontShape{U}{BOONDOX-calo}{m}{n}{
  <-> s*[1.05] BOONDOX-r-calo}{}
\DeclareFontShape{U}{BOONDOX-calo}{b}{n}{
  <-> s*[1.05] BOONDOX-b-calo}{}
\DeclareMathAlphabet{\mathcalboondox}{U}{BOONDOX-calo}{m}{n}
\SetMathAlphabet{\mathcalboondox}{bold}{U}{BOONDOX-calo}{b}{n}
\DeclareMathAlphabet{\mathbcalboondox}{U}{BOONDOX-calo}{b}{n}
\DeclareSymbolFont{imag@m}{OT1}{cmr}{m}{ui}
\DeclareMathSymbol{\imag}{\mathord}{imag@m}{105}
\DeclareMathOperator*{\forkindep}{\raise0.2ex\hbox{\ooalign{\hidewidth$\vert$\hidewidth\cr\raise-0.9ex\hbox{$\smile$}}}}
\newcommand{\Sa}[1]{\ensuremath{\mathscr{#1}}}
\newcommand{\perf}{K^{\mathrm{perf}}}
\newcommand{\lowenheim}{L\"owenheim-Skolem\xspace}
\newcommand{\eval}{\operatorname{Eval}}
\newcommand{\st}{\operatorname{st}}
\newcommand{\pring}{K[[t_1,\ldots,t_m]]}
\newcommand{\pringI}{K[[t_i]]_{i\in I}}
\newcommand{\palg}{K[t_1,\ldots,t_m]^\mathrm{h}}
\newcommand{\palgbasn}{K[[t_i]]^\mathrm{h}}
\newcommand{\palgkap}{\palgbasn_{i < \kappa}}
\newcommand{\palgI}{K[t_i]^\mathrm{h}_{i \in I}}
\newcommand{\kalg}{K^{\mathrm{alg}}}
\newcommand{\Gal}{\operatorname{Gal}}
\newcommand{\Spec}{\operatorname{Spec}}
\newcommand{\Frac}{\operatorname{Frac}}
\newcommand{\jac}{\operatorname{Jac}}
\newcommand{\red}{\mathrm{red}}
\newcommand{\poly}{\mathrm{Poly}}
\newcommand{\id}{\operatorname{id}}
\newcommand{\prc}{\mathrm{PRC}}
\newcommand{\pac}{\mathrm{PAC}}
\newcommand{\alg}{{\mathrm{alg}}}
\newcommand{\sep}{{\mathrm{sep}}}
\newcommand{\C}{\mathcal{C}}
\newtheorem*{Claim*}{Claim}
\newtheorem{theorem}{Theorem}[section] % numbered like the section
\newtheorem{lemma}[theorem]{Lemma}
\newtheorem{prop-def}[theorem]{Proposition-Definition}
\newtheorem{corollary}[theorem]{Corollary}
\newtheorem{fact}[theorem]{Fact}
\newtheorem{fact-eh}[theorem]{Fact(?)}
\newtheorem{conjecture}[theorem]{Conjecture}
\newtheorem{question}[theorem]{Question}
\newtheorem{proposition}[theorem]{Proposition}
\newtheorem{proposition-eh}[theorem]{Proposition(?)}
\newtheorem*{theorem-star}{Theorem}
\newtheorem*{conjecture-star}{Conjecture}
\newtheorem*{lemma-star}{Lemma}
\newtheorem*{thmA}{Theorem A}
\newtheorem*{thmB}{Theorem B}
\newtheorem*{thmC}{Theorem C}
\newtheorem*{thmD}{Theorem D}
\newtheorem*{lemma6.2alt}{Lemma \ref*{confusing-v2}$'$}
\theoremstyle{definition}
\newtheorem{definition}[theorem]{Definition}
\newtheorem{remark}[theorem]{Remark}
\theoremstyle{remark}
\newtheorem{claim}[theorem]{Claim}
\newcommand{\Aa}{\mathbb{A}}
\newcommand{\Ff}{\mathbb{F}}
\newcommand{\Qq}{\mathbb{Q}}
\newcommand{\Rr}{\mathbb{R}}
\newcommand{\Kk}{\mathbb{K}}
\newcommand{\Zz}{\mathbb{Z}}
\newcommand{\Nn}{\mathbb{N}}
\newcommand{\Cc}{\mathbb{C}}
\newcommand{\Pp}{\mathbb{P}}
\newcommand{\mm}{\mathfrak{m}}
\newcommand{\cE}{\mathscr{E}}
\newcommand{\cF}{\mathscr{F}}
\newcommand{\cM}{\mathscr{M}}
\newcommand{\cN}{\mathscr{N}}
\newcommand{\cT}{\mathscr{T}}
\newenvironment{claimproof}[1][\proofname]
               {
                 \proof[#1]
                 
               }
               {
                 \endproof
               }
\keywords{Large fields, \'etale-open topology, finite-closed topology}
\begin{document}

\begin{abstract}
Fix a field $K$.
We show that $K$ is large if and only if some elementary extension of $K$ is the fraction field of a henselian local domain which is not a field.
The proof uses a new result about the \'etale-open topology over $K$: if $K$ is not separably closed and $V \to W$ is an \'etale morphism of $K$-varieties then $V(K) \to W(K)$ is a local homeomorphism in the \'etale-open topology.
This, in turn, follows from results comparing the \'etale-open topology on $V(K)$ and the finite-closed topology on $V(K)$, newly introduced in this paper.
We show that the \'etale-open topology refines the finite-closed topology when $K$ is perfect, and that the finite-closed topology refines the \'etale-open topology when $K$ is bounded.
It follows that these two topologies agree in many natural examples.
On the other hand, we construct several examples where these two differ, which allows us to answer a question of Lampe.
\end{abstract}

\maketitle

\section{Introduction} 
Let $K$ be a field.
Then $K$ is \textbf{large} if one of the following equivalent conditions holds.
\begin{enumerate}[leftmargin=*]
\item Any smooth one-dimensional $K$-variety with a $K$-point has infinitely many $K$-points.
\item If $f\in K[x,y]$ and $(a,b)\in K^2$ satisfy $f(a,b)=0\ne\der f/\der y (a,b)$, then $f$ has infinitely many zeros in $K^2$.
\end{enumerate}

This notion was introduced by Pop in the context of inverse Galois theory~\cite{pop-embedding} and independently by Sander in his unpublished thesis~\cite{Sander-thesis}. It was soon realized that this is the ``right class'' of fields where ``one can do a lot of [other] interesting mathematics''~\cite{Pop-little}.
Largeness can be seen as the opposite of satisfying some form of the Mordell Conjecture.
In particular, number fields and function fields are not large.
On the other hand, separably closed fields, real closed fields, and fields admitting non-trivial henselian valuations are large.
Fields with pro-$p$ absolute Galois group~\cite{Jarden-pro-p}, pseudo-algebraically closed ($\pac$) fields (such as pseudofinite fields and infinite algebraic extension of finite fields), pseudo-real closed ($\prc$) fields (such as the field of totally real algebraic numbers),  and more generally fields which satisfy a local-global principle with respect to a set of places~\cite[1.B]{Pop-little} are also large.
Algebraic extensions of large fields are large.
A longstanding open question asks whether the maximal abelian extension of $\Qq$ is large.
Much more can be said about this fascinating class of fields; we refer the reader to \cite{Pop-little, open-problems-ample} for excellent surveys.

\medskip
For a while, it was ``speculated''~\cite[Remark~1.5]{Pop-little} and ``generally believed''~\cite{Pophenselian} that fraction fields of complete noetherian local domains of Krull dimension $\ge 2$ (e.g., $K[[x,y]]$) were not large.
Efroymson and Pop independently refuted this, showing that the fraction field of a henselian local domain is large~\cite{efroymson,Pophenselian}.
We show here that all large fields can essentially be obtained this way:

\begin{thmA}
A field is large if and only if it is elementarily equivalent to the fraction field of a henselian local domain that is not a field.
\end{thmA}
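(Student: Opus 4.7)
Largeness is axiomatized by a first-order scheme: condition (2) yields, for each degree bound $d$ and each integer $n$, a single first-order sentence (``for all $f$ of degree $\le d$ and all $(a,b)$ with $f(a,b)=0\neq\partial f/\partial y(a,b)$, $f$ has at least $n$ distinct zeros in $K^2$''). Hence largeness is preserved under elementary equivalence. Combined with Pop's theorem cited in the introduction (fraction fields of henselian local domains not fields are large), this gives the backward implication.

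\textbf{Forward direction.} Assume $K$ is large. The goal is to produce $L\equiv K$ and a henselian local domain $R\subseteq L$, not a field, with $\Frac(R)=L$. It suffices to find $L\equiv K$ admitting a nontrivial henselian valuation $v$, for then $R:=\Oo_v$ is a henselian valuation ring---in particular a henselian local domain not a field---with $\Frac(R)=L$.

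The plan is to use compactness. Expand $\cL_{\mathrm{ring}}$ by a unary predicate $\Oo$ and set
\[
T\;:=\;\Th(K)\;\cup\;\{\Oo\text{ is a nontrivial henselian valuation ring}\}.
\]
The second set of axioms is a first-order scheme in $\cL_{\mathrm{ring}}\cup\{\Oo\}$. A model of $T$ is a pair $(L,\Oo_L)$ with $L\equiv K$ and $\Oo_L$ a nontrivial henselian valuation ring of $L$, which completes the proof. By compactness, consistency of $T$ reduces to the following \emph{key step}: for each $\psi\in\Th(K)$, some nontrivially henselian valued field satisfies $\psi$.

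For $\psi$ of $\Sigma_1$ or $\Pi_1$ complexity this follows from Pop's classical characterization: $K$ is existentially closed in $K(\!(t)\!)$, so such $\psi$ transfers to $K(\!(t)\!)$, which is nontrivially henselian valued with valuation ring $K[[t]]$. This handles the key step at low quantifier complexity.

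\textbf{Main obstacle.} The core difficulty is $\psi\in\Th(K)$ of higher quantifier complexity, where bare existential closure in $K(\!(t)\!)$ is insufficient. This is where the paper's new étale-open topology results enter: the local homeomorphism property of étale morphisms $V(K)\to W(K)$ (the paper's main technical innovation) provides an Artin-approximation-style lifting of definable data from $K$ into henselian valued extensions. Iterating this lifting along an elementary chain of henselian valued fields---at each stage extending both the elementary extension of $K$ and the domain of the henselian valuation---yields witnesses for every $\psi\in\Th(K)$, establishing the consistency of $T$ and completing the proof.
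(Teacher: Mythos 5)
Your argument matches the paper: largeness is elementary, and Pop's theorem does the rest.

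\textbf{Forward direction.} The plan has a fundamental gap at the very first reduction, where you write that it suffices to find $L\equiv K$ admitting a nontrivial \emph{henselian valuation}. This is strictly stronger than what Theorem~A asserts, and it is actually \emph{false} for some large fields. A henselian local domain need not be a valuation ring, and the gap is essential: by the Frey--Prestel theorem, a $\pac$ field carrying a nontrivial henselian valuation must be separably closed. Since both ``$\pac$'' and ``not separably closed'' are elementary properties, a $\pac$ field that is not separably closed (e.g.\ any pseudofinite field) has \emph{no} elementary extension admitting a nontrivial henselian valuation. So the theory $T$ you set up is inconsistent in exactly the cases the paper flags as counterintuitive --- the introduction explicitly remarks that pseudofinite fields being elementarily equivalent to fraction fields of henselian local domains is surprising precisely because such a ring cannot be a valuation ring.

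Consequently, the compactness strategy and its ``key step'' cannot be repaired: the obstruction is not quantifier complexity but the target. The paper instead constructs, inside a saturated elementary extension $\Kk\succeq K$, a $K$-infinitesimal set $E$ and a ring $K[E]^\mathrm{h}$ of evaluations of algebraic power series (Nash function germs) at infinitesimal tuples. This ring is a henselian local domain (it is a quotient of $\palgkap$), is not a field, and --- by a chain construction (Theorem~\ref{thm:infin3}) --- has fraction field an elementary extension of $K$. It is typically not a valuation ring. The separably closed case is handled separately (Proposition~\ref{prop:separablecaseofA}), and \emph{there} a valuation ring does suffice. The r\^ole of Theorem~B (local homeomorphism for \'etale morphisms) is not to do Artin-approximation-style lifting into henselian valued fields; it is to make the category of Nash functions well behaved (closed under composition, with local rings of germs equal to henselizations), which is what lets one build the non-valuation henselian domain $K[E]^\mathrm{h}$ in the first place.
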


Recall that two fields are elementarily equivalent if they satisfy the same first-order sentences in the language of rings.
Note that the right-to-left direction of Theorem~A follows from Efromyson and Pop's result as the class of large fields is elementary.

\medskip
A subfield $F$ of $K$ is existentially closed in $K$ if whenever $\alpha_1,\ldots,\alpha_n \in F$ and $\varphi(x_1,\ldots,x_n)$ is an existential formula in the language of rings  then $F$ satisfies $\varphi(\alpha_1,\ldots,\alpha_n)$ if and only if $K$ satisfies $\varphi(\alpha_1,\ldots,\alpha_n)$.
F.-V. Kuhlmann~\cite{kuhlmann} showed that $K$ is large if and only if $K$ is existentially closed in $K(\!(t)\!)$.
It is easy to see that an existentially closed subfield of a large field is large.
It therefore follows from Efroymson, Pop, and Kuhlmann's results that $K$ is large if and only if $K$ is an existentially closed subfield of the fraction field of a henselian local domain.
Thus Theorem~A is equivalent to showing that any existentially closed subfield of the fraction field of a henselian local domain is an elementary submodel of the fraction field of another henselian local domain.

\medskip
By Theorem~A any pseudofinite field is elementarily equivalent to the fraction field of a non-trivial henselian local domain.
This is quite counterintuitive to us.
It would be nice to have a model-theoretically tame henselian local domain with pseudofinite fraction field.

\medskip
Theorem~A is proven in Section~\ref{section:l->h}.
The proof makes essential use of the \'etale-open topology, introduced in~\cite{firstpaper}.
Let $V$ range over $K$-varieties.
The \textbf{\'etale-open topology} (or $\Sa E_K$-topology) on $V(K)$ is the topology with open basis the collection of \textbf{E-sets}, i.e., sets of the form $f(W(K))$ for a $K$-variety $W$ and an \'etale morphism $f\colon W\to V$.
The relevance to the setting of large fields follows from a result in~\cite{firstpaper}: $K$ is large if and only if the \'etale-open topology on $K = \Aa^1(K)$ is not discrete. Our second main result reads:

\begin{thmB}
Suppose that $K$ is not separably closed and $V \to W$ is an \'etale morphism of $K$-varieties.
Then the induced map $V(K)\to W(K)$ is a local homeomorphism in the \'etale-open topology.
Hence, if $V$ is smooth and irreducible then $V(K)$ is locally homeomorphic to $K^d = \Aa^d(K)$ for $d$ the dimension of $V$.
\end{thmB}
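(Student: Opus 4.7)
My plan is to reduce the local-homeomorphism claim to a local-injectivity statement, and then to establish local injectivity using the diagonal decomposition of $V\times_W V$ together with the comparison between the \'etale-open topology and the finite-closed topology that is developed later in the paper.

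First, continuity and openness of $f\colon V(K)\to W(K)$ in the \'etale-open topology are formal. Openness: a basic open $g(V'(K))\subset V(K)$ with $g\colon V'\to V$ \'etale is mapped to $(f\circ g)(V'(K))$, which is again a basic open since $f\circ g$ is \'etale. Continuity: if $h\colon W'\to W$ is \'etale, then base change yields an \'etale morphism $V\times_W W'\to V$ whose image on $K$-points is exactly $f^{-1}(h(W'(K)))$, so preimages of basic opens are basic opens. In particular $f(V(K))$ is open. Granted local injectivity at every $v_0\in V(K)$, an \'etale-open neighborhood $U$ of $v_0$ on which $f$ is injective gives a continuous open bijection onto its image, hence a homeomorphism. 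The ``smooth $V$ is locally homeomorphic to $\Aa^d(K)$'' consequence then follows by composing with the Zariski-local \'etale maps $V\to\Aa^d$ produced by smoothness.

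For local injectivity at $v_0\in V(K)$, I use that $\Delta\colon V\to V\times_W V$ is an open and closed immersion (because $f$ is \'etale), so we get a decomposition
\[
V\times_W V=\Delta(V)\sqcup Z
\]
for a closed subvariety $Z$. The $K$-points of $Z$ are exactly the pairs $(v_1,v_2)\in V(K)\times V(K)$ with $v_1\ne v_2$ and $f(v_1)=f(v_2)$. Local injectivity of $f$ at $v_0$ is therefore equivalent to producing an \'etale-open neighborhood $U$ of $v_0$ with $(U\times U)\cap Z(K)=\emptyset$, i.e., to separating the diagonal point $(v_0,v_0)\notin Z(K)$ from $Z(K)$ by a basic product open in the \'etale-open topology on $V(K)\times V(K)$.

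The essential step, and the one that uses the hypothesis that $K$ is not separably closed, is to produce this separating product open. Since $Z$ is closed in $V\times V$ (using that $W$ is separated), the set $Z(K)$ is closed in the finite-closed topology on $(V\times V)(K)$, and separating the diagonal $K$-point $(v_0,v_0)$ from $Z(K)$ is routine in the finite-closed topology. I would then transfer this separation into the \'etale-open topology via the comparison theorems of the paper: when $K$ is not separably closed, one has enough nontrivial finite \'etale covers of $\Aa^1$ that the \'etale-open topology refines the finite-closed topology in the sense needed to inherit the separation around a $K$-point. The main obstacle in executing the plan is precisely this transfer step---verifying that the comparison theorems apply in exactly the form required to convert a product of finite-closed neighborhoods of a diagonal $K$-point into a product of \'etale-open ones. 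The hypothesis on $K$ is essential: when $K$ is separably closed the \'etale-open topology coincides with the Zariski topology, and simple \'etale covers such as $x\mapsto x^2$ on $\Gg_m$ fail to be locally injective in the Zariski topology, so Theorem~B must fail without it.
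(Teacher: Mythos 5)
Your reduction to local injectivity is correct, and your reformulation of local injectivity at $v_0$ as the existence of an $\cE_K$-open $U \ni v_0$ with $(U \times U) \cap Z(K) = \varnothing$, where $Z$ is the complement of the (clopen) diagonal in $V \times_W V$, is a clean and genuinely different starting point from the paper's. The paper does not look at the diagonal at all: it reduces to standard \'etale (so $V \subseteq W \times \Aa^1$ is cut out by $f = 0 \ne g$), parametrizes monic polynomials by $\mathrm{Poly}_d$, and proves a continuity-of-roots statement (Lemma~\ref{lem:roots}) using the finiteness and generic \'etaleness of the polynomial multiplication map $\mathrm{Poly}_{d-1} \times \mathrm{Poly}_1 \to \mathrm{Poly}_d$.

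However, the obstacle you flag at the end is not a verification detail but the entire mathematical content of the theorem, and as stated your plan has no mechanism to overcome it. The problem is not the transfer from $\cF_K$ to $\cE_K$ (that transfer is painless once one is over a perfect field). The problem is that $Z(K)$ being closed in the $\cF_K$- or $\cE_K$-topology on $(V \times V)(K)$ gives you an open \emph{neighborhood} of $(v_0,v_0)$ in that topology, but \emph{not} a neighborhood of the form $U_1 \times U_2$. Both systems of topologies on a product only \emph{refine} the product of the topologies on the factors (Fact~\ref{fact:prod}); they may refine it strictly (the paper explicitly warns of this in Section~\ref{section:l->h}, in the discussion of $K$-infinitesimals). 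So ``$(v_0,v_0) \notin Z(K)$ with $Z(K)$ closed'' does not yield a product open box avoiding $Z(K)$, and the phrase ``separating the diagonal $K$-point from $Z(K)$ is routine in the finite-closed topology'' is not accurate for product separation, which is what you need. Indeed, the bulk of the paper's Section~\ref{sec: localhomeo} (Fact~\ref{fact:poly-multi}, Lemma~\ref{lem:ez-fc-1}, Lemma~\ref{lem:roots}) exists precisely to manufacture such a product box by hand: it first nests an $\cE_K$-open $U$ inside an $\cF_K$-closed $F$ with $0 \in U \subseteq F \subsetneq K$ (Proposition~\ref{prop:hd}, then Lemma~\ref{lem:ez-fc-1}), then uses that the multiplication map $\mathrm{Poly}_1 \times \mathrm{Poly}_{d-2} \to \mathrm{Poly}_{d-1}$ is finite to push $F \times \mathrm{Poly}_{d-2}(K)$ forward to an $\cF_K$-closed set $H$, and finally takes the product of $K \setminus H$ with $U$. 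That nested $U \subseteq F$ configuration is what makes the box self-consistent; nothing in your plan produces an analogue of it.

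Two smaller points. First, the comparison you want to invoke, that $\cE_K$ refines $\cF_K$, is Theorem~C.1 and it requires $K$ \emph{perfect}, not merely non-separably-closed; ``enough nontrivial finite \'etale covers of $\Aa^1$'' is not the right hypothesis. The paper handles general non-separably-closed $K$ by first passing to the maximal purely inseparable extension $L$ and using Fact~\ref{fact:kins} to descend local injectivity from $V(L) \to W(L)$ back to $V(K) \to W(K)$; you would need to incorporate that reduction. Second, you do not actually need the comparison theorem to know $Z(K)$ is $\cE_K$-closed: $Z \hookrightarrow V \times V$ is a closed immersion, so $Z(K)$ is closed in \emph{every} system of topologies, including $\cE_K$, directly from Definition~\ref{sys-def}. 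This makes it even clearer that the sticking point is not the $\cF_K$-versus-$\cE_K$ comparison but the product-versus-system-of-topologies comparison, which goes the wrong way for you.
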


If $K$ is separably closed then the \'etale-open topology on $V(K)$ agrees with the Zariski topology for any $K$-variety $V$ and hence the conclusions of Theorem~B fail.

\medskip
Theorem~B is used to study Nash functions over large fields in Section~\ref{section:nashh-basic}, which are then used to prove Theorem~A in Section~\ref{section:l->h}.
The proof of Theorem~B in Section~\ref{sec: localhomeo} uses a result comparing the $\cE_K$-topology to another topology, which we now describe.
The \textbf{finite-closed topology} (or $\cF_K$-topology) on $V(K)$ is the topology with closed basis the collection of \textbf{F-sets}, i.e., sets of the form $f(W(K))$ for a $K$-variety $W$ and a finite morphism $f\colon W \to V$.

\medskip
The $\cF_K$-topology can also be described in a lengthier, but somewhat more natural, fashion.
Let $V\to W$ be a morphism of $K$-varieties, let $V(K)\to W(K)$ be the induced map, and equip $V(K)$ and $W(K)$ with the $\cF_K$-topology.
In Section~\ref{section:FCT}, we show the following:
\begin{enumerate}[leftmargin=*]
\item $V(K)\to W(K)$ is continuous.
\item If $V\to W$ is a scheme-theoretic open \textup{(}closed\textup{)} immersion then $V(K)\to W(K)$ is a topological open \textup{(}closed\textup{)} immersion.
\item If $V\to W$ is finite then $V(K)\to W(K)$ is a closed map.
\end{enumerate}
Here (1) and (2) show that the finite-closed topology is a ``system of topologies over $K$" in the sense of \cite{firstpaper} and (3) shows that the $\cF_K$-topology is the coarsest system of topologies in which finite morphisms induce closed maps.
This is analogous to how the $\cE_K$-topology is the coarsest system of topologies in which \'etale morphisms induce open maps~\cite{firstpaper}.

\medskip
We now discuss our third main theorem, the promised comparison Theorem~C.
Recall that $K$ is \textbf{bounded} if $K$ has only finitely many separable extensions of any given degree $d < \infty$.
We say that one system of topologies $\cT_1$ refines another system $\cT_2$ if the $\cT_1$-topology on $V(K)$ refines the $\cT_2$-topology on $V(K)$ for every $K$-variety $V$.

\begin{thmC}
\hspace{.0001cm}
\begin{enumerate}[leftmargin=*]
\item If $K$ is perfect then the $\cE_K$-topology refines the $\cF_K$-topology.
\item If $K$ is  bounded then the $\cF_K$-topology refines the $\cE_K$-topology.
\item If $K$ is perfect and t-henselian then the $\cF_K$-topology agrees with the $\cE_K$-topology.
\end{enumerate}
\end{thmC}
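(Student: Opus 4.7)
The three parts require showing one system of topologies refines another, and each hinges on how the images of finite or \'etale morphisms of $K$-varieties behave on $K$-points under a different hypothesis on $K$. I sketch each part.

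For \textbf{(1)}, the task is to show that every basic F-set $f(W(K))$ (with $f\colon W\to V$ finite) is $\cE_K$-closed. After reducing to $W$ integral with $f$ dominant onto an integral closed subvariety of $V$, I factor $f$ through $V'$, the normalization of the Zariski image of $f$ in the separable closure $L$ of $K(V)$ inside $K(W)$. This yields $f = g\circ h$ with $g\colon V'\to V$ finite and generically \'etale and, after normalizing $W$, $h\colon W\to V'$ finite purely inseparable. Perfectness of $K$ forces $h\colon W(K)\to V'(K)$ to be surjective, since the fiber of $h$ over a $K$-point is a local Artinian $K$-algebra whose purely inseparable residue extension must equal $K$. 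It remains to show $g(V'(K))$ is $\cE_K$-closed: for a point $v\in V(K)\setminus g(V'(K))$, the reduced fiber $(g^{-1}(v))_\red$ is a finite product of proper separable extensions of $K$ (by perfectness), and the main step is to use this data to construct an \'etale morphism $h'\colon Z\to V$ with $v\in h'(Z(K))$ and $h'(Z(K))\cap g(V'(K))=\emptyset$, obtained by ``spreading out'' the no-$K$-factor property along an \'etale refinement of a Zariski neighborhood of $v$.

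For \textbf{(2)}, assume $K$ is bounded. I want $g(Z(K))$ to be $\cF_K$-open for each \'etale $g\colon Z\to V$. By the standard \'etale structure, $g$ is locally modeled by a monic separable polynomial $P(T)\in A[T]$ over $V=\Spec A$, and $g(Z(K))=\{v\in V(K):P_v\text{ has a root in }K\}$. Since $K$ is bounded, the set of separable extensions of $K$ of degree at most $\deg P$ is finite, and correspondingly the $K$-factorization type of $P_v$ ranges over a finite set as $v$ varies. For each separable extension $L\neq K$ of bounded degree the set $\{v:P_v\text{ has a root in }L\}$ is the image of a finite morphism obtained via the Weil restriction $\mathrm{Res}_{L/K}$, hence an F-set; combining these with the finitely many factorization types in which every irreducible factor has degree $\geq 2$ expresses $\{v:P_v\text{ has no root in }K\}$ as a finite union of F-sets, making $g(Z(K))$ itself $\cF_K$-open.

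For \textbf{(3)}, part (1) already yields that $\cE_K$ refines $\cF_K$. For the reverse, I would use that over a perfect t-henselian $K$ the $\cE_K$-topology on each $V(K)$ coincides with the topology $\tau$ induced by the t-henselian V-topology of $K$. It then suffices to exhibit a $\tau$-subbasis of $\cF_K$-open sets on $K$: for each $n$ coprime to the residue characteristic of $\tau$, the subgroup $(K^*)^n$ is $\tau$-open of finite index in $K^*$ by henselianness, and each coset $a(K^*)^n$ is the image of the finite morphism $t\mapsto at^n$, hence an F-set, so the complement of each coset is a finite union of F-sets and thus $\cF_K$-open. These cosets generate a $\tau$-subbasis on $K$, and pulling back along morphisms $V\to\Aa^1$ transports the construction to arbitrary $V(K)$, giving $\cF_K$ refines $\cE_K=\tau$. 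The principal obstacle across the three parts is the \'etale-neighborhood construction in (1), where the Galois-theoretic bookkeeping of fiber residue fields is the delicate step; parts (2) and (3) then mainly reduce to extracting explicit finite or coset descriptions from the structural hypotheses.
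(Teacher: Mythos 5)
Your proposal contains genuine gaps in all three parts, most seriously in part~(3).

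\textbf{Part (3).} Your argument requires $(K^*)^n$ to have \emph{finite index} in $K^*$, so that the complement of a coset $a(K^*)^n$ is a \emph{finite} union of F-sets and hence $\cF_K$-open. But for unbounded perfect t-henselian fields this fails. For $K = \Qq(\!(t)\!)$ --- which is precisely the motivating example singled out in the introduction for Theorem~C.3 --- one has $K^*/(K^*)^2 \cong \Qq^*/(\Qq^*)^2 \times \Zz/2\Zz$, which is infinite. So $K^* \setminus (K^*)^2$ is an infinite union of cosets and you get no $\cF_K$-open neighborhoods from this construction. The paper sidesteps this entirely: by Fact~\ref{fact:old-tau-compare}, to show $\cF_K$ refines the V-topology system $\cT_\uptau$ it suffices to exhibit a single nonempty $\cF_K$-open subset of $K$ that is not $\uptau$-dense, and Proposition~\ref{prop:hd} supplies one ($K \setminus F$ for an appropriate $\cF_K$-closed $F$). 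No divisibility or index hypotheses on $K^*$ are needed.

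\textbf{Part (2).} The Weil restriction observation is correct: $\{v : P_v \text{ has a root in } L\}$ is an F-set for each fixed finite separable $L/K$. But this is the wrong direction --- these sets do not combine to express $\{v : P_v \text{ has no root in } K\}$. Having a root in $L$ for some $L \ne K$ neither implies nor excludes having a root in $K$. And ``factorization types with every factor of degree $\ge 2$'' overshoots: for instance $T^6 - 1$ over $\Qq$ factors as $(T^2-1)(T^4+T^2+1)$ yet has the $K$-root~$1$, so the image of $\poly_2 \times \poly_4 \to \poly_6$ contains polynomials with $K$-roots. What is actually needed is to encode the \emph{universally quantified} condition ``every simple $K$-root of $P_v$ also satisfies $h_v = 0$'' as the $K$-rational image of a single finite morphism; this is the content of Lemma~\ref{confusing-v2}, which introduces auxiliary coordinates $z_{i,j}$ parameterizing a splitting of $P_v$ over a fixed large Galois extension $L$ and imposes vanishing of the $\Gal(L/K)$-twisted products $g(r_i(z))\prod_{j\ne i}\prod_\sigma(\sigma(r_i(z)) - r_j(z))$. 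Your sketch does not contain an analogue of this closed-condition encoding.

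\textbf{Part (1).} The reduction is sound: for $K$ perfect, a finite radicial morphism $W \to V'$ of normal varieties is bijective on $K$-points (the fiber over a $K$-point is a local Artinian $K$-algebra with residue field a purely inseparable, hence trivial, extension of $K$, and a local Artinian $K$-algebra with residue field $K$ has a unique $K$-point). So $f(W(K)) = g(V'(K))$. However, this does not substantively reduce the problem: $g$ is still a finite morphism and you must show $g(V'(K))$ is $\cE_K$-closed, which is the original task. You correctly identify the remaining step as producing, for $v \notin g(V'(K))$, an \'etale $h'\colon Z \to V$ with $v \in h'(Z(K))$ and $h'(Z(K)) \cap g(V'(K)) = \varnothing$ --- but this ``spreading out'' is the heart of the matter and is left unproved. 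The paper does it via Lemma~\ref{lem:henselian-finite-section} (a Hensel's-lemma section $(B/\mm B)^{\red} \to B$ over the henselization $A_\mm^\mathrm{h}$ of the local ring at $v$) and Lemma~\ref{main-lemma} (descending this section along the filtered colimit of \'etale neighborhoods, using that $(B \otimes_A A/\mm)^{\red}$ is finitely presented). Notably, the paper's argument handles the whole finite morphism $W \to V$ at once: perfectness of $K$ enters through $(B/\mm B)^\red$ being a product of finite \emph{separable} extensions, so no preliminary purely inseparable factorization is needed.

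In sum, the outline for (1) is in the right spirit but the decisive lemma is missing; (2) has an incorrect reduction; and (3) rests on a finiteness hypothesis that fails for the motivating example $\Qq(\!(t)\!)$.
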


Theorem~C.1 (Section~\ref{sec: etale-refine}) is the key ingredient used to prove Theorem~B above (Section~\ref{sec: localhomeo}) and to show that the \'etale-open topology on $K$ is almost always zero-dimensional (Section~\ref{section:?}). 

\medskip
Theorems~C.1 and~C.2 (Section~\ref{sec: boundedfields}) imply that the $\cF_K$-topology agrees with the $\cE_K$-topology when $K$ is a perfect and bounded field, also referred to as a field of type (F) by Serre~\cite[\S{}4.2]{Serre_gcoho}.
In particular, these results apply to algebraically, real, and $p$-adically closed fields.
Combining with the characterization of the  $\cE_K$-topology in these settings, the $\cE_K$- and $\cF_K$-topologies both agree with the Zariski, order, or valuation topologies, respectively.
One can also obtain this agreement directly using arguments involving quantifier elimination; see Section~\ref{section:rcr-pcf}.
This yields a stronger conclusion: when $K$ is algebraically, real, or $p$-adically closed, a subset of $K^m$ is an E-set (F-set) if and only if it is definable in the language of rings and open (closed) with respect to the Zariski, order, or valuation topology, respectively.

\medskip
There are many more examples of perfect bounded fields.
Pseudofinite fields and infinite algebraic extensions of finite fields are examples of perfect bounded fields which are also $\pac$.
If the absolute Galois group $\Gal(K)$ of $K$ is topologically finitely generated then $K$ is bounded~\cite[\S{}4.1]{Serre_gcoho}.
It is easy to see that $\Gal(K)$ is topologically finitely generated if and only if $K$ is the fixed field of finitely many elements of $\Gal(K)$.
The theory of a field equipped with $m$ field orders has a model companion for any $m \ge 1$, and if $(K,<_1,\ldots,<_m)$ satisfies this model companion, then $K$ is $\prc$ and $\Gal(K)$ is a pro-$2$-group topologically generated by $m$ involutions~\cite{prestel-prc}. 
Finally, suppose that $L$ is an algebraically closed field of characteristic zero and fix irrational $\beta \in L$.
If $K$ is a maximal subfield of $L$ not containing $\beta$, then $K$ is bounded by~\cite{Quigley}.

\medskip
There are also unbounded perfect t-henselian fields such as $\Qq(\!(t)\!)$, where Theorem~C.3 (Section~\ref{section:t-henselian}) allows us to show the two topologies agree. 

\medskip
Theorem~C.1 requires $K$ to be perfect.
If $K$ is separably closed of characteristic $p$ but not algebraically closed then the set of $p$th powers is $\cF_K$-closed but not $\cE_K$-closed.
Even worse, Theorem~D shows that the finite-closed topology on a bounded large field can be discrete.

\begin{thmD}
There is a bounded $\pac$ field $K$ such that:
\begin{enumerate}[leftmargin=*]
\item The $\cF_K$-topology is discrete and the $\cE_K$-topology is non-discrete.
\item There is a quintic polynomial map $K \to K$ whose image is $K^\times$.
\end{enumerate}
\end{thmD}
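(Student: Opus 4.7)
The plan is to first show that (2) implies (1) and then outline a construction of a bounded $\pac$ field $K$ together with a quintic $f \in K[x]$ satisfying $f(K) = K^\times$.

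\textbf{Deducing (1) from (2).} Suppose such $(K,f)$ is given. The set $K^\times = f(\Aa^1(K))$ is an F-set (image of the finite morphism $f\colon \Aa^1 \to \Aa^1$) and hence $\cF_K$-closed in $\Aa^1(K)$. Composing with the translation automorphism $x \mapsto x+a$ yields $\cF_K$-closedness of $K \setminus \{a\}$ for every $a \in K$, so every singleton in $K$ is $\cF_K$-open; hence the $\cF_K$-topology on $\Aa^1(K)$ is discrete. Pulling back along the coordinate projections $\Aa^n \to \Aa^1$, which are $\cF_K$-continuous by Section~\ref{section:FCT}(1), shows that every singleton in $\Aa^n(K)$ is $\cF_K$-open, so the $\cF_K$-topology on $\Aa^n(K)$ is discrete. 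For a general $K$-variety $V$, embed $V$ as a locally closed subvariety of some $\Aa^n$ and apply Section~\ref{section:FCT}(2) to conclude that its $\cF_K$-topology is the induced subspace topology, hence discrete. On the other hand, every infinite $\pac$ field is large, so the $\cE_K$-topology on $\Aa^1(K)$ is non-discrete by the characterization of largeness recalled in the introduction.

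\textbf{Construction.} To construct such $(K, f)$, we use a Fried--Jarden-style inductive construction. Fix a base field $k$ (for definiteness, $k = \Qq$) and a quintic $f \in k[x]$ irreducible over $k$ whose Galois group of $f(x)-t$ over $k(t)$ has an appropriate structure (for example $S_5$). Choose a topologically finitely generated projective profinite group $\Gamma$. We construct $K = \bigcup_n K_n$ as a tower $k = K_0 \subset K_1 \subset \cdots$ of finitely generated extensions of $k$, at each stage performing finitely many field extensions so as to: (i) enforce a listed rational point on an absolutely irreducible $K_n$-variety (toward $\pac$ness); (ii) adjoin a root of $f(x)-b$ for a newly listed $b \in K_n^\times$ (toward $K^\times \subseteq f(K)$); (iii) keep $\Gal(\bar K_n/K_n)$ a continuous quotient of $\Gamma$ (toward boundedness); and (iv) avoid ever adjoining a root of $f(x)$ (toward $0 \notin f(K)$). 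The key existence statements in (i) and (ii) rely on regular realizations of $\Gamma$ over function fields together with the Galois-theoretic behavior of $f(x)-b$ at $b \ne 0$, which, thanks to our choice of $f$, does not force an $f(x)$-root. The resulting $K$ is countable, perfect, $\pac$, and bounded, and satisfies $f(K) = K^\times$.

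\textbf{Main obstacle.} The technical heart of the proof is the simultaneous feasibility of (i)--(iv). The tension is between (i)--(ii), which push us toward many extensions, and (iii)--(iv), which constrain them. Resolution requires a careful choice of $f$ so that the branch behavior of $f(x)-t$ near $t=0$ is incompatible, in a precise Galois-theoretic sense, with its branch behavior near nonzero $t$; this ensures that no chain of adjunctions demanded by (ii) is ever forced to include a root of $f(x)$. Identifying a workable pair $(f,\Gamma)$ and verifying this incompatibility across the induction is the delicate step.
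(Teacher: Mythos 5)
Your deduction of (1) from (2) is correct and matches the paper's argument (which routes it through Remark~\ref{rem:lampe-discrete} and Fact~\ref{fact:sys-dis}). However, your proposed construction over $\Qq$ cannot work, and the obstruction is visible inside this very paper. A characteristic-zero field is perfect, and a $\pac$ field is large; by Theorem~\ref{thm:etale-refine} (Theorem~C.1), perfectness forces the $\cE_K$-topology to refine the $\cF_K$-topology, and by Fact~\ref{fact:old-EO}(\ref{old:lrge}) largeness forces the $\cE_K$-topology on $K$ to be non-discrete. Hence the $\cF_K$-topology on any perfect large field is non-discrete, which by Remark~\ref{rem:lampe-discrete} rules out the existence of any polynomial map $K \to K$ with cofinite proper image. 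So no choice of $f$, $\Gamma$, and inductive bookkeeping over $k = \Qq$ can produce a $\pac$ field satisfying (2); the ``delicate step'' you flagged is not merely difficult, it is impossible.

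The paper avoids this by working in characteristic $2$ over $\Ff_4(t)$ with the quintic $f(x) = (x^2+t)(x^3+t)$, and the resulting $K$ is crucially \emph{imperfect}. The point is that $f$ itself is inseparable (the factor $x^2+t$ has vanishing derivative), so $f$ escapes the degree-counting argument that forces separable quintics to split; meanwhile the class $\C$ of extensions of $\Ff_4(t)$ containing neither $\sqrt{t}$ nor $\sqrt[3]{t}$ is closed under regular extensions and chains, and an existentially closed member of $\C$ is $\pac$ with absolute Galois group a pro-$3$ quotient (hence bounded), because $K(\sqrt{t})/K$ is inseparable and hence invisible to $\Gal$. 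Any separable monic quintic over such $K$ factors into pieces of $3$-power degree and therefore has a linear factor, giving $f(K) = K^\times$. This degree-theoretic mechanism has no characteristic-zero analogue, which is exactly why your plan stalls. If you want a correct proof, you should abandon $\Qq$ and the regular-realization machinery and instead reproduce (or adapt) the paper's positive-characteristic, imperfect-field construction.
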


See Section~\ref{section:Esharp} for the proof of Theorem~D.
In 2009 Philipp Lampe asked on MathOverflow if there is an infinite field $K$ and polynomial $h \in K[x]$ such that $K \setminus h(K)$ is nonempty and finite~\cite{6820}.
Theorem~D answers this question.
Previously, Kosters~\cite{kosters} had shown that if $K$ is perfect and large and $K \setminus h(K)$ is nonempty then $|K \setminus h(K)| = |K|$.
Kosters' result is an easy corollary of Theorem~C; see Section~\ref{section:Esharp}.

\medskip
We could not determine whether there is an infinite \emph{perfect} field $K$ on which the $\cF_K$-topology is discrete.
This question is related to important open problems in field theory; see Section~\ref{sec:open}.

\medskip
We remark that the order of discovery was amusingly in reverse: We defined the finite-closed topology before the \'etale-open topology.
To compare these for fields without obvious topologies, such as pseudofinite fields, we proved Theorem~C.  We then realized it can be used to prove Theorem~B.  Several years later, we proved Theorem~A after an unsuccessful attempt by the first author to \emph{disprove} Theorem~A for pseudofinite fields.
In between, we connected the \'etale-open topology with large fields and proved the results in~\cite{firstpaper}.
Subsets of the authors and others also obtained related results \cite{secondpaper,field-top-1,field-top-2,with-anand, topological_proofs}.

\subsection*{A note on the title}\label{section:title note}
The title of this paper is incorrect on the level of fields.
There are fields which are large but not fraction fields of henselian local domains.
One example is the field of real numbers, see Fact~\ref{fact:sharp}.
However, our title is mathematically correct on the level of theories of fields.
Theorem~A shows that the theory of large fields is exactly the theory of fraction fields of henselian local domains and that the theory of any particular large field is the theory of a fraction field of a henselian local domain.

\subsection*{Acknowledgments}
Will Johnson was funded in part by the US National Science Foundation (Grant No.\@ DMS-1803120), the National Natural Science Foundation
  of China (Grants No.\@ 12101131 and No.\@ W2532009) and the Ministry of Education of
  China (Grant No.\@ 22JJD110002).
  Erik Walsberg was funded in part by the Austrian Science Fund (FWF) 10.55776/PAT1673125.

\section{Conventions and background}\label{section:c&b}

We will adopt the conventions in this section for the rest of the paper.
We let $d, k, m, n$ be natural numbers.

\subsection{Basic algebra and algebraic geometry} \label{sec: basicalgngeo}
All rings are commutative with unit.
Throughout, $K$ is a field.
We let $\kalg$ be the algebraic closure of $K$.
A $K$-algebra is a ring $R$ equipped with a homomorphism $K \to R$.
We let $\Frac(R)$ be the fraction field of a domain $R$.
A local ring $R$ with maximal ideal $\mm$ is \textbf{henselian} if any simple root of some $f \in R[x]$ in $R/\mm$ lifts to a root of $f$ in $R$.  
This is a form of Hensel's lemma.  
See \cite[Lemma~04GG]{stacks-project} for other characterizations of henselianity.
We say that a local domain is \textbf{proper} if it is not a field.

\medskip
A \textbf{$K$-variety} is a separated reduced scheme of finite type over $K$.
Throughout, $V,W,V',W',$ etc.\@ range over $K$-varieties and a morphism $V \to W$ is always a morphism of $K$-varieties.
We let $V(K)$ be the set of $K$-points of $V$.
Given a morphism $V \to W$ we let $V(K) \to W(K)$ be the induced map on $K$-points.
We let $\Aa^n$ be $n$-dimensional affine space over $K$ for $n\ge 1$, i.e., $\Aa^n = \Spec K[x_1,\ldots,x_n]$.
Recall that $\Aa^n(K)$ is canonically identified with $K^n$ and so $\Aa^1(K)$ is canonically identified with $K$.
Given a field extension $L/K$ we let $V_L = V \times_{\Spec K} \Spec L$ be the base change of $V$ to an $L$-variety.
Given a morphism $f\colon V \to W$ we let $f_L\colon V_L\to W_L$ be the base change of $f$.
Recall that $V_L(L)$ is canonically identified with $V(L)$.

\medskip
We let $A_b$ be the localization of a ring $A$ at $b \in A$.
Given $f,g \in A[x]$ with $f$ monic, the map $A \to A[x]_g/(f)$ is  standard \'etale if the derivative $f'$ of $f$ is invertible in $A[x]_g/(f)$. 
A morphism $X\to S$ of affine schemes is {\bf standard \'etale} if, up to isomorphism, $X \to S$ is dual to a standard \'etale ring morphism. The more appropriate definition of an {\bf \'etale} morphism can be found at~\cite[Section~02GH]{stacks-project}.  Fact~\ref{fac: sdetalevsetale} establishes the relationship between \'etale and standard \'etale morphisms, which can be taken as the working definition of \'etale morphism for our purposes.
It is~\cite[Lemma~02GT]{stacks-project}.

\begin{fact} \label{fac: sdetalevsetale}
Let $f\colon X \to S$ be an \'etale morphism of schemes.
\begin{enumerate}[leftmargin=*]
\item There are covers $\{X_i\}_{i \in I}$ of $X$ and $\{S_i\}_{i \in I}$ of $S$ by affine open subschemes such that $f(X_i) \subseteq S_i$ and $X_i \to S_i$ is standard \'etale for every $i \in I$.
\textup{(}Conversely, if such a cover exists, then $f$ is \'etale.\textup{)}
\item If $S$ is affine then there is a cover $\{X_i\}_{i \in I}$ of $X$ by affine open subschemes such that $X_i \to S$ is standard \'etale for every $i$ in $I$.
\end{enumerate}
When $X$ and $S$ are quasi-compact, we can take the covers to be finite covers.
\end{fact}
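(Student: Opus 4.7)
The plan is to reduce this fact to a local structure theorem for étale morphisms of affine schemes, invoke that theorem as the main technical input, and patch the conclusion globally. The parenthetical converse in (1) is essentially immediate: since being étale is local on source and target, if $X$ is covered by affine opens $X_i$ with each $X_i \to S_i \subseteq S$ standard étale, then $f$ is étale on each $X_i$, hence étale.

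Next, I would deduce (1) from (2). Given an étale $f \colon X \to S$, cover $S$ by affine opens $\{S_\alpha\}$. For each $\alpha$ the restriction $f^{-1}(S_\alpha) \to S_\alpha$ is étale with affine target, so (2) produces an affine open cover of $f^{-1}(S_\alpha)$ by subschemes that are standard étale over $S_\alpha$. Collecting these across all $\alpha$ yields the pair of compatible covers demanded by (1).

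The heart of the matter is therefore (2), for which I would rely on the local structure theorem for étale morphisms. Fix $x \in X$ with image $s \in S = \Spec A$. Since étale morphisms are locally of finite presentation, after shrinking $X$ I may assume $X = \Spec B$ with $B = A[t_1,\ldots,t_n]/(h_1,\ldots,h_m)$. The goal is to show that, locally around $x$, one can rewrite $B$ in the form $A[t]_g/(h)$ with $h$ monic and $h'$ invertible in the quotient after inverting $g$. The main obstacle — and the only genuinely technical step — is this reduction. The key ingredients are $\Omega_{B/A} = 0$ (unramifiedness), flatness of $B$ over $A$, and a Jacobian manipulation at $x$: the Jacobian $(\partial h_i/\partial t_j)$ is, after a suitable reindexing, surjective onto an appropriate quotient near $x$, and a Nakayama-type argument combined with flatness permits elimination of all but one of the generators, yielding a single monic polynomial which, upon inverting a carefully chosen $g \in B$, gives the sought-after standard étale presentation on a distinguished affine open. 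This is exactly the content of~\cite[Lemma~02GT]{stacks-project}, and I would take it as a black box rather than redo the commutative algebra.

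Finally, the quasi-compact refinement is automatic: a quasi-compact scheme admits a finite subcover of any affine open cover, and the constructions above respect this, producing the desired finite covers directly.
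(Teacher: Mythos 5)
Your proposal is correct, and it ultimately rests on the same citation the paper itself uses: the paper gives no proof of this Fact at all, simply stating ``It is~\cite[Lemma~02GT]{stacks-project}.'' Your reductions (converse from étale being local on source and target; deducing (1) from (2) by covering $S$ by affines; taking (2) as a black-box local structure theorem) are all correct, but they are somewhat redundant, since Lemma~02GT in the Stacks Project already asserts the full equivalence, including both (1) and (2). So in effect you and the paper are doing the same thing: citing the Stacks Project.

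One small wrinkle in your finite-cover remark: quasi-compactness of $X$ gives a finite subcover of $\{X_i\}$, but the finitely many corresponding $S_i$ need not cover $S$ if $f$ is not surjective. One must also use quasi-compactness of $S$ to finish the cover with finitely many additional affine opens (paired with $X_i = \varnothing$, noting the empty scheme is affine and vacuously standard étale over any affine). This is a routine fix, but ``the constructions above respect this'' glosses over it.
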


We record some well-known facts about \'etale morphism.

\begin{fact}
\label{fact:etale}
\quad
\begin{enumerate}[leftmargin=*]
\item \'Etale morphisms are closed under (arbitrary) base change and composition.
\item  Open immersions are \'etale.
\item If $f \colon V \to W$ is an \'etale morphism of $K$-varieties then the image of $f$ is an open subvariety of $V$.
\end{enumerate}
\end{fact}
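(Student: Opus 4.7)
The plan is to reduce all three claims to statements about standard \'etale ring maps using Fact~\ref{fac: sdetalevsetale}, since the claims are all local on source and target. All three assertions are standard items from the Stacks Project, so the aim is a pointer-style proof sketch rather than a long argument.

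For (1), base change stability is essentially immediate from the definition of standard \'etale: given $A \to A[x]_g/(f)$ with $f$ monic and $f'$ a unit in $A[x]_g/(f)$, and any ring map $A \to B$, the tensor product is $B \to B[x]_{g_B}/(f_B)$ where $f_B$ remains monic and its derivative remains a unit after base change. Covering a general \'etale $X \to S$ by affine standard \'etale pieces via Fact~\ref{fac: sdetalevsetale}(1), and base-changing this cover, gives the result. For composition, if $X \to Y$ and $Y \to Z$ are \'etale, then working affine-locally and pulling back a standard \'etale cover of $X \to Y$ along an affine standard \'etale cover of $Y \to Z$ exhibits $X \to Z$ locally as a composition of two standard \'etale ring maps; such a composition is \'etale (it is flat, finitely presented, and its module of K\"ahler differentials vanishes, which is one of the characterizations in \cite[Section~02GH]{stacks-project}).

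For (2), the question is Zariski-local on the base, so it suffices to show that a principal open immersion $\Spec A_g \hookrightarrow \Spec A$ is \'etale. But $A_g \cong A[x]/(gx-1)$, and the derivative of $gx-1$ with respect to $x$ is $g$, which is a unit in $A[x]/(gx-1) = A_g$. Hence the inclusion is standard \'etale in the sense defined before Fact~\ref{fac: sdetalevsetale}.

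For (3) (which should say the image is open in $W$, not in $V$), the key ingredients are that \'etale morphisms are flat and of finite presentation. Any flat morphism that is locally of finite presentation is open; see \cite[Lemma~01UA]{stacks-project}. Applying this to $f\colon V\to W$ shows the image $f(V)$ is open in $W$, hence inherits the structure of an open subvariety.

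The main obstacle is really only the composition part of (1): the naive approach to compose two standard \'etale presentations does not directly yield a standard \'etale presentation in a single variable, so one must invoke a more intrinsic characterization of \'etale (flat plus unramified, or vanishing of relative differentials) to close the argument. Everything else is a straightforward unwinding of definitions or a citation to the Stacks Project.
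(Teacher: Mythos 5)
The paper itself gives no proof of this fact, merely citing EGA IV Prop.~17.1.3 and Stacks Project Lemmas 02GN, 02G0, 02GP, 03WT. Your proposal is therefore a genuinely different route: it actually proves the claims by reducing to standard \'etale ring maps via Fact~\ref{fac: sdetalevsetale}. The base-change argument, the handling of composition via flatness plus vanishing of relative K\"ahler differentials (using the exact sequence $\Omega_{B/A}\otimes_B C \to \Omega_{C/A} \to \Omega_{C/B} \to 0$), and the reduction of (3) to the fact that flat morphisms locally of finite presentation are open, are all correct. You also rightly flag the typo in (3): the image should be an open subvariety of $W$, not $V$.

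There is one small but genuine slip in (2). The paper's definition of standard \'etale (stated just before Fact~\ref{fac: sdetalevsetale}) requires $f$ to be \emph{monic}. Your presentation $A_g \cong A[x]/(gx-1)$ uses $f(x) = gx - 1$, which is not monic unless $g$ is a unit, so this is not a standard \'etale presentation in the sense the paper uses. The fix is immediate: use instead $A_g \cong A[x]_g/(x-1)$, noting that $A[x]_g = A_g[x]$ and so $A[x]_g/(x-1) \cong A_g$; here $f(x) = x-1$ is monic and $f' = 1$ is a unit, so the map is standard \'etale. With that correction the argument for (2) is complete, and the rest of the proposal stands.
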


\begin{proof}
These can be found in \cite[Prop.~17.1.3]{EGA-IV-4} or 
alternatively~\cite[Lemmas  02GN, 02G0, 02GP,  03WT]{stacks-project} .
\end{proof}

A morphism $\Spec B \to \Spec A$ of affine schemes is finite if the dual ring morphism $A \to B$ gives $B$ the structure of a finitely generated $A$-module.
A morphism $f\colon S \to X$ of schemes is {\bf finite} if the following equivalent conditions hold:
\begin{enumerate}[leftmargin=*]
\item Every $p \in X$ is contained in an affine open subscheme $X'$ of $X$ such that $f^{-1}(X')$ is also affine and the induced morphism $f^{-1}(X') \to X'$ is a finite morphism of affine schemes.
\item For every affine open subscheme $X' \subseteq X$, the preimage $f^{-1}(X')$ is also affine and the induced morphism $f^{-1}(X') \to X'$ is a finite morphism of affine schemes.
\end{enumerate}
See \cite[Lemma~01WI]{stacks-project} for a proof of the equivalence.
Recall that a morphism of $K$-varieties is {\bf quasi-finite} if it has finite fibers.

\begin{fact}\label{fact:basic-f}
\hspace{.000001cm}
\begin{enumerate}[leftmargin=*]
\item Finite morphisms are closed under (arbitrary) base changes and compositions.
\item\label{afp} A morphism is finite if and only if it is quasi-finite and proper if and only if it is affine and proper.
\item If $V_i \to W$ is a finite morphism  for $i=1,\ldots,n$ and $V$ is the disjoint union of the $V_i$, then the induced morphism $V \to W$ is finite.
\item Closed immersions are finite.
\item\label{poly:fini} Any non-constant morphism $\Aa^1 \to \Aa^1$ is finite.
\end{enumerate}
\end{fact}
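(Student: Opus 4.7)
The plan is to verify each item separately, reducing to the affine case on both source and target and then to commutative algebra, citing the Stacks Project or EGA for the classical statements rather than reproving them.

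For (1), after pulling back to affine opens of the base, base change and composition reduce to the standard facts that if $B$ is a finitely generated $A$-module then so is $B \otimes_A A'$ as an $A'$-module, and if $C$ is finitely generated over $B$ and $B$ over $A$, then $C$ is finitely generated over $A$; I would invoke \cite[Lemmas~01WH and 01WJ]{stacks-project}. For (3), take an affine open $W' = \Spec A \subseteq W$: the preimage $f^{-1}(W')$ is the disjoint union of finitely many affine schemes $\Spec B_i$ with each $B_i$ finitely generated over $A$, and $f^{-1}(W') = \Spec(\prod_{i} B_i)$ with $\prod_i B_i$ a finitely generated $A$-module. For (4), a closed immersion is locally dual to a surjection $A \twoheadrightarrow A/I$, and $A/I$ is generated by $1$ as an $A$-module. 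For (5), a non-constant morphism $\Aa^1 \to \Aa^1$ is dual to $K[x] \to K[y]$, $x \mapsto f(y)$ with $n := \deg f \geq 1$; Euclidean division by $f(y) - x$, viewed as a monic polynomial in $y$ over $K[x]$, shows that $1, y, \ldots, y^{n-1}$ generate $K[y]$ as a $K[x]$-module, so the morphism is finite.

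The only substantive item is (2). That finite implies affine is immediate from the definition, and finite implies proper follows since module-finite ring extensions satisfy going-up (which gives universal closedness) while affine morphisms are automatically separated and of finite type. For the converses, ``affine and proper implies finite'' follows from coherence of direct images under proper morphisms applied to $f_*\mathcal{O}_V$, which gives that $V = \Spec(f_*\mathcal{O}_V)$ is finite over $W$; this is \cite[Lemma~0A4X]{stacks-project}. ``Quasi-finite and proper implies finite'' is a form of Zariski's Main Theorem, for which I would cite \cite[Lemma~02LS]{stacks-project}. The main obstacle here is that (2) genuinely depends on nonelementary results (Grothendieck's coherence theorem and Zariski's Main Theorem), but since the statement is presented as a \emph{Fact}, the appropriate move is to point to these references rather than to reproduce their proofs.
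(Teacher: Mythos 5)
Your proposal is correct and follows essentially the same approach as the paper: cite the Stacks Project for the substantive items and give the elementary ring-theoretic argument for~(5). The only difference is that you supply direct verifications for~(3) and~(4) where the paper simply cites \cite[Lemmas~0CYI, 035C]{stacks-project}, and your treatment of~(5) via Euclidean division against the monic polynomial $f(y)-x \in K[x][y]$ is a slightly more explicit version of the paper's one-line remark.
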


Here, (1) is \cite[Lemmas~01WK, 01WL]{stacks-project}, (2) is \cite[Lemmas~01WN, 02OG]{stacks-project}, (3) is \cite[Lemma~0CYI]{stacks-project} and (4) is~\cite[Lemma~035C]{stacks-project}.
For (5), note that a non-constant morphism $\Aa^1 \to \Aa^1$ is dual to the $K$-algebra morphism $K[t] \to K[t], t \mapsto f$ for some non-constant $f \in K[t]$.
Such a ring morphism makes $K[t]$ into a finite module over itself.
We now recall a form of Zariski's main theorem.

\begin{fact}\label{fact:zmt}
Any quasi-finite morphism $V \to W$ factors as $V \to V^* \to W$ where $V \to V^*$ is an open immersion with Zariski dense image and $V^* \to W$ is finite.
\end{fact}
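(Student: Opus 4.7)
The plan is to deduce this from the Grothendieck form of Zariski's Main Theorem: any separated quasi-finite morphism $f\colon V \to W$ between quasi-compact and quasi-separated schemes factors as $V \xrightarrow{j} V^* \xrightarrow{g} W$ where $j$ is an open immersion and $g$ is finite (see \cite[Lemma~05K0]{stacks-project}). Since morphisms of $K$-varieties are finite-type and separated and both source and target are quasi-compact and quasi-separated, this applies directly and supplies a preliminary factorization.

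The intermediate object $V^*$, however, lives in the category of schemes rather than $K$-varieties: it may be non-reduced, and the open immersion $j$ need not have Zariski dense image. To correct both defects simultaneously, I would replace $V^*$ by the Zariski closure of $j(V)$ inside $V^*$, equipped with the reduced induced subscheme structure; call this closed subscheme $V^{**}$. Then $V^{**}$ is reduced, separated, and of finite type over $K$, hence a $K$-variety, and the composition $V^{**} \hookrightarrow V^* \to W$ is finite because closed immersions are finite by Fact~\ref{fact:basic-f}(4) and finite morphisms compose.

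It remains to check that $V \to V^{**}$ is an open immersion with dense image. Density is immediate from the construction, since $V^{**}$ is by fiat the closure of the image. For the open immersion claim, observe that $j$ identifies $V$ with its set-theoretic image $U = j(V) \subseteq V^*$; because $V$ is reduced, the open subscheme $U \subseteq V^*$ must itself be reduced. By construction $U$ is open in $V^{**}$, and the reduced structure on $U$ induced from $V^{**}$ coincides with the structure on $U$ as an open subscheme of $V^*$, which is in turn isomorphic to $V$ via $j$. The main obstacle here is entirely categorical bookkeeping: the scheme-theoretic ZMT may output a non-reduced $V^*$ whose open part $j(V)$ is not dense, so one must pass carefully to the reduced closure and verify that finiteness, openness, and density all survive. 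No new geometric input beyond the classical theorem is required.
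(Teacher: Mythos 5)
Your proof is correct and takes essentially the same approach as the paper: both cite \cite[Lemma~05K0]{stacks-project} for the separated quasi-finite factorization, then replace the intermediate scheme by the (reduced) Zariski closure of the image of $V$ and verify openness, density, and finiteness. The only difference is stylistic — you spell out the reducedness bookkeeping that the paper compresses into the remark that pullbacks of open immersions are open immersions (which implicitly uses that $V$, being reduced, has no proper closed subschemes with full underlying set).
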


\begin{proof}
By \cite[Lemma~05K0]{stacks-project} we can factor $V \to W$ as $V \to V^{**} \to W$ where $V \to V^{**}$ is an open immersion and $V^{**} \to W$ is finite.
Let $V^{*}$ be the Zariski closure of the image of $V \to V^{**}$.
The inclusion $V = V \cap V^* \hookrightarrow V^*$ is an open immersion, because pullbacks of open immersions are open immersions.
Hence $V \to V^{*}$ is an open immersion with Zariski dense image.
Furthermore the inclusion $V^{*} \to V^{**}$ is a closed immersion and is hence finite and so $V^{*} \to W$ is a composition of finite morphisms and is hence finite.
\end{proof}

\subsection{Systems of topologies and the \'etale-open topology} \label{sec: systemoftop}

\begin{definition}\label{sys-def}
A \textbf{system of topologies} over $K$ is a choice of a topology on $V(K)$ for every  $V$, such that the following conditions hold:
\begin{enumerate}[leftmargin=*]
\item $V(K)\to W(K)$ is continuous for any morphism $V \to W$.
\item If $V \to W$ is a (scheme-theoretic) open immersion, then
$V(K) \to W(K)$ is a (topological) open embedding.
\item If $V \to W$ is a (scheme-theoretic) closed immersion, then
$V(K) \to W(K)$ is a (topological) closed embedding.
\end{enumerate}
\end{definition}

Given systems $\cT,\cT^*$ of topologies over $K$ we say that $\cT$ \textbf{refines} $\cT^*$ if the $\cT$-topology on $V(K)$ refines the $\cT^*$-topology on $V(K)$ for every $V$.
The finest system of topologies over $K$ is the discrete system in which every $V(K)$ has the discrete topology and the coarsest system of topologies over $K$ is the Zariski system in which every $V(K)$ has the Zariski topology.

\begin{fact}\label{fact:prod}
Suppose that $\cT$ is a system of topologies over $K$ and $V_1,V_2$ are $K$-varieties.
Then the $\cT$-topology on $V_1(K) \times V_2(K) = (V_1 \times V_2)(K)$ refines the product of the $\cT$-topologies on $V_1(K)$ and $V_2(K)$.
\end{fact}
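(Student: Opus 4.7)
The plan is to unwind the definition of the product topology and apply continuity of the projection morphisms. The product topology on $V_1(K) \times V_2(K)$ has, by definition, a subbasis consisting of sets of the form $\pi_i^{-1}(U_i)$, where $\pi_i \colon V_1(K) \times V_2(K) \to V_i(K)$ is the coordinate projection and $U_i$ is $\cT$-open in $V_i(K)$. So it suffices to show each such $\pi_i^{-1}(U_i)$ is $\cT$-open in $(V_1 \times V_2)(K)$ once we pass through the canonical identification.

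The key point is that the set-theoretic projection $\pi_i \colon V_1(K) \times V_2(K) \to V_i(K)$ is, under the canonical identification $(V_1 \times V_2)(K) = V_1(K) \times V_2(K)$, precisely the map on $K$-points induced by the scheme-theoretic projection morphism $V_1 \times V_2 \to V_i$. So first I would just record this identification carefully (it is essentially the universal property of the fibered product in the category of $K$-schemes, applied to $\Spec K$). Once this is in place, condition (1) in Definition~\ref{sys-def} gives that $\pi_i \colon (V_1 \times V_2)(K) \to V_i(K)$ is $\cT$-continuous, so $\pi_i^{-1}(U_i)$ is $\cT$-open.

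Taking finite intersections (which are open in any topology) gives that every basic open set of the product topology, namely $U_1 \times U_2 = \pi_1^{-1}(U_1) \cap \pi_2^{-1}(U_2)$, is $\cT$-open in $(V_1 \times V_2)(K)$, and the refinement claim follows by taking arbitrary unions. There is no real obstacle here; the argument is just the standard fact that a topology on a product set is refined by the product topology as soon as both projections to the factors are continuous. The only small point that deserves explicit mention is the identification of the set-theoretic projection with the morphism-induced map, so that condition (1) of Definition~\ref{sys-def} actually applies.
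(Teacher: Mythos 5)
Your proof is correct and matches the paper's one-line justification exactly: the paper also notes that the fact follows because each projection $V_1(K) \times V_2(K) \to V_i(K)$ is $\cT$-continuous. You have simply spelled out the standard argument that a topology refines the product topology as soon as both projections are continuous.
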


Fact~\ref{fact:prod} follows as each projection $V_1(K) \times V_2(K) \to V_i(K)$ is $\cT$-continuous.
% We leave the details to the reader.

\begin{definition}
The \textbf{$K$-rational image} of a morphism $V\to W$ is the image of  the induced map $V(K)\to W(K)$.
\end{definition}

We recall the \'etale open topology introduced in~\cite{firstpaper}.

\begin{definition}\label{def:E}
An {\bf E-subset} of $V(K)$ is a $K$-rational image of an \'etale morphism $W \to V$.
By Fact~\ref{fact:E-set} below E-sets form an open basis for a topology on $V(K)$.
We call this the {\bf \'etale-open} topology on $V(K)$.
We let $\cE_K$ denote the collection of \'etale-open topologies on $V(K)$ for $V$ ranging over $K$-varieties.
The \'etale-open topology over $K$ is the coarsest system of topologies over $K$ such that $V(K) \to W(K)$ is an open map  for any \'etale morphism $V \to W$ \cite[Prop.~5.5]{firstpaper}.
\end{definition}

Fact~\ref{fact:E-set} below is \cite[Lemma~3.3]{firstpaper}.

\begin{fact}\label{fact:E-set}
E-subsets of $V(K)$ are closed under finite unions and intersections.
\end{fact}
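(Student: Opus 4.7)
The plan is to handle unions via disjoint unions of source varieties and intersections via fibre products, relying on the closure properties of \'etale morphisms recorded in Fact~\ref{fact:etale}.

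For finite unions, suppose $U_1,U_2$ are E-subsets of $V(K)$, so $U_i = f_i(W_i(K))$ for \'etale morphisms $f_i\colon W_i \to V$. Form the disjoint union $W := W_1 \sqcup W_2$, which is a $K$-variety (disjoint union of finitely many separated reduced finite-type schemes over $K$), and let $f\colon W \to V$ be the morphism induced by $f_1$ and $f_2$. Then $f$ is \'etale because \'etaleness is a property that can be checked on an affine open cover (Fact~\ref{fac: sdetalevsetale}), and the two pieces of the disjoint union furnish such a cover. Since $W(K) = W_1(K)\sqcup W_2(K)$, we get $f(W(K)) = U_1\cup U_2$, so $U_1 \cup U_2$ is an E-subset. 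By induction, any finite union of E-subsets is an E-subset. (The empty set is also an E-subset, being the image of the empty $K$-variety under the unique \'etale map to $V$.)

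For finite intersections, with $f_i\colon W_i\to V$ as above, form the fibre product $W_1\times_V W_2$ and consider the composition
\[
g\colon W_1\times_V W_2 \longrightarrow V,
\]
which we may factor as $W_1\times_V W_2 \to W_2 \xrightarrow{f_2} V$, where the first arrow is the base change of $f_1$ along $f_2$. By Fact~\ref{fact:etale}(1), \'etale morphisms are stable under base change and composition, so $g$ is \'etale, and $W_1\times_V W_2$ is a $K$-variety. Now a $K$-point of $W_1\times_V W_2$ is exactly a pair $(p_1,p_2)\in W_1(K)\times W_2(K)$ with $f_1(p_1)=f_2(p_2)$; consequently
\[
g\bigl((W_1\times_V W_2)(K)\bigr) \;=\; f_1(W_1(K))\cap f_2(W_2(K)) \;=\; U_1\cap U_2.
\]
Hence $U_1\cap U_2$ is an E-subset, and by induction so is any finite intersection.

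I don't anticipate a real obstacle here: the only subtlety is verifying that the fibre product of $K$-varieties is again a $K$-variety (separated reduced finite-type schemes over $K$ are closed under fibre products; reducedness uses that the base change of an \'etale morphism preserves reducedness, since \'etale morphisms are flat with geometrically reduced fibres), and checking that a disjoint union of \'etale maps is \'etale, both of which are immediate from the cited facts.
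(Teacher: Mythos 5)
Your proof is correct, and it is the standard argument. The paper does not reprove Fact~\ref{fact:E-set} here (it cites \cite[Lemma~3.3]{firstpaper}), but the paper's own proof of the analogous statement for F-sets (Lemma~\ref{fc-lemma}) uses exactly your strategy: disjoint unions for unions and fiber products for intersections, with the relevant morphism class closed under both. Your care about reducedness of $W_1\times_V W_2$ (via flatness and geometrically reduced fibers of the étale projection) is the right observation and is sometimes glossed over.
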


We recall some facts we will use about the \'etale-open topology.

\begin{fact}[\cite{firstpaper,secondpaper,field-top-1,field-top-2}]\label{fact:old-EO}
~
\begin{enumerate}[leftmargin=*] 
\item\label{old:lrge} $K$ is large if and only if the $\cE_K$-topology on $K$ is not discrete.
\item\label{old:sep} If $K$ is separably closed, then the $\cE_K$-topology agrees with the Zariski topology.
Otherwise, the $\cE_K$-topology on $V(K)$ is Hausdorff when $V$ is quasi-projective.
\item\label{old:ordr} The $\cE_K$-topology refines the topology induced by any field order on $K$.
If $K$ is real closed then the $\cE_K$-topology agrees with the topology induced by the order.
\item\label{old:tsepr} If $K$ is neither separably closed nor isomorphic to $\Rr$ then the $\cE_K$-topology on $K$ is totally separated and hence totally disconnected.
\item\label{old:hnsl} If $K$ is t-henselian and not separably closed then the $\cE_K$-topology agrees with the canonical t-henselian topology.
In particular if $K$ is not separably closed and admits a non-trivial henselian valuation then the $\cE_K$-topology agrees with the valuation topology.
\item\label{old:dvr} If $v$ is a valuation on $K$ and either the value group of $v$ is not divisible or the residue field of $v$ is not algebraically closed then the $\cE_K$-topology refines the $v$-topology.
\item\label{old:pacv2} If $K$ is $\pac$ then the $\cE_K$-topology is not induced by a field topology on $K$.
\end{enumerate}
\end{fact}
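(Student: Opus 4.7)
Since Fact~\ref{fact:old-EO} is a compendium of seven results drawn from several papers, I would tackle them item by item, using the definition of the \'etale-open topology together with classical field-theoretic constructions.

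For (\ref{old:lrge}), non-discreteness of the $\cE_K$-topology at some $a \in K$ supplies an \'etale $W \to \Aa^1$ whose $K$-rational image contains $a$ but is not $\{a\}$; by Fact~\ref{fac: sdetalevsetale} we may take this to be standard \'etale, so $W$ is a smooth curve with infinitely many $K$-points, witnessing largeness. Conversely, largeness applied to a smooth curve with a $K$-point above $a$ produces a non-singleton E-set near $a$. For (\ref{old:sep}), separable closure of $K$ forces every non-empty Zariski open subvariety of any $V$ to have a $K$-point, so by Fact~\ref{fact:etale}(3) the $K$-rational image of any \'etale $W \to V$ is exactly a Zariski open of $V(K)$. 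For the Hausdorffness claim, I would reduce to $V = \Aa^1$ via a quasi-projective embedding and then separate two distinct points of $K$ using fibers of an \'etale map built from a non-trivial separable extension.

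For (\ref{old:ordr}), (\ref{old:hnsl}), and (\ref{old:dvr}), my strategy is uniform: exhibit explicit \'etale morphisms whose $K$-rational images realize the basic opens of the target topology. For an order on $K$, the positive cone $\{y^2 : y \in K^\times\}$ is the image of the \'etale map $\Gg_m \to \Gg_m$, $y \mapsto y^2$, so every open interval is an E-set; in the real closed case quantifier elimination matches the two topologies. For a valuation $v$ with non-divisible value group or non-algebraically-closed residue field, the $K$-rational image of a suitable $n$th-power or Kummer-type \'etale cover misses translates of $\Oo_v$ in a controlled way, and one deduces that $\cE_K$ refines the $v$-topology. For $K$ t-henselian but not separably closed, I would invoke the Prestel--Ziegler uniqueness of the canonical t-henselian topology as a non-discrete Hausdorff field topology and identify $\cE_K$, which is non-discrete by~(\ref{old:lrge}) and Hausdorff by~(\ref{old:sep}), with it.

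For (\ref{old:tsepr}), in each case I would separate two points by a clopen set extracted from the available extra structure: a non-trivial order or valuation (giving translates of $\cE_K$-open rays or cosets of $n$th powers as clopens), or, in the remaining cases, fibers of an \'etale cover attached to a non-trivial separable extension. For (\ref{old:pacv2}), the key input is that over a $\pac$ field every absolutely irreducible variety has a $K$-point, so the $K$-rational image of every dominant \'etale morphism is Zariski dense; one argues this density is incompatible with $\cE_K$ being induced by a field topology. The principal obstacle, and the reason these results are spread across several papers, is (\ref{old:hnsl}): identifying $\cE_K$ with the canonical t-henselian topology genuinely requires the Prestel--Ziegler classification of field topologies plus a verification that $\cE_K$ meets the required field-topological axioms; parts~(\ref{old:dvr}) and~(\ref{old:pacv2}) are similarly delicate and rely on nontrivial Galois-theoretic input to build sufficiently many \'etale covers with controlled $K$-rational images.
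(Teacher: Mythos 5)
This \textbf{Fact} is not proved in the paper: it is stated as a citation to \cite{firstpaper,secondpaper,field-top-1,field-top-2}, so there is no internal proof to compare your sketch against. Taking the sketch on its own terms, there are two concrete gaps.

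Your argument for (\ref{old:lrge}) does not establish either direction. Non-discreteness only says that every E-set containing a given $a\in K$ has at least two points; it does not directly yield an E-set with infinitely many points, and in any case a \emph{single} smooth curve with many $K$-points cannot witness the universally quantified statement that every smooth curve with a $K$-point has infinitely many. The correct route is via the contrapositive: if $K$ is not large, take a smooth affine curve $C$ with $1\le |C(K)|<\infty$, shrink to an open neighborhood of a $K$-point that is \'etale over $\Aa^1$ (this is Fact~\ref{fact:local-coordinate}), and observe that the $K$-rational image is a nonempty finite E-set; then, exactly as in Fact~\ref{fact:sys-dis} of this paper, a nonempty finite $\cT$-open subset of $K$ forces any system of topologies $\cT$ to be discrete, since finite sets are Zariski-closed hence $\cT$-closed, so one can delete points one at a time to reach an open singleton and then translate. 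For the converse, if $\{a\}$ were an E-set via an \'etale $f\colon W\to\Aa^1$, then $\varnothing\ne W(K)\subseteq f^{-1}(a)$ is finite, so some irreducible component of the smooth $1$-dimensional $W$ is a smooth curve with a $K$-point but only finitely many, contradicting largeness.

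Your treatment of (\ref{old:hnsl}) asserts that because $\cE_K$ is non-discrete (by (\ref{old:lrge})) and Hausdorff (by (\ref{old:sep})), Prestel--Ziegler uniqueness identifies it with the canonical t-henselian topology. That inference is not valid: the uniqueness theorem is among t-henselian \emph{V-topologies}, i.e.\ a specific class of field topologies, and a non-discrete Hausdorff topology on $K$ need not be a field topology at all. The substantive content of (\ref{old:hnsl}) is precisely the verification that $\cE_K$ is induced by a V-topology when $K$ is t-henselian and not separably closed --- you flag this at the end as the principal obstacle, but your main argument for~(\ref{old:hnsl}) purports to bypass it by appealing to uniqueness alone, which it cannot do. Your sketch for (\ref{old:dvr}) (``Kummer-type \'etale covers miss translates of $\Oo_v$ in a controlled way'') is too vague to verify; the remaining items are reasonable at the level of a sketch.
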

See \cite{field-top-1, field-top-2} for some additional results on the \'etale-open topology in other cases.
Fact~\ref{fact:z-dense} below is \cite[Props.~5, 6]{topological_proofs}.

\begin{fact}\label{fact:z-dense}
Suppose that $K$ is large, $V$ is irreducible, and $O$ is an $\cE_K$-open subset of $V(K)$ which contains a smooth point.
Then $O$ is Zariski dense in $V$ and $|O| = |K|$.
\end{fact}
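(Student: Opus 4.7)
The plan is to reduce, via the structure of basic $\cE_K$-open neighborhoods, to a classical density statement about smooth varieties over large fields. Let $p\in O$ be the given smooth $K$-point. Replacing $V$ by its smooth locus (Zariski open, dense, and containing $p$) and $O$ by $O\cap V_{\mathrm{sm}}(K)$, I may assume $V$ is smooth and irreducible. Since $O$ is $\cE_K$-open and contains $p$, it contains a basic E-set $E = f(W(K))$ for some \'etale $f\colon W\to V$ and some $q\in W(K)$ with $f(q)=p$. Shrinking $W$ to a Zariski open neighborhood of $q$ on which $W$ is smooth and irreducible, I may further assume $\dim W = \dim V =: d$.

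By Fact~\ref{fact:etale}(3), $f(W)$ is a Zariski open subvariety of $V$, hence Zariski dense in $V$ by irreducibility. So to prove that $O$ is Zariski dense in $V$ it suffices to prove that $f(W(K))$ is Zariski dense in $f(W)$, i.e., that $W(K)$ is Zariski dense in $W$. Similarly, $|O|=|K|$ reduces to $|W(K)|=|K|$, because $f$ has finite fibers and $K$ is infinite (the latter follows from largeness applied to $\Aa^1$). Both statements are consequences of the following classical fact: for any large $K$, any smooth irreducible $K$-variety $W$ of positive dimension with a $K$-point has $W(K)$ Zariski dense in $W$ and of cardinality $|K|$.

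I would prove this last fact by induction on $d=\dim W$. In dimension one, Zariski density is exactly clause~(1) of the definition of largeness (since closed proper subvarieties of a smooth irreducible curve are finite), and the cardinality equality follows from $W(K)\subseteq W(\kalg)$ together with $|W(\kalg)|=|K|$ in positive dimension. In higher dimensions, one can either cut $W$ by a smooth hyperplane section through $q$ (Bertini plus induction, varying the hyperplane through $q$), or—more elegantly—invoke the equivalent characterization of largeness as existential closedness of $K$ in the Laurent series field $K(\!(t_1,\ldots,t_d)\!)$: smoothness at $q$ yields $\widehat{\mathcal{O}}_{W,q}\cong K[[t_1,\ldots,t_d]]$, and existential closedness then produces a $K$-point of $W$ inside any prescribed Zariski open neighborhood of $q$.

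The main obstacle is this density sub-claim in dimension $\geq 2$. The Bertini route requires care to produce smooth hyperplane sections through the specific point $q$ and to vary them coherently; the power series route is slicker but rests on the nontrivial (though by now standard) equivalent characterization of largeness via existential closedness in Laurent series. I would pursue the second route, as it combines cleanly with the reduction in the first two paragraphs: smoothness at $q$, the formal isomorphism $\widehat{\mathcal{O}}_{W,q}\cong K[[t_1,\ldots,t_d]]$, and existential closedness together immediately give both Zariski density and a parametrization producing $|K|$-many $K$-points in any nonempty Zariski open subset of $W$ meeting our neighborhood of $q$.
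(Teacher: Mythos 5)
First, a note on the framing: the paper does not prove Fact~\ref{fact:z-dense} itself — it is cited as Propositions~5 and 6 of \cite{topological_proofs}, so there is no in-paper proof to compare against.

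Your reduction is reasonable: passing to the smooth locus, replacing $O$ by a basic E-set $f(W(K))$, noting via Fact~\ref{fact:etale}(3) that $f(W)$ is Zariski-open (hence dense) in $V$, and thereby reducing to the statement that, for $K$ large and $W$ smooth irreducible of positive dimension with a $K$-point, $W(K)$ is Zariski dense in $W$ and $|W(K)|=|K|$. The Zariski-density half of this is a standard consequence of largeness and your sketch (existential closedness of $K$ in appropriate power/Laurent series rings applied to the formal local ring at a smooth point) is a legitimate route.

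The gap is in the cardinality claim $|W(K)|=|K|$, which is the genuinely nontrivial part of this Fact and is not established by your argument. In the base case $d=1$, you write that the cardinality equality ``follows from $W(K)\subseteq W(\kalg)$ together with $|W(\kalg)|=|K|$''; but this only gives the \emph{upper} bound $|W(K)|\le|K|$. The lower bound $|W(K)|\ge|K|$ is trivial when $K$ is countable (Zariski density gives infinitude), but when $K$ is uncountable it requires a real argument. In the higher-dimensional case you assert that existential closedness in $K(\!(t_1,\ldots,t_d)\!)$ ``immediately'' gives ``a parametrization producing $|K|$-many $K$-points'' in any nonempty Zariski open meeting the neighborhood; this does not follow. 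Existential closedness transfers the solvability of finite systems of polynomial conditions from the overfield to $K$, but it produces no canonical (let alone injective) assignment of witnesses. For each individual constraint you get \emph{some} $K$-point, and for finitely many you get finitely many distinct points, but there is no mechanism here producing $|K|$-many pairwise distinct $K$-points when $|K|>\aleph_0$. In short: Zariski density (even over a large field) of $W(K)$ in $W$ is strictly weaker than $|W(K)|=|K|$, and the step bridging the two is missing. You would need a separate argument for the cardinality (this is exactly what \cite{topological_proofs} supplies, presumably by a more topological device, e.g.\ exploiting that translates or scalings of E-sets are again E-sets), not a one-line consequence of the containment in $W(\kalg)$ or of existential closedness.
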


\subsection{Algebraic power series}
Fix an index set $I$. \label{sec:algpower}
Let $K[t_i]_{i \in I}$ be the polynomial ring over $K$ with variables indexed by $I$.
Let $\pringI$ be the ring of power series over $K$ with variables indexed by $I$, where each series depends on only finitely many variables.
This is the colimit of the rings $K[[t_{i_1},\ldots,t_{i_m}]]$ with $i_1,\ldots,i_m \in I$.
Hence $\pringI$ is a colimit of henselian local domains along local embeddings and so $\pringI$ is a henselian local domain.

\medskip
Let $\palgI$ be the relative algebraic closure of $K[t_i]_{i\in I}$ in $\pringI$, i.e., the ring of algebraic power series over $K$ with variables indexed by $I$.
When $I = \{1,\ldots,m\}$ we write $\palg$.
Again, $\palgI$ is the colimit of the rings $K[t_{i_1},\ldots,t_{i_m}]^\mathrm{h}$ for $i_1,\ldots,i_m\in I$.

\begin{fact}\label{fact:palg}
Fix an index set $I$.
Then $\palgI$ is the henselization of the localization of $K[t_i]_{i \in I}$ at the ideal generated by the $t_i$.
\end{fact}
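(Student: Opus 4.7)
The plan is a two-stage reduction: first handle finite $I$ via the classical identification of the henselization of a local ring essentially of finite type over a field with the algebraic closure of that ring inside its completion, then deduce the general case by a filtered-colimit argument.

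For the finite case, say $I = \{1, \ldots, m\}$, set $R = K[t_1, \ldots, t_m]_{(t_1, \ldots, t_m)}$, a regular noetherian local ring whose $\mm$-adic completion is $\pring$. A classical theorem (see e.g.\ the Stacks Project, in the chapter on henselization of local rings) identifies the henselization $R^{\mathrm{h}}$ with the subring of $\pring$ consisting of those elements that are algebraic over $R$, equivalently over $K[t_1, \ldots, t_m]$. By the paper's definition this subring is precisely $\palg$, so the finite case reduces to citing this classical identification.

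Next I would pass to arbitrary $I$ by a colimit argument. The polynomial ring $K[t_i]_{i \in I}$ is the filtered colimit of its subrings $K[t_i]_{i \in J}$ ranging over finite $J \subseteq I$. Since filtered colimits commute with localization, the local ring $S := K[t_i]_{i \in I, (t_i : i \in I)}$ is the filtered colimit of the local rings $R_J := K[t_i]_{i \in J, (t_i : i \in J)}$, and the transition maps in this directed system are local homomorphisms. The henselization functor is left adjoint to the forgetful functor from henselian local rings (with local maps) to local rings (with local maps), hence preserves filtered colimits. Combining with the finite case and with the definition of $\palgI$ given just before the statement as the colimit of the $K[t_i]^{\mathrm{h}}_{i \in J}$, I obtain
\[
S^{\mathrm{h}} \;=\; \varinjlim_{J \subseteq I \text{ finite}} R_J^{\mathrm{h}} \;=\; \varinjlim_{J} K[t_i]^{\mathrm{h}}_{i \in J} \;=\; \palgI,
\]
which is the claim.

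The main obstacle is the finite case: the classical identification of $R^{\mathrm{h}}$ with the algebraic closure of $R$ in its completion is a substantive theorem, ultimately resting on Artin approximation or an explicit construction of the henselization by standard \'etale neighborhoods. Once that is invoked, the reduction to arbitrary $I$ is routine, provided one keeps track of the locality of all the transition maps so that the adjunction argument for commuting henselization with the filtered colimit applies.
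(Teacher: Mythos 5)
Your proposal follows the same strategy as the paper: reduce to finite $I$ using the fact that henselization commutes with filtered colimits, then for finite $I$ invoke the identification of $K[t_1,\ldots,t_m]^{\mathrm h}$ with the algebraic closure of $K[t_1,\ldots,t_m]$ inside $\pring$. The paper makes the finite case slightly more explicit, splitting it into two inclusions (the henselization is an algebraic extension sitting inside the completion, and it is algebraically closed there by Nagata, Cor.~44.3), but this is exactly the content of the ``classical theorem'' you cite, so the arguments agree.
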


\begin{proof}
It suffices to treat the case when $I = \{1,\ldots,m\}$ as henselization commutes with limits.
Let $A$ be the henselization of $K[t_1,\ldots,t_m]_{(t_1,\ldots,t_m)}$.
Then $A$ is contained in $\palg$ as $A$ is an algebraic extension of $K[t_1,\ldots,t_m]$ contained in $\pring$, and $A$ is algebraically closed in $\pring$ by \cite[Cor.~44.3]{nagata-local}.
\end{proof}

Alternatively, the reader can \emph{define} $\palg$ as the henselization in Fact~\ref{fact:palg}.
We will never need the fact that $\palg$ is the ring of algebraic power series.

\subsection{Model theory} \label{sec:basicmodel}
We assume some familiarity with elementary notions of first order model theory.
We always consider fields to be first order structures in the language of rings.
Let $\cM,\cN$ be first order structures.
We write $\cM\equiv \cN$ when $\cM$ is elementarily equivalent to $\cN$ and $\cM \preceq \cN$ when $\cM$ is an elementary submodel of $\cN$.
Given $A \subseteq \cM$ we say that a subset $X$ of $\cM^m$ is $A$-definable if it is definable in $\cM$ with parameters from $A$, and we simply say that $X$ is definable if it is definable with parameters from $\cM$.
Let $\kappa$ range over infinite cardinals.
We let $\kappa^+$ be the cardinal successor of $\kappa$.
Recall that if $\kappa$ is uncountable then a structure $\cM$ in a countable language (e.g., a ring) is {\bf $\kappa$-saturated} if any family of at most $\kappa$ definable sets with the finite intersection property has non-empty intersection.
By \cite[Cor.~10.2.2]{Hodges} a structure in a countable language has a $\kappa$-saturated elementary extension for any $\kappa$.

\medskip
Fact~\ref{fact:lw} is a special case of the downwards \lowenheim theorem~\cite[Cor.~3.1.5]{Hodges}.

\begin{fact}\label{fact:lw}
Let $\Sa M$ be a first-order structure in a countable language.
Any subset of $\Sa M$ of cardinality $\kappa$ is contained in an elementary submodel of $\Sa M$ of cardinality $\kappa + \aleph_0$.
\end{fact}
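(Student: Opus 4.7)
The plan is to apply the Tarski-Vaught test via a standard closure argument, building the desired elementary submodel as the union of a countable chain of subsets of $\Sa M$, each obtained from the previous by adjoining witnesses to existential formulas and values of function symbols.

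First, I would set $A_0 = A$ (adjoining, if necessary, the interpretations of all constant symbols; since the language is countable there are only countably many, so this does not change the cardinality bound). Fix an enumeration of all formulas $\varphi(x, y_1, \dots, y_k)$ of the language; by countability of the language there are only $\aleph_0$ many. Inductively, given $A_n \subseteq M$, form $A_{n+1}$ as follows: for each formula $\varphi(x, \bar y)$ and each tuple $\bar a \in A_n^{|\bar y|}$ such that $\Sa M \models \exists x\, \varphi(x, \bar a)$, pick (via the axiom of choice) a witness $b_{\varphi, \bar a} \in M$ with $\Sa M \models \varphi(b_{\varphi, \bar a}, \bar a)$, and let $A_{n+1}$ be $A_n$ together with all such witnesses. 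Since there are $\aleph_0$ many formulas and at most $|A_n|^{<\omega} = |A_n| + \aleph_0$ many finite tuples from $A_n$, we get $|A_{n+1}| \le |A_n| + \aleph_0$. By induction $|A_n| \le \kappa + \aleph_0$ for every $n$.

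Next, I would let $N = \bigcup_{n < \omega} A_n$. The cardinality estimate gives $|N| \le \kappa + \aleph_0$, and conversely $|N| \ge |A| = \kappa$, so $|N| = \kappa + \aleph_0$. To see that $N$ is the underlying set of a substructure $\Sa N$ of $\Sa M$, note that for any function symbol $f$ of arity $k$ and any $\bar a \in N^k$, the tuple lies in some $A_n$, and the instance $\exists x\, (x = f(\bar y))$ of the formula $x = f(\bar y)$ forces $f^{\Sa M}(\bar a) \in A_{n+1} \subseteq N$; constants were handled at stage $0$. Hence $N$ is closed under all function symbols and $\Sa N$ is well-defined.

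Finally, I would verify the Tarski-Vaught criterion: for every formula $\varphi(x, \bar y)$ and every $\bar a \in N^{|\bar y|}$, if $\Sa M \models \exists x\, \varphi(x, \bar a)$ then there exists $b \in N$ with $\Sa M \models \varphi(b, \bar a)$. This holds directly by construction, since any such $\bar a$ lies in some $A_n$ and then $b_{\varphi, \bar a} \in A_{n+1} \subseteq N$. By the Tarski-Vaught test, $\Sa N \preceq \Sa M$, giving the required elementary submodel of cardinality $\kappa + \aleph_0$ containing $A$. The only subtlety is the bookkeeping around function and constant symbols to ensure $N$ is genuinely a substructure; countability of the language is used exactly to keep each $A_{n+1}$ within the cardinality bound $\kappa + \aleph_0$.
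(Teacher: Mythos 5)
The paper does not prove this fact; it simply cites \cite[Cor.~3.1.5]{Hodges} for the downward L\"owenheim--Skolem theorem. Your proof is the standard Skolem-hull argument via iterated witness-adjoining and the Tarski--Vaught test, which is correct (keeping in mind the paper's standing convention that $\kappa$ ranges over \emph{infinite} cardinals, so $\kappa + \aleph_0 = \kappa$ and the final cardinality computation goes through without any further case analysis); this is essentially the argument one finds in the cited source.
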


We often use Fact~\ref{fact:lw} without mention.

\section{The finite-closed topology}\label{section:FCT}

We introduce the finite-closed topology and obtain a characterization similar to the characterization of the \'etale-open topology at the end of Definition~\ref{def:E}.

\begin{definition}
An \textbf{F-subset} of $V(K)$ is a  $K$-rational image of a finite morphism $W \to V$.
When $V(K)$ is clear from context, we will refer to this as an \textbf{F-set}.
\end{definition}

The collection of F-sets is closed under a number of operations:

\begin{lemma}\label{fc-lemma}
Let $f \colon V \to W$ be a morphism.
\begin{enumerate}[leftmargin=*]
\item The preimage of an F-set under $V(K) \to W(K)$ is an F-set.
\item If $V \to W$ is finite then the image of an F-set under $V(K) \to W(K)$ is an F-set.
\item F-subsets of $V(K)$ are closed under finite intersections and
finite unions.
\end{enumerate}
\end{lemma}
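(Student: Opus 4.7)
The plan is to reduce all three claims to the closure properties of finite morphisms collected in Fact~\ref{fact:basic-f}, namely base change, composition, and formation of disjoint unions.

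For (1), I would take a finite morphism $g \colon U \to W$ whose $K$-rational image is the given F-set $A$, and form the fibre product $U \times_W V$. Its projection to $V$ is finite by Fact~\ref{fact:basic-f}(1), and its $K$-points are precisely the pairs $(u,v) \in U(K) \times V(K)$ with $g(u) = f(v)$; hence the $K$-rational image of the projection is exactly $f^{-1}(A)$. The only wrinkle is that the fibre product of reduced schemes need not be reduced, so to stay within $K$-varieties I would pass to the reduction $(U \times_W V)_{\red}$: this is a closed subscheme, hence the inclusion is finite by Fact~\ref{fact:basic-f}(4), and since $\Spec K$ is reduced the $K$-points are unchanged.

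Part (2) is immediate from Fact~\ref{fact:basic-f}(1): if $A$ is the $K$-rational image of a finite morphism $g \colon U \to V$ and $f \colon V \to W$ is finite, then $f \circ g \colon U \to W$ is finite with $K$-rational image $f(A)$.

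For (3), I would deduce closure under finite intersections by combining (1) and (2). Given two F-sets $A, B \subseteq W(K)$, pick a finite $g \colon U \to W$ with $K$-rational image $B$; by (1), $g^{-1}(A)$ is an F-subset of $U(K)$, and by (2) its image under $g$ is an F-subset of $W(K)$ equal to $A \cap B$. Closure under finite unions follows directly from Fact~\ref{fact:basic-f}(3): if $A = f(U(K))$ and $B = g(V(K))$ with $f, g$ finite, the induced morphism from the disjoint union $U \sqcup V$ to $W$ is finite with $K$-rational image $A \cup B$. The only step that requires any genuine thought is the reducedness bookkeeping in (1); otherwise this is a straightforward assembly of Fact~\ref{fact:basic-f}, and I do not anticipate a real obstacle.
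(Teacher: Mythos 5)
Your proposal is correct and takes essentially the same approach as the paper: pull back along $f$ via a fibre product for (1), compose finite morphisms for (2), and use fibre products over $W$ and disjoint unions for (3). The only two differences are cosmetic — you derive closure under intersections from (1) and (2) whereas the paper proves it directly with a fibre product over $W$, and you explicitly pass to the reduction of the fibre product to stay within the paper's convention that $K$-varieties are reduced (a small point the paper's own proof glosses over, but one that your fix handles correctly, since the reduction morphism is a closed immersion, hence finite, and $K$-points of a scheme agree with $K$-points of its reduction).
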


\begin{proof}
(2) holds as finite morphisms are closed under composition.
We prove (1).
Fix an F-subset $X$ of $V(K)$.
Let $\phi \colon V' \to V$ be a finite morphism such that $\phi(V'(K)) = X$.
The pullback
$\phi^* \colon V' \times_V W \to W$ of $\phi$ is  finite by Fact~\ref{fact:basic-f}, and the pullback square
\[ \xymatrix{ (V' \times_V W)(K) \ar[r] \ar[d]_{\phi^*} & V'(K) \ar[d]^\phi \\ W(K) \ar[r]_f & V(K)}\]
shows that the $K$-rational image of $\phi^*$ agrees with the preimage of $X$ under $f$.

\medskip
Let $X_1,X_2$ be F-subsets of $V(K)$.
Fix finite $f_i\colon W_i\to V$ with $f_i(W_i(K))=X_i$ for $i = 1,2$.
Let $f_\times \colon W_1\times_V W_2 \to V$ and $f_\sqcup\colon W_1\sqcup W_2 \to V$ be the natural maps.
By Fact~\ref{fact:basic-f} $f_\times, f_\sqcup$ are finite.
The $K$-rational images of $f_\times$, $f_\sqcup$ are $X_1\cap X_2$, $X_1\cup X_2$, respectively.
\end{proof}

Lemma~\ref{fc-lemma}(3) shows that F-subsets of $V(K)$ form a closed basis for a topology on $V(K)$.

\begin{definition}
The \textbf{finite-closed topology} on $V(K)$ is the topology with closed basis the collection of F-subsets of $V(K)$.
We let $\cF_K$ denote the collection of finite-closed topologies on $V(K)$ with $V$ ranging over $K$-varieties.
We call $\cF_K$ the finite-closed topology over $K$.
\end{definition}

\medskip
Below is the main result of this section.

\begin{theorem} \label{215}
The finite-closed topology is a system of topologies over $K$, and can be characterized as the coarsest system of topologies such that $V(K) \to W(K)$ is a closed map for every finite morphism $V \to W$.
\end{theorem}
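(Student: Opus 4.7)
I will verify the two assertions in turn, beginning with the three axioms of Definition~\ref{sys-def}. Axiom~(1), continuity under any morphism $V \to W$, is immediate from Lemma~\ref{fc-lemma}(1): preimages of F-sets are F-sets, hence preimages of closed sets (arbitrary intersections of F-sets) are closed. For axiom~(3), a closed immersion $V \hookrightarrow W$ is finite by Fact~\ref{fact:basic-f}(4), so $V(K) \to W(K)$ is injective with F-set image. Moreover, any F-set $X = f(V'(K))$ in $V(K)$ is the image of the composition $V' \to V \to W$, which is finite by Fact~\ref{fact:basic-f}(1); thus $X$ is already an F-set of $W(K)$, giving the agreement of the subspace topology with the $\cF_K$-topology on $V(K)$.

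Axiom~(2), for an open immersion $V \hookrightarrow W$, is the principal difficulty. The openness of $V(K)$ in $W(K)$ follows from axiom~(3), because the complement $(W \setminus V)(K)$ is an F-set. For the $\cF_K$-topology on $V(K)$ to agree with the subspace topology, I need every F-set $X = f(V'(K)) \subseteq V(K)$ to be expressible as $V(K) \cap Y$ for some $Y$ closed in $W(K)$. Applying Fact~\ref{fact:zmt} to the quasi-finite composition $V' \to V \hookrightarrow W$ yields a factorization $V' \hookrightarrow V^* \to W$ with $V^* \to W$ finite; the image $Y$ of $V^*(K)$ in $W(K)$ is then an F-set, and $X \subseteq Y \cap V(K)$. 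The key step---and the main obstacle---is arranging the factorization so that $V^* \times_W V = V'$, which delivers the reverse inclusion. This is achieved by a standard ``clearing denominators'' construction: affine-locally, with $W = \Spec C$, $V = \Spec C_f$, and $V' = \Spec C_f[b_1,\ldots,b_n]$ where each $b_i$ is integral over $C_f$, rescaling as $\tilde b_i := f^{k_i} b_i$ for sufficiently large $k_i$ produces monic integral equations over $C$, so $\tilde B := C[\tilde b_1,\ldots,\tilde b_n]$ is a finite $C$-algebra with $\tilde B \otimes_C C_f = C_f[b_1,\ldots,b_n]$. A routine globalization by affine cover and gluing yields the desired $V^*$.

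For the characterization as the coarsest system, I first verify that $\cF_K$ itself sends finite morphisms to closed maps. Given finite $f\colon V \to W$ and a $\cF_K$-closed $D = \bigcap_\alpha X_\alpha$ where $X_\alpha$ ranges over F-sets containing $D$ (a downward-directed family by Lemma~\ref{fc-lemma}(3)), each $f(X_\alpha)$ is an F-set of $W(K)$ by Lemma~\ref{fc-lemma}(2), so $\bigcap_\alpha f(X_\alpha)$ is $\cF_K$-closed. The containment $f(D) \subseteq \bigcap_\alpha f(X_\alpha)$ is immediate; for the reverse, if $w \in \bigcap_\alpha f(X_\alpha)$ then $f^{-1}(w) \cap X_\alpha \neq \emptyset$ for every $\alpha$, and since $f^{-1}(w)$ is finite while the $X_\alpha$'s are downward-directed, a pigeonhole argument yields a single point $p \in f^{-1}(w) \cap D$, whence $w \in f(D)$. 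Minimality is then immediate: for any system of topologies $\cT$ in which finite morphisms are closed maps, every F-set $X = f(V'(K))$ is the image under the closed map $f$ of the $\cT$-closed space $V'(K)$, hence $\cT$-closed, so $\cT$ refines $\cF_K$.
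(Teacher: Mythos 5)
Your overall structure parallels the paper's proof exactly: axiom (1) from Lemma~\ref{fc-lemma}(1), axiom (3) via finiteness of closed immersions, axiom (2) via extending finite morphisms over an open subvariety, and the characterization by first checking finite morphisms are $\cF_K$-closed maps and then a quick minimality argument. Your downward-directed-intersection / pigeonhole argument that finite morphisms are closed maps is a valid reformulation of the paper's Lemma~\ref{closed-lemma}, and the rest of what you wrote for axioms (1), (3), and the characterization is correct.

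The gap is in your proof of the extension lemma underlying axiom (2). You correctly identify that after Fact~\ref{fact:zmt} gives $V' \hookrightarrow V^* \to W$, the crux is to show $V^* \times_W V = V'$. But your proposed route---affine-locally clear denominators, then ``a routine globalization by affine cover and gluing''---does not close the gap. The globalization is not routine: $V$ is an arbitrary open subvariety (not a basic open $D(f)$), $V'$ need not be affine, the local finite $C$-subalgebras $\tilde B_i$ live in different rings of sections with no canonical common home, and the exponents $k_i$ can't be chosen compatibly across the cover. This is precisely the difficulty that the proof of Zariski's main theorem is designed to overcome, so ``redoing'' it locally and gluing is essentially as hard as the fact you already cited. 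The paper's Lemma~\ref{extend-lemma} sidesteps all of this by actually using the ``Zariski dense image'' clause of Fact~\ref{fact:zmt} (which you quoted but didn't exploit): letting $X = V \times_W V^*$, the canonical map $V' \to X$ is simultaneously an open immersion and a finite morphism by the cancellation laws of Fact~\ref{enough-of-this}, hence a clopen embedding; the complementary clopen maps to an open in $V^*$ disjoint from the dense image of $V'$, so it is empty, giving $V' = X$. Replace your clearing-denominators paragraph with this cancellation/diagram-chase argument and the proof is complete.
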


Theorem~\ref{215} follows from Lemmas~\ref{lem:system-1}, \ref{lem:system-2}, and \ref{lem:system-4} in the remaining part of the section.

\medskip
Lemma~\ref{lem:system-1} below verifies  (1) in Definition~\ref{sys-def} of a system of topologies. It follows immediately from Lemma~\ref{fc-lemma}(1).

\begin{lemma}\label{lem:system-1}
If $V \to W$ is a morphism, then  $V(K) \to W(K)$ is continuous with respect to the finite-closed topology.
\end{lemma}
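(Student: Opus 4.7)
The plan is to invoke Lemma~\ref{fc-lemma}(1) directly and deduce continuity via a standard general-topology reduction. Recall that a map $f \colon X \to Y$ between topological spaces is continuous as soon as the preimage $f^{-1}(Z)$ is closed in $X$ for every $Z$ in a fixed closed basis (or even subbasis) of $Y$. By the definition of the finite-closed topology, the F-subsets of $W(K)$ form such a closed basis. So the continuity of the induced map $f \colon V(K) \to W(K)$ reduces to the single assertion that if $Y \subseteq W(K)$ is an F-set, then $f^{-1}(Y)$ is an F-subset of $V(K)$, hence in particular closed.

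This assertion is exactly Lemma~\ref{fc-lemma}(1), which was already proved via the pullback square displayed in its proof: writing $Y = \phi(V'(K))$ for a finite morphism $\phi \colon V' \to W$, the preimage $f^{-1}(Y)$ is the $K$-rational image of the base change $V' \times_W V \to V$, and this morphism is finite by stability of finite morphisms under base change (Fact~\ref{fact:basic-f}). Thus $f^{-1}(Y)$ is an F-subset of $V(K)$, and the passage from basic closed sets to arbitrary closed sets is automatic, since closed sets are intersections of F-sets and $f^{-1}$ commutes with intersections.

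I do not anticipate any genuine obstacle here; the whole statement is formal once Lemma~\ref{fc-lemma}(1) is available, which is why the authors flag it as immediate. The only conceptual point worth noting in the write-up is the basis-to-subbasis reduction, so that one does not have to argue separately about arbitrary intersections of F-sets.
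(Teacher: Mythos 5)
Your proof is correct and matches the paper's approach exactly: the paper also deduces Lemma~\ref{lem:system-1} immediately from Lemma~\ref{fc-lemma}(1), using the standard fact that continuity can be checked on a closed (sub)basis. Nothing to add.
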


We recall a criterion for a map to be closed.

\begin{lemma}\label{closed-lemma}
Let $f \colon X \to Y$ be a continuous map of topological spaces and $\mathcal{B}$ be a closed basis for $Y$.
Suppose that:
\begin{itemize}
\item $f$ has finite fibers.
\item $f(C_1 \cap \cdots \cap C_n)$ is closed for any $C_1, \ldots, C_n\in\mathcal{B}$.
\end{itemize}
Then $f$ is a closed map.
\end{lemma}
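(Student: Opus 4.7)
The plan is to prove that $f(C)$ is closed in $Y$ for every closed $C\subseteq X$ by a direct pointwise argument on the complement. (I read the statement as requiring $\mathcal{B}$ to be a closed basis for $X$; otherwise the expression $f(C_1\cap\cdots\cap C_n)$ does not type-check.) The finite-fiber hypothesis is exactly what lets us replace a possibly infinite basis representation of $C$ by a finite sub-intersection that still separates a given fiber from $C$.

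Fix a closed $C \subseteq X$ and a point $y \in Y\setminus f(C)$; the goal is to exhibit an open neighborhood of $y$ disjoint from $f(C)$. Since $\mathcal{B}$ is a closed basis for $X$, write $C = \bigcap_{i \in I} B_i$ with each $B_i \in \mathcal{B}$. The fiber $f^{-1}(y) = \{x_1,\ldots,x_n\}$ is finite by hypothesis, and each $x_k$ fails to lie in $C$, so for each $k$ we may choose an index $i_k \in I$ with $x_k \notin B_{i_k}$. Setting $D = B_{i_1}\cap\cdots\cap B_{i_n}$, we then have $C \subseteq D$ (whence $f(C) \subseteq f(D)$) while $D \cap f^{-1}(y) = \emptyset$ (whence $y \notin f(D)$).

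By the second hypothesis, $f(D)$ is closed in $Y$, so $Y \setminus f(D)$ is an open neighborhood of $y$ disjoint from $f(C)$. Thus $y \notin \overline{f(C)}$, and since $y$ was an arbitrary point of $Y\setminus f(C)$, we conclude $f(C) = \overline{f(C)}$. There is no real obstacle here: the whole argument is the observation that finiteness of the fibers turns a possibly transfinite intersection presenting $C$ into the finite intersection to which the second hypothesis applies directly.
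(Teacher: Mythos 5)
Your proof is correct and takes essentially the same route as the paper's: for $y \notin f(C)$, use finiteness of $f^{-1}(y)$ to pick finitely many basic closed sets containing $C$ that separate each point of the fiber from $C$, intersect them, and apply the hypothesis to see that the image of that finite intersection is a closed set avoiding $y$ but containing $f(C)$. You also correctly flagged the typo in the statement: $\mathcal{B}$ must be a closed basis for $X$, not $Y$, and this is exactly how the paper's own proof uses it.
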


\begin{proof}
Let $C$ be a closed subset of $X$.
We show that $f(C)$ is closed.
Fix $b \in Y\setminus f(C)$.
We show that $b$ is not in the closure of $f(C)$.
Let $f^{-1}(b) = \{a_1,\ldots,a_n\}$.
Then $a_i\notin C$ for $i=1,\ldots,n$, so for each $i$ there
is $C_i\in\mathcal{B}$ such that $C \subseteq C_i$ and $a_i
\notin C_i$.
Let $C^* = C_1 \cap \cdots \cap C_n$ and note that $f(C^*)$ contains $f(C)$.
By assumption,
$f(C^*)$ is closed.
By choice of the $C_i$, no $a_i$ is in $C^*$.
Hence $b \notin f(C^*)$, so $b$ is not in the closure of $f(C)$.
\end{proof}

The second part of Lemma~\ref{lem:system-2} below verifies (3) of Definition~\ref{sys-def}. In any system of topologies where finite morphisms induce closed maps, the F-sets must be closed. Thus, assuming we have shown the finite-closed topology is a system of topologies, the first part of Lemma~\ref{lem:system-2} will show the remaining part of Theorem~\ref{215}.

\begin{lemma}\label{lem:system-2}
Let $f \colon V \to W$ be a finite morphism.
Then $V(K) \to W(K)$ is a closed map with respect to the finite-closed topology.
Hence if $V \to W$ is a closed immersion, then $V(K) \to
W(K)$ is a closed embedding in the $\cF_K$-topology.
\end{lemma}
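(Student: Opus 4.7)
The plan is to invoke the closed-map criterion of Lemma~\ref{closed-lemma} for the induced map $V(K) \to W(K)$, taking the F-subsets of $V(K)$ as the closed basis (they form one by the very definition of the $\cF_K$-topology). Continuity is immediate from Lemma~\ref{lem:system-1}, which applies to any morphism. For the finite-fiber hypothesis, Fact~\ref{fact:basic-f}(\ref{afp}) says that any finite morphism is in particular quasi-finite, so every scheme-theoretic fiber of $f$ is a finite set of points; in particular, for each $w \in W(K)$ the set of $K$-points of $V$ lying above $w$ is contained in this finite fiber and is therefore finite.

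It remains to verify that $f(C_1 \cap \cdots \cap C_n)$ is closed in the $\cF_K$-topology on $W(K)$ whenever $C_1,\ldots,C_n$ are F-subsets of $V(K)$. Here both closure properties of F-sets established in Lemma~\ref{fc-lemma} enter: by Lemma~\ref{fc-lemma}(3) the intersection $C_1\cap\cdots\cap C_n$ is again an F-subset of $V(K)$, and then since $f$ itself is finite, Lemma~\ref{fc-lemma}(2) implies that its image $f(C_1\cap\cdots\cap C_n)$ is an F-subset of $W(K)$, hence closed. All hypotheses of Lemma~\ref{closed-lemma} are now in place, and the conclusion is that $V(K) \to W(K)$ is a closed map.

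For the second assertion, Fact~\ref{fact:basic-f}(4) says that a closed immersion is finite, so the first part already gives that $V(K) \to W(K)$ is continuous and closed. A closed immersion is also injective on $R$-points for any $K$-algebra $R$, so in particular on $K$-points. An injective continuous closed map automatically restricts to a homeomorphism onto its image and has closed image, so we obtain a closed embedding of $V(K)$ into $W(K)$ in the $\cF_K$-topology. I do not anticipate any genuine obstacle in carrying out this argument: the whole proof is a formal combination of the closure properties of F-sets recorded in Lemma~\ref{fc-lemma} with the general topological criterion of Lemma~\ref{closed-lemma}, with the one substantive input being the use of quasi-finiteness of finite morphisms to supply the finite-fiber hypothesis.
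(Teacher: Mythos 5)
Your proof is correct and takes essentially the same approach as the paper: apply Lemma~\ref{closed-lemma} using continuity from Lemma~\ref{lem:system-1}, finite fibers from Fact~\ref{fact:basic-f}, and the closure properties of F-sets from Lemma~\ref{fc-lemma}(2),(3), then deduce the closed-embedding statement from the fact that closed immersions are finite and injective continuous closed maps are closed embeddings.
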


\begin{proof}
The second claim follows from the first as closed immersions are finite, and injective continuous closed maps are closed embeddings.
We prove the first claim.
Finite maps have finite fibers by Fact~\ref{fact:basic-f}.
We apply Lemma~\ref{closed-lemma}.
Let $C_1, \ldots, C_n$ be F-subsets of $V(K)$. 
It suffices to show that $f(C_1 \cap \cdots \cap
C_n)$ is $\cF_K$-closed in $W(K)$.
In fact, $f(C_1 \cap \cdots \cap C_n)$ is an F-set by parts (2) and (3) of Lemma~\ref{fc-lemma}.
\end{proof}

Fact~\ref{enough-of-this} is essentially trivial from the definitions, using in part (2) the analogous statement about rings\footnote{If $A \to B \to C$ are ring homomorphisms such that $B$ and $C$ are finitely generated $A$-modules, then $C$ is a finitely generated $B$-module.
Indeed, any set which generates $C$ as an $A$-module also generates it as a $B$-module.}.
Alternatively, (2) can be proven from \cite[Lemma~035D]{stacks-project}.

\begin{fact} \label{enough-of-this} 
Let $f \colon W \to W'$ and $g \colon W' \to W''$ be morphisms.
\begin{enumerate}
\item If $g \circ f$ and $g$ are open immersions, then $f$ is an open immersion.
\item If $g \circ f$ and $g$ are finite morphisms, then $f$ is finite.
\end{enumerate}
\end{fact}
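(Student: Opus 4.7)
The plan is to prove both parts by a single diagonal/graph trick, which reduces each claim to the preservation of the relevant property (open immersion in (1), finite in (2)) under base change and composition; both of these are already recorded in the paper, at least for the property ``finite''.

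For part (1), I would use that an open immersion is a monomorphism of schemes. Because $g$ is a monomorphism, the canonical graph morphism $\Gamma_f = (\mathrm{id}_W, f) \colon W \to W \times_{W''} W'$ is an isomorphism: given $T$-valued points $a \colon T \to W$ and $b \colon T \to W'$ with $g \circ f \circ a = g \circ b$, the monomorphism property of $g$ forces $b = f \circ a$, so any $T$-point of $W \times_{W''} W'$ is uniquely determined by its first coordinate. Under this identification, $f$ is nothing but the second projection $\pi_2 \colon W \times_{W''} W' \to W'$, which is the base change of $g \circ f$ along $g$. Since open immersions are stable under base change (a standard fact), $\pi_2$, and hence $f$, is an open immersion.

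For part (2), I would apply the same graph morphism $\Gamma_f$, but invoke that $g$ is separated (finite morphisms are proper, and proper morphisms are separated). The graph of a morphism into a separated target is a closed immersion, so $\Gamma_f$ is a closed immersion, and hence finite by Fact~\ref{fact:basic-f}(4). The projection $\pi_2$ is the base change of the finite morphism $g \circ f$ along $g$ and is therefore finite by Fact~\ref{fact:basic-f}(1). Thus $f = \pi_2 \circ \Gamma_f$ is a composition of finite morphisms, hence finite, again by Fact~\ref{fact:basic-f}(1).

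I do not expect any real obstacle: the only non-bookkeeping input is the standard fact that the graph of a morphism into a separated target is a closed immersion. If one prefers the ring-theoretic route sketched in the footnote, part (2) reduces in the affine case to the trivial observation that an $A$-module-generating set for $C$ also generates $C$ over $B$ whenever $A \to B \to C$ is given; the only extra work is then to verify that $f$ is affine, which is marginally more labor than the graph argument but avoids appealing to separatedness.
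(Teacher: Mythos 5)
Your proof is correct. What you are doing is unwinding the standard ``cancellation'' argument (Stacks, Lemma~035D) — the very reference the paper cites as an \emph{alternative} proof of part (2) — and then applying it uniformly to both statements, using that $g$ is a monomorphism in (1) and merely separated in (2). The paper's primary suggestion is different: for (1) it treats the claim as an immediate consequence of the definition of open immersion (if $g$ and $g\circ f$ identify $W'$ and $W$ with open subschemes $U'\supseteq U$ of $W''$, then $f$ is the inclusion $U\hookrightarrow U'$), and for (2) it reduces to the affine case and uses the elementary ring-theoretic footnote that a generating set for $C$ over $A$ also generates $C$ over $B$. You are right that the ring-theoretic route needs the small extra step of verifying $f$ is affine; this is done by covering $W''$ by affine opens $U$ and noting both $g^{-1}(U)$ and $(g\circ f)^{-1}(U)$ are affine because $g$ and $g\circ f$ are affine, which gives a cover of $W'$ satisfying the paper's local characterization of finiteness. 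The trade-off is exactly as you say: your graph argument is more uniform and conceptual (one mechanism for both parts), the paper's is more hands-on, leaning on no separatedness or diagonal machinery for (1) and on a one-line module-theoretic observation for (2).
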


The proof of Lemma~\ref{open-lemma} below is an elementary topological exercise which we leave to the reader.

\begin{lemma}\label{open-lemma}
Let $X,Y$ be topological spaces, $\mathcal{B}$ be a closed basis for $X$, and $f \colon X \to Y$ be continuous.
Suppose that we have the following.
\begin{itemize}
\item $f$ is injective
\item $f(X)$ is open in $Y$.
\item For every  $C \in\mathcal{B}$, there is a closed
set $C^* \subseteq Y$ such that $C = f^{-1}(C^*)$.
\end{itemize}
Then $f$ is an open embedding.
\end{lemma}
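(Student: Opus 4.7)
The plan is to show that $f \colon X \to f(X)$ is a homeomorphism; combined with the hypothesis that $f(X)$ is open in $Y$, this gives the open embedding. Continuity and injectivity are given, so the only remaining task is to prove that $f \colon X \to f(X)$ is a closed map, which then forces the set-theoretic inverse $f(X) \to X$ to be continuous.

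Let $C \subseteq X$ be an arbitrary closed set; the goal is to produce a closed subset of $Y$ whose intersection with $f(X)$ is exactly $f(C)$. Since $\mathcal{B}$ is a closed basis, I can write
\[
C \;=\; \bigcap_{\alpha} \bigcup_{j=1}^{n_\alpha} C_{\alpha,j}
\]
with each $C_{\alpha,j} \in \mathcal{B}$. For each basis element $C_{\alpha,j}$, the third hypothesis supplies a closed $C^*_{\alpha,j} \subseteq Y$ with $f^{-1}(C^*_{\alpha,j}) = C_{\alpha,j}$. I then assemble these witnesses into
\[
D^* \;=\; \bigcap_{\alpha} \bigcup_{j=1}^{n_\alpha} C^*_{\alpha,j},
\]
a closed subset of $Y$. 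Because preimage commutes with arbitrary unions and intersections, $f^{-1}(D^*) = C$.

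The crux is then the identity $f(C) = D^* \cap f(X)$. The inclusion $\subseteq$ is automatic, since $C \subseteq f^{-1}(D^*)$ forces $f(C) \subseteq D^*$, and of course $f(C) \subseteq f(X)$. For $\supseteq$, any $y \in D^* \cap f(X)$ has the form $y = f(x)$; then $f(x) \in D^*$ yields $x \in f^{-1}(D^*) = C$, so $y = f(x) \in f(C)$. Thus $f(C)$ equals the intersection of a closed subset of $Y$ with $f(X)$, so $f(C)$ is closed in the subspace topology on $f(X)$, as required.

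There is no genuine obstacle here; the argument is a careful bookkeeping exercise that pushes the basis-level witnesses $C^*_{\alpha,j}$ through the lattice operations to cover every closed subset of $X$. Note that injectivity plays no role in the closed-map step itself; it is used only to make the phrase \emph{open embedding} meaningful by ensuring $f$ is a bijection onto $f(X)$.
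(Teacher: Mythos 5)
Your proof is correct. The paper explicitly leaves this lemma as an exercise to the reader, so there is no paper proof to compare against; your argument (reduce to showing $f\colon X\to f(X)$ is a closed map, then push basis-level witnesses $C^*_{\alpha,j}$ through unions and intersections to handle an arbitrary closed set) is the natural one and is carried out cleanly. One small remark: with the usual convention that a closed basis is a collection $\mathcal{B}$ of closed sets such that every closed set is an \emph{intersection} of members of $\mathcal{B}$ (no finite unions needed), your decomposition $C=\bigcap_\alpha\bigcup_j C_{\alpha,j}$ is slightly more general than necessary, but it is harmless and also covers the subbasis case.
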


Lemma~\ref{extend-lemma} is the main fact from algebraic geometry used in Lemma~\ref{lem:system-4} below.

\begin{lemma}\label{extend-lemma}
Let $V$ be an open subvariety of a $K$-variety $V^*$ and $\phi\colon W \to V$ be a finite morphism.
Then there is a finite morphism $\phi^* \colon W^* \to V^*$ whose pullback to $V$ is $\phi$.
\end{lemma}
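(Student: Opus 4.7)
The plan is to reduce to Zariski's Main Theorem (Fact~\ref{fact:zmt}) applied to the composition of $\phi$ with the open immersion $V\hookrightarrow V^*$. First I observe that $\phi$ factors as $W \xrightarrow{\phi} V \hookrightarrow V^*$, where the composition $\psi\colon W \to V^*$ is quasi-finite (finite morphisms are quasi-finite, open immersions have singleton fibers, and quasi-finiteness is preserved under composition). Apply Fact~\ref{fact:zmt} to $\psi$ to obtain a factorization $W \xrightarrow{j} W^* \xrightarrow{\phi^*} V^*$ with $j$ an open immersion whose image is Zariski-dense in $W^*$ and $\phi^*$ finite. This $\phi^*$ is my candidate extension.

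The real content is then to show that the pullback of $\phi^*$ along $V\hookrightarrow V^*$ recovers $\phi\colon W\to V$. Set
\[
W' \;=\; W^*\times_{V^*} V \;=\; (\phi^*)^{-1}(V),
\]
an open subvariety of $W^*$ mapping finitely to $V$ (finiteness is preserved under base change by Fact~\ref{fact:basic-f}). Since $\psi = \phi^* \circ j$ factors through $V$, the map $j$ lands in $W'$, yielding a morphism $\tilde j\colon W\to W'$ through which $\phi$ factors as $\phi = (W'\to V)\circ \tilde j$. I now argue that $\tilde j$ is an isomorphism.

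On the one hand, $\tilde j$ is an open immersion: both $j\colon W\to W^*$ and the inclusion $W'\hookrightarrow W^*$ are open immersions, so by Fact~\ref{enough-of-this}(1) their ``quotient'' $\tilde j$ is an open immersion. On the other hand, $\tilde j$ is finite: $\phi = (W'\to V)\circ \tilde j$ and $W'\to V$ are both finite morphisms, so Fact~\ref{enough-of-this}(2) applies. Thus $\tilde j(W)$ is a clopen subset of $W'$ (finite morphisms are proper, hence closed maps, by Fact~\ref{fact:basic-f}). Finally, $j(W)$ is Zariski-dense in $W^*$, and $W'\subseteq W^*$ is open containing $j(W)$, so $\tilde j(W)$ is dense in $W'$. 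A clopen dense subset equals the ambient space, so $\tilde j(W) = W'$, and since $\tilde j$ is an open immersion it is an isomorphism onto $W'$.

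The main obstacle I anticipate is exactly this last step: merely extending $\phi$ to some finite morphism over $V^*$ is easy from Zariski's Main Theorem, but one needs to check that the extension does not acquire extra points over $V$. The density of $j(W)$ in $W^*$ coming out of Fact~\ref{fact:zmt} is what rules this out, once combined with the clopen conclusion from Fact~\ref{enough-of-this}. Modulo that density-plus-clopen argument, everything else is formal manipulation of open immersions, finite morphisms, and pullback squares.
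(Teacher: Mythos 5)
Your proof is correct and follows essentially the same route as the paper: apply Zariski's main theorem to the composite $W\to V^*$, form the pullback $W'=W^*\times_{V^*}V$, use Fact~\ref{enough-of-this} twice to see $W\to W'$ is a finite open immersion hence clopen, and then invoke density of $j(W)$ in $W^*$ to conclude $W\to W'$ is an isomorphism. The paper phrases the final step via the complementary clopen piece $D$ being forced empty; you phrase it as a dense clopen set filling $W'$ — the same argument in slightly different words.
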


\begin{proof}
Let $W\to V^*$ be the composition of $W\to V$ with $V\to V^*$.
Then $W \to V^*$ has finite fibers and is hence a quasi-finite morphism.
Applying Zariski's main theorem (Fact~\ref{fact:zmt}), we factor $W\to V^*$ as $W \to W^* \to V^*$ where $W \to W^*$ is an open embedding with Zariski dense image and $W^* \to V^*$ is a finite morphism.
\[\xymatrix{W \ar[r] \ar[d] & W^* \ar[d] \\ V \ar[r] & V^*}\]
Let $X$ be the pullback $V \times_{V^*} W^*$.
We get the following diagram.
\begin{equation*}
\xymatrix{W \ar[drr]^{\mathrm{open}} \ar[dr] \ar[ddr]_{\mathrm{finite}} & & \\ & X \ar[r]_{\mathrm{open}} \ar[d]^{\mathrm{finite}} & W^* \ar[d]^{\mathrm{finite}} \\ & V \ar[r]_{\mathrm{open}} & V^*}
\end{equation*}
Here the arrows marked with ``open'' are open immersions, and the arrows marked with ``finite'' are finite morphisms.
The morphism $W \to X$ must be a finite open immersion by Fact~\ref{enough-of-this}. 
Hence $W\to X$ is a clopen embedding as the image of a finite morphism is closed.
Let $D \to X$ be the complementary clopen embedding.
The images of $D$ and $W$ in $W^*$ are disjoint open subvarieties, but the image of $W$ is Zariski dense in $W^*$.
Therefore $D = \varnothing$, hence $W \to X$ is an isomorphism, and so $W$ is the pullback of $W^*$ along $V \to V^*$.
\end{proof}

Lemma~\ref{lem:system-4} below verifies (2) of Definition~\ref{sys-def} for the finite-closed topology.

\begin{lemma}\label{lem:system-4}
Suppose that $V\to W$ is an open immersion.
Then $V(K) \to W(K)$ is an open immersion in the $\cF_K$-topology.
\end{lemma}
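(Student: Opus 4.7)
The plan is to apply Lemma~\ref{open-lemma} to the map $f \colon V(K) \to W(K)$, with $\mathcal{B}$ taken to be the collection of F-subsets of $V(K)$, which is a closed basis for the $\cF_K$-topology on $V(K)$. Continuity of $f$ is already granted by Lemma~\ref{lem:system-1}. So I need to verify the three bulleted hypotheses of Lemma~\ref{open-lemma}.

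First, injectivity of $f$ is immediate since $V \to W$ is an open immersion, hence in particular injective on $K$-points. Second, to see that $f(V(K))$ is open in $W(K)$, equip the Zariski-closed complement $W \setminus V$ with its reduced induced subscheme structure; the inclusion $W \setminus V \hookrightarrow W$ is then a closed immersion, and therefore finite by Fact~\ref{fact:basic-f}(4). Its $K$-rational image $(W \setminus V)(K)$ is then an F-subset of $W(K)$, hence $\cF_K$-closed, and $f(V(K))$ is precisely its complement in $W(K)$.

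The main step, and the one that genuinely uses the work of this section, is the third condition: for each F-subset $C$ of $V(K)$, I must produce a closed set $C^* \subseteq W(K)$ with $C = f^{-1}(C^*)$. Write $C = \phi(W'(K))$ for some finite morphism $\phi \colon W' \to V$. I then invoke Lemma~\ref{extend-lemma} to obtain a finite morphism $\phi^* \colon W'^{*} \to W$ whose pullback along the open immersion $V \to W$ is $\phi$. Set $C^* := \phi^*(W'^{*}(K))$, which is an F-subset of $W(K)$ and hence $\cF_K$-closed. The equality $C = f^{-1}(C^*)$ follows from the pullback square
\[
\xymatrix{W'(K) \ar[r] \ar[d]_{\phi} & W'^{*}(K) \ar[d]^{\phi^*} \\ V(K) \ar[r]_{f} & W(K),}
\]
since for any $x \in V(K)$ one has $x \in C$ iff $x$ lifts along $\phi$ iff $f(x)$ lifts along $\phi^*$ together with the canonical lift into $V$ over $W$, which is equivalent to $f(x) \in C^*$ (the other direction uses that the pullback of $\phi^*$ to $V$ is exactly $\phi$, so any $K$-point of $W'^*$ mapping into $V(K)$ comes from a $K$-point of $W'$).

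I expect no serious obstacle: the genuine geometric content has been isolated in Lemma~\ref{extend-lemma}, which upgrades a finite morphism over the open $V$ to a finite morphism over $W$. The remainder is a formal application of Lemma~\ref{open-lemma} together with the standard pullback compatibility of $K$-rational images.
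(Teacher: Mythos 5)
Your proof is correct and follows essentially the same route as the paper's: apply Lemma~\ref{open-lemma} with the closed basis of F-subsets of $V(K)$, use the complementary closed subvariety to show the image is $\cF_K$-open, and invoke Lemma~\ref{extend-lemma} to verify that every F-subset of $V(K)$ is the preimage of an F-subset of $W(K)$. You merely spell out the pullback-square bookkeeping that the paper leaves implicit.
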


\begin{proof}
Let $C \to W$ be the complementary closed subvariety.
The image of $C(K) \to W(K)$ is $\cF_K$-closed by Lemma~\ref{lem:system-2}, so the image of $V(K) \to W(K)$ is $\cF_K$-open.
By Lemma~\ref{lem:system-1} $V(K) \to W(K)$ is $\cF_K$-continuous.
Hence by Lemma~\ref{open-lemma} it suffices to show that every F-subset of $V(K)$ is the restriction to $V(K)$ of an F-subset of $W(K)$.
This follows immediately from Lemma~\ref{extend-lemma}.
\end{proof}

\section{The $\cE_K$-topology refines the $\cF_K$-topology when $K$ is perfect} \label{sec: etale-refine}

We prove Theorem C.1 from the introduction, while Theorems C.2 and C.3 will be proven in Section~\ref{sec: comparison}.
This reflects the fact that only Theorem C.1 will be used in Sections~\ref{section:?}--\ref{section:l->h}.

\medskip
We let $R^{\red}$ be the quotient of a ring $R$ by its nilradical.
Lemma~\ref{lem:henselian-finite-section} is the commutative algebraic result behind Theorem~\ref{thm:etale-refine} below ($=$ Theorem C.1).

\begin{lemma} \label{lem:henselian-finite-section}
Suppose that $K$ is perfect.
Let $A$ be a noetherian henselian local $K$-algebra with residue field $K$ and maximal ideal $\mm$.
Let $B$ be a finite $A$-algebra.
Then the quotient homomorphism $B \to (B/\mm B)^{\red}$ has a section $(B/\mm B)^{\red} \to B$ in the category of $K$-algebras.
\end{lemma}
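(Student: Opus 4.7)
The plan is a structural decomposition of $B$ followed by a standard Hensel lift. First, I would invoke the classical structure theorem for finite algebras over henselian local rings: since $A$ is henselian local, $B$ decomposes as a finite product $B = B_1 \times \cdots \times B_n$, where each $B_i$ is itself a henselian local $A$-algebra, finite over $A$. Writing $\mm_i$ for the maximal ideal of $B_i$ and $k_i = B_i/\mm_i$, the structure map $A \to B_i$ is local, so $\mm B_i \subseteq \mm_i$, and $B_i/\mm B_i$ is a finite-dimensional local $K$-algebra whose unique maximal ideal $\mm_i/\mm B_i$ is nilpotent. Hence $(B_i/\mm B_i)^{\red} = k_i$, and
\[
(B/\mm B)^{\red} \;=\; k_1 \times \cdots \times k_n.
\]
It therefore suffices to produce, for each $i$, a $K$-algebra section $s_i \colon k_i \to B_i$ of the residue map $B_i \to k_i$, and then take $s = (s_1,\ldots,s_n)$.

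Next, because $k_i$ is a quotient of the finite-dimensional $K$-algebra $B_i/\mm B_i$, it is a finite extension of $K$; since $K$ is perfect, this extension is separable. By the primitive element theorem, $k_i = K(\alpha)$ for some $\alpha$ whose minimal polynomial $f \in K[x]$ is monic and satisfies $f'(\alpha) \ne 0$ in $k_i$, i.e.\ $\alpha$ is a simple root of $f$ modulo $\mm_i$.

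Viewing $f$ as an element of $B_i[x]$ via $K \hookrightarrow A \hookrightarrow B_i$, the henselianity of $B_i$ (with residue field $k_i$) lifts the simple root $\alpha$ to a root $\tilde{\alpha} \in B_i$ of $f$. The assignment $\alpha \mapsto \tilde{\alpha}$ then defines a $K$-algebra homomorphism $s_i \colon k_i = K[x]/(f) \to B_i$, and composing with the quotient $B_i \to k_i$ sends $\tilde{\alpha}$ back to $\alpha$, so $s_i$ is genuinely a section. Assembling the $s_i$ produces the desired $K$-algebra section $(B/\mm B)^{\red} \to B$.

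The main ingredient to verify is the decomposition in the first step; this is where the henselianity of $A$ is essentially used, via the classical fact that a finite algebra over a henselian local ring splits as a product of henselian local factors mirroring the decomposition of $B/\mm B$ into its local Artinian pieces. The perfectness of $K$ enters cleanly in the second step to ensure that each residue extension $k_i/K$ is separable and therefore admits a primitive element with a simple minimal polynomial; without perfectness, the residue extension could be purely inseparable and no such lift would exist.
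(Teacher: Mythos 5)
Your proof is correct and follows essentially the same route as the paper's: decompose $B$ into henselian local factors via the classical structure theorem for finite algebras over henselian local rings, observe that each $(B_i/\mm B_i)^{\red}$ is a finite separable extension of $K$, and use the primitive element theorem together with Hensel's lemma in $B_i$ to lift the primitive generator. The only stylistic difference is that you keep the product decomposition explicit throughout, whereas the paper reduces immediately to the case of a single local factor.
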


\begin{proof}
By~\cite[Lemma~04GH]{stacks-project}, $B$ is a finite product of henselian local rings $B_i$ such that each $B_i$ is a finite $A$-algebra.
We can thus assume that $B$ is a henselian local ring and that $B$ is finite over $A$.
By Nakayama's lemma, $B \ne \mm B$, so  $B/\mm B$ is non-trivial.
Note that $B/\mm B = B \otimes_A (A/\mm)$ is a finite algebra over $A/\mm = K$, hence artinian.
As an artinian local ring, $B/ \mm B$ has a unique prime ideal, necessarily induced by the maximal ideal $\mm^*$ of $B$.
It follows that $(B/ \mm B)^{\red}=B/ \mm^*$.
Moreover, $B/\mm^*$ is a finite field extension of $K$.

\medskip
By the primitive element theorem we have $B/\mm^* = K(a)$ for some $a\in B/\mm^*$.
Let $h$ be the minimal polynomial of $a$ over $K$.
Then $h$ is separable because $K$ is perfect.
We can identify $h$ as a polynomial over $B$, as $B$ is a $K$-algebra.
By Hensel's lemma, $a$ can be lifted to a root of $h$ in $B$.
This gives the desired homomorphism $B/\mm^* \to B$.
\end{proof}

Lemma~\ref{main-lemma} essentially recasts Lemma~\ref{lem:henselian-finite-section} in geometric language.

\begin{lemma} \label{main-lemma}
Suppose that $K$ is perfect.
Let $W \to V$ be a finite morphism of affine $K$-varieties, $p \colon \Spec K \to V$ be a $K$-point of $V$, and $F$ be the scheme-theoretic fiber of $W \to V$ over $p$, so that
\[ \xymatrix{ F \ar[r] \ar[d] & W \ar[d] \\ \Spec K \ar[r]_-p & V}\]
is a pullback square in the category of schemes.
Let $F^{\red}$ be the induced reduced subscheme, so that
\begin{equation}
\xymatrix{ F^{\red} \ar[r] \ar[d] & W \ar[d] \\ \Spec K \ar[r]_-p & V} \tag{$\star$}
\end{equation}
is a pullback square in the category of varieties.
Then we can extend to a diagram of schemes
\[ \xymatrix{ F^{\red} \ar[r] \ar[d] & W' \ar[d] \ar[r] & W \ar[d] \\ \Spec K \ar[r] & V' \ar[r] & V}\]
where
\begin{enumerate}[leftmargin=*]
\item $V' \to V$ is etale.
\item In the category of schemes, the right-hand square is a pullback square; equivalently, we have $W' = W \times_V V'$.
\item The outer rectangle is \textup{(}$\star$\textup{)}.
\item The map $F^{\red} \to W'$ has a retract, i.e., a morphism $W' \to F^{\red}$ of $K$-schemes 
such that the composition
\[ F^{\red} \to W' \to F^{\red}\]
is the identity.
\end{enumerate}
\end{lemma}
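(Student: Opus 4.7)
The strategy is to interpret the problem algebraically, apply Lemma~\ref{lem:henselian-finite-section} to the henselization of the local ring of $V$ at $p$, and then algebraize the resulting section to an étale neighborhood. Write $V = \Spec A_0$ and $W = \Spec B_0$ with $B_0$ finite over $A_0$, and let $\mm_0 \subset A_0$ be the maximal ideal corresponding to $p$, so that $A_0/\mm_0 = K$. Setting $R := B_0/\mm_0 B_0$, we have $F = \Spec R$ and $F^{\red} = \Spec R^{\red}$; since $B_0$ is a finite $A_0$-module, $R$, and hence $R^{\red}$, is a finite-dimensional $K$-algebra, in particular finitely presented as a $K$-algebra.

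Let $A$ denote the henselization of $(A_0)_{\mm_0}$ and set $B := B_0 \otimes_{A_0} A$. Then $A$ is a noetherian henselian local $K$-algebra with residue field $K$ and maximal ideal $\mm = \mm_0 A$, and $B$ is finite over $A$ with $B/\mm B = R$. Lemma~\ref{lem:henselian-finite-section} yields a $K$-algebra section $s \colon R^{\red} \to B$ of the natural quotient $q \colon B \twoheadrightarrow R^{\red}$. Use the standard description $A = \varinjlim_i A_i$ as a filtered colimit indexed by pointed étale neighborhoods $(V_i, p_i) \to (V, p)$, so that $B = \varinjlim_i (B_0 \otimes_{A_0} A_i)$. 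Because $R^{\red}$ is finitely presented, at some stage $s$ factors as $s_i \colon R^{\red} \to B_0 \otimes_{A_0} A_i$. Set $V' := V_i$ and $W' := W \times_V V' = \Spec(B_0 \otimes_{A_0} A_i)$. Then $V' \to V$ is étale, the right-hand square is a pullback by construction, the outer rectangle recovers $(\star)$ because the scheme-theoretic fiber of $W' \to V'$ over $p_i$ is canonically identified with $F$, and $s_i$ is the desired retraction $W' \to F^{\red}$ at the level of coordinate rings.

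The identity $q_i \circ s_i = \mathrm{id}_{R^{\red}}$ required for the retraction property transfers from $q \circ s = \mathrm{id}_{R^{\red}}$, because the transition maps $B_0 \otimes_{A_0} A_i \to B_0 \otimes_{A_0} A_j$ are compatible with the projections to $R^{\red}$; hence $q_i \circ s_i$ agrees with $q \circ s$ on the finitely many generators of $R^{\red}$ after enlarging $i$ if necessary. The main obstacle is precisely this algebraization step: Lemma~\ref{lem:henselian-finite-section} supplies a section only over the henselization $A$, and we must push it down to a single étale stage $A_i$. The finite presentation of $R^{\red}$, itself a consequence of $B_0$ being a finite $A_0$-module, is exactly what makes this descent go through.
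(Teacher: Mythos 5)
Your proof is correct and follows essentially the same route as the paper: pass to the henselization $A_\mm^\mathrm{h}$, apply Lemma~\ref{lem:henselian-finite-section} there, and use that $R^{\red}$ is finitely presented over $K$ to descend the section to a finite stage $A_i$ of the filtered colimit of \'etale neighborhoods. The only (harmless) redundancy is your final ``enlarge $i$ if necessary'' step: once $s_i$ is a $K$-algebra map factoring $s$, the identity $q_i\circ s_i=\id$ holds automatically because $q\circ\iota_i=q_i$, so no further enlargement is needed.
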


\begin{proof}
Let $V=\Spec A$, $W=\Spec B$, and $A \to B$ be the morphism dual to $W \to V$, so $A \to B$ is finite.
Take a maximal ideal $\mathfrak{m}\subseteq  A$ such that $A\to A/\mathfrak{m}$ is dual to $p\colon \Spec K\to A$.
We have the following pushout diagram in the category of reduced $K$-algebras.
\[
\begin{tikzcd}
A \arrow[d] \arrow[r] & A/\mathfrak{m} \arrow[d]    \\
B \arrow[r]           & (B\otimes_A A/\mathfrak{m})^{\red} 
\end{tikzcd}
\]
By an \textbf{\'etale neighborhood} of $\mm$ we mean an \'etale $A$-algebra $A \to A'$ with a morphism $A' \to A/\mm$ extending $A \to A/\mm$. 
The \textbf{henselization} $A_\mm^\mathrm{h}$ is the filtered colimit $A_\mm^\mathrm{h} = \varinjlim_{A'} A'$ where $A'$ ranges over \'etale neighborhoods of $\mm$ \cite[Definition~04GQ, Lemma~04GV]{stacks-project}.
The ring $A_\mm^\mathrm{h}$ is a henselian local ring with maximal ideal $\mm A_\mm^\mathrm{h}$, and its residue field is $A_\mm/\mm A_\mm = A/\mm$ \cite[Lemma~04GN]{stacks-project}.
Moreover, $A_\mm^\mathrm{h}$ is noetherian \cite[Lemma~06LJ]{stacks-project}.
We have the following diagram in the category of $K$-algebras:
\[
\begin{tikzcd}
A \arrow[d] \arrow[r] & A_\mathfrak{m}^\mathrm{h} \arrow[d] \arrow[r] & A/\mathfrak{m} \arrow[d]
\\
B \arrow[r]           & B\otimes_A A_\mathfrak{m}^\mathrm{h} \arrow[r] \arrow[r] & (B\otimes_A A/\mathfrak{m})^{\red}                                               
\end{tikzcd}
\]
By~Lemma~\ref{lem:henselian-finite-section},  the bottom right arrow has a section.
Because $B \otimes_A A^\mathrm{h}_\mm$ is a filtered colimit $\varinjlim_{A'} B \otimes_A A'$ as $A'$ ranges over \'etale neighborhoods of $\mm$, and because $(B \otimes_A A/\mm)^\red$ is a finitely presented $K$-algebra, the section
\[ (B \otimes_A A/\mm)^\red \to B \otimes_A A^\mathrm{h}_\mm\]
factors through $B \otimes_A A'$ for some some \'etale neighborhood $A'$ of $\mm$.
Then we have a diagram
\[
\begin{tikzcd}
A \arrow[d] \arrow[r] & A' \arrow[d] \arrow[r] & A/\mathfrak{m} \arrow[d]                         \\
B \arrow[r]           & B\otimes_A A' \arrow[r] \arrow[r] & (B\otimes_A A/\mathfrak{m})^{\red}           
\end{tikzcd}
\]
such that the bottom right arrow continues to admit a section.
This configuration is dual to the desired configuration of schemes.
\end{proof}

The following fact allows us to focus on affine varieties.

\begin{fact}\label{fact:refine}
Suppose that $\cT,\cT^*$ are systems of topologies over $K$ such that the  $\cT$-topology on $K^n=\Aa^n(K)$ refines the $\cT^*$-topology for every $n\ge 1$.
Then $\cT$ refines $\cT^*$.
\end{fact}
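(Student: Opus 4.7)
The plan is to reduce the claim to the affine case using property (2) of a system of topologies, and then reduce the affine case to affine space using property (3).

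First I would handle the reduction to affine varieties. Let $V$ be an arbitrary $K$-variety and choose a finite affine open cover $V = V_1 \cup \cdots \cup V_n$. Since $\Spec K$ has a unique point, any $K$-point $\Spec K \to V$ factors through some $V_i$, so $V(K) = V_1(K) \cup \cdots \cup V_n(K)$. By condition (2) of Definition~\ref{sys-def} applied to both $\cT$ and $\cT^*$, each inclusion $V_i(K) \hookrightarrow V(K)$ is an open topological embedding for both systems. In particular, $\{V_i(K)\}_{i=1}^n$ is an open cover of $V(K)$ in both topologies, and each $V_i(K)$ carries the subspace topology from $V(K)$ in both $\cT$ and $\cT^*$. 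A subset of $V(K)$ is open in either topology if and only if its restriction to each $V_i(K)$ is open there, so it suffices to show that the $\cT$-topology on $V_i(K)$ refines the $\cT^*$-topology for each $i$. Thus we may assume $V$ is affine.

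Now I would reduce the affine case to affine space. Since $V$ is an affine $K$-variety, one can choose generators for its coordinate ring to obtain a closed immersion $V \hookrightarrow \Aa^n$ for some $n$. By condition (3) of Definition~\ref{sys-def}, applied to both $\cT$ and $\cT^*$, the induced map $V(K) \hookrightarrow \Aa^n(K) = K^n$ is a closed topological embedding in both systems; in particular, the $\cT$-topology (resp.\ $\cT^*$-topology) on $V(K)$ coincides with the subspace topology inherited from the $\cT$-topology (resp.\ $\cT^*$-topology) on $K^n$. By hypothesis, the $\cT$-topology on $K^n$ refines the $\cT^*$-topology on $K^n$; restricting to $V(K)$ preserves this refinement, which completes the proof.

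There is no serious obstacle; the argument is purely formal, combining the open and closed embedding properties of systems of topologies with the standard fact that any $K$-variety admits a finite affine open cover and any affine $K$-variety admits a closed immersion into some $\Aa^n$.
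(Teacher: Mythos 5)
Your proof is correct. The paper itself does not prove this fact but cites it as Lemma~4.2 of \cite{firstpaper}; your two-step reduction (finite affine open cover via property (2), then closed immersion into $\Aa^n$ via property (3)) is the standard argument and is exactly what one expects the cited lemma to do.
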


Fact~\ref{fact:refine} is \cite[Lemma~4.2]{firstpaper}.

\medskip
We now prove the main result of this section, which is Theorem~C.1.

\begin{theorem}\label{thm:etale-refine}
The $\cE_K$-topology refines the $\cF_K$-topology when $K$ is perfect.
\end{theorem}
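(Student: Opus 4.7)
The plan is to prove directly that every F-subset of $V(K)$ is $\cE_K$-closed; since F-sets form a closed basis for the $\cF_K$-topology, this will suffice. By Fact~\ref{fact:refine} I may reduce to $V = \Aa^n$; then for any finite morphism $\phi\colon W\to V$, the target being affine forces $W$ to be affine as well, so the hypotheses of Lemma~\ref{main-lemma} are available. Fix such a $\phi$, let $X = \phi(W(K))$, and fix a point $p \in V(K) \setminus X$. It will be enough to produce an $\cE_K$-open neighborhood of $p$ disjoint from $X$.

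The main step is to invoke Lemma~\ref{main-lemma} with this $\phi$ and this $p$. The lemma yields an \'etale morphism $\psi\colon V' \to V$ through which $p$ factors as a $K$-point $p' \in V'(K)$, together with the pullback $W' = W\times_V V'$ and a scheme-theoretic retraction $r\colon W' \to F^{\red}$ of the inclusion of the reduced fiber of $\phi$ over $p$ into $W'$. Now, because $p \notin X$, there is no $K$-point of $W$ mapping to $p$; since $\Spec K$ is reduced, a $K$-point of $F$ is the same as a $K$-point of $F^{\red}$, so this gives $F^{\red}(K) = \varnothing$. Applying the functor of $K$-points to the retraction $r$, I get $W'(K) = \varnothing$.

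To conclude, let $U = \psi(V'(K))$; this is an E-set containing $p = \psi(p')$, hence an $\cE_K$-open neighborhood of $p$. Any element of $U \cap X$ would be of the form $\psi(q) = \phi(w)$ for some $q \in V'(K)$ and $w \in W(K)$, and the pair $(q,w)$ would then define a $K$-point of $V' \times_V W = W'$, contradicting $W'(K) = \varnothing$. So $U \cap X = \varnothing$, as required.

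The real content of the theorem has already been packaged into Lemma~\ref{main-lemma}, which in turn rests on Lemma~\ref{lem:henselian-finite-section}; this is where perfectness is essential, via the primitive element theorem together with Hensel's lemma to lift a separable residue extension to a section of $B \to (B/\mm B)^{\red}$ over a noetherian henselian local $K$-algebra. The remaining argument for the theorem is essentially just the geometric translation of that fact into a statement about the $\cE_K$- and $\cF_K$-topologies, so I do not expect any further obstacle beyond assembling the pieces cleanly.
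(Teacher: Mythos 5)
Your proposal is correct and follows essentially the same route as the paper's own proof: reduce to affine space via Fact~\ref{fact:refine}, then for a finite $\phi\colon W\to\Aa^n$ and $p\notin\phi(W(K))$ invoke Lemma~\ref{main-lemma} to produce the \'etale neighborhood $V'\to\Aa^n$ of $p$, use the retraction $W'\to F^{\red}$ to see $W'(K)=\varnothing$, and conclude from the pullback property of $W'=W\times_{\Aa^n}V'$ that the E-set $\psi(V'(K))$ misses $\phi(W(K))$. The only cosmetic difference is that the paper phrases the reduction as "$V$ affine" while you take $V=\Aa^n$ directly (which is exactly what Fact~\ref{fact:refine} gives), and you spell out the final disjointness via the universal property rather than pointing at the diagram; the content is identical.
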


\begin{proof}
Suppose that $K$ is perfect.
By Fact~\ref{fact:refine} it suffices to suppose that $V$ is affine and show that any F-subset of $V(K)$ is $\cE_K$-closed.
This is equivalent to the following claim.

\begin{Claim*}
Suppose that $W \to V$ is a finite morphism of affine $K$-varieties, and $p \in V(K)$ is a $K$-point which does not lift to a $K$-point in $W$, i.e., $p$ is not in the image of $W(K) \to V(K)$.
Then there is an \'etale morphism $V' \to V$ such that $p$ is in the image of $f\colon V'(K) \to V(K)$, and $f(V'(K))$ is disjoint from the image of $W(K) \to V(K)$.
\end{Claim*}

Let $F$ and $F^{\red}$ be as in Lemma~\ref{main-lemma}.
Since $K$ is a reduced ring, $F(K) = F^{\red}(K)$.
The fact that $p$ is not in the image of $W(K) \to V(K)$ means that $F(K) = F^{\red}(K) = \varnothing$.
Let $W', V'$ be as in Lemma~\ref{main-lemma}.
Then $V' \to V$ is \'etale, and $p$ lifts to a point in $V'(K)$.
Thus $p$ is in the image of $V'(K) \to V(K)$.
Also, there is a morphism $W' \to F^{\red}$ over $K$.
Therefore $W'(K)$ is empty as $F^{\red}(K)$ is empty.
Then the pullback diagram
\[ \xymatrix{ \varnothing=W'(K) \ar[d] \ar[r] & W(K) \ar[d] \\ V'(K) \ar[r] & V(K)}\]
shows that the image of $W(K) \to V(K)$ is disjoint from the image
of $V'(K) \to V(K)$.
\end{proof}

We give a quick application of Theorem~\ref{thm:etale-refine}. It is of independent interest, will be used in Section~\ref{sec: localhomeo}, and also illustrates how Theorem~\ref{thm:etale-refine} about perfect fields can sometimes be used in combination with Fact~\ref{fact:kins} below to deduce consequences for arbitrary fields.

\begin{fact}\label{fact:kins}
Let $L$ be a purely inseparable extension of $K$.
\begin{enumerate}[leftmargin=*]
\item If $f\colon V \to W$ is an \'etale morphism then $f_L(V(L)) \cap W(K) = f(V(K))$.
\item The inclusion $V(K) \to V(L)$ gives a homeomorphic embedding of the $\cE_K$-topology on $V(K)$ into the $\cE_L$-topology on $V(L)$.
\end{enumerate}
\end{fact}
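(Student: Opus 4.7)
The plan for part (1) is to analyze the scheme-theoretic fiber of $f$ over a $K$-point of $W$. The inclusion $f(V(K)) \subseteq f_L(V(L)) \cap W(K)$ is immediate. For the reverse, I would fix $w \in W(K)$ and set $F = V \times_W \Spec K$. Since $f$ is \'etale, $F$ is an \'etale $K$-scheme, hence isomorphic to $\bigsqcup_{i=1}^n \Spec A_i$ with each $A_i$ a finite separable extension of $K$. Any $K$-embedding $A_i \hookrightarrow L$ has image a separable-over-$K$ subfield of the purely inseparable extension $L$, and so this image must equal $K$. Hence $\Hom_K(A_i, L) = \Hom_K(A_i, K)$ for every $i$, which gives $F(K) = F(L)$ via the canonical inclusion. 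In particular, $w \in f_L(V(L))$ iff $F(L) \neq \varnothing$ iff $F(K) \neq \varnothing$ iff $w \in f(V(K))$.

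For part (2), I would prove openness onto image and continuity separately. Openness is a corollary of part (1): for any \'etale $K$-morphism $h \colon V' \to V$, the E-set $h(V'(K))$ equals $h_L(V'(L)) \cap V(K)$, so every $\cE_K$-open set in $V(K)$ is of the form $U \cap V(K)$ for some $\cE_L$-open $U \subseteq V(L)$. For continuity, the key input is the topological invariance of the \'etale site under universal homeomorphisms. Since $L/K$ is purely inseparable, $\Spec L \to \Spec K$ is integral, surjective, and universally injective, hence a universal homeomorphism; base change along $V \to \Spec K$ then shows $V_L \to V$ is a universal homeomorphism for every $K$-variety $V$. By topological invariance of the \'etale site (e.g.\ \cite[Lemma~04DY]{stacks-project}), every \'etale morphism $g \colon Y \to V_L$ is, up to isomorphism over $V_L$, the base change of an \'etale $K$-morphism $h \colon W' \to V$; that is, $Y \cong W'_L$ and $g = h_L$. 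Then $g(Y(L)) = h_L(W'(L))$, and part (1) applied to $h$ gives $g(Y(L)) \cap V(K) = h(W'(K))$, which is an E-set in $V(K)$. Writing an arbitrary $\cE_L$-open set as a union of such basic E-sets, we conclude that every $\cE_L$-open set intersected with $V(K)$ is $\cE_K$-open, which is precisely continuity.

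I expect the main obstacle to be the appeal to topological invariance of the \'etale site along the universal homeomorphism $V_L \to V$: the content is that every \'etale $L$-cover of $V_L$ descends to an \'etale $K$-cover of $V$, which is not evident from the local standard-\'etale description (attempting to descend the defining monic polynomial directly runs into the fact that purely inseparable base change does not commute with picking separable factors) but follows from classical descent along universal homeomorphisms. Once this descent is granted, part (1) translates the descended $h \colon W' \to V$ into an equality of $K$-rational images, and the rest is essentially bookkeeping.
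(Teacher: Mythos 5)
The paper does not prove Fact~\ref{fact:kins} in-text; it cites \cite[Prop.~5.10]{firstpaper} for part (2), noting that part (1) appears in that proof. So a line-by-line comparison is not possible from this document alone. Your argument is correct, and the decomposition is natural. For (1) the fibered-scheme analysis is exactly right: the scheme-theoretic fiber $F = V \times_W \Spec K$ over a $K$-point is a finite disjoint union of $\Spec A_i$ with $A_i/K$ finite separable, and purely inseparably of $L/K$ forces any $K$-embedding $A_i \hookrightarrow L$ to land in $K$, whence $F(K) = F(L)$ under the canonical inclusion.

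For (2), your two-step split (openness onto image via (1); continuity via topological invariance of the \'etale site along the universal homeomorphism $V_L \to V$) is the right shape. You correctly identify that the nontrivial direction is descending an \'etale morphism $Y \to V_L$ to one over $V$; trying to do this by hand on standard \'etale charts does indeed fail because passing to separable factors does not commute with inseparable base change. Two small points you should make explicit: first, the descended scheme $W'$ must be verified to be a $K$-variety (it is: \'etale over $V$ gives locally of finite type and reduced; quasi-compactness and separatedness descend from $W'_L$ along the faithfully flat, universally-homeomorphic $W'_L \to W'$), since the paper's definition of E-set insists the source be a $K$-variety. Second, the Stacks Project tag you cite, \texttt{04DY}, is the thickenings case; you want the universal-homeomorphism form of topological invariance (e.g.\ EGA~IV~18.1.2, or the corresponding Stacks tag for universal homeomorphisms). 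Neither affects the substance of the argument.
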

Here, (2) is \cite[Prop.~5.10]{firstpaper}, and (1) is given in its proof.

\medskip
The Hausdorffness of the \'etale-open topology (Fact~\ref{fact:old-EO}(2)) was first proved in~\cite{firstpaper} and then given another proof in \cite{with-anand}.
It is a key ingredient of the proof that large stable fields are separably closed in~\cite{firstpaper}.
For the proof, we only need to construct two disjoint non-empty $\Sa E_K$-open subsets of $K$, as any invertible affine transformation $K \to K$ is a homeomorphism.
In the first two proofs we constructed disjoint nonempty open subsets $U_1,U_2$ of $K^n$ for some $n\ge 2$, let $\ell$ be a line passing through each $U_i$, and identified $\ell \cap U_1$ and $\ell\cap U_2$ with subsets of $K$ via an affine transformation. Proposition~\ref{prop:hd} gives a third proof with a more canonical pair of open subsets of $K$.

\begin{proposition} \label{prop:hd}
If $K$ is not separably closed, then the $\cE_K$-topology on $K$ is Hausdorff. Moreover, if $K$ is perfect and not algebraically closed, then there is $\cE_K$-open $U$ and $\cF_K$-closed $F$ such that $\varnothing \ne U \subseteq F \subsetneq K$.
\end{proposition}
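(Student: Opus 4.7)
The plan is: first establish the ``moreover'' in the perfect, not-algebraically-closed case; then deduce Hausdorffness in that case by composing with affine transformations; and finally bootstrap to arbitrary non-separably-closed $K$ through the perfect closure via Fact~\ref{fact:kins}(2).

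For the moreover, assume $K$ is perfect and not algebraically closed, and pick an irreducible $f \in K[x]$ of degree $n \ge 2$; $f$ is automatically separable. Viewing $f$ as a morphism $\Aa^1 \to \Aa^1$, it is non-constant and hence finite by Fact~\ref{fact:basic-f}(\ref{poly:fini}), so $F := f(K)$ is an F-subset of $K$ and therefore $\cF_K$-closed. Since $f$ is irreducible of degree $\ge 2$, it has no root in $K$, so $0 \notin F$ and $F \subsetneq K$. Now restrict $f$ to the open subscheme $V := D(f') \subseteq \Aa^1$. The ring map dual to $f|_V$ factors as $K[y] \to K[y][x]_{f'(x)}/(f(x) - y)$ with $f(x) - y$ monic in $x$ and derivative $f'(x)$ invertible, so $f|_V$ is standard \'etale; hence $U := f(V(K)) = f(\{a \in K : f'(a) \neq 0\})$ is an E-subset of $K$, is $\cE_K$-open, and satisfies $U \subseteq F$ by construction. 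Nonemptiness of $U$ reduces to finding $a \in K$ with $f'(a) \ne 0$: this is automatic if $K$ is infinite (since $f'$ is a nonzero polynomial with finitely many zeros), and for $K = \Ff_q$ it is verified by taking $f$ of degree $2$ (in characteristic $2$, irreducibility of a monic quadratic forces $f'$ to be a nonzero constant; in odd characteristic $f'$ has degree~$1$ and hence at most one zero in $K$).

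For Hausdorffness in the perfect, not-algebraically-closed case, Theorem~\ref{thm:etale-refine} makes $K \setminus F$ an $\cE_K$-open set, so $A := U$ and $B := K \setminus F$ are disjoint nonempty $\cE_K$-open subsets of $K$. Given distinct $a, b \in K$ and choices $a_0 \in A$, $b_0 \in B$, the affine map $\phi(x) = \frac{b-a}{b_0 - a_0}(x - a_0) + a$ is a $K$-isomorphism of $\Aa^1$ (since $a \ne b$) and hence an $\cE_K$-homeomorphism; its images $\phi(A) \ni a$ and $\phi(B) \ni b$ are disjoint $\cE_K$-open neighborhoods of $a$ and $b$.

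For the general Hausdorffness statement, if $K$ is merely not separably closed, then $\perf$ is perfect and not algebraically closed (since $K$ and $\perf$ have the same separable closure), so $(\perf, \cE_{\perf})$ is Hausdorff by the previous paragraph. Fact~\ref{fact:kins}(2) identifies $(K, \cE_K)$ with a subspace of $(\perf, \cE_{\perf})$, and subspaces of Hausdorff spaces are Hausdorff. The only real obstacle is the construction in the first step: the same polynomial $f$ must serve double duty, yielding a finite morphism (whose $K$-rational image is the F-set $F$) and, after restriction to its \'etale locus, an \'etale morphism (whose $K$-rational image is a nonempty E-set contained in $F$).
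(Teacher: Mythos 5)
Your proof is correct and follows essentially the same strategy as the paper: construct a nonempty E-set $U = f(\{a : f'(a) \ne 0\})$ contained in the F-set $F = f(K)$ for an irreducible separable $f$ of degree $\geq 2$ with no root in $K$, then invoke Theorem~\ref{thm:etale-refine} to make $K \setminus F$ an $\cE_K$-open disjoint from $U$, and separate arbitrary points by affine transformations. There are two places where you diverge, both to your advantage. First, for the general non-separably-closed case, the paper constructs a second pair of disjoint nonempty $\cE_K$-opens inside $K$ itself (namely $f(U)$ and $K \setminus f(L)$ with $L = \perf$), whereas you simply observe that $(K, \cE_K)$ is a subspace of $(\perf, \cE_{\perf})$ by Fact~\ref{fact:kins}(2) and that $\perf$ is perfect, not algebraically closed, hence Hausdorff by the case already done; subspaces of Hausdorff spaces are Hausdorff. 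This is cleaner and avoids re-running the open-set construction. Second, you explicitly verify that $U(K)$ is nonempty over finite fields by choosing $f$ quadratic, which patches a small elision in the paper's proof: the paper only notes that the \emph{scheme} $U \subseteq \Aa^1$ is nonempty (because $f' \ne 0$), but that alone does not guarantee $U(K) \ne \varnothing$ when $K$ is finite. Your degree-$2$ choice handles this correctly in both even and odd characteristic.

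Two minor points worth tightening if you want this to be airtight: you should say explicitly that $f$ can be taken monic (so that $f(x) - y$ is monic in $x$ in the standard-\'etale presentation), and you may want to note that $\perf$ fails to be separably closed because any proper finite separable extension $M/K$ gives a proper separable extension $M \cdot \perf / \perf$ by linear disjointness, rather than just asserting that $K$ and $\perf$ share a separable closure. Neither is a real gap.
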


\begin{proof}
We first prove the second statement.
Suppose, $K$ is perfect and not algebraically closed.
Let $f \in K[x]$ be irreducible with $\deg f>1$, and hence separable.
Let $U$ be the open subvariety of $\Aa^1$ given by $f' \ne 0$.
Note that $U$ is nonempty as $f$ is separable and $\deg f>1$.
Furthermore $f \colon U \to \Aa^1$ is \'etale, hence $f(U(K))$ is a nonempty E-subset of $K$.
On the other hand,  $f(K)$ is an F-set by Fact~\ref{fact:basic-f}(\ref{poly:fini}).
Finally $f(K) \ne K$ as $0 \notin f(K)$.

\medskip
We now prove the first statement.
Suppose $K$ is not separably closed.
If $K$ is perfect, then we have the desired conclusion from the second statement and Theorem~\ref{thm:etale-refine}.
In the general  case, let $L$ be the maximal purely inseparable extension of $K$.
Then $L$ is perfect and not algebraically closed.
Fix separable irreducible $f \in K[x]$ of degree $\ge 2$.
Set $U = \{a \in K : f'(a) \ne 0\}$.
Then $f(U)$ is a non-empty $\Sa E_K$-open subset of $K$.
On the other hand, $f(L)$ is an F-set in $L$ by Fact~\ref{fact:basic-f}(\ref{poly:fini}), hence $\Sa E_L$-closed by Theorem~\ref{thm:etale-refine}.
The $\cE_K$-topology on $K$ is a subspace of the $\cE_L$-topology on $L$ by Fact~\ref{fact:kins}(2), so $K \setminus f(L)$ is $\Sa E_K$-open.  Since $f$ has no zeros in $L$, the set $K \setminus f(L)$ contains 0.
Then $f(U)$ and $K \setminus f(L)$ are the desired disjoint nonempty $\Sa E_K$-open subsets of $K$.
\end{proof}

\section{Boundaries in the \'etale-open topology}\label{section:?}
We apply Theorem~\ref{thm:etale-refine} to study boundaries in the \'etale-open topology. The results in this section will not be used in the later sections.

\begin{lemma}\label{lem:finite-etale}
Suppose that $f\colon V \to W$ is a morphism which is both finite and \'etale.
Then $f(V(K))$ is clopen in the $\cE_K$-topology on $W(K)$.
\end{lemma}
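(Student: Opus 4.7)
The plan is to split the two halves of the clopenness separately, handle the perfect case via Theorem~\ref{thm:etale-refine}, and then reduce the general case to the perfect case by passing to the perfect closure.

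First I would note that openness is immediate: since $f$ is \'etale, the set $f(V(K))$ is by definition an E-subset of $W(K)$, hence $\cE_K$-open. The real content is $\cE_K$-closedness. Since $f$ is finite, $f(V(K))$ is an F-subset of $W(K)$, hence $\cF_K$-closed.

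When $K$ is perfect, I would finish immediately: Theorem~\ref{thm:etale-refine} says that the $\cE_K$-topology refines the $\cF_K$-topology, so every $\cF_K$-closed set is $\cE_K$-closed, and we are done.

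For the general case I would reduce to the perfect case via Fact~\ref{fact:kins}. Let $L$ be the perfect closure of $K$, which is a purely inseparable extension. Both finiteness and \'etaleness are stable under arbitrary base change (Fact~\ref{fact:basic-f}(1) and Fact~\ref{fact:etale}(1)), so $f_L\colon V_L\to W_L$ is again finite and \'etale; applying the perfect case yields that $f_L(V(L))$ is $\cE_L$-clopen in $W(L)$. By Fact~\ref{fact:kins}(1), $f_L(V(L))\cap W(K) = f(V(K))$, and by Fact~\ref{fact:kins}(2), the $\cE_K$-topology on $W(K)$ coincides with the subspace topology induced from the $\cE_L$-topology on $W(L)$. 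Intersecting a clopen set with a subspace gives a clopen subset, so $f(V(K))$ is $\cE_K$-clopen.

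The only real obstacle is the imperfect case, since Theorem~\ref{thm:etale-refine} genuinely needs perfection (as highlighted just after Theorem~D in the introduction, $\cF_K$ need not be refined by $\cE_K$ when $K$ is imperfect). The purely inseparable base change trick from Fact~\ref{fact:kins} is what lets us sidestep this by working over the perfect closure and then restricting.
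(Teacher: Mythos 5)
Your proof is correct and matches the paper's argument exactly: the paper's proof also passes to the maximal purely inseparable extension $L$, invokes Theorem~\ref{thm:etale-refine} for $\cE_L$-clopenness, and concludes by applying both parts of Fact~\ref{fact:kins}. You have simply written out the details that the paper leaves implicit (that openness comes from the E-set structure, closedness from the F-set structure plus the refinement theorem, and that intersecting with a subspace preserves clopenness).
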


\begin{proof}
Let $L$ be the maximal purely inseparable extension of $K$.
Then $L$ is perfect so by Theorem~\ref{thm:etale-refine} $f_L(V(L))$ is $\cE_L$-clopen.
Apply both parts of Fact~\ref{fact:kins}.
\end{proof}

We remark that, for perfect fields, Lemma~\ref{lem:finite-etale} is an immediate consequence of Theorem~\ref{thm:etale-refine}.
We now turn to a result about boundaries in the \'etale-open topology.

\begin{theorem} \label{thm:small-boundry}
Suppose that $U$ is an E-subset of $V(K)$.
Then the boundary of $U$ in the $\cE_K$-topology is not Zariski dense in $V$. 
\end{theorem}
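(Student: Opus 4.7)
The plan is to exhibit a proper closed subvariety $Z \subsetneq V$ with $\partial U \subseteq Z(K)$; since $\partial U$ is then contained in a proper closed subvariety, it is not Zariski dense in $V$.

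First assume $K$ is perfect, and write $U = f(W(K))$ for an \'etale morphism $f\colon W \to V$. Because \'etale morphisms have finite fibers, $f$ is quasi-finite, so Zariski's main theorem (Fact~\ref{fact:zmt}) factors it as
\[
W \;\xrightarrow{\;j\;}\; W^{*} \;\xrightarrow{\;f^{*}\;}\; V,
\]
where $j$ is an open immersion with Zariski dense image and $f^{*}$ is finite. Then $f^{*}(W^{*}(K))$ is an F-subset of $V(K)$, hence $\cF_K$-closed, hence $\cE_K$-closed by Theorem~\ref{thm:etale-refine}. Since $U \subseteq f^{*}(W^{*}(K))$ and $U$ is itself $\cE_K$-open, this gives
\[
\partial U \;\subseteq\; f^{*}(W^{*}(K)) \setminus U \;\subseteq\; Z(K),
\]
where $Z := f^{*}(W^{*} \setminus W)$ is closed in $V$ because $f^{*}$ is finite, hence proper. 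Density of $W$ in $W^{*}$ forces $\dim(W^{*} \setminus W) < \dim W^{*}$ on each irreducible component of $W^{*}$, and since the finite morphism $f^{*}$ preserves dimensions we obtain $\dim Z < \dim W^{*} = \dim f^{*}(W^{*}) \leq \dim V$, so $Z \subsetneq V$ is proper.

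For a general field $K$, let $L = K^{\mathrm{perf}}$ and carry out the factorization $W \to W^{*} \to V$ already over $K$; its base change to $L$ supplies the Zariski-main-theorem factorization of $f_L$, and the proper closed subvariety produced by the perfect case is visibly of the form $Z_L$ for some $Z \subsetneq V$ defined over $K$. The perfect case applied to $f_L$ therefore gives $\partial_{\cE_L} U_L \subseteq Z(L)$. By Fact~\ref{fact:kins}(1), $U = U_L \cap V(K)$, and by Fact~\ref{fact:kins}(2) the $\cE_K$-topology on $V(K)$ agrees with the subspace topology inherited from $\cE_L$ on $V(L)$, so
\[
\partial_{\cE_K} U \;\subseteq\; \partial_{\cE_L} U_L \cap V(K) \;\subseteq\; Z(L) \cap V(K) \;=\; Z(K),
\]
completing the proof.

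No serious obstacle is expected: Zariski's main theorem converts the \'etale morphism $f$ into a finite morphism $f^{*}$ up to a dense open immersion, and then Theorem~\ref{thm:etale-refine} promotes the resulting F-set to an $\cE_K$-closed superset of $U$ whose difference from $U$ is visibly captured by the finite image of a lower-dimensional closed subscheme. The only care needed is to perform the factorization over $K$ rather than over $L$, so that Fact~\ref{fact:kins} may be invoked for the descent.
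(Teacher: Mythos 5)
Your proof is correct and follows the same approach as the paper's: the Zariski-main-theorem factorization $W \to W^* \to V$ producing the lower-dimensional closed locus $Z$ (the paper's $B$), with Theorem~\ref{thm:etale-refine} and Fact~\ref{fact:kins} doing the real work (the latter for the reduction to the perfect case). The only difference is presentational: the paper derives $\partial U \subseteq Z(K)$ by pulling back over the open complement $C = V \setminus Z$ and invoking Lemma~\ref{lem:finite-etale} to show $U \cap C(K)$ is clopen, whereas you argue globally that $f^*(W^*(K))$ is an $\cE_K$-closed superset of $U$ with $f^*(W^*(K)) \setminus U \subseteq Z(K)$, which is a touch more direct but uses the same ingredients.
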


\begin{proof}
We may suppose that $U \ne \varnothing$.
Let $W \to V$ be an \'etale morphism  with $K$-rational image $U$.
As \'etale morphisms are quasi-finite we apply Fact~\ref{fact:zmt} to factor $W \to
V$ as $W \to Z \to V$, where $W \to Z$ is an open immersion with Zariski dense image and $Z \to V$ is finite.
It follows that $\dim(Z \setminus W) < \dim(W) \le \dim(V)$.
Let $B$ be the image of $Z \setminus W$ in $V$.
Then $B$ is a closed subvariety as finite maps are proper and hence closed.
Furthermore $\dim(B) \le \dim(Z \setminus W) < \dim(V)$.
Hence $B$ is not Zariski dense in $V$.

\medskip
We now show that  the $\cE_K$-boundary of $U$ is contained in $B(K)$.
Let $C$ be the complementary open subvariety $V \setminus B$.
Consider the pullback diagram.
\begin{equation*}
\xymatrix{W \times_V C \ar[r] \ar[d] & Z \times_V C \ar[d] \ar[r] & C \ar[d] \\
W \ar[r] & Z \ar[r] & V}
\end{equation*}
Here $W\times_V C \to C$ is \'etale and $Z\times_V C \to C$ is finite as finite and \'etale morphisms are closed under pullbacks.
The open immersion $W \times_V C \to Z \times_V C$ is an isomorphism, because the complementary closed immersion is $(Z\setminus W) \times_V C \to Z \times_V C$, but $(Z \setminus W) \times_V C$ is empty.
Hence $W \times_V C \to C$ is both finite and \'etale.
Its $K$-rational image is $U \cap C(K)$, which must therefore be an $\cE_K$-clopen subset of $C(K)$ by Lemma~\ref{lem:finite-etale}.
Therefore the $\cE_K$-boundary of $U$ is disjoint from $C(K)$, hence contained in $B(K)$ as desired.
\end{proof}

Recall that a topological space $T$ is \textbf{totally separated} if any two distinct points are separated by a clopen set, and $T$ is \textbf{zero-dimensional} if it admits a basis of clopen sets.
A zero-dimensional Hausdorff topological space is clearly totally separated but there are totally separated Hausdorff topological spaces which are not zero-dimensional~\cite[\S{}127]{counterexamples_in_topology}.

\begin{remark}\label{rem:easy-top}
Let $T$ be a totally separated topological space, with a basis of open sets $\mathcal{B}$ such that every element of $\mathcal{B}$ has finite boundary.
Then $T$ is zero-dimensional.
Indeed, given a basic open set $U \in \mathcal{B}$ and a point $p \in U$, we can find a clopen set $C \ni p$ disjoint from the boundary of $U$, and then $U \cap C$ is a smaller clopen neighborhood of $p$.
\end{remark}

\begin{corollary}\label{cor:zero-dim}
If $K$ is neither separably closed nor isomorphic to $\Rr$ then the $\cE_K$-topology on $K$ is zero-dimensional.
\end{corollary}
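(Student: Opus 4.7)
The plan is to combine Theorem~\ref{thm:small-boundry} with Remark~\ref{rem:easy-top} and Fact~\ref{fact:old-EO}(\ref{old:tsepr}). Under the hypothesis that $K$ is neither separably closed nor isomorphic to $\Rr$, Fact~\ref{fact:old-EO}(\ref{old:tsepr}) already gives that the $\cE_K$-topology on $K$ is totally separated, so by Remark~\ref{rem:easy-top} it suffices to exhibit a basis of $\cE_K$-open subsets of $K$ whose members each have finite $\cE_K$-boundary.

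The natural candidate basis is of course the collection of E-subsets of $K = \Aa^1(K)$, which forms an open basis by Definition~\ref{def:E}. Let $U$ be any such E-subset. Applying Theorem~\ref{thm:small-boundry} with $V = \Aa^1$, the $\cE_K$-boundary of $U$ in $K$ is not Zariski dense in $\Aa^1$. Since the proper Zariski closed subsets of $\Aa^1$ are exactly the finite subsets, the boundary of $U$ is finite, in particular has empty interior, which is exactly the hypothesis required by Remark~\ref{rem:easy-top}.

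There is no real obstacle: the entire statement is a three-line assembly of previously established results, with Theorem~\ref{thm:small-boundry} doing the essential work. The only point worth being slightly careful about is the reduction ``not Zariski dense in $\Aa^1$ implies finite,'' but this is immediate from the one-dimensionality of $\Aa^1$. Accordingly, the proof will consist of quoting total separation, quoting the small-boundary theorem for $V=\Aa^1$, observing finiteness of the boundary, and invoking Remark~\ref{rem:easy-top}.
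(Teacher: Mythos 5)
Your proof is correct and is essentially the same assembly the paper uses: total separation from Fact~\ref{fact:old-EO}(\ref{old:tsepr}), finiteness of the boundary of an E-subset of $K$ via Theorem~\ref{thm:small-boundry} together with the one-dimensionality of $\Aa^1$, and Remark~\ref{rem:easy-top} to conclude. One small imprecision: Remark~\ref{rem:easy-top} requires \emph{finite} boundary (so that finitely many clopen separations can be intersected), not merely that the boundary have empty interior, but since you establish finiteness anyway the argument goes through unchanged.
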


Corollary~\ref{cor:zero-dim} follows from Theorem~\ref{thm:small-boundry}, Remark~\ref{rem:easy-top}, Fact~\ref{fact:old-EO}(\ref{old:tsepr}), and the fact that a subset of $K$ which is not Zariski dense in $K$ is finite.

\medskip
Using some standard facts from general topology, we get the following corollary:

\begin{corollary}\label{cor:QQQ}
Suppose that $K$ is countable.
Then the $\cE_K$-topology on $K$ is either discrete, the cofinite topology, or homeomorphic to the order topology on $\Qq$.
\end{corollary}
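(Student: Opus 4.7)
The plan is to split on the two binary conditions ``$K$ is large'' and ``$K$ is separably closed'', which yields the three alternatives in the statement. If $K$ is not large, then by Fact~\ref{fact:old-EO}(\ref{old:lrge}) the $\cE_K$-topology on $K$ is discrete. If $K$ is separably closed (hence large), then Fact~\ref{fact:old-EO}(\ref{old:sep}) says that $\cE_K$ on $K = \Aa^1(K)$ agrees with the Zariski topology, which is cofinite because $K[x]$ is a PID.

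In the remaining case, $K$ is large, not separably closed, and not isomorphic to $\Rr$ (since $K$ is countable). Here I would verify that the $\cE_K$-topology on $K$ is a nonempty countable metrizable space without isolated points, and then invoke the Sierpi\'nski characterization: any such space is homeomorphic to $\Qq$ with its order topology. Hausdorffness is Fact~\ref{fact:old-EO}(\ref{old:sep}); zero-dimensionality is Corollary~\ref{cor:zero-dim}, which also gives regularity; and Fact~\ref{fact:z-dense} applied to the smooth irreducible variety $\Aa^1$ shows that every nonempty $\cE_K$-open subset of $K$ has cardinality $|K| = \aleph_0$, ruling out isolated points. For metrizability, I would apply Urysohn's theorem to the regular Hausdorff topology, after checking second countability: when $K$ is countable, Fact~\ref{fac: sdetalevsetale} reduces each E-subset of $K$ to a finite union of $K$-rational images of standard \'etale morphisms to $\Aa^1$, and such morphisms are parametrized by finitely many polynomials over the countable ring $K$, yielding only countably many E-subsets of $K$.

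The main subtlety is the second-countability step, which requires packaging the defining basis of E-sets into a countable family via Fact~\ref{fac: sdetalevsetale} and the finiteness of affine covers of quasi-compact schemes; once that is in hand, the result is a direct assembly of the facts and corollaries already established together with the standard characterization of $\Qq$.
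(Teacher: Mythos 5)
Your proof is correct and follows essentially the same route as the paper: dispose of the non-large and separably closed cases via Fact~\ref{fact:old-EO}, then apply Sierpi\'nski's characterization of $\Qq$ after establishing regularity (via Corollary~\ref{cor:zero-dim}), second countability, and metrizability (Urysohn). The extra details you supply --- filling in the countability of the E-set basis via Fact~\ref{fac: sdetalevsetale}, and ruling out isolated points via Fact~\ref{fact:z-dense} --- are valid elaborations of steps the paper leaves implicit.
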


\begin{proof}
Equip $K$ with the $\cE_K$-topology.
By parts (\ref{old:lrge}) and (\ref{old:sep}) of Fact~\ref{fact:old-EO} it suffices to suppose that $K$ is large and not separably closed, and show that $K$ is homeomorphic to the order topology on $\Qq$.
There are only countably many E-sets, so $K$ is second countable.
By Corollary~\ref{cor:zero-dim} $K$ is zero-dimensional, hence regular.
By the Uryshon metrization theorem~\cite[Thm.~34.1]{Munkres} $K$ is metrizable.
By a theorem of Sierpi\'nski a countable metrizable topological space with no isolated points is homeomorphic to the order topology on $\Qq$~\cite{Sierpinski}.
\end{proof}

\section{The polynomial inverse function theorem for large fields} \label{sec: localhomeo}

We prove Theorem~B from the introduction. 
Let $\poly_m$ be the $K$-variety parameterizing degree $m$ monic polynomials of one variable over $K$.
This is just a copy of $\Aa^m$.
Fact~\ref{fact:poly-multi} is probably known, but we include a proof for lack of a reference.

\begin{fact}
\label{fact:poly-multi}
Fix $n,m \ge 1$ and let $f\colon \poly_n \times \poly_m \to \poly_{n + m}$ be the morphism given by polynomial multiplication.
Then $f$ is finite.
Moreover, $f$ is \'etale at a $K$-point $(g,h)$ if and only if $g,h$ have no common roots in $\kalg$ \textup{(}equivalently do not have a common factor in $K[t]$\textup{)}.
\end{fact}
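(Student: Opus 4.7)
The plan is to prove finiteness first, and then analyze étaleness via the differential.

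For finiteness, I would argue via integral dependence. The morphism $f\colon\poly_n\times\poly_m\to\poly_{n+m}$ is dual to the $K$-algebra homomorphism $\phi\colon S\to R$ where $S=K[c_0,\dots,c_{n+m-1}]$, $R=K[a_0,\dots,a_{n-1},b_0,\dots,b_{m-1}]$, and $\phi(c_k)$ is the $t^k$-coefficient of the product $G(t)H(t)$ with $G(t)=t^n+\sum a_it^i$ and $H(t)=t^m+\sum b_jt^j$. Since $R$ is finitely generated as an $S$-algebra, it suffices to show $R$ is integral over $\phi(S)$. Working in a ring extension in which $G$ factors as $\prod_{i=1}^n(t-\alpha_i)$, each root $\alpha_i$ satisfies $F(\alpha_i)=G(\alpha_i)H(\alpha_i)=0$ where $F(t)=t^{n+m}+\sum\phi(c_k)t^k$ is monic with coefficients in $\phi(S)$; hence every $\alpha_i$ is integral over $\phi(S)$. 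Since each $a_i$ is, up to sign, an elementary symmetric polynomial in the $\alpha_i$, and integral elements form a ring, each $a_i$ is integral over $\phi(S)$; symmetrically for each $b_j$. This yields finiteness.

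For étaleness, I use that $\poly_n\times\poly_m$ and $\poly_{n+m}$ are smooth of the same dimension $n+m$, so for a morphism between them (and we have already shown $f$ is finite, hence flat by miracle flatness), being étale at the $K$-point $(g,h)$ is equivalent to the differential $df_{(g,h)}$ being an isomorphism of tangent spaces. A tangent vector at $(g,h)$ is naturally a pair $(u,v)$ with $\deg u<n$ and $\deg v<m$, and the tangent space at $gh$ consists of polynomials of degree $<n+m$. Expanding $(g+\epsilon u)(h+\epsilon v)\equiv gh+\epsilon(uh+gv)\pmod{\epsilon^2}$ shows
\[
df_{(g,h)}\colon\{u:\deg u<n\}\oplus\{v:\deg v<m\}\longrightarrow\{w:\deg w<n+m\},\qquad(u,v)\longmapsto uh+gv.
\]

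The main step is then a classical fact: in the monomial bases, the matrix of this map is the Sylvester matrix of $g$ and $h$, whose determinant is, up to sign, the resultant $\operatorname{Res}(g,h)$. Hence $df_{(g,h)}$ is an isomorphism if and only if $\operatorname{Res}(g,h)\neq 0$, equivalently if and only if $g$ and $h$ have no common root in $\kalg$, equivalently if and only if $g$ and $h$ share no non-constant common factor in $K[t]$.

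The step I expect to require the most care is verifying that ``étale at a point'' for $f$ really does reduce to non-vanishing of the Jacobian at that point. This comes down to checking flatness of $f$ (which I get from finiteness between smooth varieties of equal dimension, via miracle flatness), after which unramifiedness at $(g,h)$ over $\kalg$ becomes the differential condition above. The rest—identifying $df_{(g,h)}$ with the Sylvester map and recognizing its determinant as the resultant—is essentially bookkeeping in linear algebra.
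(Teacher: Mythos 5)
Your proof is correct, but for the finiteness part you take a genuinely different route from the paper. The paper observes that $f$ is affine (source and target are affine schemes) and then proves properness via the valuative criterion: a monic polynomial of degree $n+m$ over a valuation ring $A$ that factors into monics of degrees $n$ and $m$ over $\Frac(A)$ already factors over $A$ --- a case of Gauss's lemma --- and affine plus proper equals finite by Fact~\ref{fact:basic-f}(\ref{afp}). You instead establish integral dependence directly at the level of coordinate rings: passing to a ring extension where the universal degree-$n$ monic $G$ splits (since $R$ is a polynomial ring, hence a domain, the algebraic closure of $\Frac(R)$ serves), each root $\alpha_i$ satisfies the monic $F = GH$ with coefficients in $\phi(S)$ and is therefore integral over $\phi(S)$, and the $a_i$ (likewise the $b_j$) are symmetric polynomials in the $\alpha_i$'s, hence integral. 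Both arguments are valid; yours is more elementary and self-contained, avoiding the valuative criterion at the cost of exhibiting the splitting extension.

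For the \'etaleness claim your argument is essentially the same as the paper's: both compute the differential and recognize the Sylvester matrix, whose determinant is the resultant up to sign. Your dual-numbers expansion $(g+\epsilon u)(h+\epsilon v) \equiv gh + \epsilon(uh+gv) \pmod{\epsilon^2}$ is a slicker way to exhibit the linear map $(u,v) \mapsto uh + gv$ than the paper's entry-by-entry Jacobian computation in Fact~\ref{fact:syl}, and you also supply a justification (miracle flatness, since $f$ is finite between smooth varieties of equal dimension, so \'etale at a point reduces to invertibility of the differential) for the Jacobian criterion that the paper applies without comment.
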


\begin{proof}
For the first claim  it suffices to show that $f$ affine and proper by Fact~\ref{fact:basic-f}(\ref{afp}).
Now $f$ is affine since its domain and codomain are affine schemes.
By the valuative criterion of properness $f$ is proper if whenever $A$ is a valuation ring over $K$ with fraction field $F$ and $g \in A[t]$ of degree $n + m$ factors as $g = hh^*$ for monic $h,h^* \in F[t]$ of degree $m,n$, respectively, then $h,h^*\in A[t]$.
This is a special case of Gauss's lemma on polynomial factorization.

\medskip
The second claim follows from Fact~\ref{fact:syl} below and the fact that the resultant of two polynomials in $K[t]$ vanishes if and only if the polynomials have a common root in $\kalg$.
\end{proof}

We let $\jac_h(a)$ and $|\!\jac_h(a)|$ be the Jacobian and Jacobian determinant, respectively, of a morphism $h\colon \Aa^m \to \Aa^n$ at $a \in K^m$.

\begin{fact}\label{fact:syl}
Let $f$ and $(g,h)$ be as in Fact~\ref{fact:poly-multi}.
Then $|\!\jac_f(g,h)|$ agrees with the resultant of $g,h$ up to sign.
\end{fact}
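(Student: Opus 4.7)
The plan is to compute the Jacobian of $f$ explicitly and recognize it as (a reordering of) the Sylvester matrix. Parameterize $\poly_n$ by coordinates $(g_0,\dots,g_{n-1})$, corresponding to the monic polynomial $g(t) = t^n + \sum_{i=0}^{n-1} g_i t^i$, and similarly $\poly_m$ by $(h_0,\dots,h_{m-1})$, and parameterize $\poly_{n+m}$ by the $n+m$ non-leading coefficients of the product. Under these coordinates, $f$ is given by polynomial multiplication, so its Jacobian at $(g,h)$ is the $(n+m)\times(n+m)$ matrix $M$ whose columns record the coefficients of
\[
\frac{\partial (gh)}{\partial g_i} = t^i h(t), \quad i=0,\dots,n-1, \qquad \frac{\partial (gh)}{\partial h_j} = t^j g(t), \quad j=0,\dots,m-1,
\]
expanded as vectors in the basis $(1,t,\dots,t^{n+m-1})$ (the top coefficient is forced by monicity, so one drops the $t^{n+m}$-row, or equivalently one retains it and observes the corresponding row contributes trivially to the determinant).

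The main step is now purely linear-algebraic: the matrix $M$ is precisely the Sylvester matrix $\mathrm{Syl}(g,h)$, up to a permutation of columns and possibly a permutation of rows depending on whether one writes coefficients in ascending or descending powers of $t$. By the classical definition of the resultant as the determinant of the Sylvester matrix, we conclude $|\!\jac_f(g,h)| = \pm \mathrm{Res}(g,h)$.

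The only subtle point is bookkeeping the sign, which I would handle by fixing a convention once and for all and noting that both the Jacobian determinant and the resultant are polynomial functions of the coefficients which agree up to a global sign determined by a single explicit evaluation. For instance, evaluating at $g = t^n$, $h = t^m$ makes both quantities easy to compute: the Jacobian becomes a (signed) permutation matrix and the resultant specializes to $\pm 1$ (in fact $0$ in this degenerate case, so one should instead evaluate at $g = t^n - 1$, $h = t^m - 1$ or just compare leading monomials in the coefficients of $g$ and $h$). This determines the sign unambiguously, but since the statement of the fact only asserts equality \emph{up to sign}, this last verification is not strictly necessary.

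I expect no real obstacles here: the whole content is the identification of the partial-derivative matrix of polynomial multiplication with the Sylvester matrix, which is a standard computation. The only mild annoyance is choosing coordinate conventions consistently between the two sides, and this is precisely why the statement is formulated up to sign.
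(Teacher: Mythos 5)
Your proof is correct and is essentially the same as the paper's: both compute the Jacobian of the multiplication map explicitly and recognize it, up to a permutation of rows (and/or columns), as the Sylvester matrix, whence the determinant is the resultant up to sign. Your phrasing of the columns as the coefficient vectors of $t^i h(t)$ and $t^j g(t)$ is a slightly cleaner way to see the same entry-wise computation the paper carries out.
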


\begin{proof}
Set
\begin{align*}
g(t) &= t^n + a_{n - 1} t^{n - 1} + \cdots + a_2 t^2 + a_1 t + a_0 \\
h(t) &= t^m + b_{m - 1} t^{m - 1} + \cdots + b_2 t^2 + b_1 t + b_0
\end{align*}
for $a_i,b_j \in K$.
We compute the Jacobian of $f$ at $(g,h)$.
Set $a_n = 1 = b_m$.
Then 
\begin{align*}
(gh)(t) = \sum_{k = 0}^{m + n} \left( \sum_{i + j = k} a_i b_j\right) t^k
\end{align*}
So $f$ is identified with the morphism $\Aa^{n} \times \Aa^{m} \to \Aa^{m + n}$ given by
\begin{equation*} f(x_0,\ldots,x_{n - 1},y_0,\ldots,y_{m - 1}) = \left(x_0 y_0, \ldots, \sum_{i + j = k} x_i y_j, \ldots, y_{m - 1} + x_{n - 1} \right) \end{equation*}
Therefore $\der f_k/\der x_\ell (g,h)$ equals $b_{k - \ell}$ when $k - \ell \ge 0$ and $0$ otherwise, and $\der f_k/\der y_\ell (g,h)$ equals $a_{k - \ell}$ when $k - \ell \ge 0$ and $0$ otherwise.
So  $\jac_f(g,h)$ is, up to possibly permuting rows, the Sylvester matrix of $(g,h)$.
Hence $|\!\jac_f(g,h)|$ is, up to sign, the resultant of $(g,h)$.
\end{proof}

Lemma~\ref{lem:ez-fc-1} is an easy extension of the second statement in Proposition~\ref{prop:hd}.

\begin{lemma}
\label{lem:ez-fc-1}
Let $K$ be perfect and not algebraically closed, $A \subseteq K$ be finite, and $b \in K \setminus A$.
There is an $\cE_K$-open $U\subseteq K$ and an $\cF_K$-closed $F\subseteq K$ so that $b \in U \subseteq F$ and $A \cap F = \varnothing$.
\end{lemma}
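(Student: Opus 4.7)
The plan is to reduce this directly to Proposition~\ref{prop:hd} using the fact that, in any system of topologies over $K$, an isomorphism of $K$-varieties induces a homeomorphism on $K$-points. Since $\Aa^1 \to \Aa^1$ has a large automorphism group (the affine group of $K$), one can transport the single pair $(U_0,F_0)$ produced by Proposition~\ref{prop:hd} to separate $b$ from each element of $A$ individually, and then intersect.

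First I would apply Proposition~\ref{prop:hd} to fix a nonempty $\cE_K$-open $U_0 \subseteq K$ and an $\cF_K$-closed $F_0 \subsetneq K$ with $U_0 \subseteq F_0$. I then choose witnesses $c \in U_0$ and $d \in K \setminus F_0$; these are automatically distinct. If $A = \varnothing$ there is nothing to do (take $U = F = K$), so I assume $A$ is nonempty.

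For each $a \in A$, the hypothesis $b \notin A$ gives $a \ne b$, so the affine map
\[
\psi_a(x) \;=\; \frac{a-b}{d-c}(x - c) + b
\]
is a well-defined automorphism of $\Aa^1$ sending $c \mapsto b$ and $d \mapsto a$. Since $\psi_a$ is an isomorphism of $K$-varieties, it induces a homeomorphism on $K$-points in any system of topologies, and in particular in both $\cE_K$ and $\cF_K$. Hence $U_a := \psi_a(U_0)$ is an $\cE_K$-open subset of $K$ containing $b$, and $F_a := \psi_a(F_0)$ is an $\cF_K$-closed subset of $K$ with $U_a \subseteq F_a$ and $a \notin F_a$ (the latter because $d \notin F_0$).

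Finally I set $U := \bigcap_{a \in A} U_a$ and $F := \bigcap_{a \in A} F_a$. Finite intersections of open (resp.\ closed) sets are open (resp.\ closed) in any topology, so $U$ is $\cE_K$-open and $F$ is $\cF_K$-closed; clearly $b \in U \subseteq F$ and $A \cap F = \varnothing$. There is no genuine obstacle here — all the real content is contained in Proposition~\ref{prop:hd}; the extension to separating a finite set $A$ is just transport by the affine symmetry group of $\Aa^1$ together with the finite intersection closure of open/closed sets.
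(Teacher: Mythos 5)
Your proof is correct and takes essentially the same route as the paper's: both start from Proposition~\ref{prop:hd}, transport the pair $(U_0,F_0)$ by affine automorphisms of $\Aa^1$ (the paper translates to $b=0$ and uses scalings $x\mapsto c_a x$, while you write out the general affine map $\psi_a$ directly), and then take a finite intersection. The only difference is cosmetic.
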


\begin{proof}
After possibly translating we may suppose that $b = 0$.
Applying Proposition~\ref{prop:hd} and translating, we obtain an $\cE_K$-open $U$ and an $\cF_K$-closed $F$ such that $0 \in U \subseteq F \subsetneq K$.
For each $a \in A$ fix $c_a \in K^\times$ such that $a \notin c_a F$.
Then $F^* = \bigcap_{a \in A} c_a F$ is $\cF_K$-closed and $U^* = \bigcap_{a \in A} aU$ is $\cE_K$-open.
We have $0 \in U^* \subseteq F^*$ and $A \cap F^* = \varnothing$.
\end{proof}

The next lemma can be seen as a weak form of continuity of roots for the \'etale-open topology.

\begin{lemma}
\label{lem:roots}
Suppose that $K$ is perfect and not algebraically closed.
Fix $f \in \poly_d(K)$ and let $a \in K$ be a simple root of $f$.
Then there are $\cE_K$-open neighborhoods $U$ of $a$ and $O$ of $f$ such that any $h \in O$ has a unique root in $U$, and the root is simple.
\end{lemma}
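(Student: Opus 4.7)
The plan is to combine the finiteness and \'etaleness of polynomial multiplication (Fact~\ref{fact:poly-multi}) with Theorem~\ref{thm:etale-refine}. I will treat the main case $d\ge 2$; the case $d=1$ is trivial since every monic linear polynomial has exactly one root, so we may take $O=U$ for any $\cE_K$-open neighborhood $U$ of $a$. First I would factor $f(t)=(t-a)g(t)$ with $g\in\poly_{d-1}(K)$ and $g(a)\ne 0$ (as $a$ is simple), let $A\subseteq K$ be the finite set of roots of $f$ in $K$ other than $a$, and apply Lemma~\ref{lem:ez-fc-1} to obtain an $\cE_K$-open $U$ and an $\cF_K$-closed $F$ with $a\in U\subseteq F$ and $A\cap F=\varnothing$.

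For the existence half I would use the multiplication map $\mu\colon\poly_1\times\poly_{d-1}\to\poly_d$, whose \'etale locus is, by Fact~\ref{fact:poly-multi}, the open subvariety $V:=\{(t-\alpha,h):h(\alpha)\ne 0\}$; it contains the $K$-point $(t-a,g)$. Let $\pi\colon V\to\poly_1$ be the first projection (a morphism, hence $\cE_K$-continuous). Then $W_0:=\pi^{-1}(U)\cap V(K)$ is $\cE_K$-open in $V(K)$, so it is a union of E-sets $\psi_i(Z_i(K))$ with each $\psi_i\colon Z_i\to V$ \'etale. Since $\mu|_V$ is \'etale, each composite $\mu\circ\psi_i\colon Z_i\to\poly_d$ is \'etale by Fact~\ref{fact:etale}(1), so the set $O_0:=\mu(W_0)=\bigcup_i(\mu\circ\psi_i)(Z_i(K))$ is $\cE_K$-open and contains $f=\mu(t-a,g)$. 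For any $h\in O_0$ one has $h=(t-a')g'(t)$ for some $(t-a',g')\in V(K)$ with $a'\in U$, and since $g'(a')\ne 0$ the element $a'$ is a \emph{simple} root of $h$ lying in $U$.

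For the uniqueness half I would introduce the triple multiplication $\mu_2\colon\poly_1\times\poly_1\times\poly_{d-2}\to\poly_d$, which is finite by Fact~\ref{fact:poly-multi} and Fact~\ref{fact:basic-f}(1). The product $F\times F\times\poly_{d-2}(K)$ is $\cF_K$-closed by Fact~\ref{fact:prod}, hence its image $\tilde F$ under $\mu_2$ is $\cF_K$-closed by Lemma~\ref{lem:system-2}. Crucially, $f\notin\tilde F$: a factorization $f=(t-\alpha_1)(t-\alpha_2)h$ with $\alpha_1,\alpha_2\in F$ would force two roots of $f$ (counted with multiplicity) to lie in $F$, contradicting the fact that $a$ is the only root of $f$ in $F$ and is simple. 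Thus $O_1:=\poly_d(K)\setminus\tilde F$ is an $\cF_K$-open neighborhood of $f$, and by Theorem~\ref{thm:etale-refine} it is also $\cE_K$-open. Setting $O:=O_0\cap O_1$ then yields the desired $\cE_K$-open neighborhood: membership of $h$ in $O_0$ supplies a simple root $a'\in U$ of $h$, while membership in $O_1$ prevents $h$ from having a second root in $F\supseteq U$.

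The step I expect to be most delicate is justifying that $O_0$ is itself $\cE_K$-open. This relies on two facts working together: the $\cE_K$-continuity of the first projection $\pi\colon V\to\poly_1$ (so that $\pi^{-1}(U)$ remains $\cE_K$-open in $V(K)$), and closure of \'etale morphisms under composition (so that applying $\mu|_V$ to an E-cover of $\pi^{-1}(U)$ yields an E-cover of its image). The uniqueness half is conceptually simpler once the correct auxiliary variety $\tilde F$ is identified, but it is there that Theorem~\ref{thm:etale-refine} is used in an essential way to convert an $\cF_K$-open set into an $\cE_K$-open one.
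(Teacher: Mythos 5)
Your proof is correct and rests on the same ingredients as the paper's --- Lemma~\ref{lem:ez-fc-1}, Fact~\ref{fact:poly-multi}, Lemma~\ref{lem:system-2}, Fact~\ref{fact:prod}, and Theorem~\ref{thm:etale-refine} --- but organizes them differently. The paper first translates $a$ to $0$ and then works with a \emph{single} multiplication map $\poly_{d-1}\times\poly_1\to\poly_d$: it lets $S\subseteq\poly_{d-1}(K)$ be the $\cE_K$-open complement of the F-set of degree-$(d-1)$ monics with a root in $F$, and defines $O$ as the image of $(S\times U)\cap E(K)$ under multiplication, where $E$ is the \'etale locus. Existence and uniqueness are then read off a single factorization $h=(t-a')g'$ with $a'\in U$ and $g'\in S$: the factor $a'$ supplies the root, and $g'\in S$ (no root of $g'$ in $F\supseteq U$) prevents a second one. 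You instead encode the two halves of the conclusion as two independent open conditions: $O_0$ (existence) from the \'etale locus of $\poly_1\times\poly_{d-1}\to\poly_d$, and $O_1$ (uniqueness) from a second, triple, multiplication $\poly_1\times\poly_1\times\poly_{d-2}\to\poly_d$ whose $K$-rational image of $F\times F\times\poly_{d-2}(K)$ is the F-set $\tilde F$ of polynomials having at least two roots in $F$ counted with multiplicity. Both arguments are sound; the paper's is a little more economical (one map, one open set, one appeal to Theorem~\ref{thm:etale-refine}), while your two-step decomposition makes the logical split between ``at least one root in $U$'' and ``at most one root in $F$'' explicit, at the cost of introducing a second finite morphism. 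One small shared gap worth noting: $\poly_{d-2}$ is only defined for $d\ge 3$ (the paper sets $\poly_m=\Aa^m$ for $m\ge 1$), so the case $d=2$ requires interpreting $\poly_0$ as $\Spec K$ or simply dropping that factor; this edge case is glossed over in the paper's proof as well, and you should at least flag it alongside your treatment of $d=1$.
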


\begin{proof}
First observe that the map $K \to K, b \mapsto b - a$ and the map $\poly_d(K) \to \poly_d(K)$ given by $f(t) \mapsto f(t - a)$ are both $\cE_K$-homeomorphisms.
Hence may suppose that $a=0$.
Then $f(0) = 0\ne f'(0)$.
So $f(t) = t g(t)$ for some $g \in K[t]$ with $g(0) \ne 0$.
Applying Lemma~\ref{lem:ez-fc-1} we fix an $\cE_K$-open set $U$ and an $\cF_K$-closed set $F$ such that $0 \in U \subseteq F$ and $F$ does not contain a  root of $g$.
By Fact~\ref{fact:prod} $F \times \poly_{d - 2}(K)$ is an $\cF_K$-closed subset of $\poly_1(K) \times \poly_{d - 2}(K)$.
Let $H \subseteq \poly_{d - 1}(K)$ be the image of $F \times \poly_{d - 2}(K)$ under the multiplication map.
So $H$ is the set of degree $d - 1$ monic polynomials that have a root in $F$.
Lemma~\ref{lem:system-2} and Theorem~\ref{thm:etale-refine} together show that $H$ is $\cE_K$-closed.
Moreover, $g$ is not in $H$ as $g$ does not have a root in $F$.
Let $S$ be the complement of $H$ in $\poly_{d - 1}(K)$, so $S$ is $\cE_K$-open.
Hence $S \times U$ is an $\cE_K$-open neighborhood of $(g,0)$ in $\poly_{d - 1}(K) \times \poly_1(K)$.
As $g(0) \ne 0$, Fact~\ref{fact:poly-multi} implies that the multiplication morphism $\poly_{d - 1} \times \poly_1 \to \poly_d$ is \'etale at $(g,0)$.
Let $E \subseteq \poly_{d-1} \times \poly_1$ be the \'etale locus of the multiplication morphism.
Then $(S \times U) \cap E(K)$ is $\cE_K$-open in $E(K)$.
Let $O$ be the image of $(S \times U) \cap E(K)$ under the restriction of the multiplication morphism to $E(K) \to \poly_d(K)$.
Then $O$ is $\cE_K$-open by Definition~\ref{def:E}.
Moreover, $O$ contains $f$, the image of $(g,0)$.
Thus $O$ is an $\cE_K$-open neighborhood of $f$ which is contained in the image of $S \times U$ under the multiplication map $\poly_{d-1}(K) \times \poly_1(K) \to \poly_d(K)$.
Suppose $h \in O$.
Then $h(t) = (t - a) h^*(t)$ where $a \in U$ and $h^*(t)$ has no roots in $F$, hence $h$ has exactly one root in $U$, and this root is simple.
\end{proof}

We now prove the main result of this section, which is the first part of Theorem~B in the introduction.

\begin{theorem}
\label{thm:local-homeo-2}
Suppose that $K$ is not separably closed and $f \colon V \to W$ is \'etale.
Then $V(K) \to W(K)$ is a local homeomorphism in the $\cE_K$-topology.
\end{theorem}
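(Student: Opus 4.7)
My plan is to reduce the theorem to local injectivity of $f$ in the $\cE_K$-topology, and then to establish this first when $K$ is perfect, and afterwards in general via a reduction to the perfect closure. Observe that by Definition~\ref{def:E}, $f \colon V(K) \to W(K)$ is both $\cE_K$-continuous (as a morphism of $K$-varieties) and $\cE_K$-open (as an \'etale morphism). Consequently, if $U$ is a $\cE_K$-open neighborhood of $p \in V(K)$ on which $f$ is injective, then $f(U)$ is $\cE_K$-open and $f|_U \colon U \to f(U)$ is a continuous open bijection, hence a homeomorphism; so it suffices to produce, for each $p \in V(K)$, such a neighborhood.

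For the perfect case (so $K$ is perfect and not algebraically closed) I would fix $p \in V(K)$ and invoke Fact~\ref{fac: sdetalevsetale} to reduce, after shrinking $W$ to an affine open neighborhood of $f(p)$ and $V$ to an affine open neighborhood of $p$ in its preimage --- legitimate since open immersions give topological open embeddings --- to the case that $f$ is standard \'etale. Writing $W = \Spec A$ and $V = \Spec A[x]_g/(h)$ with $h \in A[x]$ monic of some degree $d$ in $x$ and $h'$ invertible in $A[x]_g/(h)$, the point $p$ corresponds to a pair $(w_0, b_0) \in (W \times \Aa^1)(K) = W(K) \times K$ with $b_0$ a simple root of $h(w_0, x) \in \poly_d(K)$. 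Applying Lemma~\ref{lem:roots} to this pair yields $\cE_K$-open sets $U_0 \ni b_0$ in $K$ and $O \ni h(w_0, x)$ in $\poly_d(K)$ such that every polynomial in $O$ has a unique root in $U_0$. The evaluation morphism $W(K) \to \poly_d(K)$, $w' \mapsto h(w', x)$, is $\cE_K$-continuous as a morphism of $K$-varieties, so its preimage $W_1$ of $O$ is a $\cE_K$-open neighborhood of $w_0$. Setting $U := V(K) \cap (W_1 \times U_0)$, Fact~\ref{fact:prod} ensures $W_1 \times U_0$ is $\cE_K$-open in $(W \times \Aa^1)(K)$, and the locally closed immersion $V \hookrightarrow W \times \Aa^1$ makes $U$ a $\cE_K$-open neighborhood of $p$ in $V(K)$. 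The injectivity of $f|_U$ then follows directly from the uniqueness clause of Lemma~\ref{lem:roots}: given $(w', b_1), (w', b_2) \in U$, both $b_1, b_2$ lie in $U_0$ and are roots of $h(w', x) \in O$, so $b_1 = b_2$.

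For arbitrary $K$ not separably closed, I would set $L := \perf$. Since any field extension that is simultaneously separable and purely inseparable is trivial, $K^{\sep} \cap L = K$, forcing $L$ to be not algebraically closed and a fortiori not separably closed, so the perfect case applies to $f_L \colon V(L) \to W(L)$. Given $p \in V(K) \subseteq V(L)$, I choose a $\cE_L$-open neighborhood $\tilde U \ni p$ in $V(L)$ on which $f_L$ is injective; by Fact~\ref{fact:kins}(2), the inclusion $V(K) \hookrightarrow V(L)$ is a topological embedding of the $\cE_K$-topology into the $\cE_L$-topology, so $U := \tilde U \cap V(K)$ is $\cE_K$-open, contains $p$, and $f|_U = f_L|_U$ is injective. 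The main obstacle is precisely arranging this reduction: Lemma~\ref{lem:roots} ultimately relies on Theorem~\ref{thm:etale-refine}, which requires $K$ perfect, so the natural workaround is the passage to $\perf$ (which inherits non-separable-closedness from $K$) and descent via Fact~\ref{fact:kins}.
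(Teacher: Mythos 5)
Your proof is correct and follows essentially the same route as the paper's: reduce local-homeomorphism to local injectivity, pass to the perfect closure via Fact~\ref{fact:kins}, reduce to a standard \'etale morphism, and apply Lemma~\ref{lem:roots} together with the continuity of the evaluation map $W(K) \to \poly_d(K)$ to obtain the injectivity neighborhood $V(K) \cap (W_1 \times U_0)$. The only difference is presentational (you prove the perfect case first and then descend, whereas the paper performs the reduction to $\perf$ up front), and you are slightly more explicit than the paper in noting that $\perf$ remains non-algebraically-closed and in invoking Fact~\ref{fact:prod} for the product step.
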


We let $\Sa O(W)$ be the ring of regular functions on $W$.

\begin{proof}
As $V(K) \to W(K)$ is continuous and open, it suffices to show that it is locally injective.
We first reduce to the case when $K$ is perfect.
Let $L/K$ be the maximal purely inseparable extension of $K$, so $L$ is perfect.
Let $V_L \to W_L$ be the base change of $V \to W$.
Then $V_L \to W_L$ is \'etale as \'etale morphisms are closed under base change.
The $\cE_K$-topology on $V(K)$ agrees with the topology induced by the $\cE_L$-topology on $V_L(L)$ by Fact~\ref{fact:kins}.
Hence it is enough to show that $V_L(L) \to W_L(L)$ is locally injective in the $\cE_L$-topology.
Thus we may suppose that $K$ is perfect.

\medskip
By Fact~\ref{fac: sdetalevsetale} we may suppose that $V\to W$ is standard \'etale.
Hence we suppose that $V$ is a subvariety of $W \times \Aa^1$ given by $f = 0 \ne g$ for some $f,g \in \Sa O(W)[y]$ such that $f$ is monic in $y$ and $\der f/\der y$ does not vanish on $V$, and suppose that $V \to W$ is the projection.

\medskip
Fix $(a,b) \in V(K)$.
We show that $f$ is locally injective at $(a,b)$.
Let $f_c \in K[y]$ be given by $f_c(y) = f(c,y)$ for each $c \in W(K)$.
Every $f_c$ is monic of degree $d$ for some $d$ as $f$ is monic.
Let $\gamma \colon W(K) \to \poly_d(K)$ be given by $\gamma(c) = f_c$.
Then $\gamma$ is a continuous map.
We have $f'_a(b) = \der f/\der y (a,b) \ne 0$, where the derivative $f'_b$ is taken in $K[y]$.
Applying Lemma~\ref{lem:roots} we obtain $\cE_K$-open neighborhoods $b \in U \subseteq K$ and $f_a \in O^* \subseteq \poly_d(K)$ so that any $h \in O^*$ has a unique root in $U$.
Let $O = \gamma^{-1}(O^*)$.
Then $O$ is an $\cE_K$-open neighborhood $a \in O \subseteq W(K)$, and $f_c$ has a unique root in $U$ when $c \in O$.
Then $V(K) \cap (O \times U)$ is an $\cE_K$-open neighborhood of $(a,b) \in V(K)$ and the restriction of $f$ to $V(K) \cap (O \times U) \to O$ is injective.
\end{proof}

Fact~\ref{fact:local-coordinate} below is \cite[Lemma~054L]{stacks-project}.  We need to assume irreducibility simply to ensure that the local dimension of $V$ at $p \in V$ does not depend on $p$.

\begin{fact}\label{fact:local-coordinate}
If $V$ is smooth and irreducible then any $p\in V$ is contained in an open subvariety $W$ of $V$ which admits an \'etale morphism $W\to \Aa^d$ for $d = \dim V$
\end{fact}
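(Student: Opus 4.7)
The plan is to construct the \'etale morphism $W \to \Aa^d$ by choosing $d$ regular functions near $p$ whose differentials form a basis of the cotangent space of $V$ at $p$, and then checking that the resulting morphism $V \dashrightarrow \Aa^d$ is \'etale on some open neighborhood of $p$.

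First I would pass to an affine open neighborhood $U \subseteq V$ of $p$. Since $V$ is smooth and irreducible and $\dim V = d$, the local ring $\mathcal{O}_{V,p}$ is a regular local $K$-algebra of dimension $d$, so its cotangent space $\mathfrak{m}_p/\mathfrak{m}_p^2$ has dimension $d$ over the residue field $\kappa(p)$. I would choose regular functions $f_1,\ldots,f_d \in \mathcal{O}(U)$ whose images in $\mathfrak{m}_p/\mathfrak{m}_p^2$ form a $\kappa(p)$-basis (after shrinking $U$, one can even arrange $f_i(p) = 0$ by translating). These functions define a morphism
\[
\phi \colon U \longrightarrow \Aa^d, \qquad q \mapsto (f_1(q),\ldots,f_d(q)).
\]

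Next I would verify that $\phi$ is \'etale at $p$. Since $\Aa^d$ is smooth of dimension $d$ over $K$ and $U$ is smooth of dimension $d$ over $K$ at $p$, by the Jacobian criterion it suffices to show that the induced map on relative cotangent spaces at $p$ is an isomorphism. But this map sends the class of the coordinate $y_i$ on $\Aa^d$ to the class of $f_i$, so by construction it is surjective (hence bijective, as both spaces have dimension $d$). Equivalently, $\phi$ is unramified at $p$, and flat at $p$ by the miracle flatness theorem applied to a map between regular local rings of equal dimension; \'etaleness at $p$ follows.

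Finally, the \'etale locus of a morphism of finite type between noetherian schemes is open (Stacks Project Lemma~02GI), so there is an open neighborhood $W \subseteq U$ of $p$ on which $\phi$ restricts to an \'etale morphism $W \to \Aa^d$, as required. The main subtlety is the verification that $\phi$ is \'etale at $p$: the cotangent-space computation gives unramifiedness immediately, but flatness at $p$ needs to be justified, either via miracle flatness for regular local rings of equal dimension or by exhibiting $\phi$ as standard \'etale locally at $p$ using the chosen generators of $\mathfrak{m}_p/\mathfrak{m}_p^2$.
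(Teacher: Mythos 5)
The paper does not give a proof of this fact: it simply cites \cite[Lemma~054L]{stacks-project}. Your argument is the standard one and is, in essence, the argument behind that reference (choose local coordinates, use the Jacobian criterion or miracle flatness, then spread out using openness of the \'etale locus). So your approach is the ``same'' as the paper's in the only meaningful sense available.

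There is, however, a small but genuine inaccuracy in the way you set up the local coordinates. The statement is about an arbitrary point $p \in V$, not only a closed point, and you assert that $\mathcal{O}_{V,p}$ is a regular local ring of dimension $d$ and that $\mathfrak{m}_p/\mathfrak{m}_p^2$ is a $d$-dimensional $\kappa(p)$-vector space. This is only correct when $p$ is a closed point; at a non-closed point the local ring has dimension $\dim V - \dim \overline{\{p\}} < d$, so $\mathfrak{m}_p/\mathfrak{m}_p^2$ is too small to contain $d$ linearly independent classes. Even at a closed point $p$ whose residue field $\kappa(p)$ is inseparable over $K$, a $\kappa(p)$-basis of $\mathfrak{m}_p/\mathfrak{m}_p^2$ need not map to a basis of $\Omega_{V/K} \otimes \kappa(p)$, because the conormal sequence has nonzero cokernel $\Omega_{\kappa(p)/K}$; the resulting $\phi$ would then fail to be unramified at $p$. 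The robust formulation, and the one used in the cited Stacks lemma, is to pick $f_1,\ldots,f_d$ on an affine neighborhood whose differentials $df_1,\ldots,df_d$ form a basis of the stalk $\Omega_{V/K,p}$ of the (locally free, rank $d$) cotangent sheaf, with no requirement that the $f_i$ vanish at $p$. With that change the rest of your argument (Jacobian criterion, miracle flatness for the flat direction, openness of the \'etale locus) goes through for arbitrary $p \in V$. For the application in this paper one only needs $K$-points, and there $\kappa(p) = K$, so your cotangent-space computation is correct as written; it is only the claim for general $p$ that needs the cotangent-sheaf reformulation.
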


Corollary~\ref{cor:manifold} below follows from Theorem~\ref{thm:local-homeo-2} and Fact~\ref{fact:local-coordinate}. It is the second part of Theorem~B from the introduction. 

\begin{corollary}\label{cor:manifold}
If $V$ is smooth and irreducible then the $\cE_K$-topology on $V(K)$ is locally homeomorphic to the $\cE_K$-topology on $K^d$ for $d$ the dimension of $V$.
\end{corollary}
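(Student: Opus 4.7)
The plan is to derive Corollary~\ref{cor:manifold} as a direct assembly of the two results flagged by the authors: Fact~\ref{fact:local-coordinate}, which supplies \'etale local coordinates on any smooth irreducible variety, and Theorem~\ref{thm:local-homeo-2}, which converts \'etale morphisms into local homeomorphisms in the $\cE_K$-topology (using the ambient assumption, inherited from the Theorem~B context in which this corollary sits, that $K$ is not separably closed).

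First, I would fix a point $p\in V(K)$; if $V(K)=\varnothing$ there is nothing to prove. Applying Fact~\ref{fact:local-coordinate} to $V$ at $p$, I obtain an open subvariety $W\subseteq V$ containing $p$ together with an \'etale morphism $\varphi\colon W\to \Aa^d$ where $d=\dim V$. Next, I would invoke clause (2) of Definition~\ref{sys-def}, applied to the $\cE_K$-topology (which is a system of topologies by Definition~\ref{def:E}): since $W\hookrightarrow V$ is an open immersion, the induced inclusion $W(K)\hookrightarrow V(K)$ is a topological open embedding in the $\cE_K$-topology, so $W(K)$ is an $\cE_K$-open neighborhood of $p$ in $V(K)$ whose $\cE_K$-topology agrees with the subspace topology. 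Finally, I would apply Theorem~\ref{thm:local-homeo-2} to $\varphi$ to conclude that the induced map $\varphi\colon W(K)\to \Aa^d(K)=K^d$ is a local homeomorphism in the $\cE_K$-topology. Shrinking to a suitable $\cE_K$-open $U\subseteq W(K)$ containing $p$ on which $\varphi$ restricts to an $\cE_K$-homeomorphism onto an $\cE_K$-open subset of $K^d$, and using that $U$ is simultaneously $\cE_K$-open in $V(K)$, gives exactly the required local model at $p$.

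There is no genuine obstacle: all substantive work has already been absorbed into Theorem~\ref{thm:local-homeo-2} and Fact~\ref{fact:local-coordinate}. The only bookkeeping is verifying that $\cE_K$-openness transfers between $V(K)$ and the Zariski-open subset $W(K)$, which is precisely the open-immersion axiom for a system of topologies, and that the target identification $\Aa^d(K) = K^d$ is the canonical one fixed in Section~\ref{sec: basicalgngeo}. Consequently the proof should occupy only a few lines.
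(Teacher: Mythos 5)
Your proof is correct and follows exactly the route the paper intends: combine Fact~\ref{fact:local-coordinate} to get \'etale local coordinates on a Zariski-open neighborhood, then apply Theorem~\ref{thm:local-homeo-2} to that \'etale morphism, with the open-immersion axiom of Definition~\ref{sys-def} supplying the routine transfer of $\cE_K$-openness between $W(K)$ and $V(K)$. The paper states the corollary follows from precisely these two ingredients and gives no further detail, so your write-up is, if anything, more explicit than the original.
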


\section{Nash functions over large fields}\label{section:nashh-basic}

The \'etale-open topology introduced by us is different from Grothendieck's much more famous \'etale topology.
In particular, the former is a topology in the classical sense while the latter is not.
We nevertheless establish a connection between them for large fields (Theorem~\ref{thm:milne}).
This uses Theorem~B from the last section and will be used to prove Theorem~A 
in the next section.

\medskip
Let $U$ be an open semialgebraic subset of $\Rr^n$ and $f$ be a function $U \to \Rr$.
Then $f$ is \textit{Nash} if it is $C^\infty$ and semialgebraic, equivalently if it is real analytic and algebraic over $\Rr[x_1,\ldots,x_n]$~\cite[\S{}8]{real-algebraic-geometry}.
Artin and Mazur showed that $f$ is Nash if and only if it can be expressed as a composition of an inverse of an \'etale map with an algebraic map~\cite[Thm.~8.4.4]{real-algebraic-geometry}.
This allows us to define Nash functions over non-separably closed large fields.
\textbf{Throughout this section $K$ is large and not separably closed.}

\medskip
Suppose that $V_i$ is a smooth $K$-variety and $O_i$ is a nonempty $\cE_K$-open subset of $V_i(K)$ for $i = 1,2$.
A function $f \colon O_1 \to O_2$ is a \textbf{Nash map} if there is a $K$-variety $X$, \'etale morphism $e \colon X \to V_1$, morphism $h \colon X \to V_2$, and $\cE_K$-open subset $P$ of $X(K)$ such that $e$ restricts to a homeomorphism $P \to O_1$, $h$ maps $P$ into $O_2$, and $f = h \circ (e|_P)^{-1}$.
A \textbf{Nash function} is a Nash map with codomain $K$.
Note that a Nash map is $\cE_K$-continuous.
We say that $(X,e,h,P)$ is \textbf{Nash data} for $f$.
Note that $X$ is necessarily smooth as $V$ is smooth and $e$ is \'etale.
Furthermore note that $V(K) \to W(K)$ is a Nash map for any morphism $V \to W$ between smooth varieties as the identity $V \to V$ is \'etale.
The following proposition shows that Nash maps form a category.

\begin{proposition}
\label{prop:composition}
Nash maps are closed under composition.
\end{proposition}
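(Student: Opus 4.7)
The plan is to build Nash data for $g \circ f$ out of Nash data for $f$ and $g$ by taking a fibre product. Write $f : O_1 \to O_2$ with Nash data $(X, e, h, P)$ and $g : O_2 \to O_3$ (with $O_3$ a nonempty $\cE_K$-open subset of $V_3(K)$ for some smooth $V_3$) with Nash data $(Y, e', h', Q)$. Form the pullback $Z = X \times_{V_2} Y$ along $h : X \to V_2$ and $e' : Y \to V_2$, with projections $\pi_X : Z \to X$ and $\pi_Y : Z \to Y$. Since \'etale morphisms are stable under base change and composition (Fact~\ref{fact:etale}), $\pi_X$ is \'etale and hence so is $e \circ \pi_X : Z \to V_1$. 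Set $h'' = h' \circ \pi_Y : Z \to V_3$.

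To supply the required open subset of $Z(K)$, I exploit that for each $p \in P$ one has $h(p) \in O_2 = e'(Q)$, so there is a unique $q(p) \in Q$ with $e'(q(p)) = h(p)$. This defines a set-theoretic section $s : P \to Z(K)$ by $s(p) = (p, q(p))$, lifting the inclusion $P \hookrightarrow X(K)$ along $\pi_X$. Continuity of $s$ in the $\cE_K$-topology is immediate: $h|_P : P \to O_2$ is continuous because $h$ is a morphism, and $(e'|_Q)^{-1} : O_2 \to Q$ is a homeomorphism by hypothesis, so $p \mapsto q(p)$ is continuous, and hence so is $s$.

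The key step is to show $R := s(P)$ is $\cE_K$-open in $Z(K)$ and that $\pi_X|_R : R \to P$ is a homeomorphism. Here I invoke Theorem~\ref{thm:local-homeo-2}, which uses the standing hypothesis that $K$ is not separably closed to conclude that the \'etale morphism $\pi_X$ induces a local homeomorphism $Z(K) \to X(K)$ in the $\cE_K$-topology. A standard general-topology argument then applies: a continuous section of a local homeomorphism over an open set is an open embedding. Concretely, for each $z = s(p) \in R$, pick an open neighborhood $U$ of $z$ on which $\pi_X$ restricts to a homeomorphism onto an open subset of $X(K)$, and shrink $P$ using continuity of $s$ to an open $W \ni p$ with $s(W) \subseteq U$; then $s(W) = (\pi_X|_U)^{-1}(W)$ is open in $U$ and therefore in $Z(K)$. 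Thus $R$ is $\cE_K$-open and $\pi_X|_R$ is a homeomorphism with inverse $s$.

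Finally I verify that $(Z,\, e \circ \pi_X,\, h'',\, R)$ is Nash data for $g \circ f$. The map $(e \circ \pi_X)|_R$ factors as $e|_P \circ \pi_X|_R$, a composition of homeomorphisms onto $O_1$. For $(p,q) \in R$ one has $h''(p,q) = h'(q) \in h'(Q) \subseteq O_3$. Unwinding definitions,
\[
h'' \circ \bigl((e \circ \pi_X)|_R\bigr)^{-1} \;=\; h' \circ (e'|_Q)^{-1} \circ h \circ (e|_P)^{-1} \;=\; g \circ f.
\]
The only nontrivial input is the openness of $R$, which is precisely what Theorem~\ref{thm:local-homeo-2} delivers; the rest is a diagram chase.
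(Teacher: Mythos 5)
Your proof is correct, but it routes through heavier machinery than the paper's. You both work with the same set: what you call $R = s(P)$ is exactly the paper's set-theoretic fiber product $P_1 \times_{O_2} P_2 = \{(a,b) : a \in P, \ b \in Q, \ h(a) = e'(b)\}$. The difference is how openness of this set (and the homeomorphism to $P$) is established. You build the continuous section $s \colon P \to Z(K)$ of $\pi_X$ and invoke Theorem~\ref{thm:local-homeo-2} (\'etale $\Rightarrow$ local homeomorphism), plus the general-topology fact that a continuous section of a local homeomorphism over an open set is an open embedding. The paper instead makes the elementary observation that this set is literally $\uppi_1^{-1}(P) \cap \uppi_2^{-1}(Q)$ inside $(X \times_{V_2} Y)(K)$, hence open by continuity of the two projections, and then shows the restricted projection to $P$ is injective (hence a homeomorphism, being a continuous open bijection). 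Your approach is sound and perhaps more conceptual, but it imports Theorem~\ref{thm:local-homeo-2} where an entirely elementary argument suffices; the paper reserves Theorem~\ref{thm:local-homeo-2} for later use in this section (e.g.\ Lemma~\ref{lem:nash-ring-1}).
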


\begin{proof}
Let $V_i$ be a smooth $K$-variety and $O_i$ be a nonempty $\cE_K$-open subset of $V_i(K)$ for $i=1,2,3$.
Let $f_1 \colon O_1 \to O_2$ and $f_2 \colon O_2 \to O_3$ be Nash maps.
Given $i = 1,2$ we let $(X_i,e_i,h_i,P_i)$ be Nash data for $f_i$.
Let $\uppi_i \colon X_1 \times_{V_2} X_2 \to X_i$ be the projection for $i\in \{1,2\}$.
Note that $\uppi_1$ is \'etale as it is the base change of an \'etale morphism.
Hence $e_1 \circ \uppi_1$ is an \'etale morphism $X_1 \times_{V_2} X_2 \to V_1$.
We have the following diagram where the vertical arrows are \'etale morphisms and the horizontal arrows are morphisms.
\[ \xymatrix{
 X_1 \times_{V_2} X_2 \ar[r]^{\quad\uppi_2} \ar[d]^{\uppi_1} & X_2 \ar[d]^{e_2} \ar[r]^{h_2} & V_3 \\  X_1 \ar[d]^{e_1}
\ar[r]^{h_1} & V_2  \\ V_1 } \]
Let $P_1 \times_{O_2} P_2$ be the set-theoretic fiber product of $P_1$ and $P_2$ over $O_2$.
By definition of the fiber product we have $P_1 \times_{O_2} P_2 = \uppi^{-1}_1(P_1) \cap \uppi^{-1}_2(P_2)$ and hence $P_1 \times_{O_2} P_2$ is an $\cE_K$-open subset of $(X_1 \times_{V_2} X_2)(K)$.
We show that the projection $P_1 \times_{O_2} P_2 \to P_1$ is injective and hence an $\cE_K$-homeomorphism.
Fix elements $(a,b),(a,b^*) \in P_1 \times_{O_2} P_2$, so $a\in P_1$ and $b,b^*\in P_2$.
Then $e_2(b) = h_1(a) = e_2(b^*)$.
As $a \in P_1$ we have $h_1(a) \in O_2$, so $b = b^*$ as $e_2$ gives a bijection $P_2 \to O_2$.
Let $d_1,d_2, \rho$ be the restriction of $e_1,e_2,\uppi_1$ to $O_1,O_2, P_1 \times_{O_2} P_2$, respectively.
Therefore we obtain the following diagram of Nash maps.
\[ \xymatrix{
P_1 \times_{O_2} P_2 \ar[r]^{\quad \uppi_2}
& P_2 \ar[r]^{h_2} & O_3 \\  P_1 \ar[u]_{\rho^{-1}}
\ar[r]^{h_1} & O_2 \ar[ur]_{f_2} \ar[u]_{d^{-1}_2}  \\ O_1 \ar[ur]_{f_1} \ar[u]_{d^{-1}_1} } \]
It is easy to see that this diagram commutes.
It follows that
$$ f_2 \circ f_1 = h_2 \circ d^{-1}_2 \circ h_1 \circ d^{-1}_1 = h_2 \circ \uppi_2 \circ \rho^{-1} \circ d^{-1}_1 = (h_2 \circ \uppi_2) \circ (d_1 \circ \rho)^{-1}.$$
Hence $f_2 \circ f_1$ is Nash.
\end{proof}

Let $O$ be a nonempty \'etale-open subset of $K^m$ and $f \colon O \to K^n$ be a Nash map.
Then each composition of $f$ with a coordinate projection $K^n \to K$ is a composition of Nash maps and is hence Nash.
Proposition~\ref{prop:nash-product} gives the converse.

\begin{proposition}
\label{prop:nash-product}
Suppose that $V$, $W_1,\ldots,W_n$ are smooth $K$-varieties, $O$ is a nonempty $\cE_K$-open subset of $V(K)$, and $f_i \colon O \to W_i(K)$ is a Nash map for each $i \in \{1,\ldots,n\}$.
Let $f \colon O \to (W_1 \times \ldots \times W_n)(K)$ be given by $f = (f_1,\ldots,f_n)$.
Then $f$ is a Nash map.
\end{proposition}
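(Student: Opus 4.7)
The plan is to combine the Nash data for the $f_i$ using the fiber product over $V$, mimicking and extending the construction used in the proof of Proposition~\ref{prop:composition}. For each $i$, let $(X_i,e_i,h_i,P_i)$ be Nash data witnessing that $f_i$ is a Nash map. I will set
\[ X = X_1 \times_V X_2 \times_V \cdots \times_V X_n, \]
let $\uppi_i \colon X \to X_i$ be the canonical projections, and define $e = e_1 \circ \uppi_1 \colon X \to V$ together with $h = (h_1 \circ \uppi_1, \ldots, h_n \circ \uppi_n) \colon X \to W_1 \times \cdots \times W_n$. Since \'etale morphisms are closed under base change and composition (Fact~\ref{fact:etale}(1)), each $\uppi_i$ and hence $e$ is \'etale, so $X$ is smooth (in particular reduced) and of finite type over $K$, and it is separated because \'etale morphisms are separated. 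Thus $X$ is a $K$-variety and $e$ is an \'etale morphism.

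Next I would define the candidate neighborhood
\[ P = \uppi_1^{-1}(P_1) \cap \cdots \cap \uppi_n^{-1}(P_n) \subseteq X(K). \]
Each $\uppi_i \colon X(K) \to X_i(K)$ is $\cE_K$-continuous, so $P$ is $\cE_K$-open. The key step is to verify that $e$ restricts to a homeomorphism $P \to O$. Given $a \in O$, there is for each $i$ a unique $p_i \in P_i$ with $e_i(p_i) = a$, and the tuple $(p_1,\ldots,p_n)$ lies in $X(K)$ (since all the $e_i(p_i)$ agree), lies in $P$, and maps to $a$ under $e$; conversely any preimage of $a$ in $P$ must have $\uppi_i$-projection equal to this unique $p_i$. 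So $e|_P \colon P \to O$ is a continuous bijection. By Theorem~\ref{thm:local-homeo-2}, $e \colon X(K) \to V(K)$ is a local homeomorphism in the $\cE_K$-topology, hence so is $e|_P$, and a bijective local homeomorphism is a homeomorphism.

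Finally, I would check that $h \circ (e|_P)^{-1} = f$: for $a \in O$ the preimage $(e|_P)^{-1}(a) = (p_1,\ldots,p_n)$ has $p_i = (e_i|_{P_i})^{-1}(a)$, so its image under $h$ is $(h_1(p_1),\ldots,h_n(p_n)) = (f_1(a),\ldots,f_n(a)) = f(a)$. Thus $(X,e,h,P)$ is Nash data for $f$, completing the proof.

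The step I expect to require the most care is the verification that $e|_P$ is a homeomorphism: one must be sure that $X$ really is a $K$-variety so that Theorem~\ref{thm:local-homeo-2} applies, and that $e|_P$ is a local homeomorphism as a map between the specified subspaces (which follows because local homeomorphisms restrict to local homeomorphisms on open subsets, and $P$ is $\cE_K$-open in $X(K)$ while $O$ is $\cE_K$-open in $V(K)$). Everything else is formal diagram chasing analogous to Proposition~\ref{prop:composition}.
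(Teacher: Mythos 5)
Your proof is correct and follows essentially the same approach as the paper's: build Nash data for $f$ by taking the fiber product of the $X_i$ over $V$, pull back the $P_i$, and push the $h_i$ forward. The only minor differences are cosmetic: you handle general $n$ directly where the paper inducts to the case $n=2$, and to see that $e|_P \colon P \to O$ is a homeomorphism you invoke Theorem~\ref{thm:local-homeo-2} (a bijective local homeomorphism is a homeomorphism), whereas the paper just uses the more elementary fact that \'etale morphisms induce continuous open maps in the $\cE_K$-topology, so the restriction of $e$ to the open set $P$ is already a continuous open bijection onto $O$; both arguments are valid here since this section assumes $K$ is not separably closed.
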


Equivalently the set-theoretic product is the product in the category of Nash maps.

\begin{proof}
An obvious induction reduces to the case when $n = 2$.
For each $i \in \{1,2\}$ let $(X_i,e_i,h_i,P_i)$ be Nash data for $f_i$.
Let $\uppi_i \colon X_1 \times_V X_2 \to X_i$ be the projection for $i \in \{1,2\}$ and $e \colon X_1 \times_V X_2 \to V$ be the canonical morphism.
Recall that $X_1 \times_V X_2$ is the closed subvariety of $X_1 \times X_2$ given by $e_1(x) = e_2(y)$, and $e$ is given by $e(x,y) = e_1(x)$.
Now $e$ is \'etale.
Let $h \colon X_1 \times_V X_2 \to W_1 \times W_2$ be the morphism given by $h(x,y) = (h_1(x),h_2(y))$.
\begin{center}
\begin{tikzcd}
  & X_1 \arrow[ld, "e_1"'] \arrow[r, "h_1"] & W_1                                                                                     &                \\
V &                                         & X_1 \times_V X_2 \arrow[lu, "\uppi_1"'] \arrow[ld, "\uppi_2"'] \arrow[ll, "e"'] \arrow[r, "h"] & W_1 \times W_2 \\
  & X_2 \arrow[lu, "e_2"] \arrow[r, "h_2"]  & W_2                                                                                     &               
\end{tikzcd}
\end{center}
As in the proof of Proposition~\ref{prop:composition} $P_1 \times_{O} P_2$ is an $\cE_K$-open subset of $(X_1 \times_V X_2)(K)$.
Each $e_i$ gives a bijection $P_i \to O$.
Hence for every $a_1 \in P_1$ there is a unique $a_2 \in P_2$ such that $e_1(a_1) = e_2(a_2)$.
Equivalently for every $a_1 \in P_1$ we have $(a_1,a_2)\in P_1 \times_O P_2$ for a unique $a_2 \in P_2$.
Hence $\uppi_1$ gives a bijection $P_1 \times_O P_2 \to P_1$ and so $e$ gives a bijection $P_1 \times_O P_2 \to O$.
As $e$ is \'etale this bijection is a homeomorphism.
Hence $(X_1 \times_V X_2, e , h , P_1 \times_O P_2)$ is Nash data for a Nash map $f \colon O \to (W_1 \times W_2)(K)$.
It remains to show that $f = (f_1,f_2)$.
We leave this to the reader.
\end{proof}

\begin{proposition}
\label{prop:nash-ring-0}
Suppose $V$ is a smooth irreducible $K$-variety and $O \subseteq V(K)$ is nonempty $\cE_K$-open.
Let $\Sa N(O)$ be the set of Nash functions on $O$.
Then $\Sa N(O)$ is closed under pointwise addition and multiplication.
If $f \in \Sa N(O)$ is everywhere nonzero on $O$ then $1/f \in \Sa N(O)$.
\end{proposition}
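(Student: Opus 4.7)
The plan is to reduce everything to Propositions~\ref{prop:composition} and~\ref{prop:nash-product}. Observe first that for any smooth $K$-variety $U$ and any morphism $U \to \Aa^n$, the induced map on $K$-points is a Nash map: the identity $U \to U$ is \'etale, so $(U, \operatorname{id}_U, \text{morphism}, U(K))$ is Nash data. In particular, the addition morphism $\alpha\colon \Aa^2 \to \Aa^1$, the multiplication morphism $\mu\colon \Aa^2 \to \Aa^1$, and the inversion isomorphism $\iota\colon \Gg_m \to \Gg_m$ all induce Nash maps on $K$-points (here $\Gg_m = \Aa^1 \setminus \{0\}$ is a smooth $K$-variety).

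For closure under addition and multiplication, suppose $f_1, f_2 \in \Sa N(O)$. By Proposition~\ref{prop:nash-product} applied with $W_1 = W_2 = \Aa^1$, the map $(f_1, f_2)\colon O \to K^2 = \Aa^2(K)$ is Nash. Then $f_1 + f_2 = \alpha \circ (f_1, f_2)$ and $f_1 \cdot f_2 = \mu \circ (f_1, f_2)$ are Nash by Proposition~\ref{prop:composition}.

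For closure under reciprocals, suppose $f \in \Sa N(O)$ is nowhere zero on $O$, so $f(O) \subseteq K^\times = \Gg_m(K)$. I first argue that $f$, regarded as a map $O \to \Gg_m(K)$, is still a Nash map. Let $(X,e,h,P)$ be Nash data for $f\colon O \to K$, so $X$ is smooth, $e\colon X \to V$ is \'etale, $h\colon X \to \Aa^1$ is a morphism, $e|_P\colon P \to O$ is an $\cE_K$-homeomorphism, and $f = h \circ (e|_P)^{-1}$. Let $X' = h^{-1}(\Gg_m)$, an open subvariety of $X$ (hence smooth); then $e|_{X'}\colon X' \to V$ is \'etale and $h|_{X'}\colon X' \to \Gg_m$ is a morphism. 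Since $h(P) = f(O) \subseteq K^\times$, we have $P \subseteq X'(K)$, and $P$ is $\cE_K$-open in $X'(K)$ by Definition~\ref{sys-def}(2). Thus $(X', e|_{X'}, h|_{X'}, P)$ is Nash data witnessing that $f\colon O \to \Gg_m(K)$ is Nash. Composing with the Nash map $\iota\colon \Gg_m(K) \to \Gg_m(K) \subseteq K$ via Proposition~\ref{prop:composition} yields $1/f \in \Sa N(O)$.

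No step is a serious obstacle once Propositions~\ref{prop:composition} and~\ref{prop:nash-product} are available; the only point requiring a moment's care is the reciprocal, where one must notice that restricting the Nash datum $(X,e,h,P)$ to $X' = h^{-1}(\Gg_m)$ gives a Nash map into $\Gg_m(K)$ to which the algebraic isomorphism $\iota$ may then be composed.
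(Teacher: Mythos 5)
Your proof is correct and follows essentially the same route as the paper's: use Proposition~\ref{prop:nash-product} to package $(f_1,f_2)$ as a Nash map into $K^2$, then compose with the addition/multiplication morphisms via Proposition~\ref{prop:composition}. The only difference is that where the paper dismisses $1/f$ with ``a similar argument,'' you spell out the genuinely necessary step of restricting the Nash datum $(X,e,h,P)$ to $X' = h^{-1}(\Gg_m)$ so that $f$ becomes a Nash map with codomain $\Gg_m(K)$, after which composition with inversion applies; this is exactly the point the paper's phrasing elides, and your handling of it is correct.
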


\begin{proof}
Fix $f,g \in \Sa N(O)$.
By Proposition~\ref{prop:nash-product} $(f,g)$ is a Nash map $O \to K^2$.
Now $f+g$, $fg$ is the composition of $(f,g)$ with the map $K^2\to K$ given by $(x,y)\mapsto x + y$, $(x,y)\mapsto xy$, respectively.
Hence $f+g$ and $fg$ are compositions of Nash maps and hence Nash.
A similar argument handles $1/f$.
\end{proof}

Note that a constant map $O \to K$ is trivially Nash.
We therefore consider $\Sa N(O)$ to be a $K$-algebra in the natural way.

\begin{lemma}\label{lem:O->N}
Suppose that $V$ is a smooth irreducible $K$-variety and $O$ is a nonempty $\cE_K$-open subset of $V(K)$.
Let $\Sa O(V)$ be the ring of regular functions on $V$.
Then $f \mapsto f|_{O}$ gives a $K$-algebra embedding $\Sa O(V) \to \Sa N(O)$.
\end{lemma}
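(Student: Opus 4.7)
The plan is to verify three things: that each regular function restricts to a Nash function on $O$, that the restriction map respects the $K$-algebra structure, and that it is injective. Only the last requires any real input; the first two are essentially unwinding of definitions.

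First, I will check that the map is well-defined, i.e., that $f|_O$ is Nash for every $f \in \mathcal{O}(V)$. Recall from the discussion preceding Proposition~\ref{prop:composition} that $V(K) \to W(K)$ is Nash for any morphism $V \to W$ between smooth $K$-varieties, witnessed by the Nash data consisting of the identity \'etale morphism on $V$. A regular function $f \in \mathcal{O}(V)$ is precisely a morphism $f\colon V \to \Aa^1$, and $\Aa^1$ is smooth, so $f$ induces a Nash map $V(K) \to K$. To obtain a Nash map on $O$, one takes Nash data $(V, \mathrm{id}_V, f, O)$: the identity restricts to the homeomorphism $\mathrm{id}\colon O \to O$, and $f$ maps $O$ into $K$, so $f|_O = f \circ (\mathrm{id}|_O)^{-1}$ is Nash.

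Next, the fact that $f \mapsto f|_O$ is a $K$-algebra homomorphism is immediate, since addition, multiplication, and scalar action are all defined pointwise both in $\mathcal{O}(V)$ and in $\Sa N(O)$ (and the latter is closed under these operations by Proposition~\ref{prop:nash-ring-0}).

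Finally, for injectivity, suppose $f \in \mathcal{O}(V)$ satisfies $f|_O = 0$. Since $K$ is large and $V$ is smooth and irreducible, every point of $V(K)$ is smooth, so Fact~\ref{fact:z-dense} applies to the nonempty $\cE_K$-open set $O$ and yields that $O$ is Zariski dense in $V$. Thus the regular function $f$ vanishes on a Zariski dense subset of the reduced, irreducible $K$-variety $V$, forcing $f = 0$. I expect no real obstacle here; the only thing worth flagging is that the hypothesis that $K$ is large (in force throughout Section~\ref{section:nashh-basic}) is essential, as it is exactly what guarantees Fact~\ref{fact:z-dense} applies.
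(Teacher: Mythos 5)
Your proof is correct and follows essentially the same route as the paper: the well-definedness and homomorphism checks are routine (the paper just says ``clearly''), and the injectivity is exactly the paper's argument via Fact~\ref{fact:z-dense}, merely phrased as ``$O$ is Zariski dense, so $f|_O=0$ forces $f=0$'' instead of ``the open set $\{f\ne g\}$ meets $O$.'' Your explicit check that $f|_O$ is Nash via the data $(V,\mathrm{id}_V,f,O)$ is a small but sound addition that the paper omits.
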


\begin{proof}
Clearly $f \mapsto h|_{O}$ gives a $K$-algebra morphism.
Let $f, g \in \Sa O(V)$ be distinct.
Then the open subvariety of $V$ given by $f \ne g$ intersects $O$ by Fact~\ref{fact:z-dense}, hence $f|_{O} \ne g|_{O}$.
\end{proof}

If $O^*$ is an $\cE_K$-open subset of $O$ and $f\colon O \to K$ is a Nash function then the restriction of $f$ to $O^*$ is also Nash.
It is easy to see that restriction gives a $K$-algebra morphism $\Sa N(O) \to \Sa N(O^*)$.

\medskip
Given $p \in V(K)$ we let $\Sa N_p$ be the $K$-algebra of germs of Nash functions at $p$, i.e., the inverse limit of $\Sa N(O)$ where $O$ ranges over $\cE_K$-open neighborhoods of $p$.

\begin{proposition}
\label{prop:nash-ring}
Suppose that $V$ is a smooth $K$-variety and $p \in V(K)$.
Then $f \in \Sa N_p$ is invertible in $\Sa N_p$ if and only if $f(p) \ne 0$.
Hence $\Sa N_p$ is a local ring with maximal ideal $\{f \in \Sa N_p : f(p) = 0\}$, residue field $K$, and residue map $f\mapsto f(p)$.
\end{proposition}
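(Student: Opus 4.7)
The plan is to show the ``if'' direction (the ``only if'' is immediate: $fg=1$ in $\Sa N_p$ forces $f(p)g(p)=1$) by combining $\cE_K$-continuity of Nash functions with the Hausdorffness of the $\cE_K$-topology on $K$. Concretely, suppose $f \in \Sa N_p$ with $f(p) \ne 0$, and let $f$ be represented by a Nash function $\tilde f \colon O \to K$ on some $\cE_K$-open neighborhood $O$ of $p$. Since $K$ is large and not separably closed throughout this section, Fact~\ref{fact:old-EO}(\ref{old:sep}) (or Proposition~\ref{prop:hd}) implies that the $\cE_K$-topology on $K = \Aa^1(K)$ is Hausdorff, hence $\{0\}$ is $\cE_K$-closed. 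Nash maps are $\cE_K$-continuous by construction, so $O' := O \cap \tilde f^{-1}(K \setminus \{0\})$ is an $\cE_K$-open neighborhood of $p$ on which $\tilde f$ is nowhere zero. By Proposition~\ref{prop:nash-ring-0}, $1/\tilde f$ is a Nash function on $O'$, and its germ at $p$ is the desired multiplicative inverse of $f$ in $\Sa N_p$.

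Given this, the remaining structural statements follow formally. The map $\varepsilon \colon \Sa N_p \to K$ sending a germ $f$ to the value $f(p)$ of any representative is well-defined, because two Nash functions representing the same germ agree on some $\cE_K$-open neighborhood of $p$, and in particular at $p$. Pointwise addition and multiplication of germs show that $\varepsilon$ is a $K$-algebra homomorphism, and it is surjective since constant functions are Nash. Its kernel is exactly $\mathfrak{m} := \{f \in \Sa N_p : f(p) = 0\}$, and by the invertibility criterion just established, $\mathfrak{m}$ coincides with the set of non-units of $\Sa N_p$. A commutative ring in which the set of non-units is an ideal is local with that ideal as its unique maximal ideal, so $\Sa N_p$ is local with maximal ideal $\mathfrak{m}$ and residue field $\Sa N_p / \mathfrak{m} \cong K$ via $\varepsilon$.

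The only genuine content is the invertibility criterion, and within that the sole non-formal input is the closedness of $\{0\}$ in the $\cE_K$-topology on $K$. This is where the standing hypothesis that $K$ is not separably closed is essential: over a separably closed field the $\cE_K$-topology is Zariski (Fact~\ref{fact:old-EO}(\ref{old:sep})), so points are not closed and the argument would genuinely fail. Since Hausdorffness of the $\cE_K$-topology on quasi-projective varieties is already established in the paper, no further obstacle arises.
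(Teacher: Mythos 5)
Your proof is essentially correct and follows the same approach as the paper: represent $f$ by a Nash function, use $\cE_K$-continuity to shrink to an open neighborhood of $p$ on which $\tilde f$ is nowhere zero, and apply Proposition~\ref{prop:nash-ring-0}. The deduction of the local-ring structure from the invertibility criterion is likewise the paper's (one-line) argument.

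One remark in your closing paragraph is wrong and worth flagging. You claim the hypothesis that $K$ is not separably closed is essential here because ``over a separably closed field the $\cE_K$-topology is Zariski \ldots so points are not closed.'' But points \emph{are} closed in the Zariski topology on $K = \Aa^1(K)$ (it is the cofinite topology). In fact $\{0\}$ is $\cE_K$-closed over \emph{any} field $K$: $K^\times = \Gg_m(K)$ is the $K$-rational image of the open immersion $\Gg_m \hookrightarrow \Aa^1$, hence an E-set and therefore $\cE_K$-open. No appeal to Hausdorffness of the $\cE_K$-topology is needed for this step; the standing hypothesis of Section~\ref{section:nashh-basic} is used elsewhere (e.g.\ in Theorem~\ref{thm:local-homeo-2}, which underlies Lemma~\ref{lem:nash-ring-1} and Theorem~\ref{thm:milne}), not in this particular proposition.
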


\begin{proof}
The second claim is immediate from the first.
The left to right implication of the first claim is clear.
Suppose $f(p) \ne 0$.
As $f$ is $\cE_K$-continuous there is an $\cE_K$-open neighborhood $O$ of $p$ so that $f$ is defined and does not vanish on $O$.
Apply Proposition~\ref{prop:nash-ring-0}.
\end{proof}

Let $\Sa O_p$ be the local ring of a variety $V$ as a point $p\in V$.

\begin{lemma}
\label{lem:nash-ring}
Suppose that $V$ is a smooth $K$-variety and $p \in V(K)$.
Then $h \mapsto h|_{V(K)}$ gives a $K$-algebra embedding $\Sa O_p \to \Sa N_p$.
\end{lemma}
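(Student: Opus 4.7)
The plan is to define the map directly by restriction and then verify, in order: well-definedness, the $K$-algebra morphism property (which is essentially free), and injectivity (which is the only nontrivial step and is handled by Lemma~\ref{lem:O->N}).

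First I would define the map. A germ in $\Sa O_p$ is represented by a pair $(U,\tilde h)$ with $U\subseteq V$ a Zariski-open neighborhood of $p$ and $\tilde h\in\mathcal O(U)$. Since open immersions are \'etale (Fact~\ref{fact:etale}), Zariski opens are $\cE_K$-open, so $U(K)$ is $\cE_K$-open in $V(K)$; taking Nash data $(U,\id_U,\tilde h,U(K))$ shows that $\tilde h|_{U(K)}$ is a Nash function and hence defines a germ in $\Sa N_p$. Independence of the representative is immediate: two representatives agree on some common smaller Zariski-open neighborhood $U''\ni p$, and hence their images agree on the $\cE_K$-open set $U''(K)$. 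The $K$-algebra morphism property then follows at once because addition and multiplication on both sides are pointwise.

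The core of the argument is injectivity. Since $V$ is smooth at $p$, the local ring $\Sa O_p$ is regular, hence an integral domain; equivalently, $p$ lies on a unique irreducible component of $V$, and after shrinking $V$ to a Zariski-open neighborhood of $p$ we may assume $V$ is irreducible. Now suppose a germ represented by $(U,\tilde h)$ with $U$ open and irreducible maps to zero in $\Sa N_p$. Vanishing in $\Sa N_p$ furnishes a nonempty $\cE_K$-open $O\subseteq U(K)$ on which $\tilde h$ vanishes. Lemma~\ref{lem:O->N} applied to $U$ (smooth, irreducible) says that $\mathcal O(U)\to\Sa N(O)$ is injective, and we conclude $\tilde h=0$ in $\mathcal O(U)$, hence the germ is zero.

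The only real obstacle is the reduction to the irreducible case, which relies on smoothness via the standard fact that a regular local ring is a domain; beyond that, everything is routine, since Lemma~\ref{lem:O->N} already packages the substantive content (namely, that $\cE_K$-open subsets of a smooth irreducible variety are Zariski dense, via Fact~\ref{fact:z-dense}).
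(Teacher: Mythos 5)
Your argument is correct and is exactly the fleshing-out that the paper intends: the paper's entire proof is the one-line remark that the lemma ``follows directly from Lemma~\ref{lem:O->N}.'' You correctly identify that the only nontrivial point is injectivity, reduce to the irreducible case via smoothness (regular local ring is a domain, so $p$ lies on a unique component, and $\Sa O_p$ and $\Sa N_p$ depend only on a Zariski-open neighborhood of $p$), and then invoke Lemma~\ref{lem:O->N} on a Zariski-open $U \ni p$ and the $\cE_K$-open $O \subseteq U(K)$ furnished by vanishing in $\Sa N_p$.
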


Lemma~\ref{lem:nash-ring} follows directly from Lemma~\ref{lem:O->N}.

\begin{lemma}
\label{lem:nash-ring-1}
Suppose $e : X \to V$ is an \'etale morphism of smooth $K$-varieties, $q \in X(K)$, and $p = e(q)$.
Then $h \mapsto h \circ e$ gives a $K$-algebra isomorphism $\Sa N_p \to \Sa N_q$.
\end{lemma}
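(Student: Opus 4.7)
The plan is to use Theorem~\ref{thm:local-homeo-2} to reduce the statement to a routine verification at the level of Nash data. Since $K$ is large and not separably closed (the standing hypothesis of this section) and $e$ is \'etale, Theorem~\ref{thm:local-homeo-2} provides an $\cE_K$-open neighborhood $P \subseteq X(K)$ of $q$ whose image $O \coloneqq e(P) \subseteq V(K)$ is $\cE_K$-open and such that $e|_P \colon P \to O$ is a homeomorphism. After possibly shrinking $P$ further, every germ we encounter will be represented by a Nash function defined on $P$ (or on $O$).

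Next I verify that $h \mapsto h \circ e$ is a well-defined $K$-algebra homomorphism $\Sa N_p \to \Sa N_q$. A morphism of smooth $K$-varieties is a Nash map (take $X' = $ domain and $e' = $ identity as Nash data), so $e$ itself is Nash on any $\cE_K$-open subset of $X(K)$ mapped into a given $\cE_K$-open subset of $V(K)$; by Proposition~\ref{prop:composition} the composition $h \circ e$ of Nash maps is Nash, and this composition respects the $K$-algebra operations pointwise. Injectivity follows from openness: if $h \in \Sa N_p$ is represented on $O$ and $h \circ e$ vanishes on some $\cE_K$-open neighborhood $P' \subseteq P$ of $q$, then $h$ vanishes on $e(P')$, which by Definition~\ref{def:E} is an $\cE_K$-open neighborhood of $p$; hence $h = 0$ in $\Sa N_p$.

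The main content is surjectivity: given $g \in \Sa N_q$ represented by a Nash function $g \colon P \to K$, I want to produce $h \in \Sa N_p$ with $h \circ e = g$ as germs at $q$. The natural candidate is $h \coloneqq g \circ (e|_P)^{-1} \colon O \to K$, and the one point that needs checking is that $(e|_P)^{-1} \colon O \to X(K)$ is itself a Nash map. This is immediate from the definition: the tuple $(X, e, \mathrm{id}_X, P)$ is Nash data for $(e|_P)^{-1}$, since $e \colon X \to V$ is \'etale, $e|_P \colon P \to O$ is a homeomorphism, $\mathrm{id}_X$ sends $P$ into $P$, and $(e|_P)^{-1} = \mathrm{id}_X \circ (e|_P)^{-1}$. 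Then $h = g \circ (e|_P)^{-1}$ is Nash by Proposition~\ref{prop:composition}, and $h \circ e$ agrees with $g$ on $P$, so they define the same germ at $q$.

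The only genuine obstacle is the surjectivity step, and within that, the realization that the local \'etale inverse is Nash straight from the definition once Theorem~\ref{thm:local-homeo-2} has been invoked; everything else is routine unpacking. No further ingredients are needed beyond Theorem~\ref{thm:local-homeo-2}, Proposition~\ref{prop:composition}, and the definition of Nash data.
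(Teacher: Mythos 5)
Your proof is correct and uses exactly the two ingredients the paper indicates (Theorem~\ref{thm:local-homeo-2} and Proposition~\ref{prop:composition}); the paper leaves this lemma as an exercise, and you have filled it in as intended, with the key observation being that the local inverse $(e|_P)^{-1}$ is Nash with Nash data $(X, e, \id_X, P)$. The only cosmetic point is that citing Definition~\ref{def:E} for the openness of $e(P')$ is slightly indirect -- it is cleaner to note that $e|_P$ is a homeomorphism onto $O$ and so carries open subsets of $P$ to open subsets of $O$ -- but this does not affect correctness.
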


Lemma~\ref{lem:nash-ring-1} follows easily from Theorem~\ref{thm:local-homeo-2} and Proposition~\ref{prop:composition}, and is left as an exercise to the reader.

\medskip
We first review some definitions.
An \textbf{\'etale neighborhood} $e : (X,q) \to (V,p)$ of $p\in V$ is a $K$-variety $X$, $q \in X(K)$, and an \'etale morphism $e \colon X \to V$ such that $e(q) = p$.
We say that $e \colon (X,q) \to (V,p)$ refines $e^* \colon (X^*,q^*) \to (V,p)$ if $e$ factors as $X \to X^* \to V$ where $X^* \to V$ is $e^*$.
Taking fiber products shows that \'etale neighborhoods form a directed class.
The \textbf{\'etale local ring} $\Sa O^{\acute{e}t}_{V,p}$ of $p$ on $V$ is the filtered colimit of the coordinate ring of $X$'s. Fact~\ref{fact:etalelocalring} is~~\cite[Lemma~05KS]{stacks-project}.

\begin{fact} \label{fact:etalelocalring}
The \'etale local ring is isomorphic to the henselization $\Sa O^\mathrm{h}_p$ of $\Sa O_p$
\end{fact}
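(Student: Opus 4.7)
The plan is to reduce to the affine case and then match the two filtered colimits defining the \'etale local ring and the henselization. Since the assertion is about a germ at $p$, I can replace $V$ by an affine open neighborhood and so assume $V=\Spec A$ with $p$ corresponding to a maximal ideal $\mm\subseteq A$ for which $A/\mm=K$; then $\Sa O_p = A_\mm$.

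First I would spell out the index category for $\mathcal{O}^{\acute e t}_{V,p}$. An \'etale neighborhood is $e\colon(X,q)\to(V,p)$ with $e$ \'etale and $e(q)=p$, and morphisms are refinements. By Fact~\ref{fac: sdetalevsetale}, after shrinking $X$ around $q$ we may assume $X=\Spec A'$ is affine with $A\to A'$ standard \'etale; the $K$-point $q$ above $p$ is then precisely a ring homomorphism $A'\to K$ extending $A\to A/\mm$. Thus the cofinal subsystem of affine \'etale neighborhoods coincides with the directed system of \'etale $A$-algebras equipped with a $K$-algebra augmentation to $A/\mm$, which is exactly the system used in the proof of Lemma~\ref{main-lemma} to describe $A_\mm^{\mathrm h}=\varinjlim_{A'}A'$.

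Next I would construct the comparison map. Given an \'etale neighborhood $X=\Spec A'$, the canonical map $A'\to A_\mm^{\mathrm h}$ into the colimit is compatible with refinement, so these assemble into a ring homomorphism
\[
\mathcal{O}^{\acute e t}_{V,p} = \varinjlim_{(X,q)}\mathcal{O}(X)\;\longrightarrow\; A_\mm^{\mathrm h}.
\]
Because the index categories are equivalent (via $(X,q)\leftrightarrow(A',A'\to K)$) and the transition maps on both sides are the same ring maps $A'\to A''$ of \'etale $A$-algebras, this map is both surjective (every $A'$ appears on the left) and injective (any element of $\mathcal{O}(X)$ mapping to $0$ in $A_\mm^{\mathrm h}$ already vanishes after passing to some refinement $A''$, hence in the \'etale local ring).

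The main conceptual point, and the only real thing to check, is the cofinality/equivalence of the two index systems after restricting to affine \'etale neighborhoods; this relies on the facts that \'etale morphisms are locally standard \'etale (Fact~\ref{fac: sdetalevsetale}) and that fiber products of \'etale neighborhoods are again \'etale neighborhoods (Fact~\ref{fact:etale}(1)), so in particular both systems are filtered. Once cofinality is in place the isomorphism is essentially formal. Alternatively, one may simply invoke \cite[Lemma~05KS]{stacks-project}, which establishes exactly this description of $\Sa O_p^{\mathrm h}$ as the filtered colimit over \'etale neighborhoods of $p$.
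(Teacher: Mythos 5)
Your argument is correct and agrees in substance with the paper, which proves Fact~\ref{fact:etalelocalring} simply by citing \cite[Lemma~05KS]{stacks-project}; you unpack that citation by observing that affine \'etale neighborhoods of $(V,p)$ form the same filtered system of \'etale $A$-algebras with augmentation $A'\to A/\mm$ that the paper already uses to present $A_\mm^{\mathrm{h}}$ in the proof of Lemma~\ref{main-lemma}, so the two colimits coincide. You even offer the paper's one-line citation as an alternative at the end, so the two routes are the same.
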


We now prove the main result of this section.

\begin{theorem} \label{thm:milne}
Suppose that $V$ is a smooth $K$-variety and $p \in V(K)$.
Then $\Sa N_p$ is canonically isomorphic as a $K$-algebra to the \'etale local ring $\Sa O^{\acute{e}t}_{V,p}$ of $p$ on $V$. Hence, $\Sa N_p$ is also isomorphic to the henselization $\Sa O^\mathrm{h}_p$ of $\Sa O_p$ and to $\palg$ where $m$ is the dimension of the irreducible component of $V$ containing $m$.
\end{theorem}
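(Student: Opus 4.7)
The plan is to exhibit a canonical $K$-algebra isomorphism $\Phi \colon \mathcal{O}^{\acute{e}t}_{V,p} \to \Sa N_p$; once this is done, Fact~\ref{fact:etalelocalring}, Fact~\ref{fact:local-coordinate}, and Fact~\ref{fact:palg} will immediately yield the remaining identifications. Since $V$ is smooth, hence regular, hence locally irreducible, I first restrict to the open irreducible component through $p$ and assume $V$ is smooth and irreducible of dimension $m$.

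To construct $\Phi$, I fix an \'etale neighborhood $e \colon (X,q) \to (V,p)$ and assemble the $K$-algebra map
\[ \mathcal{O}(X) \to \Sa O_{X,q} \hookrightarrow \Sa N_q \xrightarrow{\sim} \Sa N_p, \]
where the first arrow is localization, the second is the injection from Lemma~\ref{lem:nash-ring}, and the third is the inverse of the pullback isomorphism $f \mapsto f \circ e$ provided by Lemma~\ref{lem:nash-ring-1} (which is applicable because Theorem~\ref{thm:local-homeo-2} makes $e$ a local $\cE_K$-homeomorphism at $q$). Concretely, $h \in \mathcal{O}(X)$ is sent to the germ at $p$ of $h \circ (e|_P)^{-1}$ for any $\cE_K$-open $P \ni q$ on which $e$ is a homeomorphism. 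A routine check shows that if $\pi \colon (X',q') \to (X,q)$ refines $(X,q) \to (V,p)$, the transition map $h \mapsto h \circ \pi$ intertwines the two maps into $\Sa N_p$: both produce the germ of $h \circ (e \circ \pi)|_{P'}^{-1}$ on a common small enough open. Hence the construction descends to the filtered colimit and yields a well-defined $K$-algebra morphism $\Phi$.

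Surjectivity is immediate from the definition of a Nash germ: any $f \in \Sa N_p$ comes with Nash data $(X,e,h,P)$, and taking $q \in P$ to be the unique preimage of $p$ exhibits $f$ as the image of $h$ under $\Phi$. For injectivity, note that each constituent $\Sa O_{X,q} \hookrightarrow \Sa N_q$ is injective by Lemma~\ref{lem:nash-ring}, and a filtered colimit of injections into a fixed target remains injective: if $h \in \mathcal{O}(X)$ represents a class mapping to $0$ in $\Sa N_p$, then its image in $\Sa N_q$ is already $0$, so by Lemma~\ref{lem:nash-ring} it is $0$ in $\Sa O_{X,q}$, hence $0$ in the colimit. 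This is where largeness enters, since Lemma~\ref{lem:nash-ring} ultimately rests on Fact~\ref{fact:z-dense}. The main obstacle I expect is bookkeeping the compatibility of the isomorphisms $\Sa N_q \cong \Sa N_p$ under successive refinements so that $\Phi$ is unambiguously defined on the colimit; with Lemma~\ref{lem:nash-ring-1} already proved, this reduces to the functoriality of pullback but still requires care.

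Finally, Fact~\ref{fact:etalelocalring} identifies $\mathcal{O}^{\acute{e}t}_{V,p}$ with the henselization $\Sa O^{\mathrm{h}}_p$. By Fact~\ref{fact:local-coordinate} there is an open neighborhood $W \subseteq V$ of $p$ admitting an \'etale morphism $W \to \Aa^m$ taking $p$ to the origin; since \'etale local rings are preserved under \'etale maps, $\mathcal{O}^{\acute{e}t}_{V,p} \cong \mathcal{O}^{\acute{e}t}_{\Aa^m,0}$, and by Fact~\ref{fact:palg} this last ring is exactly $\palg$, completing the proof.
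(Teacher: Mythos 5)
Your proposal is correct and follows the same route as the paper's proof: build the map $\mathcal{O}^{\acute{e}t}_{V,p}\to\Sa N_p$ from the constituent maps $\Sa O_{X,q}\hookrightarrow\Sa N_q\xrightarrow{\sim}\Sa N_p$ furnished by Lemmas~\ref{lem:nash-ring} and~\ref{lem:nash-ring-1}, get surjectivity directly from the definition of Nash data, get injectivity from the injectivity of each constituent map (plus the standard filtered-colimit observation), and then transport to $\palg$ using Fact~\ref{fact:etalelocalring}, Fact~\ref{fact:local-coordinate}, and Fact~\ref{fact:palg}. You are somewhat more explicit than the paper in checking that the maps form a cocone and that injectivity passes to the colimit (the paper compresses this into a single ``Therefore''), and you derive the $\palg$ identification by applying \'etale invariance to $\mathcal{O}^{\acute{e}t}_{V,p}$ whereas the paper applies Lemma~\ref{lem:nash-ring-1} to $\Sa N_p$; these are interchangeable. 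No gaps.
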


\begin{proof}
Fix an \'etale neighborhood $g \colon (X,q) \to (V,p)$ of $p$.
Let $\uptau_g$ be the $K$-algebra embedding $\Sa O_q \to \Sa N_p$ given by composing the embedding $\Sa O_q \to \Sa N_q$ produced in Lemma~\ref{lem:nash-ring} with the $K$-algebra isomorphism $\Sa N_q \to \Sa N_p$ produced by Lemma~\ref{lem:nash-ring-1}.
Therefore $(\uptau_e)_{e \colon (X,q) \to (V,p)}$ gives a $K$-algebra embedding $\uptau \colon \Sa O^\mathrm{h}_p \to \Sa N_p$.
It follows by definition of a Nash function that every element of $\Sa N_p$ is the image of some $\uptau_e$.
Hence $\uptau$ is a $K$-algebra isomorphism.
This proves the first claim of Theorem~\ref{thm:milne}.

\medskip
The first part of the second claim follows immediately from Fact~\ref{fact:etalelocalring}. It remains to show the second part of the second claim. When $V$ is affine space, the first claim then shows that $\Sa N_p$ is the henselization of the localization $K[t_1,\ldots,t_m]_{(t_1,\ldots,t_m)}$, which is $\palg$ by Fact~\ref{fact:palg}.
The general case of the second claim follows by Fact~\ref{fact:local-coordinate} and Lemma~\ref{lem:nash-ring-1}.
\end{proof}

\begin{remark}
For our purposes we only need to know that $\Sa N_p$ is a henselian local domain.
We give a sketch of a direct proof.
Fix $f \in \Sa N_p[x]$ and suppose that $\alpha \in K$ is a simple root of $\bar{f}$ where $\bar{f}$ is $f$ modulo the maximal ideal.
We may suppose that $f$ is monic and let
$$ f(x) = x^{d + 1} + g_d x^d + \cdots + g_1 x + g_0 \quad \text{for  } g_0,\ldots,g_d \in \Sa N_p.$$
So $\bar{f}(x) = x^{d + 1} + g_d(p) x^d + \cdots + g_1(p) x + g_0(p)$.
Fix an $\Sa E_K$-open neighborhood $O$ of $p$ on which every $g_i$ is defined.
Let $(X_i, e_i, h_i, P_i)$ be Nash data for $g_i$ for each $i = 0, \ldots, d$.
Let $X$ be the fiber product of the $X_i$ over $V$ and $P$ be the set-theoretic fiber product of the $P_i$ over $O$.
Let $e^* \colon X \to V$ and $e^*_i \colon X \to X_i$ be the natural morphisms and let $h^*_i = h_i \circ e^*_i$ for each $i$.
Then each $(X, e^*, h^*_i, P)$ is also Nash data for $g_i$.
Take $q = (e^*|_{P})^{-1}(p)$.
The isomorphism $\Sa N_q \to \Sa N_p$  given by Lemma~\ref{lem:nash-ring-1} takes each $h^*_i$ to $g_i$.
Hence after replacing $\Sa N_p$ with $\Sa N_q$ we may suppose that every $g_i$ is in the image of the embedding $\Sa O_p \to \Sa N_p$ given by Lemma~\ref{lem:nash-ring}.
Identify each $g_i$ with the corresponding element of $\Sa O_p$.
Let $U$ be an open subvariety of $V$ on which each $g_i$ is defined.
Let $W$ be the subvariety of $U \times \Aa^1$ given by
$$ x^{d + 1} + g_d x^d + \cdots + g_1 x + h_0 = 0 \ne \frac{\der}{\der x} [x^{d + 1} + g_d x^d + \cdots + g_1 x + h_0]. $$
Then $(p,\alpha) \in W(K)$ and the projection $W \to U$ is a standard \'etale morphism.
After possibly shrinking $O$ we may suppose that there is an $\Sa E_K$-open neighborhood $O^*$ of $(p,\alpha)$ in $W(K)$ such that the projection $W(K) \to U(K)$ gives an $\Sa E_K$-homeomorphism $O^* \to O$.
Let $\gamma$ be the composition of the inverse $O \to O^*$ with the projection $U(K) \times K \to K$.
Then $\gamma$ is an element of $\Sa N_p$, $\gamma$ is a root of $f$, and $\gamma(p) = \alpha$.
\end{remark}

\section{Large implies henselian}\label{section:l->h}
In this section, we prove Theorem~A from the introduction, that every large field $K$ is elementarily equivalent to the fraction field of a proper henselian local domain. The case when $K$ is separably closed is Proposition~\ref{prop:separablecaseofA} below. The case when $K$ is \emph{not} separably closed follows from Theorem~\ref{thm:infin3}.

\begin{proposition} \label{prop:separablecaseofA}
Every separably closed field $K$ is elementarily equivalent to the fraction field of a proper henselian local domain.
\end{proposition}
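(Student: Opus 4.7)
The plan is to realize a suitably large elementary extension of $K$ as the fraction field of a non-trivial valuation ring, and then exploit the general principle that a separably closed valued field is automatically henselian.

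First, by upward \lowenheim pick an elementary extension $K \preceq L$ of cardinality strictly greater than $\aleph_0$. Then $L$ has positive transcendence degree over its prime field, so some $t \in L$ is transcendental over the prime field. The $t$-adic valuation on the rational function subfield generated by $t$ over the prime field extends, by Chevalley's extension theorem for valuations, to a non-trivial valuation $v$ on $L$. Write $\Oo_v \subseteq L$ for the corresponding valuation ring; since $v$ is non-trivial, $\Oo_v$ is a local domain which is not a field, and $\Frac(\Oo_v) = L$.

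Next, since $L \equiv K$, the field $L$ is separably closed. The henselization of $(L,v)$ is a separable algebraic extension of $L$ that embeds into $L^{\sep}$; but $L^{\sep} = L$, so this henselization coincides with $L$ itself. Hence $(L,v)$ is henselian, and in particular $\Oo_v$ is a henselian local ring in the sense used throughout the paper. Since $\Frac(\Oo_v) = L \equiv K$, this produces the required proper henselian local domain.

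No step presents a serious obstacle. The one mild subtlety is that $K$ itself may admit only the trivial valuation — the extreme case being $K = \overline{\Ff}_p$ — which is exactly why we pass to a proper elementary extension at the outset: enlarging the cardinality forces $L$ to contain transcendentals over its prime field, and Chevalley's extension theorem then delivers a non-trivial valuation on $L$. The rest is a one-line appeal to the fact that $L^{\sep} = L$ collapses any would-be henselization.
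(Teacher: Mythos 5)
Your proof is correct and follows essentially the same route as the paper: pass to an elementary extension with a transcendental over the prime field, equip it with a non-trivial valuation, and observe that any valuation on a separably closed field is automatically henselian (you derive this via the henselization embedding into the separable closure, where the paper instead cites \cite[Lemma~4.1.1]{EP-value}). The only cosmetic difference is that you invoke an uncountable extension and Chevalley's extension theorem explicitly, while the paper states more briefly that one may assume $K$ is not algebraic over its prime subfield and hence carries a non-trivial valuation.
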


\begin{proof}
After possibly passing to an elementary extension, we may suppose that $K$ is not algebraic over its prime subfield. Hence, there is a non-trivial valuation $v$ on $K$ and $K$ is the fraction field of the valuation ring $R$ of $v$.
Any valuation on a separably closed field is henselian~\cite[Lemma~4.1.1]{EP-value}, hence $R$ is a henselian local domain.
\end{proof}

\medskip
Suppose that $K$ is large and not separably closed and let $\Kk$ be an elementary extension of $K$.
Given a $K$-definable set $X\subseteq K^n$ let $X(\Kk)$ be the subset of $\Kk^n$ defined by any formula defining $X$ and given a $K$-definable function $f\colon X \to Y$ let $f_\Kk\colon X(\Kk)\to Y(\Kk)$ be the $\Kk$-definable function defined by any formula defining $f$.
We say that $a \in \Kk$ is $K$-infinitesimal if and only if $a\in U(\Kk)$ for every $E$-subset $U$ of $K$ containing $0$.
More generally, an indexed family $(a_i)_{i\in I}$ of elements of $\Kk$ is \textbf{$K$-infinitesimal} if $(a_{i_1},\ldots,a_{i_m})\in U(\Kk)$ for every $i_1,\ldots,i_m \in I$ and E-subset $U$ of $K^m$  containing the origin.
Finally, we say that a subset $A$ of $\Kk$ is $K$-infinitesimal if $(a_1,\ldots,a_n)$ is $K$-infinitesimal for any $a_1,\ldots,a_n \in A$, equivalently if any tuple enumerating $A$ is $K$-infinitesimal.

\medskip
Note that $(a_i)_{i\in I}$ might not be $K$-infinitesimal even when every $a_i$ is $K$-infinitesimal, because the $\cE_K$-topology on $K^m$ may not agree with the product topology coming from the $\cE_K$-topology on $K$.
If the $\cE_K$-topology on $K^m$ does not agree with the product topology then by a standard application of model-theoretic compactness we can produce $\Kk$ and $a\in \Kk^m$ such that each coordinate of $a$ is $K$-infinitesimal and $a$ is not.

\medskip
We recall an easy fact about the \'etale-open topology.

\begin{fact}\label{fact:projections}
Let $U$ be an $\cE_K$-open subset of $V(K) \times W(K)$.
Then $\{ b \in W(K) : (a,b) \in U\}$ is $\cE_K$-open for any $a \in V(K)$.
\end{fact}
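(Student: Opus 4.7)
The plan is to express the fiber $\{b \in W(K) : (a,b) \in U\}$ as the preimage of $U$ under a continuous map $W(K) \to (V \times W)(K) = V(K) \times W(K)$, and then invoke the continuity clause (Definition~\ref{sys-def}(1)) of the $\cE_K$-topology being a system of topologies.

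Concretely, the $K$-point $a \in V(K)$ corresponds to a morphism $a \colon \Spec K \to V$, and composing with the structure morphism gives a constant morphism $c_a \colon W \to V$ of $K$-varieties whose map on $K$-points sends every $b \in W(K)$ to $a$. Pairing $c_a$ with the identity $\id_W \colon W \to W$ yields a morphism
\[
\varphi := (c_a, \id_W) \colon W \longrightarrow V \times W
\]
of $K$-varieties. On $K$-points, under the canonical identification $(V \times W)(K) = V(K) \times W(K)$, the induced map $\varphi_K \colon W(K) \to V(K) \times W(K)$ is exactly $b \mapsto (a,b)$.

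Since $\cE_K$ is a system of topologies over $K$, Definition~\ref{sys-def}(1) ensures $\varphi_K$ is $\cE_K$-continuous. Therefore
\[
\varphi_K^{-1}(U) = \{b \in W(K) : (a,b) \in U\}
\]
is $\cE_K$-open in $W(K)$, as required.

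There is no real obstacle here: the only mildly subtle point is checking that $\varphi$ is an honest morphism of $K$-varieties (not merely a set-theoretic map on $K$-points), but this is immediate from the universal property of the product $V \times W$ applied to the pair $(c_a, \id_W)$.
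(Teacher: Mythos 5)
Your proof is correct and is exactly the argument the paper uses (the paper's one-line proof simply asserts that $b \mapsto (a,b)$ is $\cE_K$-continuous; you have supplied the justification that this map is induced by the morphism $(c_a,\id_W)\colon W \to V\times W$ of $K$-varieties). Incidentally, the paper's proof contains a typo, writing the domain of this map as $V(K)$ rather than $W(K)$; your version has it right.
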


\begin{proof}
The map $V(K) \to V(K) \times W(K)$ given by $b \mapsto (a,b)$ is $\cE_K$-continuous.
\end{proof}

A union of $K$-infinitesimal sets is not necessarily $K$-infinitesimal, but we do have the following Lemma~\ref{lem:infin1}, which allows us to construct suitable $K$-infinitesimal sets later.

\begin{lemma}\label{lem:infin1}
Suppose that $K$ is large and not separably closed and $K \preceq \Kk$.
Fix a linearly ordered index set $I$ with minimal element $0$ and let $(K_i)_{i\in I}$ be an increasing sequence of elementary subfields of $\Kk$ with $K_0 = K$.
For each $i \in I$ let $A_i$ be a $K_i$-infinitesimal subset of $\Kk$ so that $A_i \subseteq K_j$ for $j > i$.
Then $\bigcup_{i \in I} A_i$ is a $K$-infinitesimal set.
\end{lemma}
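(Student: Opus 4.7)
The plan is to unfold the definition and prove by induction on $n$ that any $n$-tuple $(a_1,\ldots,a_n)$ drawn from $\bigcup_{i\in I} A_i$ lies in $U(\Kk)$ for every E-subset $U \subseteq K^n$ containing the origin. I would first pick such a tuple with $a_l \in A_{j_l}$ and observe that coordinate permutations of $\mathbb{A}^n$ are \'etale automorphisms fixing the origin, so I may simultaneously permute the tuple and $U$ and assume $j_1 \leq j_2 \leq \cdots \leq j_n$. Write $j^* = j_n$ and let $k \geq 1$ be the number of indices $l$ with $j_l = j^*$. Split the tuple as $b = (a_1,\ldots,a_{n-k})$ and $c = (a_{n-k+1},\ldots,a_n) \in A_{j^*}^k$. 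The hypothesis $A_{j_l} \subseteq K_{j^*}$ whenever $j_l < j^*$ supplies the crucial containment $b \in K_{j^*}^{n-k}$.

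The core of the argument is a two-step slicing driven by Fact~\ref{fact:projections}. First slice: the fiber $V = \{x \in K^{n-k} : (x,0) \in U\}$ is $\cE_K$-open (Fact~\ref{fact:projections}) and contains $0$; choose an E-subset $V_0 \subseteq V$ containing $0$. By the inductive hypothesis applied to $b$ (a tuple of $n-k < n$ elements of $\bigcup_i A_i$) together with $V_0$, we get $b \in V_0(\Kk)$; since $V_0 \subseteq V$ is a $K$-definable inclusion, elementarity upgrades this to $b \in V(\Kk)$, i.e.\@ $(b, 0) \in U(\Kk)$. As $b \in K_{j^*}^{n-k}$ and $K_{j^*} \preceq \Kk$, we then also have $(b, 0) \in U(K_{j^*})$. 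Second slice: $U(K_{j^*})$ is an E-subset of $K_{j^*}^n$ (the $K_{j^*}$-rational image of the base change of the \'etale morphism defining $U$), so $V' = \{y \in K_{j^*}^k : (b,y) \in U(K_{j^*})\}$ is $\cE_{K_{j^*}}$-open and contains $0$; pick an E-subset $V_0' \subseteq V'$ containing $0$. The $K_{j^*}$-infinitesimality of $A_{j^*}$ applied to the tuple $c$ and to $V_0'$ yields $c \in V_0'(\Kk)$, and elementarity combined with $V_0' \subseteq V'$ gives $c \in V'(\Kk)$, i.e.\@ $(b, c) \in U(\Kk)$, as required. The base case $n = 0$ is vacuous, and the case $k = n$ uses only the second slice.

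The main conceptual obstacle is the bookkeeping around levels: as the excerpt itself notes, componentwise $K$-infinitesimality is strictly weaker than tuple-wise $K$-infinitesimality, so the naive strategy of processing one coordinate at a time breaks down precisely when several coordinates sit at the same top level $j^*$. Handling these tied coordinates en bloc via the tuple-wise $K_{j^*}$-infinitesimality of $A_{j^*}$, rather than one coordinate at a time, is the whole point of splitting into $b$ and $c$ and is what forces the two-step slicing. A secondary bookkeeping point is that the same defining formula for $U$ must be interpreted coherently as an E-subset over $K$, $K_{j^*}$, and $\Kk$; this is ensured by the compatibility of base change of \'etale morphisms with the ``$K$-rational image'' construction, together with the elementary inclusions $K \preceq K_{j^*} \preceq \Kk$ that allow us to transport the membership statement $(b,0)\in U$ between the three fields.
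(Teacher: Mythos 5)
Your proof is correct and takes essentially the same route as the paper's: both reduce to a finite tuple, induct, and split the tuple into the coordinates at the top level (handled en bloc via $K_{j^*}$-infinitesimality of $A_{j^*}$) versus the lower-level coordinates (handled by the inductive hypothesis and the fact that they lie in $K_{j^*}$), with the same two-step slicing via Fact~\ref{fact:projections}. If anything you are a bit more careful than the paper about shrinking the $\cE$-open slices to E-sets and about using $K_{j^*} \preceq \Kk$ to transfer $(b,0)\in U(\Kk)$ down to $(b,0)\in U(K_{j^*})$, both of which the paper leaves implicit.
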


\begin{proof}
By the definitions it suffices to fix $i_0,\ldots,i_n \in I$, $m\ge 1$, and elements $a^i_1,\ldots,a^i_{m}$ of $A_i$ for each $i \in \{i_0,\ldots,i_n\}$ and show $(a^i_j : i\in \{i_0,\ldots,i_n\}, j \in \{1,\ldots m\})$ is $K$-infinitesimal.
Hence we suppose that $I = \{0,\ldots,n\}$ for some $n$.
We apply induction on $n$.
The base case $n = 0$ is trivial.
Fix $n\ge 1$ and suppose that $a' = (a^1_1,\ldots,a^1_{m},\ldots,a^{n-1}_1,\ldots, a^{n-1}_m)$ is $K$-infinitesimal.
Declare $a'' = (a^n_1,\ldots,a^n_{m})$.
Fix an E-subset $U$ of $K^{nm}$ containing the origin.
We show that $(a', a'')\in U(\Kk)$.
By Fact~\ref{fact:projections} $\{ c \in K^{(n-1)m} : (c,0,\ldots,0) \in U\}$ is a $K$-definable $\cE_K$-open set containing the origin.
Hence as $a'$ is $K$-infinitesimal we have $(a',0,\ldots,0) \in U(\Kk)$.
Again by Fact~\ref{fact:projections} $O = \{c \in K^{m}_{n} : (a',c) \in U(K_{n})\}$ is a $K_{n}$-definable $\cE_{K_{n}}$-neighborhood of the origin.
Hence $a'' \in O(\Kk)$ as $a_n$ is $K_n$-infinitesimal, so $(a',a'') \in U(\Kk)$.
\end{proof}

Proposition~\ref{prop:eval} below allows us to produce henselian local rings from $K$-infinitesimal sets.

\begin{proposition}\label{prop:eval}
Suppose that $K$ is large and not separably closed, let $\Kk$ be an elementary extension of $K$, and $\varepsilon = (\varepsilon_i)_{i\in I} \in \Kk^I$ be a $K$-infinitesimal tuple for an index set $I$.
There is a canonical $K$-algebra morphism $\eval_\varepsilon\colon \palgI \to \Kk$ so that $\eval_\varepsilon(t_i) = \varepsilon_i$ for every $i\in I$.  Moreover, the image is contained in the relative algebraic closure of $K(\varepsilon_i)_{i \in I}$ in $\Kk$.  Finally, the image of $\eval_\varepsilon \colon \palgI \to \Kk$ is a henselian local ring with residue field $K$.
\end{proposition}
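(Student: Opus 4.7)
My plan is to construct $\eval_\varepsilon$ by viewing elements of $\palgI$ as germs of Nash functions via Theorem~\ref{thm:milne} and evaluating them at $\varepsilon$ using $K$-infinitesimality and the local homeomorphism property of \'etale maps from Theorem~\ref{thm:local-homeo-2}. Since $\palgI$ is the filtered colimit of the rings $\palg$ over finite subsets $\{i_1,\ldots,i_m\} \subseteq I$, it suffices to construct compatible evaluation maps on the finite pieces. So I fix a finite index set, rename the variables $t_1,\ldots,t_m$, and fix $f \in \palg$; Theorem~\ref{thm:milne} identifies $f$ with a Nash germ at $0 \in K^m$.

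For the construction, I would pick Nash data $(X, e, h, P)$ representing $f$, where $e\colon X \to \Aa^m$ is \'etale, $q_0 \in X(K)$ is a base point with $e(q_0) = 0$, and $P \subseteq X(K)$ is an $\cE_K$-open neighborhood of $q_0$ on which $e_K$ is a homeomorphism onto $O := e_K(P) \subseteq K^m$ (Theorem~\ref{thm:local-homeo-2}), with $h\colon X \to \Aa^1$ satisfying $f = h \circ (e_K|_P)^{-1}$ near $0$. After shrinking I can arrange that $P$ is itself an E-set (hence $K$-definable), so that $O = e(P)$ is also an E-set containing $0$. By elementary equivalence, $e_\Kk \colon P(\Kk) \to O(\Kk)$ is a bijection, and since $\varepsilon = (\varepsilon_1,\ldots,\varepsilon_m)$ is $K$-infinitesimal, we have $\varepsilon \in O(\Kk)$; let $q \in P(\Kk)$ be the unique preimage and set $\eval_\varepsilon(f) := h_\Kk(q)$. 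The condition $\eval_\varepsilon(t_i) = \varepsilon_i$ is immediate from the choice $(X,e,h,P) = (\Aa^m, \id, t_i, \Aa^m(K))$. Independence of the choice of Nash data, as well as additivity and multiplicativity, follow from fiber-product arguments analogous to Propositions~\ref{prop:composition} and~\ref{prop:nash-product}: given two Nash data $(X_1,e_1,h_1,P_1)$ and $(X_2,e_2,h_2,P_2)$ for the same germ, I compare them through the \'etale neighborhood $X_1 \times_{\Aa^m} X_2$ of $0$; sums and products of germs are realized by morphisms $h_1 \circ \pi_1 + h_2 \circ \pi_2$ and $(h_1 \circ \pi_1)(h_2 \circ \pi_2)$ on such a common pullback, and the unique $K$-infinitesimal lift of $\varepsilon$ there projects to the unique lifts in each $X_i(\Kk)$. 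Compatibility across different finite subsets of $I$ is built into the filtered colimit.

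For the final claim, let $R = \eval_\varepsilon(\palgI) \subseteq \Kk$ and $\pp = \ker(\eval_\varepsilon)$. Since $R$ is a subring of a field it is a domain, so $\pp$ is prime. Every element of $\palgI^\times$ is sent to a nonzero element of $\Kk$, so $\pp$ is contained in the unique maximal ideal $\mm$ of $\palgI$; hence $R \cong \palgI/\pp$ is a local ring with maximal ideal $\mm/\pp$ and residue field $\palgI/\mm = K$. Henselianity transfers from $\palgI$ to its quotient $R$: given a polynomial over $R$ with a simple root modulo $\mm/\pp$, I lift it to $\palgI[x]$, apply Hensel's lemma there, and reduce modulo $\pp$. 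The main obstacle I anticipate is the well-definedness step, where I must carefully match base points across different Nash-data representations and confirm that the construction depends only on the germ of $f$, not on the chosen representative; the fiber-product bookkeeping of Section~\ref{section:nashh-basic} carries over, but the interplay between $\cE_K$-open neighborhoods in $X(K)$ and their model-theoretic extensions to $X(\Kk)$ has to be handled with some care.
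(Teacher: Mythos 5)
Your proposal is correct and follows essentially the same route as the paper: identify $K[t_{i_1},\ldots,t_{i_m}]^\mathrm{h}$ with germs of Nash functions via Theorem~\ref{thm:milne}, evaluate at $\varepsilon$ using $K$-infinitesimality, and obtain henselianity/locality from the fact that the image is a quotient of the henselian local ring $\palgI$. The paper's proof is more terse (it dispatches the homomorphism verification with ``it is easy to see,'' relying on the observation that the Nash function restricted to an E-set is $K$-definable and that elementary extensions respect the algebraic operations), whereas you unwind the well-definedness and ring-homomorphism checks explicitly in terms of Nash data and fiber products; both arguments are sound and differ only in the level of detail.
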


\begin{proof}
Fix $f\in \palgI$.
Then $f\in K[t_{i_1},\ldots,t_{i_m}]^\mathrm{h}$ for some $i_1,\ldots,i_m\in I$.
Applying Theorem~\ref{thm:milne} we identify $K[t_{i_1},\ldots,t_{i_m}]^\mathrm{h}$ with the ring of germs of Nash functions on $K^m$ at the origin and therefore consider $f$ to be such a germ.
Let $U$ be an E-subset of $K^m$ containing the origin on which $f$ is defined.
Then $(\varepsilon_{i_1},\ldots,\varepsilon_{i_m})$ is in $U(\Kk)$ as $\varepsilon$ is $K$-infinitesimal, hence $f_\Kk(\varepsilon_{i_1},\ldots,\varepsilon_{i_m})$ is defined.  By definition of Nash functions, $f_\Kk(\varepsilon_{i_1},\ldots,\varepsilon_{i_m})$ is algebraic over $K(\varepsilon_{i_1},\ldots,\varepsilon_{i_m})$.
Set $\eval_\varepsilon(f) = f_\Kk(\varepsilon_{i_1},\ldots,\varepsilon_{i_m})$.
It is easy to see that $\eval_\varepsilon$ is a $K$-algebra morphism.

\medskip
The last assertion follows from the fact that the image is the quotient of the henselian local ring $\palgI$ by an ideal.
\end{proof}

Suppose that $K$ is large and not separably closed and let $\Kk$ be an elementary extension of $K$.
Given a nonempty $K$-infinitesimal  $E \subseteq \Kk$ of cardinality $\kappa$, we let $K[E]^\mathrm{h}$ be the image of $\eval_\varepsilon \colon \palgkap \to \Kk$ where $\varepsilon = (\varepsilon_i)_{i < \kappa}$ is any tuple enumerating $E$.  Then $K[E]^\mathrm{h}$ is a henselian local ring with residue field $K$.
We record a number of easy facts leaving the proofs to the reader.

\begin{lemma} \label{lem: obviousepsilonimage}
Under the same assumptions as the preceding paragraph, we have the following:
\begin{enumerate}[leftmargin=*]
\item $E$ is a subset of the maximal ideal of $K[E]^\mathrm{h}$.
\item If  $E \neq \{ 0\}$, then $K[E]^\mathrm{h}$ is not a field.
\item $K[E]^\mathrm{h}$  is the filtered union of  $K[E_0]^\mathrm{h}$ as $E_0$ ranges over finite subsets of $E$.
\item $K[E]^\mathrm{h}$ is contained in the relative algebraic closure of $K \cup E$ in $\Kk$.
\end{enumerate}
\end{lemma}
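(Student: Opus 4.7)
The plan is to unpack each of the four claims directly from Proposition~\ref{prop:eval} and the colimit structure of $\palgkap$. Throughout, write $R = \palgkap$ with maximal ideal $\mathfrak{m}_R$, let $\eta = \eval_\varepsilon \colon R \to \Kk$, and recall that $K[E]^\mathrm{h} = \eta(R)$ and that $t_i \in \mathfrak{m}_R$ for every $i$.

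For (1), the key point is that by Proposition~\ref{prop:eval}, $K[E]^\mathrm{h}$ is a local ring (with residue field $K$), and it is nontrivial since it contains $K$. So $\ker\eta$ is a proper ideal of the local ring $R$, hence contained in $\mathfrak{m}_R$, and the maximal ideal of $K[E]^\mathrm{h} \cong R/\ker\eta$ is $\mathfrak{m}_R/\ker\eta$; since $t_i \in \mathfrak{m}_R$, $\varepsilon_i = \eta(t_i)$ lies in this maximal ideal. For (2), given any nonzero $\varepsilon \in E$, part (1) places it in the maximal ideal, so the maximal ideal is nonzero and $K[E]^\mathrm{h}$ is not a field. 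For (3), I would use that, by the colimit description of $\palgI$ in Section~\ref{sec:algpower}, $R$ is the filtered union of the subrings $K[t_i]^\mathrm{h}_{i \in J}$ as $J$ ranges over finite subsets of $E$ (after identifying the index set with $E$), and that the restriction of $\eta$ to such a subring is precisely the evaluation map producing $K[J]^\mathrm{h}$; taking images preserves filtered unions, giving $K[E]^\mathrm{h} = \bigcup_{E_0} K[E_0]^\mathrm{h}$.

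The main substance is in (4). Fix $x \in K[E]^\mathrm{h}$ and write $x = \eta(f)$ with $f \in K[t_{i_1},\ldots,t_{i_m}]^\mathrm{h}$. By Fact~\ref{fact:palg}, $f$ lies in the henselization of the Noetherian local ring $K[\underline{t}]_{(\underline{t})}$, so $f$ is integral over that local ring. Clearing denominators in an integral relation yields
\[q(\underline{t})\,f^{d+1} + p_d(\underline{t})\,f^d + \cdots + p_0(\underline{t}) = 0\]
with $q,p_j \in K[\underline{t}]$ and $q(0) \neq 0$. Applying $\eta$ gives
\[q(\underline{\varepsilon})\,x^{d+1} + p_d(\underline{\varepsilon})\,x^d + \cdots + p_0(\underline{\varepsilon}) = 0,\]
where $\underline{\varepsilon} = (\varepsilon_{i_1},\ldots,\varepsilon_{i_m})$. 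The expected obstacle is to guarantee that this relation is nontrivial, i.e.\ that $q(\underline{\varepsilon}) \neq 0$; this is precisely where $K$-infinitesimality is used. Indeed, the Zariski-open locus $\{q \neq 0\} \subseteq \Aa^m$ is an open subvariety and its $K$-rational image is an E-subset of $K^m$ containing the origin, so $K$-infinitesimality of $\varepsilon$ forces $q(\underline{\varepsilon}) \neq 0$ in $\Kk$. Hence $x$ is algebraic over $K(\underline{\varepsilon}) \subseteq K(E)$, giving (4).
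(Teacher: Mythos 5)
Parts (1)--(3) are correct, and the bookkeeping (kernel of $\eval_\varepsilon$ is a proper ideal of a local ring, filtered unions pass through images, etc.) is the expected argument.

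Part (4) has a genuine gap. You assert that since $f$ lies in the henselization of $A:=K[\underline t]_{(\underline t)}$, it is \emph{integral} over $A$; this is false. Elements of $A^{\mathrm h}$ are algebraic over $\operatorname{Frac}(A)$, but $A^{\mathrm h}$ is (in the Galois-theoretic picture) a \emph{localization} of an integral extension of $A$, not an integral extension itself. A concrete one-variable counterexample: take $K=\Qq$, $m=1$, and let $f=\sum_{n\ge 0}C_n t^n$ be the Catalan generating function. Hensel's lemma applied to $g(y)=ty^2-y+1$, whose reduction $-y+1$ has the simple root $1$, shows $f\in \Qq[t]_{(t)}^{\mathrm h}$. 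But the monic minimal polynomial of $f$ over $\Qq(t)$ is $y^2-\tfrac{1}{t}y+\tfrac{1}{t}$, whose coefficients are not in $\Qq[t]_{(t)}$; since $\Qq[t]_{(t)}$ is normal, $f$ is not integral over it. So your cleared relation $q(\underline t)f^{d+1}+\cdots+p_0(\underline t)=0$ need not have $q(0)\ne 0$, and the $K$-infinitesimality argument as written does not apply.

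The conclusion of (4) is still true, but the route must change. One clean fix is to avoid the power-series picture and use the structure of the henselization (equivalently, the Nash data underlying the definition of $\eval_\varepsilon$ in Proposition~\ref{prop:eval}): present the relevant \'etale neighborhood as a standard \'etale $A$-algebra $C=(K[\underline t][z]/(g))_s$ with $g$ monic in $z$, and let $w\colon C\to\Kk$ be the point with $w(t_i)=\varepsilon_i$. Then $\zeta:=w(z)$ satisfies $g(\underline\varepsilon,\zeta)=0$, a \emph{monic} equation, so $\zeta$ is algebraic over $K(\underline\varepsilon)$; and $\eta(f)=w(\tilde\phi)=r(\underline\varepsilon,\zeta)/s(\underline\varepsilon,\zeta)^n$ is a rational expression in $\underline\varepsilon$ and $\zeta$, hence algebraic over $K(\underline\varepsilon)\subseteq K(E)$. (In the Catalan example, the relation $tf^2-f+1=0$ does have a coefficient that is a unit at the origin, which is exactly what this \'etale presentation guarantees in general; but that unit coefficient need not be the \emph{leading} one, and is not produced by any integrality argument.)
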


The proof of Lemma~\ref{lem:infin0} is also left to the reader.

\begin{lemma}\label{lem:infin0}
Suppose that $K$ is large and not separably closed, $\Kk$ is an elementary extension of $K$, $f\colon K^m \to K^n$ is a $K$-definable function which is $\cE_K$-continuous and takes the origin to the origin, and $\varepsilon \in \Kk^m$ is  $K$-infinitesimal.
Then $f_\Kk(\varepsilon) \in \Kk^n$ is also $K$-infinitesimal.
\end{lemma}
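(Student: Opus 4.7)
The plan is to unpack the definition of $K$-infinitesimal on the output side and combine $\cE_K$-continuity of $f$ with the hypothesis $K\preceq\Kk$. Concretely, I will fix an E-subset $V$ of $K^n$ containing the origin and show $f_\Kk(\varepsilon)\in V(\Kk)$; since coordinate projections $K^n\to K^\ell$ are algebraic and hence $\cE_K$-continuous, the same argument applied to the composition of $f$ with any such projection also handles the sub-tuple requirement in the general definition of $K$-infinitesimal for the tuple $f_\Kk(\varepsilon)$.

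To produce the required witness, I would use continuity: because $f(0)=0\in V$, the preimage $f^{-1}(V)$ is an $\cE_K$-open subset of $K^m$ containing the origin, and since E-sets form an open basis for the $\cE_K$-topology (Definition~\ref{def:E}), I can choose an E-subset $U$ of $K^m$ with $0\in U\subseteq f^{-1}(V)$. The $K$-infinitesimality of $\varepsilon$ then gives $\varepsilon\in U(\Kk)$ immediately.

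The remaining step is to transfer the inclusion $U\subseteq f^{-1}(V)$ from $K$ to $\Kk$. Here I would invoke $K\preceq\Kk$: since $f$ is $K$-definable by hypothesis and since every E-set is $\exists$-definable in the language of rings (for a $K$-variety $W$, the set $W(K)$ is $K$-definable by the finite-type hypothesis together with a finite affine cover, and the image of a morphism on a $K$-definable set is a projection of a $K$-definable set), the first-order sentence $\forall x\,(x\in U\to f(x)\in V)$ holds in $K$ and hence also in $\Kk$. This yields $U(\Kk)\subseteq f_\Kk^{-1}(V(\Kk))$, so $\varepsilon\in U(\Kk)$ gives $f_\Kk(\varepsilon)\in V(\Kk)$, as desired. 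The only mild subtlety is the definability observation about E-sets; once that is recorded, the rest of the proof is a clean pairing of topological continuity with elementary equivalence, and I do not expect any genuine obstacle.
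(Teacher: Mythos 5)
The paper leaves the proof of this lemma to the reader, so there is no written proof to compare against. Your argument is correct and is surely what the authors intend: use $\cE_K$-continuity and the basis of E-sets to find an E-subset $U$ of $K^m$ with $0 \in U \subseteq f^{-1}(V)$, pick up $\varepsilon \in U(\Kk)$ from $K$-infinitesimality of $\varepsilon$, and transfer the inclusion $U \subseteq f^{-1}(V)$ to $\Kk$ by elementarity, using that $f$ is $K$-definable and E-sets are $\exists$-definable. One small point worth making explicit: the paper's definition of a $K$-infinitesimal family allows the index tuple $(j_1,\dots,j_\ell)$ to have repetitions and arbitrary order, so the ``projections'' $K^n \to K^\ell$ you compose with should be understood to include coordinate maps of the form $x \mapsto (x_{j_1},\dots,x_{j_\ell})$ with repeated indices; these are still morphisms of varieties, so the reduction goes through unchanged.
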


A special consequence is needed for our purposes.

\begin{corollary} \label{cor:infin.5}
Suppose that $K$ is large and not separably closed, and $\varepsilon$ is a non-zero $K$-infinitesimal element of an elementary extension $\Kk$ of $K$.
Then $\varepsilon K$ is a $K$-infinitesimal subset of $\Kk$.
\end{corollary}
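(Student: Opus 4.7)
The plan is to derive this as a direct application of Lemma~\ref{lem:infin0} to the scalar multiplication map. Concretely, to show $\varepsilon K$ is $K$-infinitesimal, I need to fix finitely many scalars $a_1,\ldots,a_n \in K$ and verify that the tuple $(\varepsilon a_1,\ldots,\varepsilon a_n) \in \Kk^n$ is $K$-infinitesimal.

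The natural move is to package the multiplications into a single $K$-definable map. Define $f \colon K \to K^n$ by $f(x) = (a_1 x, \ldots, a_n x)$. This is (the $K$-points of) a morphism of $K$-varieties $\Aa^1 \to \Aa^n$, so by the system-of-topologies axiom for $\cE_K$ (Definition~\ref{sys-def}(1)) it is $\cE_K$-continuous. Clearly $f(0) = 0$, and $f$ is $K$-definable with parameters $a_1,\ldots,a_n$. Lemma~\ref{lem:infin0} then applies to $f$ and to the $K$-infinitesimal element $\varepsilon \in \Kk$, yielding that $f_\Kk(\varepsilon) = (\varepsilon a_1,\ldots,\varepsilon a_n)$ is $K$-infinitesimal, which is exactly what is required.

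Since the $a_i$ were arbitrary and any tuple from $\varepsilon K$ has this form, this gives $K$-infinitesimality of the whole set. There is no real obstacle here: the only thing to confirm is that the proof of Lemma~\ref{lem:infin0} (left to the reader) indeed allows parameters from $K$, which is clear from the statement (the function $f$ is only required to be $K$-definable, $\cE_K$-continuous, and base-point preserving). Note that the hypothesis $\varepsilon \ne 0$ plays no role in showing $\varepsilon K$ is $K$-infinitesimal; it is recorded in the statement presumably because the interesting use of the corollary (e.g.\ in building a proper henselian local domain via Proposition~\ref{prop:eval}) requires the resulting infinitesimal set to contain a nonzero element.
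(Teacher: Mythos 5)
Your proof is correct and is essentially identical to the paper's own argument: the paper also reduces to a finite tuple $(\varepsilon b_1,\ldots,\varepsilon b_n)$ and applies Lemma~\ref{lem:infin0} to the map $\lambda \mapsto (\lambda b_1,\ldots,\lambda b_n)$. Your side remark about the non-vanishing hypothesis being irrelevant here (and present only for downstream use) is also accurate.
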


\begin{proof}
It suffices to show  that $(\varepsilon b_1,\ldots,\varepsilon b_n)$ is $K$-infinitesimal for any $b_1,\ldots,b_n \in K$.
Apply Lemma~\ref{lem:infin0} with $f\colon K \to K^n$ given by $f(\lambda) = (\lambda b_1,\ldots,\lambda b_n)$.
\end{proof}

We now prove the main result of this section.

\begin{theorem} \label{thm:infin3}
Suppose that $K$ is large and not separably closed.
Let $\Kk$ be a $|K|^+$-saturated elementary extension of $K$.
Then there is a non-empty $K$-infinitesimal set $E$ of non-zero elements of $\Kk$ such that the inclusions $K \to \Frac(K[E]^\mathrm{h}) \to \Kk$ are elementary.
\end{theorem}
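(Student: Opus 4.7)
The plan is a transfinite Tarski--Vaught construction of length $\kappa := |K|^+$ inside $\Kk$. I build an increasing chain of $K$-infinitesimal sets $E_\alpha \subseteq \Kk$ with $L_\alpha := \Frac(K[E_\alpha]^h)$, in parallel with an elementary chain $K = K_0 \preceq K_1 \preceq \cdots \preceq \Kk$ satisfying $L_\alpha \subseteq K_\alpha$ and $|K_\alpha| \le |K|+|\alpha|$, and eventually take $E := \bigcup_{\alpha<\kappa} E_\alpha$ and $L := \Frac(K[E]^h)$.

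\emph{Base.} Fact~\ref{fact:old-EO}(\ref{old:lrge}) says the $\Sa E_K$-topology on $K$ is non-discrete, and Proposition~\ref{prop:hd} says it is Hausdorff. Since affine translations are $\Sa E_K$-homeomorphisms of $K$, no singleton is $\Sa E_K$-open; hence the partial type over $K$ asserting $x \ne 0$ and $x \in U$ for every $\Sa E_K$-open neighborhood $U$ of $0$ in $K$ is finitely satisfiable, and by $|K|^+$-saturation is realized by a nonzero $K$-infinitesimal $\varepsilon_0 \in \Kk$. Set $E_0 := \{\varepsilon_0\}$ and choose any $K_0 \preceq \Kk$ of size $|K|$ containing $L_0$. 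Limit stages are handled by taking unions.

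\emph{Successor step (main obstacle).} Given $L_\alpha \subseteq K_\alpha$, enumerate the $\le |K|+|\alpha|$ pairs $(\phi(x, \bar y), \bar a)$ with $\bar a \in L_\alpha^{|\bar y|}$ and $\Kk \models \exists x\, \phi(x, \bar a)$. For each such pair, the technical heart of the argument is to produce a finite set $A^\phi_{\bar a}$ of $K_\alpha$-infinitesimal elements of $\Kk$ and a witness $c$ for $\phi(x, \bar a)$ with $c \in \Frac(K[E_\alpha \cup A^\phi_{\bar a}]^h)$. My strategy is as follows: write $\bar a = \bar H(\bar \varepsilon)$ with $\bar \varepsilon \in E_\alpha^m$ and $\bar H$ a tuple in $\Frac(K[t_1,\dots,t_m]^h)$; by Theorem~\ref{thm:milne} model $\bar H$ by Nash data $(X, e, \bar G, P)$ with $q \in P$, $e(q) = 0$ and $\bar G(q) = \bar H(0)$. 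By Theorem~\ref{thm:local-homeo-2} the \'etale map $e$ is a local $\Sa E_K$-homeomorphism, so $\bar \varepsilon$ has a unique lift $\eta \in X(\Kk)$ which is $K$-infinitesimally close to $q$ and satisfies $\bar G(\eta) = \bar a$. The $K$-definable set $\psi(X)$ cut out by $\psi(u) := \exists x\,\phi(x, \bar G(u))$ contains $\eta$ in $\Kk$, so by $K \preceq \Kk$ the point $q$ lies in the $\Sa E_K$-closure of $\psi(X)(K)$. Using $|K_\alpha|^+$-saturation of $\Kk$ (noting $K_\alpha$ is large and not separably closed since $K \preceq K_\alpha$), one then realizes a $K_\alpha$-type in fresh variables $(\bar u', x')$ demanding $e(\bar u') = \bar \varepsilon$, $\phi(x', \bar G(\bar u'))$, and $(\bar u', x')$ $\Sa E_{K_\alpha}$-infinitesimally close to a $K_\alpha$-rational point of the graph $D := \{(u,x) : \phi(x, \bar G(u))\}$ lying near $(q, x_0)$ for some $x_0 \in \Aa^1(K_\alpha)$; consistency follows by combining the closure statement with the local homeomorphism applied over $K_\alpha$. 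The coordinates of $\bar u'$ (together with any auxiliary $K_\alpha$-infinitesimal data needed to express $x'$) form $A^\phi_{\bar a}$, and $c := x'$ is the desired witness, lying in $\Frac(K[E_\alpha \cup A^\phi_{\bar a}]^h)$ by Theorem~\ref{thm:milne}. Set $A_{\alpha+1} := \bigcup_{\phi,\bar a} A^\phi_{\bar a}$, $E_{\alpha+1} := E_\alpha \cup A_{\alpha+1}$, and pick $K_{\alpha+1} \preceq \Kk$ of size $\le |K|+|\alpha+1|$ containing $K_\alpha \cup L_{\alpha+1}$.

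\emph{Conclusion.} Applying Lemma~\ref{lem:infin1} to the elementary chain $(K_\alpha)_\alpha$ with the sets $A_{\alpha+1}$ (which are $K_\alpha$-infinitesimal and contained in $K_\beta$ for $\beta > \alpha$), $E$ is $K$-infinitesimal. Because $\kappa$ has uncountable cofinality, any finite tuple from $L$ lies in some $L_\alpha$ for $\alpha < \kappa$, so every existential formula over $L$ true in $\Kk$ was already witnessed at stage $\alpha+1$; Tarski--Vaught then gives $L \preceq \Kk$, and $K \subseteq L \subseteq \Kk$ with $K \preceq \Kk$ forces $K \preceq L$. Since $\varepsilon_0 \in E$ is nonzero, $E$ is a set of nonzero elements, completing the proof.
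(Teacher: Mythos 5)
Your overall architecture (build a chain $(K_\alpha)$ of elementary substructures, pair it with an increasing chain of infinitesimal sets $E_\alpha$, and show $\Frac(K[E]^{\mathrm h})$ is elementary in $\Kk$) is the right shape, but the ``technical heart'' of your successor step has a real gap that is not repaired by the citations you give. The key obligation is to produce, for each existential $\exists x\,\phi(x,\bar a)$ with $\bar a \in L_\alpha$ realized in $\Kk$, a witness $c$ \emph{together with a proof that $c$ lies in $\Frac(K[E_\alpha\cup A]^{\mathrm h})$}. You assert this ``by Theorem~\ref{thm:milne}'', but Theorem~\ref{thm:milne} only identifies the ring of Nash germs at a point with a henselization; to apply it you would need to exhibit $c$ as (a ratio of) Nash functions evaluated at the new infinitesimals. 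A point $x'$ realized by a saturation argument is just an element of $\Kk$ and comes with no such presentation, and the ``auxiliary $K_\alpha$-infinitesimal data needed to express $x'$'' is not specified. There is a second structural problem: the formula $\phi$ is an arbitrary first-order condition, so the ``graph $D=\{(u,x):\phi(x,\bar G(u))\}$'' you work with is a definable set rather than a $K$-variety. The $\cE_K$-topology, its infinitesimals, and Theorem~\ref{thm:local-homeo-2} are all stated and used in the paper only for $K$-varieties, so the consistency of the type you write down (asking $(\bar u',x')$ to be $\cE_{K_\alpha}$-infinitesimally close to a $K_\alpha$-point of $D$) is not supported by what has been established.

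The paper sidesteps every one of these difficulties: there is no Tarski--Vaught witness hunt at all. Instead of choosing $E_{i+1}$ to witness existentials, one chooses a single nonzero $K_i$-infinitesimal $\varepsilon_i$ and sets $E_{i+1} = E_i \cup \varepsilon_i K_i^\times$; Corollary~\ref{cor:infin.5} guarantees $\varepsilon_i K_i^\times$ is $K_i$-infinitesimal. The point is that every element of $K_i^\times$ is a quotient of two elements of $\varepsilon_i K_i^\times$, so one gets $K_i \subseteq \Frac(K[E_{i+1}]^{\mathrm h})$ \emph{for free}, while Lemma~\ref{lem: obviousepsilonimage}(4) gives $\Frac(K[E_i]^{\mathrm h}) \subseteq K_{i+1}$. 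These two containments sandwich the two chains together, and taking the union over an $\omega$-chain of elementary substructures yields $K_\infty = \bigcup_i K_i = \Frac(K[E]^{\mathrm h}) \preceq \Kk$. This is both shorter (length $\omega$ rather than $|K|^+$) and avoids any need to express individual witnesses via Nash data. If you want to pursue a Tarski--Vaught approach, you would need to supply the missing step connecting saturation-produced witnesses to the Nash germ ring, and deal with the fact that definable sets are not varieties; replacing your successor step with the paper's ``$\varepsilon_i K_i^\times$'' trick is the cleanest fix.
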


\begin{proof}
Let $\kappa = |K|$.
We inductively build an increasing chain $(K_i)_{i \in \Nn}$ of elementary subfields of $\Kk$, each of cardinality $\kappa$.
Let $K_0 = K$.
Given $K_i$, we fix a non-zero $K_i$-infinitesimal $\varepsilon_i \in \Kk$ and let $K_{i + 1}$ be an elementary subfield of $\Kk$ of cardinality $\kappa$ containing $\varepsilon_i K^\times_i$.
Note that $K_{i + 1}$ contains $K_i$ as every non-zero element of $K_i$ is a quotient of elements of $\varepsilon_i K^\times_i$.
By Corollary~\ref{cor:infin.5}, $\varepsilon_i K^\times_i$ is a $K_i$-infinitesimal set for each $i$.
Let $E_n = \bigcup_{i = 0}^{n} \varepsilon_i K^\times_i$ for each $n$ and set $E = \bigcup_{n \in \Nn} E_n$, so each $E_n$ is contained in $K_{n + 1}$.
Then $E$ is a $K$-infinitesimal set by Lemma~\ref{lem:infin1}.
Let $K_\infty =\bigcup_{i \in \Nn} K_i$.
We have $K \preceq K_\infty \preceq \Kk$.
We show that $K_\infty = \Frac(K[E]^\mathrm{h})$.
Every element of $K^\times_i$ is a quotient of elements of $\varepsilon_i K^\times_i \subseteq E \subseteq K[E]^\mathrm{h}$, hence each $K_i$ is contained in $\Frac(K[E]^\mathrm{h})$.
Furthermore each $K_{i +1}$ contains the relative algebraic closure of $K \cup E_i$ in $\Kk$, hence each $K_{i + 1}$ contains $\Frac(K[E_i]^\mathrm{h})$.
It follows that
\[K_\infty = \bigcup_{i \in \Nn} K_i = \bigcup_{i \in \Nn} \Frac(K[E_i]^\mathrm{h}) = \Frac(K[E]^\mathrm{h}).
\qedhere\]
\end{proof}

\begin{corollary}\label{cor:thmA}
Suppose that $K$ is large.
Then there is a henselian local domain $R$ extending $K$ with residue field $K$ such that $\Frac(R)$ is an elementary extension of $K$.
\end{corollary}

We thank Philip Dittmann for providing an improved proof of the separably closed case.

\begin{proof}
If $K$ is not separably closed, then we can take $R$ to be $K[E]^\mathrm{h}$ as in Theorem~\ref{thm:infin3}.
Suppose that $K$ is separably closed.
Let $K^*$ be a non-trivial elementary extension of $K$ and fix $t \in K^* \setminus K$, so $t$ is transcendental over $K$.
Let $v$ be the Gauss valuation on $K(t)$ and extend $v$ to a valuation $v^*$ on $K^*$.
Note that $v$ is henselian as any valuation on a separably closed field is henselian.
Furthermore $v$ is trivial on $K$, so $K$ embeds into the residue field of $v$.
We consider $K$ to be a subfield of the residue field of $v$.
Let $R$ be the set of elements of the valuation ring with residue in $K$.
Then $R$ is henselian with residue field $K$.
Finally $\Frac(R) = K$ as every element of $K$ is a quotient of elements with residue $0$.
\end{proof}

% We end the section with a remark.

% \begin{remark} \label{rem:thmA}
% With a bit more work we can get the following strengthening of Theorem~A: Suppose that $K$ is large.
% Then there is a henselian local domain $R$ extending $K$ with residue field $K$ such that $\Frac(R)$ is an elementary extension of $K$.

% \medskip
% If $K$ is not separably closed, then we can take $R$ to be $K[E]^\mathrm{h}$ as in~
% Theorem \ref{thm:infin3}.
% If $K$ is algebraically closed, we can take $R$ to be the valuation ring of the field $K(\!(t^\Qq)\!)$ of Hahn series over $K$.
% Finally, suppose that $K$ is separably but not algebraically closed.
% Then $K$ has positive transcendence degree
% over the prime field, so it admits a non-trivial valuation $v$.
% In this case, one can mimic the arguments of
% Sections~\ref{section:nashh-basic} and \ref{section:l->h}, replacing the \'etale-open topology with the valuation topology.
% We leave the details as an exercise to the reader.

% \medskip
We discuss some classical examples.
First, let $K$ be a local field and $\Kk$ be an arbitrary proper elementary extension of $K$.
Let $\st$ be the standard part map $R \to K$, where $R$ is the set of finite elements of $\Kk$.
Then $R$ is a Henselian valuation ring with fraction field $\Kk$ and residue field $K$ and $\st$ is the residue map.
The residue map to $K$ is the map that takes each finite element of $\Kk$ to the nearest element of $K$.
Second, suppose that $K$ is either algebraically, real, or $p$-adically closed.
Then the field $K(\!(t^\Qq)\!)$ of Hahn series over $K$ is algebraically, real, or $p$-adically closed and so the inclusion $K \to K(\!(t^\Qq)\!)$ is elementary by model completeness of the theory of algebraically, real, or $p$-adically closed fields, respectively.
Furthermore $K(\!(t^\Qq)\!)$ has a natural henselian valuation with residue field $K$.
Again, the residue map of this valuation is a ``standard part map" that takes each ``finite" element of $K(\!(t^\Qq)\!)$ to the nearest element of $K$.
The strengthening of Theorem~A in this remark shows more generally that any large field $K$ admits an elementary extension with a ``standard part map" taking values in $K$; however the ``standard part map" is no longer canonical.
% \end{remark}

\section{The remaining comparison theorems} \label{sec: comparison}

We prove Theorems~C.2 and~C.3 and other results related to comparing the \'etale-open topology and the finite-closed topology.

\subsection{E- and F-sets over algebraically, real, and $p$-adically closed fields}\label{section:rcr-pcf}
Fact~\ref{fact:E-set2} below is \cite[Lemma~5.2]{firstpaper}.

\begin{fact}\label{fact:E-set2}
The preimage of an E-subset of $V(K)$ under the map $W(K) \to V(K)$ induced by a morphism $W \to V$ is an E-subset of $W(K)$.
\end{fact}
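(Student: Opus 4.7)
The plan is to mirror exactly the proof of Lemma~\ref{fc-lemma}(1) for F-sets, replacing ``finite'' by ``\'etale'' and using the fact that \'etale morphisms, like finite morphisms, are closed under arbitrary base change (Fact~\ref{fact:etale}(1)). Given a morphism $f\colon W \to V$ and an E-subset $X$ of $V(K)$, I would begin by choosing an \'etale morphism $\phi\colon V' \to V$ with $\phi(V'(K)) = X$ (such a $\phi$ exists by the definition of an E-set), and then form the Cartesian square
\[
\xymatrix{
V' \times_V W \ar[r] \ar[d]_{\phi^*} & V' \ar[d]^\phi \\
W \ar[r]_f & V.
}
\]
The pullback morphism $\phi^* \colon V' \times_V W \to W$ is \'etale by Fact~\ref{fact:etale}(1), so $\phi^*(\,(V' \times_V W)(K)\,)$ is automatically an E-subset of $W(K)$.

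It will then remain to check on $K$-points that the $K$-rational image of $\phi^*$ coincides with $f^{-1}(X)$. This should follow directly from the universal property of the fibre product: $(V' \times_V W)(K)$ is the set of pairs $(a,b) \in V'(K) \times W(K)$ satisfying $\phi(a) = f(b)$, and its image under the projection to $W(K)$ is precisely
\[
\{\, b \in W(K) : f(b) \in \phi(V'(K))\,\} \;=\; f^{-1}(X).
\]
Hence $f^{-1}(X)$ is the $K$-rational image of an \'etale morphism, i.e.\ an E-subset of $W(K)$.

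I do not anticipate any real obstacle here: once one invokes base-change stability of \'etale morphisms and the standard identification of $K$-points of a fibre product, the argument is essentially formal and is the direct \'etale analogue of the finite-morphism version already recorded in Lemma~\ref{fc-lemma}(1). In fact, the statement strengthens to continuity of $W(K) \to V(K)$ in the $\cE_K$-topology, since E-sets form an open basis and preimages commute with unions, but for the fact as stated the two short steps above suffice.
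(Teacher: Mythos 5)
Your proof is correct, and it is exactly the standard base-change argument: the paper itself cites this as \cite[Lemma~5.2]{firstpaper} rather than reproving it, but the proof there (and the paper's own proof of the F-set analogue in Lemma~\ref{fc-lemma}(1), which you explicitly mirror) uses precisely the same pullback square, stability of \'etale morphisms under base change, and the identification of $K$-points of a fibre product with the set-theoretic fibre product of $K$-points. One small point worth being conscious of, though it does not affect correctness and the paper elides it in the F-set case too: $V' \times_V W$ must be taken to be a $K$-variety in the paper's sense (separated, reduced, finite type), and reducedness holds here because an \'etale morphism over a reduced scheme has reduced source.
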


We let $R_n$ be the set of $n$th powers in $K$ and $P_n = R_n \setminus \{0\}$.  A \textbf{positive combination} of members of a family $\C$ of sets is a finite union of finite intersections of members of $\C$.

\begin{proposition}\label{prop:acf-rcf-pcf}
\label{prop:real-p-adic}
Suppose that $K$ is algebraically, real, or $p$-adically closed.
Let $X$ be a subset of $K^m$, and equip $K^m$ with the Zariski, order, or $p$-adic topology, respectively.
Let $f$ range over $K[x_1,\ldots,x_m]$.
Then the following are equivalent:
\begin{enumerate}
\item $X$ is an E-set,
\item $X$ is open and definable,
\item $X$ is a positive combination of sets of the following form:
\begin{itemize}
\item $\{ a \in K^m : f(a) \ne 0\}$, when $K$ is algebraically closed.
\item $\{ a \in K^m : f(a) > 0 \}$, when $K$ is real closed.
\item $\{ a \in K^m : f(a) \in P_n \}$, when $K$ is $p$-adically closed.
\end{itemize}
\end{enumerate}
Furthermore the following are equivalent:
\begin{enumerate}
\setcounter{enumi}{3}  
\item $X$ is an F-set,
\item $X$ is closed and definable,
\item $X$ is a positive combination of sets of the following form:
\begin{itemize}
\item $\{ a \in K^m : f(a) = 0 \}$, when $K$ is algebraically closed.
\item $\{ a \in K^m : f(a) \ge 0 \}$, when $K$ is real closed.
\item $\{ a \in K^m : f(a) \in R_n \}$, when $K$ is $p$-adically closed.
\end{itemize}
\end{enumerate}
\end{proposition}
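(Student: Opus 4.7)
The plan is to establish both equivalence groups by cycling $(3) \Rightarrow (1) \Rightarrow (2) \Rightarrow (3)$ and $(6) \Rightarrow (4) \Rightarrow (5) \Rightarrow (6)$. The three essential ingredients are quantifier elimination in each theory (Chevalley for ACF, Tarski for RCF, Macintyre for pCF in the language of rings expanded by the predicates $P_n$), the identification of the $\cE_K$-topology on $K$ with the Zariski, order, or $p$-adic topology via Fact~\ref{fact:old-EO}, and Fact~\ref{fact:prod} transferring this to $K^m$.

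For $(3) \Rightarrow (1)$ and $(6) \Rightarrow (4)$: by Fact~\ref{fact:E-set} and Lemma~\ref{fc-lemma}(3), it suffices to show each basic set is an E-set or F-set. In ACF, $\{f \ne 0\}$ is an open immersion (hence \'etale) and $\{f = 0\}$ is a closed immersion (hence finite). In RCF, $\{f > 0\}$ and $\{f \ge 0\}$ are the $K$-rational images of the morphisms $\Spec K[a,b]_b/(b^2 - f(a)) \to \Aa^m$ and $\Spec K[a,b]/(b^2 - f(a)) \to \Aa^m$, respectively, which are standard \'etale (after inverting $b$, since $2b$ is the relevant derivative) and finite (since $b^2 - f(a)$ is monic in $b$). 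The pCF cases are identical with $b^n$ in place of $b^2$; \'etaleness holds because $K$ has characteristic zero, so $n b^{n-1}$ is invertible after inverting $b$.

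For $(1) \Rightarrow (2)$ and $(4) \Rightarrow (5)$: definability is immediate since every $K$-variety is cut out by polynomials and every morphism is polynomial. Openness of E-sets in the stated topology follows from Fact~\ref{fact:old-EO}(\ref{old:sep}) in the ACF case ($\cE_K$ agrees with the Zariski topology on all $V(K)$), and from Fact~\ref{fact:old-EO}(\ref{old:ordr}), (\ref{old:hnsl}) combined with Fact~\ref{fact:prod} in the RCF and pCF cases (the $\cE_K$-topology on $K$ equals the order or $p$-adic topology, and these topologies on $K^m$ are the product topologies). Closedness of F-sets follows similarly: in ACF one uses Zariski-closedness of images of proper morphisms; in RCF and pCF one argues directly that the $K$-rational image of a finite morphism, being cut out by the existence of a root of a monic polynomial, is closed in the order or $p$-adic topology by a properness argument.

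Finally, $(2) \Rightarrow (3)$ and $(5) \Rightarrow (6)$ are pure quantifier elimination. After QE, any open definable subset of $K^m$ is a finite Boolean combination of the respective atomic formulas, and a standard normal-form argument reduces it to a finite union of finite intersections of the relevant strict atoms, giving $(3)$; dually for closed sets, giving $(6)$. The main subtlety arises in the pCF case of $(5) \Rightarrow (6)$: Macintyre's QE retains $\{f = 0\}$ as an atomic formula, and to express it as a positive combination of $\{g \in R_n\}$ sets one uses the classical fact that for every $p$-adically closed field $K$ there exists $n$ with $-1 \notin R_n$, so that $\{f = 0\} = \{f \in R_n\} \cap \{-f \in R_n\}$.
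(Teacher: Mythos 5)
Your proof is correct and mostly tracks the paper's argument: same three-way cycle, same use of QE and of the identification of $\cE_K$ with the classical topology. Two remarks on where the routes diverge. First, for showing F-sets are closed (part of $(4)\Rightarrow(5)$), the paper uses Theorem~\ref{thm:etale-refine} (the $\cE_K$-topology refines $\cF_K$ over a perfect field, combined with the known description of $\cE_K$). You propose instead a direct ``properness argument'' that finite morphisms give closed maps in the order or $p$-adic topology. That is a valid alternative, but it is not quite as elementary as you suggest: it is essentially Moret-Bailly's theorem in the $p$-adic case (or compactness of fibers in the real case), and the ``cut out by a root of a monic polynomial'' reduction needs a cover by finitely many generators. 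The paper's route leans on machinery already developed, which is cheaper once Theorem~\ref{thm:etale-refine} is in hand. Second, your treatment of $(3)\Rightarrow(1)$ and $(6)\Rightarrow(4)$ is the same construction the paper carries out, just unwound: the paper pulls back the $n$th-power map along $f$ using Fact~\ref{fact:E-set2} and Lemma~\ref{fc-lemma}(1), whereas you write out the fiber product $\Spec K[a,b]_b/(b^n - f(a))$ explicitly. Your observation that in the $p$-adic case $\{f=0\}=\{f\in R_n\}\cap\{-f\in R_n\}$ for $n$ with $-1\notin R_n$ is a nice clarification of a step the paper outsources to a reference; the paper simply cites Tarski/Robinson-style positive QE results for (2)$\Rightarrow$(3) and (5)$\Rightarrow$(6), so you should be aware the ``standard normal-form argument'' you invoke is exactly the content of those cited theorems, not a free corollary of raw QE.
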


Note that it follows that if $K$ is algebraically, real, or $p$-adically closed then the complement of an E-subset of $K^m$ is an F-set.
We generalize this to bounded fields in Lemma~\ref{lem: Checkingforstandardetale} below.

\begin{proof}
By definition E-sets and F-sets are definable.
Recall that the $\cE_K$-topology agrees with the Zariski, order, or valuation topology when $K$ is algebraically closed, real closed, or $p$-adically closed, respectively.
Hence every E-set is open and by Theorem~\ref{thm:etale-refine} every F-set is closed.
Thus (1) implies (2) and (4) implies (5).
We next check that (2) implies (3) and (5) implies (6).
The algebraically closed case follows by basic properties of the Zariski topology.
The real closed case is \cite[Thm.~2.7.2]{real-algebraic-geometry}.
The $p$-adically closed case is proven in \cite{robinson-edmund}.

\medskip
It remains to show that (3) implies (1) and (6) implies (4).
Suppose that $K$ is algebraically closed.
If $X$ is Zariski open then $X=V(K)$ for an open subvariety $V$ of $\Aa^m$ and the inclusion $V\to\Aa^m$ is \'etale, hence $X$ is an E-set.
If $X$ is Zariski closed then $X=V(K)$ for a closed subvariety $V$ of $\Aa^m$ and the inclusion $V\to\Aa^m$ is finite, hence $X$ is an F-set.

\medskip
Suppose that $K$ is real closed.
We show that (3) implies (1) in this case.
E-sets are closed under positive combinations by Fact~\ref{fact:E-set}.
Hence we may suppose $X = \{ a \in K^m : f(a) > 0 \}$.  Note that $P_2 = \{a \in K : a > 0\}$ is an E-set, because the squaring map is \'etale away from zero.
Then $X = f^{-1}(P_2)$ is an E-set by Fact~\ref{fact:E-set2}.
The proof that (6) implies (4) is similar, using Fact~\ref{fc-lemma}(1) and the fact that the squaring map $\Aa^1 \to \Aa^1$ is a finite morphism, and so $R_2 = \{a \in K : a \ge 0\}$ is an F-set.

\medskip
The case when $K$ is $p$-adically closed follows in the same way as the real closed case by replacing $P_2$ and $R_2$ with $P_n$ and $R_n$.
\end{proof}

\subsection{Bounded fields}\label{sec:perfect-bdd} \label{sec: boundedfields}

We now study when the finite-closed topology refines the \'etale-open topology.

\begin{lemma}\label{confusing-v2}
Let $y = (y_1,\ldots,y_n)$ be a tuple of variables and let $A = K[y]$.
Suppose $x$ is a single variable, and  $f(x,y), g(x,y)$ are two polynomials in $A[x]= K[x,y]$, with
$f(x,y)$ monic in $x$.
For every finite Galois extension $L/K$ there is a finite morphism $W \to \Aa^n$ such that 
the following are equivalent for every $b \in K^n$:
\begin{itemize}[leftmargin=*]
\item $b$ is in the image of $W(K) \to K^n$
\item The polynomial $f(x,b) \in K[x]$ splits over $L$, and
every simple root in $K$ is a root of $g(x,b)$.
\end{itemize}
\end{lemma}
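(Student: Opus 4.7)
The plan is to build $W$ as a closed subscheme of the product of $\Aa^n$ with a Weil restriction, encoding both a factorization of $f(x,b)$ over $L$ and side conditions forcing the required behavior at simple $K$-rational roots. Set $G = \Gal(L/K)$, $e = |G|$, fix a $K$-basis $\omega_1, \ldots, \omega_e$ of $L$, and let $d = \deg_x f$. Let $Y := \Aa^n \times_K \mathrm{Res}_{L/K}(\Aa^d_L) \cong \Aa^{n+de}$, so $Y(K) = K^n \times L^d$ with coordinates $(b, \alpha_1, \ldots, \alpha_d)$. Any polynomial expression with $L$-coefficients in these coordinates decomposes uniquely in the basis as $\sum_j P_j \omega_j$ with each $P_j$ in the $K$-coordinate ring of $Y$, so its vanishing becomes $e$ closed conditions defined over $K$.

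First I define $W_2 \subseteq Y$ by the equations arising from $f(x, b) = \prod_{i=1}^d (x - \alpha_i)$, equivalently $e_k(\alpha) = (-1)^k f_{d-k}(b)$ for $k = 1, \ldots, d$, where $e_k$ is the $k$-th elementary symmetric polynomial. To see that $W_2 \to \Aa^n$ is finite, I would work in $K[W_2] \otimes_K L$: each $\alpha_i$ is a root of the monic polynomial $f(x, b) \in K[b][x]$, hence integral over $K[b]$; each basis coordinate $a_{ij}$ (defined by $\alpha_i = \sum_j a_{ij} \omega_j$) is an $L$-linear combination of the Galois conjugates $\{\sigma\alpha_i\}_{\sigma \in G}$, and each $\sigma\alpha_i$ is again a root of $f(x,b)$ because $\sigma$ fixes $K[b]$. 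Thus every $a_{ij}$ is integral over $K[b]$, making $K[W_2]$ a finite $K[b]$-module. The $K$-rational image of $W_2 \to \Aa^n$ is then exactly $\{b \in K^n : f(x,b) \text{ splits in } L[x]\}$.

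Next I define $W \subseteq W_2$ by imposing, for each $i = 1, \ldots, d$, the closed equation
\[ g(\alpha_i, b) \cdot \prod_{\sigma \in G,\, k \ne i}(\sigma\alpha_i - \alpha_k) = 0, \]
decomposed into $K$-valued equations via the basis. Then $W \to \Aa^n$ is again finite. For the forward direction, if $(b, \alpha) \in W(K)$ then $f(x,b) = \prod_i (x - \alpha_i)$ splits in $L[x]$, and for any simple $K$-root $\alpha_{i_0}$ one has $\sigma\alpha_{i_0} = \alpha_{i_0}$ (since $\alpha_{i_0} \in K$) and $\alpha_{i_0} \ne \alpha_k$ for $k \ne i_0$ (by simplicity), so every factor $\sigma\alpha_{i_0} - \alpha_k$ is nonzero and the defining equation forces $g(\alpha_{i_0}, b) = 0$. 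Conversely, given $b$ satisfying the hypotheses, factor $f(x,b) = \prod_i (x - \alpha_i)$ in $L[x]$ and verify the $i$-th equation by three mutually exhaustive cases: (a) $\alpha_i$ is a simple $K$-root, so $g(\alpha_i, b) = 0$ by hypothesis; (b) $\alpha_i$ is a repeated root, so the $\sigma = 1$ factor $\alpha_i - \alpha_k$ vanishes for some $k \ne i$; (c) $\alpha_i \in L \setminus K$, so some $\sigma \in G$ fails to fix $\alpha_i$, sending it to a different root of $f(x,b)$, which must then be some $\alpha_k$ with $k \ne i$.

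The main technical obstacle is the finiteness of $W_2 \to \Aa^n$, which hinges on interpreting polynomial identities with $L$-coefficients as $K$-closed conditions and on the interplay between the Weil-restriction coordinates $a_{ij}$ and the Galois action. Once that is in place, everything else is formal, resting on the key observation behind case (c): since $f \in K[x, y]$, the $G$-orbit of any root $\alpha_i \in L$ lies inside the multiset of roots of $f(x,b)$, so $\alpha_i \notin K$ automatically produces a distinct Galois conjugate coinciding with some $\alpha_k$ with $k \ne i$, which provides the vanishing factor that the other two cases supply from repetition and from $g$ respectively.
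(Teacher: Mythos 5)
Your proof is correct and follows essentially the same approach as the paper's: you encode a factorization of $f(x,b)$ over $L$ in $K$-coordinates (you via $\mathrm{Res}_{L/K}$, the paper via the primitive-element expansion $r_i(z) = \sum_j z_{i,j}\alpha^{j-1}$, which are the same thing), cut out the extra closed conditions $g(\alpha_i,b)\prod_{\sigma,k\ne i}(\sigma\alpha_i - \alpha_k)=0$, prove finiteness by the identical argument (Galois conjugates $\sigma\alpha_i$ are roots of the monic $f(x,b)$ hence integral over $K[b]$, and the coordinate variables are $L$-linear combinations of them via an invertible matrix — your base-change matrix is the paper's Vandermonde), and run the identical three-case analysis for the converse.
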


We rephrase Lemma~\ref{confusing-v1} more explicitly in terms of polynomials.
Given an ideal $I$ in $K[x_1,\ldots,x_n]$ we let $V(I)$ be the set of $a \in K^n$ such that every $f\in I$ vanishes on $a$.

\begin{lemma6.2alt}\label{confusing-v1}
Let $y = (y_1,\ldots,y_n)$ be an $n$-tuple of variables, and $x$ be a single variable. 
Let $f(x,y)$ and $g(x,y)$ be two polynomials in $K[x,y]$ with $f(x,y)$ monic in $x$, so
\begin{align*}
f(x,y) &= x^m + a_{m-1}(y)x^{m-1} + \cdots + a_1(y)x + a_0(y)
\\ g(x,y) &= b_\ell(y)x^\ell + b_{\ell-1}(y)x^{\ell-1} + \cdots +
b_1(y)x + b_0(y).
\end{align*}
where each $a_i$ and $b_j$ is in $K[y]$.
Let $L/K$ be a finite Galois extension of degree $d$, and let $z = (z_{1,1},\ldots,z_{m,d})$ be an $md$-tuple of variables. 
Then there is an ideal $I\subseteq K[y,z]$ so that $K[y] \to K[y,z]/I$ is a  finite homomorphism, and the following are  equivalent for any $b \in K^n$:
\begin{itemize}[leftmargin=*]
\item There is $c \in K^{md}$ such that $(b,c) \in V(I)$.
\item $f(x,b) \in K[x]$ splits over $L$, and every simple root in $K$ is also a root of $g(x,b)$.
\end{itemize}
\end{lemma6.2alt}

\begin{proof}[Proof of Lemma \ref*{confusing-v2}$'$ and thus also Lemma \ref{confusing-v2}]
Applying the primitive element theorem, take $\alpha \in L$ such that $L = K(\alpha)$.
Then $\{1, \alpha, \ldots, \alpha^{d-1}\}$ is a $K$-basis of $L$.
For $1 \le i \le m$, let
\begin{equation*}
r_i(z) = z_{i,1} + \alpha z_{i,2} + \cdots + \alpha^{d-1}z_{i,d}
\in L[z].
\end{equation*}
In the ring $L[x,y,z]$, we can expand
\begin{equation*}
f(x,y) - \prod_{i=1}^m[x-r_i(z)] =
\sum_{i=0}^{m-1}\sum_{j=0}^{d-1} \alpha^j x^i h_{i,j}(y,z)
\end{equation*}
for some polynomials $h_{i,j}(y,z) \in K[y,z]$, because the left side is a difference of two degree $m$ monic polynomials over $L[y,z]$, and because $\{1,\alpha,\ldots,\alpha^{d-1}\}$ is a basis of $L[y,z]$ over $K[y,z]$.
Similarly, for each $i = 1,\ldots,m$ we can expand
\begin{equation*}
g(r_i(z),y) \prod_{j \ne i} \prod_{\sigma \in \Gal(L/K)} (\sigma(r_i(z)) - r_j(z)) = \sum_{j=0}^{d-1} \alpha^j q_{i,j}(y,z)
\end{equation*}
for some $q_{i,j}(y,z) \in K[y,z]$.
Here, $\Gal(L/K)$ acts on $L[y,z]$ coefficientwise, i.e., fixing the tuples $y$ and $z$.

\medskip
Let $I_0$ be the ideal in $K[y,z]$ generated by the $h_{i,j}$ and $q_{k,l}$.
\begin{claim}
The map $K[y] \to K[y,z]/I_0$ is a finite homomorphism.
\end{claim}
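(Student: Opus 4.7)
The plan is to base-change to $L$, use the $h_{i,j}$ relations to split $f$ completely there, and then pull integrality back to $K[y]$ using the Galois group $\Gal(L/K)$. Set $B = K[y,z]/I_0$ and $B_L = B\otimes_K L = L[y,z]/(I_0 L[y,z])$. Since $L$ is free over $K$, the natural map $B \hookrightarrow B_L$ is injective, which will let any integral equation over $K[y]$ witnessed in $B_L$ transfer back to $B$.

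The first step is to observe that the defining identity of the $h_{i,j}$ becomes, modulo $I_0 L[y,z]$, the factorization
\[ f(x,y) = \prod_{i=1}^m (x - r_i(z)) \]
in $B_L[x]$. Hence each $r_i(z) \in B_L$ is a root of the monic polynomial $f(x,y) \in K[y][x]$, and so is integral over $K[y]$ in $B_L$. Next, let $\Gal(L/K)$ act on $L[y,z]$ coefficientwise, fixing the variables $y$ and $z$. This action preserves $I_0 L[y,z]$ (since $I_0$ is generated from $K[y,z]$), so it descends to $B_L$; applying $\sigma \in \Gal(L/K)$ to the equation $f(r_i(z),y)=0$ and using that $\sigma$ fixes $f$ shows that every $\sigma(r_i(z))$ is likewise integral over $K[y]$ in $B_L$.

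The central step is recovering each $z_{i,j}$ from the Galois conjugates. Enumerating $\Gal(L/K) = \{\sigma_1,\ldots,\sigma_d\}$, the system
\[ \sigma_k(r_i(z)) = \sum_{j=1}^d \sigma_k(\alpha)^{j-1}\,z_{i,j}, \qquad k = 1,\ldots,d, \]
has coefficient matrix $\bigl(\sigma_k(\alpha)^{j-1}\bigr)_{k,j}$, a Vandermonde matrix in the distinct conjugates of $\alpha$, hence invertible over $L$. Thus each $z_{i,j}$ is an $L$-linear combination of the $\sigma_k(r_i(z))$. Since every element of $L$ is integral over $K$ (minimal polynomial) and the elements of $B_L$ integral over $K[y]$ form a subring, each $z_{i,j}$ is integral over $K[y]$ inside $B_L$; injectivity of $B \hookrightarrow B_L$ then gives a monic integrality witness over $K[y]$ already in $B$. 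Since $B$ is generated as a $K[y]$-algebra by the finitely many $z_{i,j}$, it follows that $B$ is finite over $K[y]$.

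The main obstacle is that the split roots $r_i(z)$ naturally live only in $B_L$, not in $B$, so one initially only sees $K[y]$-integrality of the conjugate combinations $\sigma(r_i(z))$, not of the generators $z_{i,j}$ themselves. The Galois/Vandermonde maneuver is exactly what converts $d$ conjugates into a monic $K[y]$-witness for each individual $z_{i,j}$. (The extra generators $q_{k,l}$ of $I_0$ play no role in this finiteness argument, since enlarging the ideal only further shrinks $B$.)
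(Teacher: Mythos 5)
Your argument is correct, and it rests on exactly the same core idea as the paper: factor $f$ over $L$, observe each $\sigma(r_i(z))$ is integral because $f$ is monic, and invert the Vandermonde matrix to express $z_{i,j}$ as an $L$-linear combination of those conjugates. The only place you diverge is in how you pass from $B_L$ back to $B$. The paper shows $L[y]\to L[y,z]/I_L$ is finite, composes with the finite inclusion $K[y]\to L[y]$, and then invokes noetherianness of $K[y]$ to conclude that the submodule $K[y,z]/I$ is finitely generated. You instead establish $K[y]$-integrality of each generator $z_{i,j}$ directly inside $B_L$ (using that $L$ is integral over $K$ and that integral elements form a subring), and then transfer each monic witness back to $B$ via injectivity of $B\hookrightarrow B_L$; since $B$ is generated as a $K[y]$-algebra by the finitely many integral $z_{i,j}$, it is finite. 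Your route avoids the noetherianness appeal, which is a marginal simplification; the paper's route avoids spelling out the ``integral closure is a subring'' step. Both are correct and the difference is cosmetic. Your parenthetical remark that the extra generators $q_{k,l}$ only shrink the ring is the same reduction the paper performs explicitly (passing from $I_0$ to $I$), so nothing is missing there either.
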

\begin{claimproof}
Let $I$ be the ideal of $K[y,z]$ generated by the $h_{i,j}$.
It suffices to show $K[y] \to K[y,z]/I$ is finite.
Let $I_L = I \cdot L[y,z]$, i.e., the ideal in $L[y,z]$ generated by the $h_{i,j}$.  
Consider the following diagram.
\begin{equation*}
\xymatrix{ K[y] \ar[r] \ar[d] & K[y,z]/I \ar[d] \\ L[y] \ar[r] & L[y,z]/I_L}
\end{equation*}
The vertical homomorphisms are both of the form $B \to B \otimes_K
L$, so both are finite and injective.
We will see shortly that $L[y] \to L[y,z]/I_L$ is finite.
This implies that the composition $K[y] \to L[y,z]/I_L$ is finite.
Since $K[y]$ is noetherian, the submodule $K[y,z]/I$ is also a finitely generated module, proving the claim.

\medskip
It remains to show that $L[y] \to L[y,z]/I_L$ is finite.
Let $A = L[y,z]/I_L$.
In $A[x]$ we have
\begin{equation*}
f(x,y) - \prod_{i=1}^m(x - r_i(z)) = \sum_{i=0}^{m-1}\sum_{j=0}^{d-1} \alpha^j x^i h_{i,j}(y,z) = 0.
\end{equation*}
Then $f(x,y)$ factors as $\prod_{i=1}^m(x-r_i(z))$, so we have $f(r_i(z)) = 0$ in $A$.
Now $f$ is fixed by $\Gal(L/K)$ as $f \in K[x,y]$, hence we have
\begin{equation*}
A \models f(\sigma(r_i(z))) = \sigma(f(r_i(z))) = 0 \quad \text{for any } \sigma \in \Gal(L/K).
\end{equation*}
As $f$ is monic, it follows that each  $\sigma(r_i(z)) \in A$ is integral over $L[y]$.
Note that
\begin{equation*}
\sigma(r_i(z)) = z_{i,1} + \sigma(\alpha)z_{i,2} + \cdots +
\sigma(\alpha)^{d-1}z_{i,d}.
\end{equation*}
Then the matrix of $\sigma(r_i(z))$'s is obtained from the matrix of $z_{i,j}$'s by multiplication by the Vandermonde matrix of $\sigma(\alpha)^j$'s.
This matrix is invertible, so the $z_{i,j}$'s are $L$-linear combinations of the $\sigma(r_i(z))$'s.
As an $L[y]$-algebra, $A$ is generated by the $z_{i,j}$'s, and therefore also by the $\sigma(r_i(z))$'s.
Each generator is integral over $L[y]$, hence $L[y] \to A$ is finite.
\end{claimproof}
We now fix $b \in K^n$.
We claim that the following are  equivalent:
\begin{enumerate}
\item $f(x,b) \in K[x]$ splits over $L$ and every simple root in $K$ is also a root of $g(x,b)$.
\item There are $s_1,\ldots,s_m \in L$ such that
\begin{gather*}
f(x,b) = \prod_{i=1}^m (x - s_i) \text{\quad in  } L[x] \tag{$\ast$} \\
g(s_i,b) \prod_{j \ne i} \prod_{\sigma \in \Gal(L/K)} (\sigma(s_i) - s_j) = 0 \text{\quad for all\quad} i = 1,\ldots,m. \tag{$\dag_i$}
\end{gather*}
\item There is $c \in K^{md}$ such that
\begin{gather*}
f(x,b) = \prod_{i=1}^m ( x - r_i(c)) \text{\quad in  } L[x]
\\ g(r_i(c),b) \prod_{j \ne i} \prod_{\sigma \in \Gal(L/K)}   (\sigma(r_i(c)) - r_j(c)) = 0
\end{gather*}
\item There is $c \in K^{md}$ such that all the $h_{i,j}(b,c)$ and $q_{i,j}(b,c)$ vanish.
\item There is $c \in K^{md}$ such that $(b,c) \in V(I_0)$.
\end{enumerate}
$(1)\implies(2)$: Let $s_1,\ldots,s_m \in L$ be the roots of $f(x,b)$ counted with multiplicities.
Then ($\ast$) holds.
For each $i$, at least one of the following holds:
\begin{itemize}
\item $s_i$ is a root of $g(x,b)$, so $g(s_i,b) = 0$.
\item $s_i$ is a non-simple root.
Then $s_i - s_j = 0$ for some $j \ne i$, so $\sigma(s_i) - s_j = 0$ for $\sigma = \id$.
\item $s_i$ is not in $K$.
Then $\sigma(s_i)\ne s_i$ for some $\sigma \in \Gal(L/K)$.
Since $\sigma$ permutes the roots of $f(x,b)$,
$\sigma(s_i) - s_j = 0$ for some $j \ne i$.
\end{itemize}
Either way, $(\dag_i)$ holds.

\medskip
$(2)\implies(1)$: Equation ($\ast$) shows that $f(x,b)$ splits over $L$.
Suppose for the sake of contradiction that $s_i$ is a simple root in $K$ and $g(s_i,b) \ne 0$.
For any $\sigma \in \Gal(L/K)$ and $j \ne i$ we have $\sigma(s_i) = s_i \ne s_j$.
Therefore $(\dag_i)$ fails.

\medskip
$(2)\iff(3)$: Since $\{1,\alpha,\ldots,\alpha^{d-1}\}$ is a basis of $L$ over $K$ and $$r_i(z) = z_{i,1} + \alpha z_{i,2} + \cdots + \alpha^{d-1} z_{i,d},$$ the equivalence is clear.

\medskip
$(3)\iff(4)$: clear by choice of the $h$'s and $q$'s.

\medskip
$(4)\iff(5)$: clear by choice of $I_0$.\qedhere
\end{proof}

\begin{lemma}\label{lem:bnddkey}
Assume the conditions of Lemma~\ref{confusing-v2} and suppose in addition that $K$ is bounded. 
Then there is a finite morphism  $W \to \Aa^n$ so that the following are equivalent for any $b \in K^n$:
\begin{itemize}
\item $b$ is not in the image of the induced map $W(K) \to K^n$.
\item The polynomial $f(x,b)$ has a simple root in $K$ 
that is not also a root of $g(x,b)$.
\end{itemize}
\end{lemma}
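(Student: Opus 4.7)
The plan is to exhibit $W$ directly as a closed subvariety of $\Aa^n$ cut out by explicit polynomial equations coming from polynomial division; since closed immersions are finite by Fact~\ref{fact:basic-f}(4), this will produce the required finite morphism. Somewhat unexpectedly, this direct approach does not visibly use the hypothesis that $K$ is bounded.

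Set $m := \deg_x f$ and $h(x,y) := f'(x,y)\, g(x,y) \in K[x,y]$, where $f' = \partial f/\partial x$. The first step is to perform polynomial division of $h^m$ by $f$ in $K[y][x]$, which is well defined since $f$ is monic of degree $m$ in $x$; this produces $h^m = qf + r$ with $q,r \in K[y][x]$ and $\deg_x r < m$. Expand $r = \sum_{j=0}^{m-1} r_j(y)\, x^j$ with $r_j \in K[y]$, and take $W$ to be the reduced closed subscheme of $\Aa^n$ cut out by $r_0 = \cdots = r_{m-1} = 0$.

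The remaining task is to verify that, for $b \in K^n$, the point $b$ lies in $W(K)$ if and only if every simple root of $f(x,b)$ in $\kalg$ is a root of $g(x,b)$; taking complements then yields the desired equivalence. By definition, $b \in W(K)$ iff the specialized remainder $r(x,b)$ vanishes in $K[x]$, iff $f(x,b) \mid h(x,b)^m$ in $K[x]$, equivalently in $\kalg[x]$ as $f(x,b)$ is monic. Since every root $\alpha \in \kalg$ of $f(x,b)$ has multiplicity at most $m$ in $f(x,b)$, and its multiplicity in $h(x,b)^m$ equals $m$ times its multiplicity in $h(x,b)$, this divisibility is equivalent to every such $\alpha$ being a root of $h(x,b) = f'(x,b)\, g(x,b)$. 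The latter is in turn equivalent to every root of $f(x,b)$ being either a multiple root (hence a root of $f'$) or a root of $g$; since simple roots are exactly the roots of $f(x,b)$ not annihilated by $f'(x,b)$, this translates to the statement that every simple root of $f(x,b)$ is a root of $g(x,b)$.

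The principal (small) obstacle is the multiplicity bookkeeping in the divisibility equivalence, which is an elementary exercise. An alternative proof closer in spirit to Lemma~\ref{confusing-v2}$'$ would use boundedness to choose a single finite Galois $L/K$ containing every separable extension of $K$ of degree $\le m$ and then modify the construction of Lemma~\ref{confusing-v2}$'$ to capture \emph{simple roots in $L$} rather than in $K$ (replacing the product over $\sigma \in \Gal(L/K)$ in the $(\dag_i)$-relations by the single term with $\sigma = \id$); this requires extra care when $f(x,b)$ fails to split over $L$, but the direct remainder-based route above bypasses such issues entirely.
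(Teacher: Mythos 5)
Your construction does prove something true, and proves it cleanly: taking $h = f'\cdot g$ with $f' = \partial f/\partial x$, dividing $h^m$ by the monic polynomial $f$ in $K[y][x]$, and letting $W$ be the vanishing locus of the remainder coefficients gives a closed immersion (hence a finite morphism) $W \to \Aa^n$ with $K^n \setminus W(K) = \{b : f(x,b) \text{ has a simple root in } \kalg \text{ not a root of } g(x,b)\}$. The multiplicity bookkeeping is fine, and monicity lets you move freely between divisibility in $K[x]$ and in $\kalg[x]$.

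The problem is that you have proved the wrong statement. The sentence as printed has a typo: it should read ``simple root in $K$'', not ``simple root in $\kalg$''. That is what the paper's own proof establishes (the equivalence chain at the end of its argument has ``simple root in $K$'' as item (1)), and it is what the application in Lemma~\ref{lem: Checkingforstandardetale} actually requires, where the fiber condition is about simple roots in $K$. Your own observation that your argument ``does not visibly use'' boundedness should have been a red flag rather than a curiosity, because the $K$-version genuinely needs it. Concretely, take $K = \Qq$, $n=1$, $f(x,y) = x^2 - y$, $g=1$. Then $f(x,b)$ has a simple root in $K$ (automatically not a root of $g$) precisely when $b$ is a nonzero square in $\Qq$, so the $K$-version would force the set of $b$ that are $0$ or a nonsquare to be an F-set; but F-subsets of $\Qq$ are either all of $\Qq$ or thin by Lemma~\ref{lem:hilbertian}, and the nonsquares are not thin since $\Qq$ is Hilbertian. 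What you are missing is exactly the mechanism the paper uses: boundedness supplies a single finite Galois extension $L/K$ containing every separable extension of degree at most $m$, so that (after a Frobenius twist when $K$ is imperfect) $f(x,b)$ always splits over $L$, and the product over $\Gal(L/K)$ in Lemma~\ref{confusing-v2}$'$ is there precisely to detect whether a root of $f(x,b)$ lies in $K$ as opposed to merely in $L$. Your sketched alternative at the end, which drops that Galois product so as to ``capture simple roots in $L$'', would again only yield the $\kalg$-version, since once $f(x,b)$ splits over $L$ its simple roots in $L$ and in $\kalg$ are the same; it misses the point in the same way your main argument does.
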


\begin{proof}
Write $f$ and $g$ as
\begin{align*}
f(x,y) &= x^m + a_{d-1}(y)x^{m-1} + \cdots + a_1(y)x + a_0(y) 
\\ g(x,y) &= b_\ell(y)x^\ell + b_{\ell-1}(y)x^{\ell-1} + \cdots + b_1(y)x + b_0(y).
\end{align*}
In particular, $m$ is the degree of $f$ as a monic polynomial in $m$.
By boundedness, there is a finite Galois extension $L/K$ containing an isomorphic copy of every finite separable extension of $K$ of degree $\le m$.
If $K$ is perfect then the polynomial $f(x,b)$ splits over $L$ for any $b \in K^n$, and we are done by Lemma~\ref{confusing-v2}.

\medskip
Suppose that $K$ is imperfect of characteristic $p$.
Fix $k$ such that $p^k > m$.
Let
\begin{align*}
f_2(x,y) &= x^m + a_{d-1}(y)^{p^k} x^{m-1} + \cdots + a_1(y)^{p^k} x + a_0(y)^{p^k}
\\ g_2(x,y) &= b_\ell(y)^{p^k} x^\ell + b_{\ell-1}(y)^{p^k} x^{\ell-1} + \cdots + b_1(y)^{p^k} x + b_0(y)^{p^k}.
\end{align*}
For any $b \in K^n$, the roots of $f_2(x,b)$ in $\kalg$ are $p^k$th powers of the roots of $f(x,b)$.
Similarly, the roots of $g_2(x,b)$ are the $p^k$th powers of the roots of $g(x,b)$.

% \textcolor{blue}{Let $V$ be the subvariety of $\Aa^{n+1}$ given by $f(x,y) = 0\ne g(x,y)$.
% The variety given by $f_2(x,y) = 0 \ne g_2(x,y)$ is exactly the iterated relative Frobenius $V^{(p^k)}$ of $V$, and the projection $V \to \Aa^n$ factors as $V \to V^{(p^k)} \to \Aa^n$ where $V \to V^{(p^k)}$ is the iterated relative frobenius given by $(x,y) \mapsto (x^{p^k},y)$ and $V^{(p^k)} \to \Aa^n$ is the projection.}

% \medskip
% \textcolor{blue}{Fix $b \in K^n$.
% Let $\Spec K \to \Aa^n$ be the corresponding $K$-point and let $F \to \Spec K$ be the fiber of $V \to \Aa^n$ above this point.
% We also get a factorization $F \to F^{(p^k)} \to \Spec K$.
% The map $F^{(p^k)} \to \Spec K$ should be the fiber of $V^{(p^k)} \to \Aa^n$ over $b$.
% Let $\kappa(q)$ be the residue field of a variety at a point $q$.
% It should follow from the lemmas in ``\'Ez fields" that $\kappa(q)/K$ is separable when $k$ is sufficiently large for any $q \in F^{(p^k)}$.
% I think this yields the claim below.} 

% \medskip
% \textcolor{blue}{Now let $W$ be the open subvariety of $V$ given by $\der f/\der y (x,y) \ne 0$.
% Then $W^{(p^k)}$ should be the subvariety of $V^{(p^k)}$ given by $\der f_2/\der y (x,y) \ne 0$.
% Now $W \to \Aa^n$ is \'etale, and it should follows by general facts that $W^{(p^k)} \to \Aa^n$ is also \'etale.}

\begin{Claim*}
If $b \in K^n$ and $a \in \kalg$ satisfies $f_2(a,b) = 0$, then $a \in L$.
Equivalently $f_2(x,b)$ splits over $L$ for any $b \in K^n$.
\end{Claim*}

\begin{claimproof}
Write $a$ as $a_0^{p^k}$ for some root $a_0$ of $f(x,b)$.
If $P(x)$ is the minimal polynomial of $a_0$ over $K^{\sep}$, then $P(x)$ divides $f(x,b)$, so $\deg P \le \deg f(x,b) = m$.
Also, $P(x)$ has the form $x^{p^j} - c$ for some $j$ and $c \in K^{\sep}$.  Then $p^j = \deg P(x) \le m < p^k$, so $j < k$.
Then $c = a_0^{p^j}$ and so $a = a_0^{p^k} = c^{p^{k-j}} \in K^{\sep}$.
At the same time, $a$ is a root of $f_2(x,b)$, which has degree $m$, so $[K(a) : K] \le m$.
Then $a \in L$ by choice of $L$ and separability of $K(a)/K$.
\end{claimproof}
  
By Lemma~\ref{confusing-v2} there is a finite morphism $W \to \Aa^n$ such that the following are equivalent for any $b \in K^n$:
\begin{itemize}
\item $b$ is in the image of $W(K) \to \Aa^n(K)$.
\item Every simple root of $f_2(x,b)$ in $K$ is a root of $g_2(x,b)$ (and $f_2(x,b)$ splits over $L$, but this is automatic by the Claim).
\end{itemize}
Let $\perf$ be the maximal purely inseparable extension of $K$.
Then the following are equivalent for any $b \in K^n$:
\begin{enumerate}
\item The polynomial $f(x,b)$ has a simple root in $K$ that is not also a root of $g(x,b)$.
\item The polynomial $f(x,b)$ has a simple root in $\perf$ that is not also a root of $g(x,b)$.
\item The polynomial $f_2(x,b)$ has a simple root in $\perf$ that is not also a root of $g_2(x,b)$.
\item The polynomial $f_2(x,b)$ has a simple root in $K$ that is not also a root of $g_2(x,b)$.
\item $b$ isn't in the image of $W(K) \to \Aa^n(K)$.
\end{enumerate}
The equivalence of (1) and (2) holds because simple roots of polynomials over $K$ always lie in $K^{\sep}$, and $K^{\sep} \cap \perf = K$.
The equivalence of (3) and (4) holds similarly.
The equivalence of (2) and (3) holds because the map $x \mapsto x^{p^k}$ maps the roots of $f(x,b)$ and $g(x,b)$ bijectively to the roots of $f_2(x,b)$ and $g_2(x,b)$.
Finally, the equivalence of (4) and (5) holds by choice of $W$, as discussed above.
\end{proof}

The previous proof is implicitly using the iterated relative Frobenius morphism; see \cite[Definition~0CC9]{stacks-project} or \cite[\S~1.2]{secondpaper}.

\begin{lemma} \label{lem: Checkingforstandardetale}
Let $K$ be bounded.
Then the complement of any E-subset of $K^n$ is an F-set.
\end{lemma}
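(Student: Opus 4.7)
The plan is to reduce to the case that $U$ is the $K$-rational image of a single standard \'etale morphism, and then apply Lemma~\ref{lem:bnddkey} almost directly.

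By definition, $U$ is the $K$-rational image of some \'etale morphism $V \to \Aa^n$ with $V$ a $K$-variety. First I would invoke Fact~\ref{fac: sdetalevsetale}(2) to cover $V$ by finitely many affine opens $V_1, \ldots, V_k$, each standard \'etale over $\Aa^n$ (finiteness comes from $V$ being of finite type, hence quasi-compact). Writing $U_i$ for the $K$-rational image of $V_i \to \Aa^n$, we have $U = U_1 \cup \cdots \cup U_k$, so $K^n \setminus U = \bigcap_{i=1}^{k}(K^n \setminus U_i)$. Since F-sets are closed under finite intersection by Lemma~\ref{fc-lemma}(3), it suffices to prove each $K^n \setminus U_i$ is an F-set.

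Next I fix one standard \'etale morphism $V' \to \Aa^n$ dual to $K[y] \to K[y,x]_g/(f)$, with $f$ monic in $x$ of degree $m$ and $f'$ invertible in $K[y,x]_g/(f)$; write $f' h \equiv g^N \pmod{f}$ for suitable $h \in K[y,x]$ and $N \ge 1$. A $K$-point of $V'$ is a pair $(a,b)$ with $f(a,b) = 0 \ne g(a,b)$, and evaluating the congruence at $(a,b)$ forces $f'(a,b) \ne 0$, so $a$ is automatically a \emph{simple} $K$-root of $f(x,b)$; conversely any simple $K$-root of $f(x,b)$ with $g(a,b) \ne 0$ lifts to a $K$-point. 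Moreover the same congruence shows that any non-simple $K$-root of $f(x,b)$ automatically satisfies $g(a,b) = 0$. Hence the complement of the image $U'$ is exactly $\{b \in K^n : \text{every simple } K\text{-root of } f(x,b) \text{ is a root of } g(x,b)\}$.

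Now I would apply Lemma~\ref{lem:bnddkey} to this pair $(f,g)$: the chain of equivalences in its proof produces a finite morphism $W \to \Aa^n$ whose $K$-rational image is precisely $\{b \in K^n : \text{every simple } K\text{-root of } f(x,b) \text{ is a root of } g(x,b)\}$. This coincides with $K^n \setminus U'$ by the previous paragraph, so $K^n \setminus U'$ is an F-set, finishing the reduction. The real content sits in the identification in the third paragraph: one must check carefully that the standard-\'etale hypothesis $f' h \equiv g^N \pmod{f}$ simultaneously forces the $K$-points of $V'$ to correspond to \emph{simple} $K$-roots of $f(x,b)$ and forces \emph{all} non-simple $K$-roots to be killed by $g$, so that Lemma~\ref{lem:bnddkey} applies verbatim. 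Everything else is packaging: Fact~\ref{fac: sdetalevsetale} to reduce to the standard \'etale setting and Lemma~\ref{fc-lemma}(3) to reassemble the pieces.
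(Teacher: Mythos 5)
Your proof is correct and follows essentially the same route as the paper: reduce to the standard \'etale case via Fact~\ref{fac: sdetalevsetale} and closure of F-sets under finite intersections (Lemma~\ref{fc-lemma}(3)), identify the image as the set of $b$ admitting a simple $K$-root of $f(x,b)$ not killed by $g(x,b)$, and apply Lemma~\ref{lem:bnddkey}. The extra unpacking of the standard \'etale condition $f'h\equiv g^N\pmod f$ is a welcome justification of a step the paper treats as immediate, but the argument is otherwise the same.
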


\begin{proof}
Let $f\colon V \to \Aa^n$ be \'etale.
We show that the complement of $f(V(K))$ in $K^n$ is an F-subset of $K^n$.
By Fact~\ref{fac: sdetalevsetale} there is a cover $V_1,\ldots,V_d$ of $V$ by affine open subvarieties such that each $V_i \to \Aa^n$ is standard \'etale.
The complement of $f(V(K))$ is the union of the complements of the $f(V_i(K))$.
F-subsets of $K^n$ are closed under finite unions by Lemma~\ref{fc-lemma}(3), so we may suppose that $f$ is standard \'etale.
Hence we may suppose that $V$ is the subvariety of $\Aa^n \times \Aa^1$ given by $g = 0 \ne h$ for some $g,h \in K[x_1,\ldots,x_n,y]$ such that $g$ is monic in $y$ and $\der g/\der y$ does not vanish on $V$, and $f$ is the projection $V \to \Aa^n$.
Then the fiber of $V(\kalg) \to \Aa^n(\kalg)$ over any $b \in \Aa^n(\kalg)$ is exactly the set of simple roots of $g(b,x)$ which are not also roots of $h(b,x)$.
In particular, if $b \in K^n$, then $b$ lies in $f(V(K))$ if and only if there
is a simple root of $g(b,x)$ in $K$ that is not  a root of $h(b,x)$.
Apply Lemma~\ref{lem:bnddkey}.
\end{proof}

Lemma~\ref{lem: Checkingforstandardetale} shows that the $\cF_K$-topology on $K^n$ refines the $\cE_K$-topology for any $n \ge 1$ when $K$ is bounded.
Theorem~\ref{66}, which is Theorem~C.2 from the introduction, follows by Fact~\ref{fact:refine}.

\begin{theorem} \label{66}
The $\cF_k$-topology refines the $\cE_K$-topology when $K$ is bounded.
\end{theorem}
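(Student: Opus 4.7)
The plan is to deduce Theorem~\ref{66} as a direct consequence of Lemma~\ref{lem: Checkingforstandardetale} combined with the affine reduction provided by Fact~\ref{fact:refine}. Specifically, Fact~\ref{fact:refine} tells us that to show one system of topologies refines another, it is enough to check the refinement on each affine space $\Aa^n(K) = K^n$. So I would begin by fixing $n \ge 1$ and reducing to the task of showing that the $\cF_K$-topology on $K^n$ refines the $\cE_K$-topology on $K^n$.

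Next I would translate this refinement statement into a purely set-theoretic assertion about basic open sets. Since E-subsets of $K^n$ form an open basis for the $\cE_K$-topology (Fact~\ref{fact:E-set}), it suffices to show that every E-subset of $K^n$ is $\cF_K$-open; and since F-subsets of $K^n$ form a closed basis for the $\cF_K$-topology, this in turn reduces to showing that the complement in $K^n$ of any E-subset is a finite union of F-sets, or even simply an F-set. This is exactly the content of Lemma~\ref{lem: Checkingforstandardetale}, so at this point the proof is complete.

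The real work sits inside Lemma~\ref{lem: Checkingforstandardetale}, and the hard part is not the reduction itself but the passage from an E-set to an explicit F-set complement. There one first invokes Fact~\ref{fac: sdetalevsetale} to assume the \'etale morphism is standard \'etale, which turns the question into asking that the set of $b \in K^n$ for which a parametric polynomial $g(b,y)$ has a simple root in $K$ not also a root of $h(b,y)$ be the image of a finite morphism. The essential input from boundedness, via Lemma~\ref{lem:bnddkey} and Lemma~\ref{confusing-v2}, is the existence of a single finite Galois extension $L/K$ through which \emph{all} separable roots of $g(b,y)$ (for $b \in K^n$) factor; this lets one encode ``has a simple $K$-rational root outside $g(\cdot,b)$'' by a finite system of polynomial equations in auxiliary variables that parameterize $L$-linear combinations of a primitive element. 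In the imperfect case the Frobenius twist $g \mapsto g_2$ handles inseparable factors by passing to $p^k$th powers.

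Assuming these lemmas, the proof of Theorem~\ref{66} itself is a one-line assembly: by Fact~\ref{fact:refine} and Lemma~\ref{lem: Checkingforstandardetale}, the $\cF_K$-topology refines the $\cE_K$-topology whenever $K$ is bounded.
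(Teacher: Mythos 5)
Your proposal is correct and follows exactly the paper's proof: reduce to affine spaces via Fact~\ref{fact:refine}, then observe that Lemma~\ref{lem: Checkingforstandardetale} shows the complement of every E-subset of $K^n$ is an F-set, hence every E-set is $\cF_K$-open. Your summary of the internal structure of Lemma~\ref{lem: Checkingforstandardetale} (reduction to standard \'etale, boundedness supplying a single Galois splitting field $L/K$, and the Frobenius twist in the imperfect case) also matches the paper.
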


Corollary~\ref{last-cor} follows by Theorem~\ref{66}, Theorem~\ref{thm:etale-refine}, and Lemma~\ref{lem:system-2}.

\begin{corollary} \label{last-cor}
Suppose that $K$ is perfect and bounded.
Then the $\cF_K$- and $\cE_K$-topologies agree.
Hence $V(K) \to W(K)$ is a closed map in the $\cE_K$-topology when $V \to W$ is finite.
\end{corollary}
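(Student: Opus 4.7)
The plan is to observe that this corollary is essentially a one-line consequence of combining the two refinement theorems already established, plus the closed-map property of the finite-closed topology.

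First, I would invoke Theorem~\ref{thm:etale-refine} (Theorem~C.1), which uses the perfectness hypothesis to show that the $\cE_K$-topology refines the $\cF_K$-topology. Then I would invoke Theorem~\ref{66} (Theorem~C.2), which uses the boundedness hypothesis to show that the $\cF_K$-topology refines the $\cE_K$-topology. Since $K$ is both perfect and bounded, both refinements hold simultaneously on $V(K)$ for every $K$-variety $V$, so the two systems of topologies coincide.

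For the second assertion, I would apply Lemma~\ref{lem:system-2}, which says that a finite morphism $V \to W$ induces a closed map $V(K) \to W(K)$ in the $\cF_K$-topology. Since the $\cF_K$- and $\cE_K$-topologies on $V(K)$ and $W(K)$ agree by the first part, closedness of the induced map transfers directly to the $\cE_K$-topology.

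There is no real obstacle here, as both directions of the comparison and the closedness statement have already been done. The only thing to verify is simply that the assumptions of each cited result are satisfied (perfect for Theorem~\ref{thm:etale-refine}, bounded for Theorem~\ref{66}, and finiteness for Lemma~\ref{lem:system-2}), and then to chain them together.
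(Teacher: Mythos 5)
Your proposal is correct and follows exactly the same route as the paper: the paper states that Corollary~\ref{last-cor} follows from Theorem~\ref{66}, Theorem~\ref{thm:etale-refine}, and Lemma~\ref{lem:system-2}, which is precisely the chain you describe.
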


\subsection{The t-henselian case}\label{section:t-henselian}
Let $\uptau$ be a Hausdorff field topology on $K$.
Then $X\subseteq K$ is \textbf{$\uptau$-bounded} if every neighborhood of $0$ contains $aX$ for some $a \in K^\times$.
Furthermore $\uptau$ is a V-topology if $(K\setminus U)^{-1}$ is $\uptau$-bounded for any neighborhood $U$ of $0$.
By \cite[Thm.~B.1]{EP-value} $\uptau$ is a V-topology if and only if $\uptau$ is induced by an absolute value or valuation on $K$.
Furthermore $\uptau$ is t-henselian if $\uptau$ is a V-topology and for any $n \ge 1$ there is a $\uptau$-open neighborhood $U$ of $0$ such that $x^{n+2} + x^{n + 1} + c_n x^n + \cdots + c_1 x +  c_0$ has a root in $K$ for any $c_0,\ldots,c_n \in U$.
Equivalently: a V-topology $\uptau$ on $K$ is t-henselian if and only if $V(K) \to W(K)$ is open for any \'etale $V \to W$~\cite[Props~8.3, 8.6]{field-top-2}.
Finally $K$ is \textbf{t-henselian} if $K$ admits a t-henselian field topology.
The main examples of t-henselian topologies are the topology induced by a non-trivial henselian valuation on $K$ and the order topology on a real closed field.
A non-separably closed field admits at most one t-henselian field topology.
See \cite[\S7]{Prestel1978} for this and other background results.

\medskip
We now prove Theorem~C.3 from the introduction.

\begin{theorem}\label{thm:new-t-hensel}
Suppose that $\uptau$ is a t-henselian topology on a perfect field $K$.
\begin{enumerate}[leftmargin=*]
\item The $\cE_K$-topology agrees with the $\cF_K$-topology.
\item If $K$ is not algebraically closed, then the $\cE_K$-topology and the $\cF_K$-topology agree with the system $\cT_\uptau$ of topologies induced by $\uptau$.
\end{enumerate}
\end{theorem}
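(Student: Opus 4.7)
The plan is to split by whether $K$ is algebraically closed.

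If $K$ is algebraically closed, then by Fact~\ref{fact:old-EO}(\ref{old:sep}) the $\cE_K$-topology on every $V(K)$ coincides with the Zariski topology. Proposition~\ref{prop:acf-rcf-pcf} (applied to each $K^n$) shows that the F-subsets of $K^n$ are precisely the Zariski-closed subsets, so the $\cF_K$-topology on $K^n$ is also Zariski, and Fact~\ref{fact:refine} extends this equality to all $V(K)$. This handles Part~(1) in this case; Part~(2) is vacuous.

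Henceforth suppose $K$ is not algebraically closed. Since $K$ is perfect it is not separably closed, so Fact~\ref{fact:old-EO}(\ref{old:hnsl}) gives $\cE_K = \cT_\uptau$, and both parts reduce to showing $\cF_K = \cT_\uptau$. One direction, ``$\cT_\uptau$ refines $\cF_K$'' (equivalently, every F-set is $\cT_\uptau$-closed), follows at once from Theorem~\ref{thm:etale-refine}, which applies because $K$ is perfect, combined with $\cE_K = \cT_\uptau$.

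For the reverse direction, ``$\cF_K$ refines $\cT_\uptau$'', I would apply Fact~\ref{fact:refine} to reduce to showing that the $\cF_K$-topology on $K^n$ refines the $\cT_\uptau$-topology for each $n$. Since $\cT_\uptau = \cE_K$ has the E-sets as an open basis, this amounts to showing that the complement in $K^n$ of the image of each standard \'etale morphism $f\colon V \to \Aa^n$ is an F-set, following the overall structure of Lemma~\ref{lem: Checkingforstandardetale}. Writing $V = \{g = 0 \ne h\}$ as in Lemma~\ref{lem:bnddkey}, the complement in question is $\{b \in K^n : \text{every simple } K\text{-root of } g(x,b) \text{ is also a root of } h(x,b)\}$, and the goal is to realize this set as the $K$-rational image of a finite morphism $W \to \Aa^n$. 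In the bounded case this is done via Lemma~\ref{confusing-v2} using a finite Galois extension $L/K$ bounding the degrees of all separable extensions involved; here the bounding extension must be replaced by the t-henselian structure, which guarantees that simple $K$-roots of $g(x,b)$ can be parametrized by henselian lifts of approximate roots that vary continuously in $b$ in the $\uptau$-topology.

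The main obstacle is this t-henselian analog of Lemma~\ref{confusing-v2}: without a single finite Galois extension $L/K$ to control all the roots uniformly, one must construct the finite morphism $W \to \Aa^n$ directly from the t-henselian lifting data while keeping it an \emph{algebraic} morphism of $K$-schemes. Verifying that the resulting $K$-rational image matches the desired complement exactly—not merely up to a $\cT_\uptau$-closed correction—is the technical heart of the argument, and is where the hypotheses that $K$ is perfect and t-henselian must both be used.
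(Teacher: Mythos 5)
Your reduction is set up correctly: the algebraically closed case is handled properly, and in the remaining case you correctly identify that $\cE_K = \cT_\uptau$ by Fact~\ref{fact:old-EO}(\ref{old:hnsl}), that $\cT_\uptau$ refines $\cF_K$ by Theorem~\ref{thm:etale-refine}, and that the real content is showing $\cF_K$ refines $\cT_\uptau$. But at that crucial final step you have a genuine gap: you propose to adapt Lemma~\ref{lem: Checkingforstandardetale} by replacing the finite bounding Galois extension $L/K$ with ``t-henselian lifting data,'' you correctly flag that constructing such a finite morphism $W \to \Aa^n$ is ``the technical heart,'' and then you stop. No argument is given, and it is not at all clear that one can be given: in the unbounded t-henselian case (e.g. $K = \Qq(\!(t)\!)$) there is no finite extension controlling the roots of all the $f(x,b)$ uniformly in $b$, and ``henselian lifts varying continuously'' is a topological statement, not an algebraic one that yields a $K$-scheme morphism. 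Your stronger claim---that the complement of every standard-\'etale image is an F-set---may well fail here; the bounded case used the finite extension $L$ in an essential way, and nothing in your sketch replaces it.

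The paper takes a completely different and much shorter route for this direction, exploiting the rigidity of V-topologies. Fact~\ref{fact:old-tau-compare} says that for any system $\cT$ of topologies and any V-topology $\uptau$, if $\cT$ has even \emph{one} non-empty open subset of $K$ that is not $\uptau$-dense, then $\cT$ already refines $\cT_\uptau$. So one does not need to prove anything pointwise about E-sets and F-sets; one only needs a single witness. Proposition~\ref{prop:hd} supplies exactly that: for $K$ perfect and not algebraically closed it produces $\varnothing \ne U \subseteq F \subsetneq K$ with $U$ $\cE_K$-open and $F$ $\cF_K$-closed, whence $K \setminus F$ is a non-empty $\cF_K$-open set that avoids the non-empty $\cE_K$-open set $U$ and is therefore not $\cT_\uptau$-dense (since $\cE_K = \cT_\uptau$). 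This sidesteps entirely the combinatorial root-tracking you were attempting. You should look for a lemma of the ``minimality of V-topologies'' type rather than try to push the bounded-field machinery into the unbounded setting.
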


\begin{proof}
The algebraically closed case follows as the $\cE_K$- and $\cF_K$-topologies both agree with the Zariski topology when $K$ is algebraically closed.
Suppose that $K$ is not algebraically closed.
Then $\cT_\uptau$ agrees with the $\cE_K$-topology by Fact~\ref{fact:old-EO}(\ref{old:hnsl}), and the $\cE_K$-topology refines the $\cF_K$-topology by Theorem~\ref{thm:etale-refine}.
It remains to show that the $\cF_K$-topology refines $\cT_\uptau$.
By Fact~\ref{fact:old-tau-compare} below, it suffices to produce a non-empty $\cF_K$-open subset of $K$ which is not $\cE_K$-dense in $K$.
By Proposition~\ref{prop:hd} we have $\varnothing \ne U \subseteq F \subsetneq K$ for an $\cE_K$-open subset $U$ and an $\cF_K$-closed subset $F$.
Hence $K \setminus F$ is $\cF_K$-open and not $\cE_K$-dense.
\end{proof}

\begin{fact}\label{fact:old-tau-compare}\textup{(}\cite[Lemma~6.9]{firstpaper}\textup{)}
Let $\uptau$ be a V-topology on $K$, let $\cT_\uptau$ be the system of topologies induced by $\uptau$, and let $\cT$ be an arbitrary system of topologies over $K$.
Suppose that some non-empty $\cT$-open subset of $K$ is not $\uptau$-dense in $K$.
Then $\cT$ refines $\cT_\uptau$.
\end{fact}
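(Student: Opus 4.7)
The plan is to reduce to proving refinement at $V=\Aa^1$, and then to use the V-topology axiom together with a symmetrization trick to extract arbitrarily $\uptau$-small $\cT$-open neighborhoods of $0$ in $K$.

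For the reduction, by Fact~\ref{fact:refine} it suffices to show the $\cT$-topology on $K^n$ refines the $\cT_\uptau$-topology on $K^n$ for every $n$, and for a V-topology $\uptau$ the $\cT_\uptau$-topology on $K^n$ is just the product topology $\uptau^n$. Each coordinate projection $K^n\to K$ is induced by a morphism $\Aa^n\to\Aa^1$ and hence is $\cT$-continuous, so once one knows the $\cT$-topology on $K$ refines $\uptau$, preimages of $\uptau$-opens under projections are $\cT$-open, and finite intersections and unions of such preimages recover all of $\uptau^n$. Thus I reduce to showing the $\cT$-topology on $K$ refines $\uptau$, and by translation invariance (translations on $K$ come from automorphisms of $\Aa^1$, and $\uptau$ is a field topology) further to showing that every $\uptau$-neighborhood of $0$ in $K$ contains a $\cT$-neighborhood of $0$.

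Next I would construct a nonempty $\cT$-open, $\uptau$-bounded subset $W$ of $K\setminus\{0\}$. Let $U_0$ be a nonempty $\cT$-open subset of $K$ that is not $\uptau$-dense; translating, assume $0\in U_0$. Fix a $\uptau$-open $V\ne\varnothing$ with $V\cap U_0=\varnothing$, choose $p\in V$ (so $p\ne 0$), and let $V_1$ be a $\uptau$-neighborhood of $0$ with $p+V_1\subseteq V$. Set $W=\{(u-p)^{-1}:u\in U_0\}$. The map $x\mapsto(x-p)^{-1}$ is a scheme isomorphism $\Aa^1\setminus\{p\}\to\Aa^1\setminus\{0\}$, with inverse $y\mapsto p+y^{-1}$, hence a $\cT$-homeomorphism on $K$-points, so $W$ is $\cT$-open in $K$. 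For $u\in U_0$ one has $u\notin V$, hence $u-p\notin V_1$, so $(u-p)^{-1}\in(K\setminus V_1)^{-1}$; and the V-topology axiom says that $(K\setminus V_1)^{-1}$ is $\uptau$-bounded, so $W$ is $\uptau$-bounded.

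Finally, given a $\uptau$-neighborhood $N$ of $0$, I would choose a $\uptau$-neighborhood $M$ of $0$ with $M-M\subseteq N$ (possible by continuity of subtraction at $(0,0)$), and then pick $a\in K^\times$ with $aW\subseteq M$ using $\uptau$-boundedness of $W$. Scaling is a $\cT$-homeomorphism, so $aW$ is $\cT$-open, and since translations are $\cT$-homeomorphisms,
\[ aW - aW \;=\; \bigcup_{w\in aW}(aW-w) \]
is a union of $\cT$-open translates of $aW$, hence $\cT$-open. It contains $0$ and is contained in $M-M\subseteq N$, giving the required $\cT$-neighborhood of $0$ inside $N$. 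The main obstacle is that the naturally produced $\uptau$-small $\cT$-open set $W$ does not contain $0$; the symmetrization $aW-aW$ is the standard topological-group remedy, and the key point that it really lies in the $\cT$-topology uses that the $\cT$-open sets on $K$ are invariant under the polynomial automorphisms of $\Aa^1$, not merely under the Zariski ones.
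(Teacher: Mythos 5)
Your proof is correct, and it is self-contained, which is more than the paper provides: the paper does not prove this statement in the text you were shown, but instead cites it as Lemma~6.9 of \cite{firstpaper}. So there is no local proof to compare against; I can only assess the argument on its own terms, and it checks out. The reduction to $\Aa^1$ via Fact~\ref{fact:refine} is sound, the inversion $x\mapsto(x-p)^{-1}$ is indeed an automorphism between the open subvarieties $\Aa^1\setminus\{p\}$ and $\Aa^1\setminus\{0\}$ and hence a $\cT$-homeomorphism (using parts (1) and (2) of Definition~\ref{sys-def}), the passage from $U_0\cap V=\varnothing$ to $W\subseteq(K\setminus V_1)^{-1}$ correctly invokes the V-topology axiom to give $\uptau$-boundedness of $W$, and the symmetrization $aW-aW$ is exactly the right device to recover a $\cT$-open set that actually contains $0$. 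One point worth making explicit when you write this up: the step producing $a\in K^\times$ with $aW\subseteq M$ relies on $W$ being bounded \emph{and} on $\uptau$ not being discrete (otherwise one could take $M=\{0\}$ and there is no such $a$, since $W\subseteq K^\times$ is nonempty); this is harmless because V-topologies are by definition non-discrete, but it is the one place where that tacit hypothesis enters. Apart from that, the argument is clean and I see no gaps.
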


Suppose that $K$ is characteristic zero.
Theorem~\ref{thm:new-t-hensel} shows that the \'etale-open and finite-closed topologies agree over $K(\!(t)\!)$.
This does not generalize to $K(\!(t_1,\ldots,t_n)\!)$ when $n \ge 2$.
By \cite[Thm.~7.2]{Weissauer} $K(\!(t_1,\ldots,t_n)\!)$ is Hilbertian, so it follows by Proposition~\ref{prop:hilbertian} below that the \'etale-open and finite-closed topologies do not agree.

\subsection{A topological application of the comparison theorem}\label{section:galois}

\begin{proposition}\label{prop:galois}
Suppose that $L/K$ is a Galois extension, $K$  is perfect, $L$ is not separably closed, and $K, L$ are both bounded.
Then the inclusion $V(K) \to V(L)$ is a closed embedding from the $\cE_K$-topology on $V(K)$ to the $\cE_L$-topology on $V(L)$.
\end{proposition}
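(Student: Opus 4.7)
The plan is to work with the $\cF$-topology throughout. Since $L/K$ is algebraic and $K$ is perfect, $L$ is also perfect, so Corollary~\ref{last-cor} gives $\cE_K = \cF_K$ and $\cE_L = \cF_L$; thus it suffices to prove that $V(K) \hookrightarrow V(L)$ is a closed embedding from $\cF_K$ to $\cF_L$. I will break the argument into three parts: (a) $V(K)$ is $\cF_L$-closed in $V(L)$; (b) every basic $\cF_K$-closed set in $V(K)$ is already $\cF_L$-closed in $V(L)$; and (c) every basic $\cF_L$-closed set in $V(L)$ meets $V(K)$ in an $\cF_K$-closed set.

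For (a), let $G = \Gal(L/K)$ act on $V(L)$ in the usual way (so that $V(K) = V(L)^G$). Each $\sigma \in G$ induces a $\cF_L$-homeomorphism $\tilde\sigma$ of $V(L)$, because for any finite $L$-morphism $g \colon W' \to V_L$ the $\sigma$-twist $\sigma^*g \colon \sigma^*W' \to \sigma^*V_L$ is again a finite $L$-morphism, $\sigma^*V_L = V_L$ (since $V_L$ descends to $V$ over $K$, which $\sigma$ fixes), and $\tilde\sigma^{-1}(g(W'(L))) = (\sigma^*g)((\sigma^*W')(L))$ is an F-set. By Fact~\ref{fact:old-EO}(\ref{old:sep}), $\cF_L = \cE_L$ is Hausdorff on $V(L)$ when $V$ is quasi-projective, so each $\mathrm{Fix}(\tilde\sigma)$ is closed and hence so is $V(K) = \bigcap_\sigma \mathrm{Fix}(\tilde\sigma)$; the general case reduces to this by covering $V$ with affine open subvarieties.

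For (b), let $X = f(W(K))$ for a finite $K$-morphism $f \colon W \to V$. By (a) applied to $W$, the set $W(K)$ is $\cF_L$-closed in $W(L)$, and the base change $f_L \colon W_L \to V_L$ is finite, hence an $\cF_L$-closed map by Lemma~\ref{lem:system-2}; thus $X = f_L(W(K))$ is $\cF_L$-closed in $V(L)$. Arbitrary $\cF_K$-closed subsets are intersections of such basic sets and hence also $\cF_L$-closed.

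Step (c) is the main obstacle, and it is here that I must use boundedness in an essential way; the plan is to reduce to the finite Galois case and then use Weil restriction. Let $Y = g(W'(L))$ for a finite $L$-morphism $g \colon W' \to V_L$. The $L$-variety $W'$ and morphism $g$ descend to a finite $L_0$-morphism $g_0 \colon W'_0 \to V_{L_0}$ over some finite Galois subextension $L_0/K$. For $v \in V(K)$, $v \in Y$ is equivalent to the scheme-theoretic fiber $g_0^{-1}(v)$ having an $L$-point, and, since $L/L_0$ is Galois and $L_0$ is perfect, this is in turn equivalent to one of its finitely many residue fields (a finite separable extension of $L_0$ of degree $\le [W'_0 : V_{L_0}]$) being $L_0$-embeddable in $L$. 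Boundedness of $L_0$ (a finite extension of the bounded field $K$) implies that only finitely many such residue fields occur up to $L_0$-isomorphism, so the compositum in $L$ of those that embed into $L$ is contained in some finite Galois extension $M/K$ inside $L$; one then obtains $Y \cap V(K) = g_0(W'_0(M)) \cap V(K)$. To finish in this finite Galois case, let $(g_0)_M \colon (W'_0)_M \to V_M$ be the base change, apply Weil restriction to get a finite $K$-morphism $R_{M/K}((g_0)_M) \colon R_{M/K}((W'_0)_M) \to R_{M/K}(V_M)$, and pull back along the canonical $K$-morphism $V \to R_{M/K}(V_M)$ (adjoint to the identity of $V_M$) to obtain a finite $K$-morphism $Z \to V$; its $K$-rational image is exactly $g_0(W'_0(M)) \cap V(K) = Y \cap V(K)$, exhibiting it as an F-set in $V(K)$.
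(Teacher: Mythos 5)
Your proof is correct, but it is substantially longer than the paper's because it re-derives two auxiliary facts that the paper simply cites.

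The paper's proof invokes Fact~\ref{fact:rel}: part (1) (from \cite{firstpaper}) says the $\cE_K$-topology on $V(K)$ is always the subspace topology induced from the $\cE_L$-topology, for any algebraic extension $L/K$; part (2) (from \cite{secondpaper}) says $V(K)$ is $\cE_L$-closed in $V(L)$ when $L/K$ is Galois and $L$ is not separably closed. Given these, the only thing left to show is that the inclusion is a closed map, which the paper does by the same argument as your step (b): base-change a finite $W \to V$ to $L$, use that $W(K)$ is $\cE_L$-closed in $W(L)$, and use Corollary~\ref{last-cor} over $L$ to conclude $f_L(W(K)) = f(W(K))$ is $\cE_L$-closed. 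Your step (b) reproduces this exactly. Your step (a) reproves Fact~\ref{fact:rel}(2) via the Galois action on $V(L)$ and Hausdorffness of $\cE_L$; this is essentially the argument of \cite[Cor.~2.8]{secondpaper}, so it is a rediscovery rather than a new route. Your step (c) is the genuinely novel part: instead of citing the general Fact~\ref{fact:rel}(1), you give a direct argument valid in the bounded perfect case, descending $g$ to a finite Galois subextension $L_0/K$, using boundedness of $L_0$ to trap the residue fields of all fibers inside a single finite Galois $M/K \subseteq L$, and then using Weil restriction along $M/K$ to exhibit $Y \cap V(K)$ as an F-set over $K$. This buys you self-containedness (no appeal to \cite[Thm.~5.8]{firstpaper}) at the cost of generality: the paper's Fact~\ref{fact:rel}(1) holds for arbitrary algebraic extensions, while your argument leans on boundedness. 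Two small points worth tightening if you keep this version: in (a), the claim that each $\tilde\sigma$ is a homeomorphism needs to be checked directly (it is \emph{not} induced by a morphism of $L$-varieties, so the system-of-topologies axioms don't apply automatically); and in (c), the uniform degree bound on residue fields of fibers of $g_0$ should be justified by quasi-compactness of $V_{L_0}$ and coherence of $(g_0)_*\Oo_{W'_0}$ rather than by an ill-defined ``degree'' $[W'_0 : V_{L_0}]$ when $W'_0$ need not be irreducible.
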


Note that a finite separable extension of a bounded field is bounded, so this applies in the case when $K$ is perfect and bounded and $L$ is a finite non-separably closed extension of $K$.
We will give the proof shortly.  Our proof of Proposition~\ref{prop:galois} goes through provided that the \'etale-open and finite-closed topologies agree over $K$ and $L$.
We first recall some background.

\begin{fact}\label{fact:rel}
Let $L/K$ be an algebraic extension and $V$ be a $K$-variety.
\begin{enumerate}[leftmargin=*]
\item $V(K) \cap O$ is $\cE_K$-open for any $\cE_L$-open subset $O$ of $V(L)$.
\item If $L/K$ is Galois and $L$ is not separably closed then $V(K)$ is an $\cE_L$-closed subset of $V(L)$.
\end{enumerate}
\end{fact}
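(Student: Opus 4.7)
The plan is to prove the two parts by different techniques: (1) via spreading-out and Weil restriction, and (2) via the Galois action on $V(L)$.

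For~(1), I would first factor $K \subseteq L^{\sep} \subseteq L$, where $L^{\sep}$ is the separable closure of $K$ in $L$. The purely inseparable step $L/L^{\sep}$ is handled directly by Fact~\ref{fact:kins}(2), which gives that $V(L^{\sep})$ carries the subspace $\cE_L$-topology, so the claim reduces to handling $L/K$ separable algebraic. Fix a basic $\cE_L$-open $O = g(W(L))$ for étale $g\colon W \to V_L$, and pick $p \in V(K) \cap O$ with a lift $q \in W(L)$ satisfying $g(q) = p$. By standard spreading-out, the data $(W,g,q)$ descends to a finite (hence separable) subextension $L_0/K$: there is an étale morphism $g_0 \colon W_0 \to V_{L_0}$ with $(g_0)_L = g$ and a point $q_0 \in W_0(L_0)$ lifting $q$. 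Since $L_0/K$ is finite separable, Weil restriction $R_{L_0/K}$ preserves étale morphisms, so $\tilde g := R_{L_0/K}(g_0) \colon R_{L_0/K} W_0 \to R_{L_0/K} V_{L_0}$ is étale over $K$. Pulling back $\tilde g$ along the adjunction unit $V \to R_{L_0/K} V_{L_0}$ produces an étale $K$-morphism $V' \to V$, and using the natural identification $(R_{L_0/K} X)(K) = X(L_0)$, the $K$-rational image of $V' \to V$ works out to $V(K) \cap g_0(W_0(L_0))$. This contains $p$ (via $q_0$) and is contained in $V(K) \cap g(W(L)) = V(K) \cap O$, so $V(K) \cap O$ is $\cE_K$-open at $p$.

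For~(2), I would first reduce to $V$ quasi-projective via a finite affine open cover $\{U_i\}$ of $V$: each $U_i(L)$ is $\cE_L$-open in $V(L)$ (Definition~\ref{sys-def}(2)) and $V(K) \cap U_i(L) = U_i(K)$, so $\cE_L$-closedness of each $U_i(K)$ in $U_i(L)$ implies $\cE_L$-closedness of $V(K)$ in $V(L)$. With $V$ quasi-projective, Fact~\ref{fact:old-EO}(\ref{old:sep}) makes $\cE_L$ on $V(L)$ Hausdorff. Next I would show that each $\sigma \in \Gal(L/K)$ acts on $V(L)$ as an $\cE_L$-homeomorphism: since $V$ is defined over $K$, the $\sigma$-twist $V_L^\sigma$ of $V_L$ is canonically $L$-isomorphic to $V_L$, and twisting an étale $g\colon W \to V_L$ produces an étale $g^\sigma \colon W^\sigma \to V_L$ with $g^\sigma(W^\sigma(L)) = \sigma \cdot g(W(L))$. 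Hence $\sigma$ sends E-sets to E-sets and is an $\cE_L$-homeomorphism. Because $V(L)$ is Hausdorff and both $\sigma$ and $\id$ are $\cE_L$-continuous, the equalizer $V(L)^\sigma = \{v \in V(L) : \sigma v = v\}$ is $\cE_L$-closed. Finally, $V(K) = \bigcap_{\sigma \in \Gal(L/K)} V(L)^\sigma$ is an intersection of $\cE_L$-closed sets, hence $\cE_L$-closed.

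The main technical obstacle is in part~(1): correctly performing the spreading-out to a separable subextension and then tracing through the Weil restriction and the adjunction unit to identify the $K$-rational image of the pullback. Part~(2) is conceptually cleaner once one verifies via twisting that the Galois action on $V(L)$ is $\cE_L$-continuous; the subtle point is that although $\sigma$ is not an $L$-morphism of $V_L$ to itself, it is an $L$-isomorphism $V_L \to V_L^\sigma$, and the canonical identification $V_L^\sigma \cong V_L$ (valid because $V$ is $K$-defined) converts this into an $\cE_L$-continuous automorphism.
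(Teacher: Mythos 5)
The paper does not actually prove Fact~\ref{fact:rel}: it cites \cite[Thm.~5.8]{firstpaper} for part~(1) and \cite[Cor.~2.8]{secondpaper} for part~(2), so there is no in-paper proof to compare against line by line. Your proposal is, however, correct and reconstructs the natural arguments that underlie those citations. For~(1), the decomposition into a separable bottom step and a purely inseparable top step is valid (Fact~\ref{fact:kins}(2) disposes of $L/L^{\sep}$), the spreading-out of the \'etale datum $(W,g,q)$ to a finite separable subextension $L_0/K$ is standard, and the Weil restriction along the finite \'etale cover $\Spec L_0 \to \Spec K$ preserves \'etaleness; the identification $\bigl(R_{L_0/K}X\bigr)(K)=X(L_0)$ together with the unit $V\to R_{L_0/K}(V_{L_0})$ correctly produces an \'etale $V'\to V$ whose $K$-rational image is $V(K)\cap g_0(W_0(L_0))\subseteq V(K)\cap O$ and contains $p$. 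For~(2), the reduction to affine opens is sound (using $V(K)\cap U_i(L)=U_i(K)$ and that $U_i(L)$ is $\cE_L$-open), the observation that each $\sigma\in\Gal(L/K)$ sends E-sets to E-sets via twisting (valid because $V_L^\sigma\cong V_L$ canonically since $V$ is $K$-defined) is the right way to see continuity, and the equalizer argument in the Hausdorff topology is exactly where the hypothesis ``$L$ not separably closed'' enters. The one place to be careful is the Galois-fixed-point identification $V(K)=V(L)^{\Gal(L/K)}$, which you are implicitly using; this holds for affine (hence, after your reduction, for all) $V$ and for infinite Galois $L/K$ since $L^{\Gal(L/K)}=K$. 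This all matches what I would expect the cited references to do.
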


Part (1) is \cite[Thm.~5.8]{firstpaper} and (2) is \cite[Cor.~2.8]{secondpaper}.
We now prove Proposition~\ref{prop:galois}.

\begin{proof}
By Fact~\ref{fact:rel} it is enough to show that $V(K) \to V(L)$ is a closed map from the $\cE_K$-topology to the $\cE_L$-topology.
As the $\cE_K$- and $\cF_K$-topologies agree it suffices to fix a finite morphism $f\colon W \to V$  and show that $f(W(K))$ is $\cE_L$-closed in $V(L)$.
Let $f_L\colon V_L \to W_L$ be the base change of $f$.
Then $f_L$ is finite and hence gives a closed map in the $\cE_L$-topology by Corollary~\ref{last-cor}.
By Fact~\ref{fact:rel} $W(K)$ is $\cE_L$-closed in $W_L(L)$, hence $f_L(W(K)) = f(W(K))$ is also $\cE_L$-closed.
\end{proof}

We describe an example.
Let $F$ be a bounded $\prc$ field which is neither real closed nor $\pac$.
Then $F$ admits exactly $n$ distinct field orders $<_1,\ldots,<_n$ for some $n \ge 1$~\cite[Remark~3.2]{Samaria-2017}.
For example, we could suppose that $(F,<_1,\ldots,<_n)$ satisfies the model  companion of the theory of a field equipped with $n$ field orders for some $n \ge 2$.
Then $L = F[\sqrt{-1}]$ is $\pac$ and not algebraically closed and $L/F$ is Galois.
Hence the $\cE_F$-topology on $F$ agrees with the topology induced on $F$ by the $\cE_L$-topology.
This is surprising since\ldots

\begin{itemize}[leftmargin=*]
\item The $\cE_F$-topology refines each $<_i$-topology by Fact~\ref{fact:old-EO}(\ref{old:ordr}).
\item The $\cE_L$-topology should be hostile to orders, since $L$ is an $\mathrm{NSOP}$ structure \cite{Chatzidakis}.
\end{itemize}
This behavior is also markedly different from the behavior of $\Cc/\Rr$: $\cE_{\Rr}$ is the order topology and $\cE_{\Cc}$ is the Zariski topology, so $\cE_{\Rr}$ is \emph{not} the restriction of $\cE_{\Cc}$.

\medskip
We need to assume that $L$ is not separably closed in Proposition~\ref{prop:galois}: if $L$ is a separably closed Galois extension of $K$ then $K$ is not separably closed, hence the $\cE_K$-topology is Hausdorff, whereas the restriction of the $\cE_L$-topology to $K$ is the cofinite topology.
We also need to assume that $L/K$ is Galois as there are examples of non-separably closed algebraic extensions $L/K$ of bounded fields such that the $\cE_K$-topology does not agree with the restriction of the $\cE_L$-topology.
For example, let $F$ be as above.
Let $L$ be the real closure of $F$ with respect to $<_1$.
Then $L$ is real closed and hence the $\cE_L$-topology agrees with the order topology by Fact~\ref{fact:old-EO}(\ref{old:ordr}).
Then the restriction of the $\cE_L$-topology to $F$ is the $<_1$-order topology (because $F$ is cofinal in $L$).
However, the $\cE_F$-topology refines the $<_i$ order topology for each $i = 1,\ldots,n$ by Fact~\ref{fact:old-EO}(\ref{old:ordr}) and any nonempty $<_i$-open subset of $F$ is $<_j$-dense when $i \ne j$~\cite[Prop.~1.6]{prestel-prc}.

\section{Examples and counterexamples}
We present examples which show that our results are sharp in various ways.

\subsection{A bounded $\pac$ field over which the finite-closed topology is discrete}\label{section:Esharp}
It is natural to ask whether any infinite field can have discrete $\cF_K$-topology.
We give an example in this section, at the same time proving Theorem~D (answering Lampe's question).

\begin{fact}\label{fact:sys-dis}
The following are equivalent for any system $\cT$ of topologies over $K$.
\begin{enumerate}[leftmargin=*]
\item $\cT$ is the discrete system of topologies.
\item The $\cT$-topology on $K$ is discrete.
\item There is a nonempty $\cT$-open subset of $K$ of cardinality $< |K|$.
\end{enumerate}
\end{fact}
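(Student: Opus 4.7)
The implications $(1)\Rightarrow(2)\Rightarrow(3)$ are essentially formal: the first is immediate, and the second holds because any singleton in $K$ is nonempty and finite. So the real content is $(3)\Rightarrow(1)$, which I would split into the intermediate steps $(3)\Rightarrow(2)\Rightarrow(1)$.

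For $(3)\Rightarrow(2)$, the plan is to shrink a nonempty finite $\cT$-open set $U\subseteq K$ down to a singleton using affine transformations. Any affine map $x\mapsto cx+a$ with $c\in K^\times$ is an isomorphism $\Aa^1\to\Aa^1$, hence a $\cT$-homeomorphism of $K$. After translating I may assume $0\in U$. If $|U|>1$, then $U\setminus\{0\}$ is a nonempty finite set. The idea is to pick $c\in K^\times$ so that $cU\cap U=\{0\}$, which makes $\{0\}$ $\cT$-open as the intersection of two $\cT$-opens. The required $c$ must avoid $b/a$ for $a\in U\setminus\{0\}$, $b\in U$, which is a finite set of forbidden values; such a $c$ exists provided $K$ is infinite. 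Translating then shows every singleton of $K$ is $\cT$-open, so the $\cT$-topology on $K$ is discrete. The case of finite $K$ has to be handled separately: when $K$ is finite, every $V(K)$ is finite, every $K$-point $p\in V(K)$ gives a closed immersion $\Spec K\hookrightarrow V$ (since $\kappa(p)=K$ makes $p$ a closed point), so each singleton of $V(K)$ is $\cT$-closed by Definition~\ref{sys-def}(3); finite unions of closed sets are closed, so every subset of $V(K)$ is $\cT$-clopen and $\cT$ is automatically discrete.

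For $(2)\Rightarrow(1)$, assuming now that $K$ is infinite and the $\cT$-topology on $K$ is discrete, I would first note that by Fact~\ref{fact:prod} the $\cT$-topology on $K^n=\Aa^n(K)$ refines the product of the $\cT$-topologies on the factors; the product of discrete topologies is discrete, so the $\cT$-topology on $K^n$ is discrete for every $n$. To pass to an arbitrary $K$-variety $V$, cover $V$ by affine opens $V_i\hookrightarrow\Aa^{n_i}$. By Definition~\ref{sys-def}(2) each $V_i(K)$ is $\cT$-open in $V(K)$, and by Definition~\ref{sys-def}(3) each $V_i(K)$ is a $\cT$-closed subspace of $K^{n_i}$. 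A closed subspace of a discrete space is discrete, so each $V_i(K)$ is $\cT$-discrete, and a space covered by discrete open sets is itself discrete.

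The main (essentially only) obstacle is the clever choice of $c$ in the $(3)\Rightarrow(2)$ step, where the finiteness of $U$ is used crucially to avoid only finitely many ``bad'' scaling factors, exploiting that $K^\times$ is infinite. Everything else is formal extraction from the axioms of a system of topologies together with Fact~\ref{fact:prod}.
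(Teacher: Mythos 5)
Your proof is correct, but it takes a more roundabout route than the paper for the key implication $(3)\Rightarrow(2)$. The paper simply observes that any system $\cT$ refines the Zariski system (the coarsest one), so finite subsets of $K$ are $\cT$-closed; hence for any $p$ in a nonempty finite $\cT$-open $U$, the set $U\setminus\{p\}$ is $\cT$-closed and $\{p\}=U\cap(K\setminus(U\setminus\{p\}))$ is $\cT$-open, and translations finish. This avoids both your scaling argument and the case split on $|K|$. Amusingly, the observation you deploy only for the finite-$K$ case --- that each $K$-point gives a closed immersion $\Spec K\hookrightarrow V$, hence $\cT$-closed singletons --- is exactly this ``refines Zariski'' fact, and if you had applied it to $U\setminus\{p\}$ directly you would have recovered the paper's one-line argument uniformly in $K$. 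Your scaling trick (choosing $c\in K^\times$ with $cU\cap U=\{0\}$) is a valid alternative but buys nothing here, and it forces the separate treatment of finite $K$ that the paper doesn't need. For $(2)\Rightarrow(1)$, the paper cites a result from an earlier paper; your explicit argument via Fact~\ref{fact:prod} (discrete products) plus covering by affine opens and closed immersions into affine space is a correct self-contained substitute.
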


\begin{proof}
The equivalence of (1) and (2) is \cite[Prop.~4.5]{firstpaper}.
It is clear that (2) implies (3).
Suppose that $U$ is nonempty, $\cT$-open, and $|U| < |K|$.
After possibly translating suppose that $0 \in U$.
As $|U| < |K|$, some $\beta \in K^\times$ is not a quotient of elements of $U$.
Hence $U \cap \beta U = \{0\}$, and so $\{0\}$ is $\cT$-open.
Translations $K \to K$ are $\cT$-homeomorphisms, so $\{c\}$ is $\cT$-open for any $c \in K$, and $\cT$ is discrete.
\end{proof}

\begin{remark} \label{rem:lampe-discrete}
Let $f \colon K \to K$ be a polynomial map with $0 < |K \setminus f(K)| < |K|$.
Then the $\cF_K$-topology is discrete.
To see this, note that $f \colon \Aa^1 \to \Aa^1$ is a finite morphism by Fact~\ref{fact:basic-f}(\ref{poly:fini}), so $f(K)$ is an F-set and $K \setminus f(K)$ is an open set in the $\cF_K$-topology.
Then some nonempty set of cardinality $<|K|$ is open, and the topology is discrete by Fact~\ref{fact:sys-dis}.
\end{remark}

% \begin{fact}\label{fact:sys-dis}
% The following are equivalent for any system $\cT$ of topologies over $K$.
% \begin{enumerate}[leftmargin=*]
% \item $\cT$ is the discrete system of topologies.
% \item The $\cT$-topology on $K$ is discrete.
% \item There is a nonempty finite $\cT$-open subset of $K$.
% \end{enumerate}
% \end{fact}

% \begin{proof}
% The equivalence of (1) and (2) is \cite[Prop.~4.5]{firstpaper}.
% It is clear that (2) implies (3).
% Suppose that $U \subseteq K$ is nonempty, finite, and $\cT$-open.
% Take $p \in U$.
% Finite sets are $\cT$-closed as $\cT$ refines the Zariski topology.
% Then $U \setminus \{p\}$ is $\cT$-closed and $\{p\}$ is $\cT$-open.
% Translations $K \to K$ are $\cT$-homeomorphisms, so $\{p+c\}$ is $\cT$-open for any $c \in K$, and $\cT$ is discrete.
% \end{proof}
% \begin{remark} \label{rem:lampe-discrete}
% Let $K$ be a field and $f \colon K \to K$ be a polynomial map  with $f(K)$ a cofinite proper subset of $K$.
% Then the $\cF_K$-topology is discrete.
% To see this, note that $f \colon \Aa^1 \to \Aa^1$ is a finite morphism by Fact~\ref{fact:basic-f}(\ref{poly:fini}), so $f(K)$ is an F-set and $K \setminus f(K)$ is an open set in the $\cF_K$-topology.
% Then some nonempty finite set is open, and the topology is discrete by Fact~\ref{fact:sys-dis}.
% \end{remark}

Recall that $K$ is {\bf existentially closed} in a $K$-algebra $A$ if one of the following equivalent conditions are satisfied:
\begin{enumerate}[leftmargin=*]
\item Whenever $\alpha_1,\ldots,\alpha_n \in K$ and $\varphi(x_1,\ldots,x_n)$ is an existential formula in the language of rings  then $K \models \varphi(\alpha_1,\ldots,\alpha_n)$ if and only if $A \models \varphi(\alpha_1,\ldots,\alpha_n)$.
\item Any finite system of polynomial equations and inequations with coefficients from $K$ and a solution in $A$ has a solution in $K$.
\end{enumerate}
A field extension $L/K$ is {\bf regular} if it is separable and $K$ is algebraically closed in $L$.
A field $K$ is $\pac$ if it is existentially closed in any regular field  extension~\cite[Prop.~11.3.5]{field-arithmetic}.
Given a class $\C$ of fields we say that $K \in \C$ is existentially closed in $\C$ if $K$ is existentially closed in any field in $\C$ which extends $K$.
Fact~\ref{fact:exx} is a special case of a general model-theoretic fact~\cite[Thm.~8.2.1]{Hodges}

\begin{fact}\label{fact:exx}
If $\C$ is a class of fields which is closed under increasing unions then any field in $\C$ has an extension which is existentially closed in $\C$.
\end{fact}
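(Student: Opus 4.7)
The plan is a standard chain construction. Given $K \in \C$, I would build a chain $K = K_0 \subseteq K_1 \subseteq K_2 \subseteq \cdots$ of members of $\C$ and take $K_\omega := \bigcup_{i < \omega} K_i$; by hypothesis $K_\omega \in \C$. The goal is to engineer the construction so that $K_\omega$ is existentially closed in $\C$.

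The key step is to arrange that, at each $i$, every existential formula $\varphi(\bar x)$ in the language of rings with parameters $\bar a \in K_i^{<\omega}$ that is realizable in some extension of $K_i$ inside $\C$ is actually realized in $K_{i+1}$. Since the language of rings is countable, the set of such pairs $(\varphi, \bar a)$ has cardinality at most $\kappa_i := |K_i| + \aleph_0$, so I would enumerate them as $(\varphi_\alpha, \bar a_\alpha)_{\alpha < \kappa_i}$ and perform a transfinite sub-chain construction inside $\C$: set $L_0 := K_i$; at a successor stage $\alpha + 1$, if there exists $L \in \C$ extending $L_\alpha$ with $L \models \varphi_\alpha(\bar a_\alpha)$, set $L_{\alpha+1} := L$, otherwise $L_{\alpha+1} := L_\alpha$; at limit stages take the increasing union, which lies in $\C$ by the hypothesis that $\C$ is closed under increasing unions. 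Finally put $K_{i+1} := L_{\kappa_i}$, which lies in $\C$.

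To verify existential closure of $K_\omega$: suppose $L \in \C$ extends $K_\omega$ and $\varphi(\bar a)$ is existential with $\bar a \in K_\omega$ and $L \models \varphi(\bar a)$. Pick $i$ with $\bar a \in K_i$, so that $(\varphi, \bar a)$ appears as some $(\varphi_\alpha, \bar a_\alpha)$ in the enumeration used to build $K_{i+1}$. Since $L_\alpha \subseteq K_{i+1} \subseteq K_\omega \subseteq L$, the field $L$ itself witnesses that $\varphi_\alpha(\bar a_\alpha)$ is realizable in some $\C$-extension of $L_\alpha$; therefore $L_{\alpha+1} \models \varphi_\alpha(\bar a_\alpha)$ by construction. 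Existential formulas persist upwards, so $K_{i+1} \models \varphi(\bar a)$ and thus $K_\omega \models \varphi(\bar a)$.

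There is no real mathematical obstacle: the whole argument is purely model-theoretic and relies only on the closure of $\C$ under increasing unions (used both at the limit stages inside each $L_\bullet$-chain and to ensure $K_\omega \in \C$) together with upward persistence of existential formulas. The only delicate point worth checking is the set-theoretic bookkeeping that $\kappa_i$ suffices to enumerate all pairs $(\varphi, \bar a)$ with $\bar a \in K_i^{<\omega}$, which is immediate from countability of the language.
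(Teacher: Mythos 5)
Your proof is correct. The paper does not prove this fact itself: it simply cites \cite[Thm.~8.2.1]{Hodges}, noting that the statement is a special case of a general model-theoretic existence theorem for existentially closed models in inductive classes. Your chain construction — an outer $\omega$-chain whose successor steps are themselves transfinite sub-chains realizing all $\C$-satisfiable existential formulas with parameters in the current field, with increasing unions taken at all limit stages — is precisely the standard argument behind that cited theorem, and all the delicate points (closure of $\C$ under the unions at both levels, upward persistence of existential formulas, the cardinality bookkeeping, and the localization of parameters to some finite stage $K_i$ of the outer chain) are handled correctly. So the proposal matches the content the paper is implicitly relying on, just made explicit.
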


Let $\Ff_4$ be the field with four elements.
Throughout this section we let $f(x) \in \left(\Ff_4(t)\right)[x]$ be $$f(x) = (x^2 + t)(x^3 + t).$$

\begin{proposition}\label{prop:cofin}
Let $\C$ be the class of fields extending $\Ff_4(t)$ which do not contain a square root or cube root of $t$.
Let $K \in \C$ be existentially closed in $\C$.
Then the following holds:
\begin{enumerate}[leftmargin=*]
\item $K$ is bounded, $\pac$, and $f(K) = K^\times$.
\item The $\cF_K$-topology is discrete and the $\cE_K$-topology is not discrete.
\end{enumerate}
Moreover, at least one such $K$ exists.
\end{proposition}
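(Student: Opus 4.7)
The plan is to deduce everything from one key structural lemma: \emph{no nontrivial algebraic extension of $K$ lies in $\C$}. First, existence of such a $K$ follows from Fact~\ref{fact:exx}, once one checks that $\C$ is closed under unions of chains (the conditions ``$\sqrt{t}\notin L$'' and ``$\sqrt[3]{t}\notin L$'' are trivially preserved under increasing unions). The structural lemma itself is then quick: given a nontrivial algebraic $L/K$ with $L\in\C$, pick $\alpha\in L\setminus K$ with minimal polynomial $P\in K[x]$ of degree $\geq 2$; the existential sentence $\exists x\,P(x)=0$ holds in $L$, hence in $K$ by existential closure, contradicting the irreducibility of $P$.

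From this lemma I would read off $\pac$-ness and boundedness. For $\pac$, if $V$ is absolutely irreducible, then $K(V)/K$ is a regular extension, so neither $\sqrt{t}$ nor $\sqrt[3]{t}$ (being algebraic over $K$) can lie in $K(V)$; thus $K(V)\in\C$, and existential closure supplies a $K$-point of $V$. For boundedness, note that $\sqrt{t}$ is purely inseparable over $K$ (we are in characteristic $2$), so the structural lemma forces every nontrivial \emph{separable} extension of $K$ to contain $\sqrt[3]{t}$. Equivalently, the degree-$3$ Galois extension $K(\sqrt[3]{t})/K$ (Galois because $\Ff_4\subseteq K$ supplies the cube roots of unity) is the unique minimal separable extension of $K$. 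Translating via the Galois correspondence, $G:=\Gal(K^{\sep}/K)$ has a unique maximal open subgroup $H$; a standard profinite argument then shows $G$ is procyclic (any $g\in G\setminus H$ must topologically generate $G$, since otherwise $\overline{\langle g\rangle}$ would be a proper closed subgroup, hence contained in some maximal open subgroup, necessarily $H$, contradicting $g\notin H$). In particular $G$ is topologically finitely generated, so $K$ is bounded.

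The main work is to show $f(K)=K^\times$. Clearly $0\notin f(K)$, since $f(a)=0$ forces $a^2=t$ or $a^3=t$. Conversely, fix $c\in K^\times$ and set $P(x):=f(x)-c$, of degree $5$. Factor $P$ into irreducibles over $K$: if some irreducible factor is linear, then $c\in f(K)$ and we are done. Otherwise every irreducible factor $Q$ of $P$ has degree $\geq 2$, so $K[x]/(Q)$ is a nontrivial algebraic extension of $K$, which by the structural lemma contains $\sqrt{t}$ or $\sqrt[3]{t}$. In characteristic $2$ their minimal polynomials are $x^2+t$ and $x^3+t$ respectively (both irreducible, as $\sqrt{t},\sqrt[3]{t}\notin K$), so each $Q$ equals one of these two. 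But $5$ has the unique representation $2\cdot 1+3\cdot 1$ as a nonnegative integer combination of $2$'s and $3$'s, so $P=(x^2+t)(x^3+t)=f(x)$, forcing $c=0$, a contradiction.

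Part~(2) is then immediate. Since $f\colon K\to K$ is a polynomial map with cofinite proper image $K^\times$ (using that $K$, being $\pac$, is infinite), Remark~\ref{rem:lampe-discrete} gives that $\cF_K$ is discrete on $K$; meanwhile $K$ is $\pac$ and therefore large, so Fact~\ref{fact:old-EO}(\ref{old:lrge}) says $\cE_K$ on $K$ is not discrete. The main obstacle I expect is the factorisation argument for $f(K)=K^\times$, and in particular pinning down that the only candidates for irreducible factors of $f(x)-c$ of degree $\geq 2$ are precisely $x^2+t$ and $x^3+t$; the remaining arithmetic ($5=2+3$ being the unique representation) and the deduction of~(2) are then routine.
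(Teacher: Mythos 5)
Your overall strategy is close to the paper's, but there is a genuine gap in the key step $f(K)=K^\times$. From ``$K[x]/(Q)$ contains $\sqrt{t}$ or $\sqrt[3]{t}$'' you jump to ``$Q$ equals $x^2+t$ or $x^3+t$.'' This does not follow: $Q$ is the minimal polynomial of whatever element $\alpha$ you choose with $K(\alpha)=K[x]/(Q)$, and there are many generators. For instance, a degree-$2$ irreducible factor $Q$ with $K[x]/(Q)=K(\sqrt{t})$ (purely inseparable, characteristic $2$) is of the form $x^2+e$ for \emph{some} non-square $e\in K$, e.g.\ $e=a^2+b^2t$ with $b\ne 0$; it is not forced to be $x^2+t$. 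Likewise a degree-$3$ factor is just some irreducible cubic whose splitting field is $K(\sqrt[3]{t})$, not necessarily $x^3+t$. So the conclusion $P=(x^2+t)(x^3+t)$ is unjustified, and the whole contradiction collapses.

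The gap is repairable, but it requires extra input you haven't supplied. The paper's route: compute $f'(x)=x^2(x^2+t)$, check that $g=f+c$ is separable whenever $c\neq 0,t^2$ (since $g$ and $g'$ then share no roots), and combine this with the fact that every separable irreducible polynomial over $K$ has degree a power of $3$ (a consequence of the Galois-theoretic Corollary~\ref{z3}). Separability kills the would-be degree-$2$ inseparable factor at once, and the degree constraint forces a linear factor of the quintic. Alternatively, you could keep your combinatorial setup and finish by an explicit division: writing the degree-$2$ factor as $x^2+e$, reduce $P(x)=x^5+tx^3+tx^2+t^2+c$ modulo $x^2+e$ to get $x(e^2+te)+(te+t^2+c)$; vanishing of both coefficients forces $e=t$ and then $c=0$, the desired contradiction. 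Either fix works, but some such argument is needed. On the side, your profinite-group route to boundedness (unique maximal open subgroup $\Rightarrow$ procyclic) is correct and genuinely different from the paper's finite-group lemma (Fact~\ref{fact:group}/Corollary~\ref{z3}); note however that the paper's version also delivers the ``all separable degrees are powers of $3$'' fact, which is exactly what it then uses to close the argument you left open.
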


We give the proof over the remainder of the subsection.
Fact~\ref{fact:exx} gives an existentially closed $K$.
Moreover, (2) follows from (1): the $\cF_K$-topology is discrete by Remark~\ref{rem:lampe-discrete} and the $\cE_K$-topology is not discrete as $K$ is $\pac$ and hence large.
It remains to prove (1).

\begin{fact}\label{fact:group}
Let $p$ be a prime.
If $G$ is a finite group which admits an index $p$ subgroup containing every proper subgroup then $G$ is a cyclic group with order a power of $p$
\end{fact}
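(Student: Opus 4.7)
The plan is to show first that $H$ forces every element outside $H$ to generate the whole group, making $G$ cyclic, and then to use the structure of subgroups of a cyclic group to pin down that $|G|$ is a power of $p$.

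More concretely, let $H \leq G$ be the given index $p$ subgroup, which by hypothesis contains every proper subgroup of $G$. The first step is to pick any $g \in G \setminus H$ and consider the cyclic subgroup $\langle g \rangle$. Since $g \notin H$, the subgroup $\langle g \rangle$ is not contained in $H$, so by the defining property of $H$ it cannot be a proper subgroup of $G$. Hence $\langle g \rangle = G$, and $G$ is cyclic.

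Next, I would use the standard classification of subgroups of a finite cyclic group: if $|G| = n$, then $G$ has exactly one subgroup of each order $d$ dividing $n$, and the maximal proper subgroups correspond to the prime divisors of $n$ (namely, the subgroup of index $q$ for each prime $q \mid n$). By hypothesis, $H$ is a proper subgroup containing every proper subgroup, so in particular $H$ is the unique maximal proper subgroup of $G$. This forces $n$ to have a unique prime divisor, so $n$ is a prime power. Since the unique maximal subgroup $H$ has index $p$ in $G$, that prime divisor must be $p$, and $|G| = p^k$ for some $k \geq 1$.

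The argument is entirely elementary, and there is no real obstacle; the only thing to be a little careful about is the direction of the implication at the start (proper subgroups are contained in $H$, not that $H$ is contained in every proper subgroup), so that one correctly concludes $\langle g\rangle = G$ for $g \notin H$. No saturation, no descent, no nontrivial machinery is needed beyond the subgroup lattice of $\Zz/n\Zz$.
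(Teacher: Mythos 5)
Your proof is correct. Note that the paper does not actually supply a proof of this fact; it simply cites it as Exercise~14 in \S5.2 of Robinson's \emph{A Course in the Theory of Groups}. Your argument is the standard one: any $g \notin H$ generates a subgroup not contained in $H$, hence not proper, so $\langle g\rangle = G$ and $G$ is cyclic; then uniqueness of the maximal subgroup (it must equal $H$, since $H$ is proper and contains every proper subgroup, hence sits above every maximal subgroup while also sitting below one) forces $|G|$ to have a single prime divisor, which must be $p$ because $[G:H]=p$. The reasoning is complete and correct, with no gaps.
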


Fact~\ref{fact:group} is \cite[\S{}5.2, Exercise~14]{a_course_in_the_theory_of_groups}.

\begin{corollary} \label{z3}
Let $K$ be a field and $F/K$ be a Galois extension of degree 3.
Suppose that any proper finite separable extension of $K$ contains $F$.
\begin{enumerate}[leftmargin=*]
\item The Galois group of any finite Galois extension of $K$ is cyclic with order a power of $3$.
\item $K$ is bounded.
In fact $K$ has at most one separable field extension of any given degree up to isomorphism.
\end{enumerate}
\end{corollary}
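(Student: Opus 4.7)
The plan is to deduce both parts from Fact~\ref{fact:group} via the Galois correspondence, with essentially no work beyond the dictionary between subfields and subgroups.

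For part (1), I fix a nontrivial finite Galois extension $L/K$ and set $G = \Gal(L/K)$; I will apply Fact~\ref{fact:group} with $p = 3$ to $G$, using the index-$3$ subgroup $H_0 = \Gal(L/F)$. The content to verify is that every proper subgroup $H \subsetneq G$ is contained in $H_0$. To see this, note that the fixed field $L^H$ is a proper finite separable extension of $K$ (properness because $H \ne G$), so by hypothesis $F \subseteq L^H$; by the Galois correspondence applied to $L/K$ this is equivalent to $H \subseteq \Gal(L/F) = H_0$. Fact~\ref{fact:group} then gives that $G$ is cyclic of order a power of~$3$. The case $L = K$ is trivial.

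For part (2), I suppose $K_1, K_2 \subseteq K^{\sep}$ are separable extensions of $K$ of the same degree $d$, and let $L \subseteq K^{\sep}$ be the Galois closure over $K$ of the compositum $K_1 K_2$. Part (1) shows that $\Gal(L/K)$ is cyclic of $3$-power order. Every intermediate field of $L/K$ has degree a power of $3$ over $K$, so $d$ is a power of $3$; and a cyclic group of prime-power order has a unique subgroup of each possible index, so $\Gal(L/K_1) = \Gal(L/K_2)$, giving $K_1 = K_2$. This is stronger than boundedness, since it bounds the number of separable extensions of each degree by~$1$.

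No step here looks genuinely hard: the argument reduces to the purely group-theoretic input of Fact~\ref{fact:group} together with the Galois correspondence. The only mild subtlety to watch is that when translating the hypothesis on proper separable extensions into a statement about proper subgroups, one uses that $L/K$ is Galois (so that $L/L^H$ is Galois and $\Gal(L/L^H) = H$), which is automatic in the setup. Thus Fact~\ref{fact:group} does all of the real work.
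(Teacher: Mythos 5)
Your proof is correct and follows essentially the same route as the paper's: part (1) is read off from Fact~\ref{fact:group} via the Galois correspondence (you just spell out the verification that $\Gal(L/F)$ contains every proper subgroup, which the paper leaves implicit), and part (2) passes to a Galois closure, invokes part (1), and uses that a cyclic $3$-group has at most one subgroup of each index.
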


\begin{proof}
Note that (1) follows from Fact~\ref{fact:group}.
For (2), let $L_1, L_2$ be separable extensions of $K$ of degree $n$.
Let $L_3$ be a finite Galois extension of $K$ containing $L_1$ and $L_2$.
Then $\Gal(L_3/K)$ is isomorphic to $\Zz/3^k\Zz$ for some $k$.
The fields $L_1$ and $L_2$ correspond to index $n$ subgroups of $\Gal(L_3/K)$.
Hence $L_1 = L_2$ as $\Zz/3^k\Zz$ has at most one subgroup of any given index.
\end{proof}

\begin{proof}[Proof of Proposition~\ref{prop:cofin}]
It is clear that $\C$ is closed under regular extensions, hence $K$ is existentially closed in any regular extension, hence $K$ is $\pac$.

\medskip
Let $F = K(\sqrt[3]{t})$.
Then $F/K$ is a degree $3$ Galois extension as $\Ff_4$ contains all cube roots~of~$1$.

\begin{Claim*}
Any proper separable algebraic extension of $K$ contains $F$.
\end{Claim*}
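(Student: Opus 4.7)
The plan is two steps: use existential closedness of $K$ in $\C$ to show that no proper finite separable extension of $K$ belongs to $\C$, and then use the characteristic $2$ hypothesis to rule out $\sqrt{t}$ and conclude that $\sqrt[3]{t}$ must lie in every such extension.

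First, reduce to the finite case. Given a proper separable algebraic extension $L/K$, pick any $\alpha \in L \setminus K$; then $K(\alpha)/K$ is itself a proper finite separable extension, and once we know $\sqrt[3]{t} \in K(\alpha)$ we obtain $\sqrt[3]{t} \in L$. So it is enough to assume that $L/K$ is a proper finite separable extension and show $F \subseteq L$.

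Second, show $L \notin \C$. By the primitive element theorem write $L = K(\alpha)$, and let $p(x) \in K[x]$ be the minimal polynomial of $\alpha$. Then $p$ is irreducible over $K$ of degree $[L:K] \ge 2$, so $p$ has no root in $K$. On the other hand the existential sentence $\exists x\, (p(x) = 0)$ with parameters from $K$ is true in $L$. If $L$ lay in $\C$, existential closedness of $K$ in $\C$ would transfer this sentence to $K$, contradicting irreducibility of $p$. Hence $L \notin \C$, which (using that $L$ automatically extends $\Ff_4(t)$) means $L$ contains $\sqrt{t}$ or $\sqrt[3]{t}$.

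Finally, exclude the $\sqrt{t}$ alternative. Since $K$ has characteristic $2$ and $\sqrt{t} \notin K$, the minimal polynomial of $\sqrt{t}$ over $K$ is $x^{2}+t = (x - \sqrt{t})^{2}$, an inseparable polynomial. Thus $K(\sqrt{t})/K$ is a non-trivial purely inseparable extension, which cannot embed into the separable extension $L/K$. Therefore $\sqrt{t} \notin L$, forcing $\sqrt[3]{t} \in L$, i.e.\ $F \subseteq L$. I do not expect a serious obstacle anywhere; the only subtlety is the observation that converts the dichotomy ``$\sqrt{t} \in L$ or $\sqrt[3]{t} \in L$'' into the desired containment, namely that $\sqrt{t}$ is purely inseparable over $K$ in characteristic $2$.
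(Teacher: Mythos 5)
Your proposal is correct and takes essentially the same approach as the paper's proof: use existential closedness to show a proper separable extension cannot lie in $\C$, then note that $\sqrt{t}$ is inseparable over $K$ in characteristic $2$ to conclude the cube root must be present. You spell out some steps the paper compresses (the reduction to finite extensions, the explicit use of the minimal polynomial to break existential closedness), but the underlying argument is identical.
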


\begin{claimproof}
Let $L/K$ be a proper separable algebraic extension.
Then $K$ is not existentially closed in $L$ and so $L$ contains a square or cube root of $t$.
Now $L$ cannot contain a square root of $t$ as $K(\sqrt{t})/K$ is not separable.
Hence $L$ contains a cube root of $t$ and so $L$ contains $F$.
\end{claimproof}

It follows by Corollary~\ref{z3} that $K$ is bounded and the degree of any finite separable extension of $K$ is a power of $3$.
Consequently, the degree of any separable irreducible polynomial in $K[x]$ is a power of $3$.

\medskip
We now show that $f(K) = K^\times$.  Recall that $f(x) = (x^2 + t)(x^3 + t)$.
Then $0 \notin f(K)$ as $f$ has no roots in $K$ and $t^2 \in f(K)$ as $f(0) = t^2$.
Fix $c \in K \setminus \{0, t^2\}$.
We show that $g(x) = f(x) + c$ has a root in $K$.
Note that
\[
g'(x) = f'(x) = 2x(x^3 + t) + 3x^2(x^2 + t) = x^2(x^2 + t).
\]
Hence the roots of $g'(x)$ are $0$ and $\sqrt{t}$.
We supposed that $c \ne 0 = f(\sqrt{t})$ and $c\ne t^2 = f(0)$, hence $g(\sqrt{t}) \ne 0 \ne g(0)$.
Then $g'(x)$ and $g(x)$ do not share any roots, so $g$ is separable.

\medskip
Therefore $g$ factors as a product of irreducible separable polynomials.
Each factor has degree a power of $3$.
Then at least one factor is linear, since $5 = \deg(g)$ cannot be written as a sum of numbers of the form $3^m$ with $m > 0$.
Thus $g$ has a root in $K$.
\end{proof}

\begin{remark}
There is an algebraic extension $F$ of $\Ff_4(t)$ such that $F$ is bounded $\pac$ and $f(F) = F^\times$ with $f(x)$ as above.
We sketch the proof.  
By the proof of Proposition~\ref{prop:cofin}, it suffices to find a bounded $\pac$ extension $K \supseteq \Ff_4(t)$ such that $f(x)$ has no roots in $K$, and the degree of any finite separable extension of $K$ is a power of 3.  Equip $\Gal(\Ff_4(t))$ with the unique Haar probability measure and let $F^*$ be the fixed field of a random element of $\Gal(\Ff_4(t))$.
Then $F^*$ is $\pac$ and $\Gal(F^*)$ is the profinite completion of $\Zz$ with probability $1$~\cite[Thm.~20.8.2]{field-arithmetic}.  With probability $2/3$, $F^*$ does not contain the cube roots of $t$, and so $[F^*(\sqrt[3]{t}) : F^*] = 3$.  Fix such an $F^*$.  Let $F$ be the Galois extension of $F^*$ with absolute Galois group $\Zz_3$.  Then $F$ is $\pac$ because it's an algebraic extension of a $\pac$ field, and every finite separable extension of $F$ has degree a power of 3.  Since $F$ is separable over $\Ff_4(t)$, it does not contain the square root $\sqrt{t}$.  Finally, $F$ does not contain the cube roots of $t$ because $\Gal(F/F^*) \cong \Zz_2 \times \Zz_5 \times \Zz_7 \times \cdots$ is prime to 3, and $[F^*(\sqrt[3]{t}) : F^*] = 3$.
\end{remark}

Let $h$ range over $K[x]$.
We constructed a large field over which the finite-closed topology is discrete by producing large $K$ and $h$ with $0 < |K \setminus h(K)| < |K|$.
In the other direction, we know that the finite-closed topology over a perfect large field is not discrete, so $K \setminus h(K)$ has cardinality $0$ or $|K|$ when $K$ is perfect and large.
This is a special case of Fact~\ref{fact:finite-image}, due to Bary-Soroker, Geyer, and Jarden~\cite{bsgj},  generalizing work of Kosters~\cite{kosters}.

\begin{fact}
\label{fact:finite-image}
Suppose that $K$ is large and let $L$ be the maximal purely inseparable extension of $K$.
Suppose that $h \colon V \to W$ is a finite morphism of irreducible $K$-varieties and $h_L(V(L))$ does not contain some smooth point of $W(L)$.
Then $W(K) \setminus h_L(V(L))$ has cardinality $|K|$.
\end{fact}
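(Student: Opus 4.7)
The plan is to reduce to the perfect-field case via base change to $L$, invoke Theorem~\ref{thm:etale-refine} over $L$, and then transport the resulting cardinality estimate back to $K$-points using Fact~\ref{fact:kins}(2).

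First I would record the standing properties of the setup. The field $L$ is perfect (being a maximal purely inseparable extension) and large (algebraic extensions of large fields are large), and the map $\Spec L \to \Spec K$ is a universal homeomorphism, so $V_L$ and $W_L$ remain irreducible and $f_L \colon V_L \to W_L$ is still finite. Moreover $|L| = |K|$ since $K$ is infinite.

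Second, I would apply the topological machinery over $L$. Finiteness of $f_L$ makes $f_L(V(L))$ an F-subset of $W(L) = W_L(L)$, and Theorem~\ref{thm:etale-refine} over the perfect field $L$ promotes it to an $\cE_L$-closed set. Its complement $U := W(L) \setminus f_L(V(L))$ is therefore $\cE_L$-open and, by hypothesis, contains a smooth $L$-point; Fact~\ref{fact:z-dense} applied to the irreducible $L$-variety $W_L$ then gives $|U| = |L| = |K|$. Now Fact~\ref{fact:kins}(2) says the inclusion $W(K) \hookrightarrow W(L)$ is a homeomorphic embedding of $\cE_K$ into $\cE_L$, so the set $W(K) \setminus f_L(V(L)) = W(K) \cap U$ is $\cE_K$-open in $W(K)$. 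Provided it contains at least one smooth $K$-point, a second application of Fact~\ref{fact:z-dense}, now over $K$ and applied to the irreducible $K$-variety $W$, delivers the claimed equality $|W(K) \setminus f_L(V(L))| = |K|$.

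The main obstacle is this last caveat: exhibiting a smooth $K$-point inside $U$. When $K$ is already perfect one has $L = K$, the hypothesis supplies such a point directly, and the argument above collapses to a short two-step deduction that recovers Kosters' theorem. In the general (imperfect) case one must promote the smooth $L$-point guaranteed by the hypothesis to a smooth $K$-point lying outside $f_L(V(L))$; this is the nontrivial ingredient of Bary-Soroker, Geyer and Jarden. Their argument factors $f$ as $V \to V' \to W$ with $V \to V'$ purely inseparable and $V' \to W$ finite and generically \'etale over $K$. The purely inseparable factor induces a bijection on $L$-points, so $f_L(V(L)) = f'_L(V'(L))$ and everything reduces to the finite, generically \'etale morphism $V' \to W$ defined over $K$. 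One then applies an implicit-function-theorem-style argument, along the lines of Theorem~\ref{thm:local-homeo-2}, to the \'etale locus of $V' \to W$ in order to pull the smooth $L$-point back to a nearby smooth $K$-point. This descent step is the only piece that does not follow directly from the material developed in the present paper.
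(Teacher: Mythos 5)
Your route matches the paper's proof step for step: apply Theorem~\ref{thm:etale-refine} over the perfect closure $L$ to see that $f_L(V(L))$ is $\cE_L$-closed, pull back along Fact~\ref{fact:kins}(2) to conclude $W(K)\setminus f_L(V(L))$ is $\cE_K$-open, and finish with Fact~\ref{fact:z-dense}. Your intermediate application of Fact~\ref{fact:z-dense} over $L$ (giving $|W(L)\setminus f_L(V(L))|=|K|$) is not in the paper's proof and is not needed for the final step, but it does no harm.

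The caveat you raise is a genuine observation, not an oversight on your part. Fact~\ref{fact:z-dense} requires the $\cE_K$-open set $O = W(K)\setminus f_L(V(L))$ to contain a smooth $K$-point, whereas the hypothesis of the Fact only supplies a smooth point of $W(L)$, i.e.\ a smooth $L$-point. The paper's three-line proof says ``Apply Fact~\ref{fact:z-dense}'' without addressing this, so it does not contain a step you failed to reproduce. In the case that actually gets used in the paper --- $K$ perfect, so $L=K$, which is Kosters' theorem --- the hypothesis hands you the smooth $K$-point directly and the argument is airtight, as you note. For imperfect $K$ your diagnosis that something extra is needed is sound, but your sketch (factor $f$ into an inseparable and a generically \'etale part and run a Theorem~\ref{thm:local-homeo-2}-style descent) is not carried out, so your proposal, like the paper's own proof, leaves that step open rather than closing it.
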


We give a topological proof of Fact~\ref{fact:finite-image}.

\begin{proof}
As $L$ is perfect $h_L(V(L))$ is an $\cE_L$-closed subset of $W(L) = W_L(L)$ by Theorem~\ref{thm:etale-refine}.
By Fact~\ref{fact:kins}(2), $W(K) \cap h_L(V(L))$ is an $\cE_K$-closed subset of $W(K)$.
Apply Fact~\ref{fact:z-dense}.
\end{proof}

\medskip
We describe some other known cases of Lampe's question over large fields.

\begin{fact}\label{fact:lpm-css}
Suppose that $K$ is large, $h \in K[x]$, and $L$ is the maximal purely inseparable extension of $K$.
If one of the following holds then $|K \setminus h(K)| = |K|$.
\begin{enumerate}[leftmargin=*]
\item $h$ is irreducible.
\item $h$ is separable and does not have a root in $K$.
\item $h$ does not have a root in $L$.
\item All roots of $h$ are in $L \setminus K$.
\end{enumerate}
% \textcolor{blue}{(3) would also follow if we could show that $K$ is dense in the finite-closed topology on the separable closure of $K$, and this seems plausible.}
\end{fact}

Here (1) is due to Koenigsmann~\cite[\S\! 6]{open-problems-ample} and (3) and (4) are proven in \cite{bsgj}.
Note that (2) follows from (3) as a separable polynomial cannot have a root in $L \setminus K$.
It is also easy to see that (3) follows from Fact~\ref{fact:finite-image}.
% In fact, it follows that if $h$ does not have a root in $L$ then $0$ is not in the $\Sa E_K$-closure of $h(L)$.
% Furthermore (1) follows from (2).
% Suppose $h$ is irreducible.
% If $h$ is separable then we can apply (2).
% If $h$ is not separable then $h(x) = h^*(x^{p^n})$ for $p = \Chara(K)$, $n \ge 1$, and separable irreducible $h^* \in K[x]$.
% If $h^*$ has degree $\ge 2$ we have $h(K) \subseteq h^*(K)$, otherwise $h(x) = x^{p^n} + c$ for some $c \in K$, in which case $K$ is imperfect and $h(K)$ is contained in a translate of the subfield of $p$th powers.

\subsection{Perfect fields over which the $\cE_K$-topology strictly refines the $\cF_K$-topology}\label{section:Dsharp}
For perfect fields, Theorem C shows that $\cE_K$ refines $\cF_K$, but in this subsection we will see examples (such as $\Qq$) where the refinement is strict.

\medskip
Recall that a topological space is {\bf irreducible} if it cannot be written as a union of two proper closed subsets (or equivalently, any two non-empty open subsets intersect).
Fact~\ref{fact:HD} below is \cite[Prop.~4.12]{firstpaper}.

\begin{fact}\label{fact:HD}
The following are equivalent for any system $\cT$ of topologies over $K$.
\begin{enumerate}[leftmargin=*]
\item The $\cT$-topology on $K$ is Hausdorff.
\item The $\cT$-topology on $K$ is not irreducible.
\item The $\cT$-topology on $V(K)$ is Hausdorff for any quasi-projective $V$.
\end{enumerate}
\end{fact}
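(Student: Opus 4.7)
The plan is to establish the cyclic chain $(3) \Rightarrow (1) \Rightarrow (2) \Rightarrow (1) \Rightarrow (3)$. The first two implications are essentially formal. For $(3) \Rightarrow (1)$, apply (3) to $V = \Aa^1$, noting that $\Aa^1$ is quasi-projective and $\Aa^1(K) = K$. For $(1) \Rightarrow (2)$, any Hausdorff topological space with at least two points fails to be irreducible, and every field has at least two elements.

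For $(2) \Rightarrow (1)$, the key observation is that affine transformations act transitively on ordered pairs of distinct points of $K$. Fix disjoint non-empty $\cT$-open sets $U_1, U_2 \subseteq K$ and pick $u_i \in U_i$; then automatically $u_1 \ne u_2$. Given distinct $a, b \in K$, set $\alpha = (a - b)/(u_1 - u_2)$ and $\beta = a - \alpha u_1$, so that $\phi(x) = \alpha x + \beta$ sends $u_1 \mapsto a$ and $u_2 \mapsto b$. Since $\phi$ is an automorphism of $\Aa^1$ as a $K$-variety, the induced map on $K$-points is a $\cT$-self-homeomorphism, and $\phi(U_1)$, $\phi(U_2)$ are disjoint $\cT$-open neighborhoods of $a$ and $b$.

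The substantive implication is $(1) \Rightarrow (3)$. First I would show that $K^n = \Aa^n(K)$ is $\cT$-Hausdorff for every $n \geq 1$: by Fact~\ref{fact:prod} the $\cT$-topology on $K^n$ refines the product of the $\cT$-topologies on the factors, and any topology refining a Hausdorff topology is Hausdorff. Next, I would show that $\Pp^n(K)$ is $\cT$-Hausdorff. Given distinct $p, q \in \Pp^n(K)$, the goal is to find a $K$-rational hyperplane $H \subset \Pp^n$ containing neither point; this reduces to finding $(\lambda_0,\ldots,\lambda_n) \in K^{n+1}$ not lying in either of two fixed linear hyperplanes, which is possible since the union of two proper linear subspaces of $K^{n+1}$ is never all of $K^{n+1}$ (a trivial counting argument handles the finite-field case, and the infinite case is immediate). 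Then $p, q \in (\Pp^n \setminus H)(K)$, and $\Pp^n \setminus H \cong \Aa^n$ as $K$-varieties; since the open immersion $\Pp^n \setminus H \hookrightarrow \Pp^n$ induces a topological open embedding on $K$-points by condition (2) of Definition~\ref{sys-def}, disjoint $\cT$-open neighborhoods of $p, q$ inside $(\Pp^n \setminus H)(K)$ remain disjoint $\cT$-open neighborhoods in $\Pp^n(K)$. Finally, for any quasi-projective $V \subseteq \Pp^n$, we can factor the inclusion as $V \hookrightarrow \overline{V} \hookrightarrow \Pp^n$ where the first arrow is an open immersion and the second a closed immersion. By conditions (2) and (3) of Definition~\ref{sys-def}, the composition $V(K) \to \Pp^n(K)$ is a topological embedding, and Hausdorffness is inherited by subspaces.

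There is no single serious obstacle in this argument: it combines the transitivity of the action of affine and projective linear groups on $\Aa^1$ and $\Pp^n$ (yielding $\cT$-homeomorphisms via their interpretation as variety automorphisms), the comparison with the product topology on $K^n$ furnished by Fact~\ref{fact:prod}, and the defining properties (2), (3) of a system of topologies. The only place that requires any care is the reduction to an affine chart in $\Pp^n$, where one should verify the elementary linear-algebraic fact that two proper hyperplanes cannot cover $K^{n+1}$.
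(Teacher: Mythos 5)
Your proof is correct. The paper itself does not prove Fact~\ref{fact:HD} but cites it from~\cite{firstpaper} as Prop.~4.12, so there is no internal argument to compare against; but the route you take---affine transformations for the equivalence of~(1) and~(2), Fact~\ref{fact:prod} to bootstrap $K$ Hausdorff to $K^n$ Hausdorff, a $K$-rational hyperplane avoiding two given points to reduce $\Pp^n(K)$ to an affine chart, and the factorization of a quasi-projective $V\hookrightarrow\overline{V}\hookrightarrow\Pp^n$ through an open immersion and a closed immersion---is the natural one, and each step checks out. Two cosmetic remarks: the chain $(3)\Rightarrow(1)\Rightarrow(2)\Rightarrow(1)\Rightarrow(3)$ is slightly redundant (it retraverses the edge between (1) and (2) in both directions rather than forming a single cycle), and in the passage from $V$ open in $\overline{V}$ you are implicitly using the standard fact that a locally closed subscheme is open in its Zariski closure, which is worth a half-sentence since $\overline{V}$ must be taken to be the scheme-theoretic closure in $\Pp^n$. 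Neither affects correctness.
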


A subset of $K^n$ is {\bf thin} if it is contained in a set of the form $A \cup f_1(V_1(K)) \cup \cdots \cup f_m(V_m(K))$ where $A\subseteq K^n$ is not Zariski dense and each $f_i$ is a dominant morphism $V_i \to \Aa^n$ of degree $\ge 2$ with $\dim V_i = n$.
We say that $K$ is {\bf Hilbertian} if and only if $K^n$ is not thin for every  $n\ge 1$.
Our notion of Hilbertian agrees with that given in the first edition of \cite{field-arithmetic}, but not with the notion given in later editions.
The two notions agree in characteristic zero.

\medskip
Let $K$ be either a number field or a function field over a characteristic zero field $F$, i.e., $K/F$ is finitely generated and $F$ is relatively algebraically closed in $K$.
Then $K$ is not large as $K$ satisfies a form of the Mordell conjecture~\cite[9.5.1, 9.5.2]{poonen-qpoints}, hence the $\cE_K$-topology is discrete.
Furthermore $K$ is Hilbertian~\cite[Thm.~13.4.2]{field-arithmetic}, so the following shows that the $\cF_K$-topology is not discrete.\footnote{There are also examples of large Hilbertian fields such as $\omega$-free $\pac$ fields~\cite[Cor~27.3.3]{field-arithmetic}.}

\begin{proposition}\label{prop:hilbertian}
If $K$ is Hilbertian then the $\cF_K$-topology on each $K^n$ is irreducible and hence non-Hausdorff and non-discrete.
The $\cF_K$-topology does not agree with the $\cE_K$-topology.
\end{proposition}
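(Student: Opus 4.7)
The plan is to show directly that the $\cF_K$-topology on $K^n$ is irreducible for every $n \ge 1$; the remaining three conclusions will then follow with little additional work. Since F-sets form a closed basis of the $\cF_K$-topology that is stable under finite union (Lemma~\ref{fc-lemma}(3)), complements of F-sets form an open basis stable under finite intersection, so to prove irreducibility it suffices to show that whenever $F_1, F_2 \subseteq K^n$ are F-sets with $F_1 \cup F_2 = K^n$, one of them already equals $K^n$. I will deduce this from the dichotomy: every F-subset of $K^n$ is either equal to $K^n$ or thin in the sense of the definition preceding the proposition.

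To prove the dichotomy, write an F-set as $F = f(W(K))$ for a finite morphism $f \colon W \to \Aa^n$, decompose $W$ into irreducible components $W_1, \ldots, W_k$, and consider each $f_i := f|_{W_i}$, which remains finite. If $\dim W_i < n$, then since $f_i$ is proper its image is Zariski closed of dimension $< n$, so $f_i(W_i(K))$ is not Zariski dense. If $\dim W_i = n$, then $f_i$ is automatically dominant; when $\deg f_i \ge 2$ the set $f_i(W_i(K))$ is thin by definition, and when $\deg f_i = 1$ the morphism $f_i$ is finite and birational onto the normal variety $\Aa^n$, hence an isomorphism by the standard fact that a finite birational morphism to a normal variety is an isomorphism. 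In the last case $F = K^n$; otherwise each piece is either non-Zariski-dense or thin, so $F$ itself is thin.

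Given the dichotomy, irreducibility is immediate: if $F_1, F_2 \subsetneq K^n$ are F-sets, both are thin, so the F-set $F_1 \cup F_2$ is thin, and thus $F_1 \cup F_2 \ne K^n$ since $K$ being Hilbertian means $K^n$ is not thin. Non-Hausdorffness and non-discreteness of $\cF_K$ on each $K^n$ follow at once, because any irreducible topological space with more than one point is both non-Hausdorff and non-discrete (and $K$ is infinite, as every Hilbertian field is). Finally, a Hilbertian field is not separably closed: if $K$ were separably closed, then in characteristic different from $2$ the squaring map $\Aa^1 \to \Aa^1$ would witness $K$ as thin, and in characteristic $2$ the cubing map does the same. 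So by Fact~\ref{fact:old-EO}(\ref{old:sep}) the $\cE_K$-topology on $K = \Aa^1(K)$ is Hausdorff, in contrast to the $\cF_K$-topology, and the two systems disagree.

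The only nonroutine step is the ``finite birational morphism to a normal variety is an isomorphism'' input, which reduces on affine charts to: if $A \hookrightarrow B$ is a finite ring extension with $A$ normal and $\Frac(B) = \Frac(A)$, then integrality combined with normality of $A$ forces $B \subseteq A$, hence $B = A$. Everything else is bookkeeping with the definitions together with Lemma~\ref{fc-lemma}(3) and the cited facts.
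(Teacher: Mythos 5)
Your proposal is correct and follows essentially the same route as the paper: the key step is the dichotomy that every F-subset of $K^n$ is either all of $K^n$ or thin (the paper's Lemma~\ref{lem:hilbertian}), proved by decomposing into irreducible components and casing on dimension and degree, after which Hilbertianity gives irreducibility and the remaining conclusions follow. The only difference of substance is in the degree-one case: you invoke ``a finite birational morphism onto a normal variety is an isomorphism'' (checked on affine rings using normality of $K[x_1,\dots,x_n]$), whereas the paper quotes the original form of Zariski's main theorem to get an open immersion and then uses finiteness; these are interchangeable standard facts, and your route is if anything slightly more self-contained. Your explicit verification that Hilbertian fields are infinite and not separably closed fills in details the paper takes as known, and is correct.
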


\begin{proof}
By Fact~\ref{fact:HD} the $\cF_K$-topology on $K$ is Hausdorff if and only if it is not irreducible.
The second claim follows from the first because separably closed fields are not Hilbertian, and non-separably closed fields have Hausdorff $\cE_K$-topologies by Fact~\ref{fact:old-EO}(\ref{old:sep}).

\medskip
The first claim of Proposition~\ref{prop:hilbertian} follows from the definition of Hilbertianity and Lemma~\ref{lem:hilbertian} below, since a topological space $T$ with closed basis $\mathcal{B}$ is irreducible if and only if $T$ cannot be expressed as a finite union of proper elements of $\mathcal{B}$.
\end{proof}

\begin{lemma}\label{lem:hilbertian}
If $Y \subseteq K^n$ is an F-set, then either $Y = K^n$ or $Y$ is thin.
\end{lemma}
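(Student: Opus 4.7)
The plan is to write $Y$ as a finite union of images coming from irreducible pieces and to analyze each piece separately. Fix a finite morphism $f \colon W \to \Aa^n$ with $Y = f(W(K))$, and decompose $W = W_1 \cup \cdots \cup W_m$ into its (finitely many) irreducible components. Then $Y = \bigcup_i f(W_i(K))$. Each restricted map $f_i = f|_{W_i} \colon W_i \to \Aa^n$ is still finite, hence closed, so its image $Z_i$ is a closed irreducible subvariety of $\Aa^n$. Moreover $\dim Z_i = \dim W_i$, because finite surjective morphisms preserve dimension.

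Next I would do a case split on $\dim W_i$. If $\dim W_i < n$, then $Z_i$ is a proper closed subvariety of $\Aa^n$, so $f_i(W_i(K)) \subseteq Z_i(K)$ is not Zariski dense in $K^n$ and can be absorbed into the $A$-term in the definition of a thin set. If $\dim W_i = n$, then $Z_i = \Aa^n$ (a closed irreducible subvariety of $\Aa^n$ of dimension $n$ must be all of $\Aa^n$), so $f_i$ is a finite dominant morphism between irreducible $n$-dimensional varieties with well-defined degree $d_i = [K(W_i) : K(\Aa^n)]$. If $d_i \ge 2$, then $f_i$ is precisely one of the types of morphisms allowed in the definition of thin, so $f_i(W_i(K))$ is a permissible thin contribution.

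The remaining subcase $d_i = 1$ is where the dichotomy bites: a finite birational morphism to a normal irreducible variety is an isomorphism, by the standard argument that the pushforward $(f_i)_{\ast}\mathcal{O}_{W_i}$ is a finite $\mathcal{O}_{\Aa^n}$-algebra contained in $K(\Aa^n)$ and integral over $\mathcal{O}_{\Aa^n}$, which is integrally closed. Since $\Aa^n$ is smooth and therefore normal, in this subcase $f_i$ is an isomorphism, whence $f_i(W_i(K)) = K^n$ and $Y = K^n$. Otherwise every component $W_i$ falls into either the $\dim W_i < n$ case or the $\dim W_i = n$, $d_i \ge 2$ case, and collecting the contributions exhibits $Y$ as thin. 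The only nontrivial ingredient is the normality-plus-birational-implies-isomorphism fact; everything else is elementary bookkeeping with irreducible components.
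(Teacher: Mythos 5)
Your proof is correct and follows essentially the same line as the paper's: reduce to irreducible components, split by dimension, and in the $n$-dimensional degree-one case show the finite morphism is an isomorphism. The only cosmetic difference is at the last step: the paper invokes the ``original form'' of Zariski's main theorem to see that a finite birational morphism with finite fibers to $\Aa^n$ is an open immersion (hence an isomorphism), while you argue directly via normality and integral closure of $\Aa^n$; these are the same fact in two standard guises.
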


\begin{proof}
Write $Y$ as the $K$-rational image of a finite morphism $\phi \colon V \to \Aa^n$.
Replacing $V$ with its irreducible components, we may assume $V$ is irreducible.
We have $\dim V \le n$ as $\phi$ is finite.
If $\dim V < n$, then the image of $V \to \Aa^n$ is not Zariski dense and is hence thin, so we may suppose $\dim V = n$.
If $\phi$ has degree at least two then $\phi(V(K))$ is thin.
Suppose that $\phi$ has degree $1$.
Then $\phi \colon V \to \Aa^n$ is a birational map with finite fibers.
By the ``original form'' of Zariski's main theorem~\cite[III \S{}9]{red-book}, $\phi$ is an open immersion.
Since $\phi$ is finite, $\phi$ must be an isomorphism.
Therefore $\phi(V(K))= K^n$.
\end{proof}

\begin{proposition}\label{prop:hilbertian1}
Suppose that $K$ is Hilbertian and let $X$ be a subset of $K^n$.
Then $X$ is not $\cF_K$-dense if and only if $X$ is thin.
\end{proposition}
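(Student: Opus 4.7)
The plan is to reformulate non-$\cF_K$-density of $X$ in terms of being trapped inside a proper F-subset of $K^n$, and then to play this off against Lemma~\ref{lem:hilbertian}. Since F-subsets of $K^n$ form a closed basis for the $\cF_K$-topology and are already closed under finite unions by Lemma~\ref{fc-lemma}(3), every nonempty $\cF_K$-open set contains one of the basic form $K^n \setminus Y$ for some F-set $Y$; thus ``$X$ is not $\cF_K$-dense'' is equivalent to ``$X \subseteq Y$ for some F-set $Y \subsetneq K^n$.'' Given this dictionary, the forward direction is immediate: Lemma~\ref{lem:hilbertian} says any such $Y$ is thin, and subsets of thin sets are thin.

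The substantive direction is the converse. Starting from a witnessing decomposition $X \subseteq A \cup \bigcup_{i=1}^m f_i(V_i(K))$ with $A$ not Zariski dense and each $f_i \colon V_i \to \Aa^n$ dominant of degree $\ge 2$ with $\dim V_i = n$, I would (after decomposing into irreducible components) assume each $V_i$ is irreducible. The main obstacle is that $f_i$ is only generically finite, not finite, so $f_i(V_i(K))$ is not itself an F-set. I plan to handle this by restricting to the dense open quasi-finite locus $V_i' \subseteq V_i$ and invoking Zariski's main theorem (Fact~\ref{fact:zmt}) to factor $f_i|_{V_i'}$ as $V_i' \hookrightarrow V_i^* \xrightarrow{f_i^*} \Aa^n$ with $f_i^*$ finite of degree $\ge 2$ and $\dim V_i^* = n$; the complement $V_i \setminus V_i'$ has dimension $<n$, hence its image is not Zariski dense and lies in a proper closed subvariety $C_i \subsetneq \Aa^n$. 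Combining these pieces with a proper closed subvariety $C_0$ containing $A$, I obtain
\[ X \;\subseteq\; C_0(K) \,\cup\, \bigcup_{i=1}^m \bigl( C_i(K) \cup f_i^*(V_i^*(K)) \bigr) \;=:\; Y, \]
which by Fact~\ref{fact:basic-f} (closed immersions are finite) and Lemma~\ref{fc-lemma}(3) is an F-set.

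It remains to rule out the possibility $Y = K^n$, and this is precisely where Hilbertianity will enter. By construction each summand of $Y$ matches one of the defining shapes of a thin set ($C_0(K)$ and $C_i(K)$ are not Zariski dense, and each $f_i^*(V_i^*(K))$ arises from a dominant morphism of degree $\ge 2$ from an $n$-dimensional variety), and thin sets are obviously closed under finite unions, so $Y$ is thin. Since $K$ is Hilbertian, $K^n$ itself is not thin, which forces $Y \subsetneq K^n$. Hence $X$ is contained in a proper F-subset of $K^n$ and is therefore not $\cF_K$-dense, as required.
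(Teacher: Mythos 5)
Your proof is correct and follows the same route the paper sketches: reduce non-density to containment in a proper F-set, use Lemma~\ref{lem:hilbertian} for one direction, and for the converse apply Zariski's main theorem to turn each generically finite cover into a finite morphism and then invoke Hilbertianity to rule out the resulting F-set being all of $K^n$. The only addition over the paper's ``easy argument which we leave to the reader'' is that you explicitly restrict to the dense open quasi-finite locus of each $f_i$ before applying Fact~\ref{fact:zmt} (which requires quasi-finiteness, not just generic finiteness) and absorb the lower-dimensional complements into a proper closed subvariety --- a detail worth making explicit, and correctly handled.
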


\begin{proof}[Proof sketch]
Lemma~\ref{lem:hilbertian} shows that any subset of $K^n$ which is not $\cF_K$-dense is thin.
Conversely, if $g \colon V \to \Aa^n$ is a morphism of degree $\ge 2$ with $\dim V = n$, then we apply Fact~\ref{fact:zmt} to extend $g$ to a finite morphism $g^*\colon V^* \to \Aa^n$ and note that $g^*(V^*(K))$ is a proper F-subset of $K^n$ containing the thin set $g(V(K))$.
Proposition~\ref{prop:hilbertian1} follows from these observations and an easy argument which we leave to the reader.
\end{proof}

\begin{remark}
Proposition~\ref{prop:hilbertian} suggests that the finite-closed topology is suitable to study non-large Hilbertian fields, especially those that are model-theoretically tame.
Models of $C$XF or $V$XF~\cite{cxf-paper,vxf-paper} give examples of model-theoretically tame non-large Hilbertian  fields. 
It would be interesting to investigate the finite-closed topologies in such examples. 
\end{remark}

\subsection{Large fields that are not fraction fields of proper henselian local domains}\label{section:A-sharp}
In this subsection we show that it is necessary to work modulo elementary equivalence in Theorem~A, and that we cannot require the henselian local domains to be noetherian.

\begin{fact}\label{fact:sharp}
Suppose that one of the following holds.
\begin{enumerate}[leftmargin=*]
\item $K$ embeds into $\Rr$.
\item $K$ is an algebraic extension of a finite field.
% \item $K$ is an algebraic extension of a prime field and is not an algebraic extension of a $p$-adically closed field.
\end{enumerate}
Then $K$ is not isomorphic to the fraction field of a proper henselian local domain \textup{(}i.e., a henselian local domain that is not a field\textup{)}.
\end{fact}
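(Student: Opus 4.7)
The plan is to prove each part by contradiction, assuming $K = \Frac(R)$ for a henselian local domain $R$ with maximal ideal $\mm \neq 0$, and to derive an explicit contradiction in each setting.

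For (2), henselianity plays no role. Pick any nonzero $a \in \mm$. Since $K$ is algebraic over its finite prime field, $a$ lies in some finite subfield $\Ff_q \subseteq K$, so $a^{q-1} = 1$ and hence $a \cdot a^{q-2} = 1$ in $R$. This exhibits $a$ as a unit of $R$, contradicting $a \in \mm$. In fact the same argument shows that \emph{any} subring of $K$ must already be a field.

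For (1), fix an embedding $K \hookrightarrow \Rr$ and a nonzero $t \in \mm$; in particular $K$, and hence $R$, has characteristic zero, so every positive integer lies in $R$. The idea is to use Hensel's lemma to produce many squares of the form $1 \pm \varepsilon$ in $R$. I would invoke the strong form of Hensel's lemma, valid in any henselian local ring: for $f \in R[x]$ and $a \in R$ with $f(a) \in f'(a)^2 \mm$, there exists $b \in R$ with $f(b) = 0$ and $b \equiv a \pmod{f'(a)\mm}$. Applying this to $f(x) = x^2 - (1 \pm 4nt)$ at $a = 1$, we have $f(1) = \mp 4nt \in 4\mm = (f'(1))^2\mm$ because $nt \in \mm$. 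So $1 \pm 4nt$ is a square in $R \subseteq K$ for every positive integer $n$. Under the embedding $K \hookrightarrow \Rr$, both of $1 \pm 4nt$ are therefore nonnegative, forcing $|t| \le 1/(4n)$ for every $n$, hence $t = 0$ by archimedeanness of $\Rr$, contradicting $t \ne 0$.

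The only mild subtlety is appealing to the \emph{strong} form of Hensel's lemma for arbitrary henselian local rings; this is what keeps the argument uniform regardless of whether $2 \in \mm$ (i.e.\ whether the residue field has characteristic $2$). Were one restricted to the simple-root form of Hensel, only the case of residue characteristic $\ne 2$ would follow directly, and the char $2$ case would require an extra twist. Beyond this minor point, neither half of the proof presents a serious obstacle.
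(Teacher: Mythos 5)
Your proof is correct. Part (2) is essentially the paper's argument: every nonzero element of a subring of $\Ff_p^{\alg}$ is a root of unity and hence a unit, so every subring is already a field, and henselianity plays no role. Part (1) takes a genuinely different route. The paper applies the ordinary simple-root form of Hensel's lemma to $x^2 - x + \beta$ for a nonzero $\beta \in \mm$ scaled so that $\beta > 1/4$: modulo $\mm$ this polynomial is $x(x-1)$, which has two distinct simple roots in every residue field, yet its real discriminant $1 - 4\beta$ is negative, so it has no root in $R \subseteq \Rr$. Your version instead forces $1 \pm 4nt$ to be squares by applying the strong (quadratic-convergence) form of Hensel's lemma to $x^2 - (1 \pm 4nt)$ at $a = 1$. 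Both work, and both exploit the tension between archimedean order and Hensel lifting, but the paper's polynomial is chosen precisely so that the basic simple-root form suffices in every residue characteristic; you correctly flag that your choice needs either the strong form or a case split when $2 \in \mm$. The strong form you invoke is indeed valid in every henselian local ring --- it follows from the simple-root form by writing $f\bigl(a + f'(a)y\bigr) = f'(a)^2\bigl(m + y + y^2 h(y)\bigr)$ where $m \in \mm$ satisfies $f(a) = f'(a)^2 m$, and lifting the simple root $0$ of the bracketed polynomial --- so this is a slightly heavier tool, not a gap.
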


\begin{proof}
For (1), suppose towards a contradiction that $R \subseteq \Rr$ is a proper henselian local domain.
Let $\beta$ be a non-zero element of the maximal ideal of $R$.
After replacing $\beta$ with a suitable integer multiple (possibly negative) we have $\beta > 1/4$.
Then $x^2 - x + \beta$  does not have a root in $R$, which contradicts Hensel's lemma.

\medskip
For case (2), 
% first
note that any subring $R \subseteq \Ff_p^{\alg}$ is a field, because for each $k < \infty$, the intersection $R \cap \Ff_{p^k}$ is a finite integral domain and hence a field.
% Secondly, note that if $R$ is a henselian local subring of $\Qq^\alg$ then $R$ extends to a valuation ring $O$ with $O \cap \Zz = R \cap \Zz$ by [??], it is easy to see that $O \cap \Zz$ is non-trivial and hence $R$ contains $\Zz_{(p)}$ for some prime $p$, hence $R$ contains the henselization $\Zz_p \cap \Qq^\alg$ of $\Zz_{(p)}$, hence $\Frac(R)$ contains $\Qq_p \cap \Qq^\alg$.
\end{proof}

In particular, if $K$ is large and satisfies (1) or (2), then $K$ is elementarily equivalent to a fraction field of a proper henselian local domain, but not isomorphic to one.
For example, an infinite algebraic extension of a finite field is $\pac$ and hence large~\cite[Cor.~11.2.4]{field-arithmetic}.
Examples of large subfields of $\Rr$ include any real closed subfield (e.g., $\Rr$ itself) and the field of totally real algebraic numbers~\cite[Thm.~$\mathfrak{S}$]{pop-embedding}.

\medskip
We recall one more example.
Given $0 < r < 1$ let $\Cc_r[[t]]$ be the ring of holomorphic functions on the open disc of radius $r$ that extend to continuous functions on the closed disc of radius $r$.
Let $\Zz_r[[t]]$ be the subring of $\Cc_r[[t]]$ of functions whose Taylor series has coefficients in $\Zz$.
Then $\Frac \Zz_r[[t]]$ is large and is not the fraction field of a henselian local domain~\cite[Remark~3.3]{arithmetic-power-series}.

\medskip
Can we take the henselian local domain in Theorem~A to be noetherian?
The next lemma shows this fails for $\Rr$.
This is presumably well-known, but we include a proof for the sake of completeness.

\begin{fact}\label{fact:rcf-noe}
Suppose that $K$ is real or algebraically closed.
Then $K$ cannot be the fraction field of a non-field noetherian domain.
\end{fact}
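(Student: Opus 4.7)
The plan is to argue by contradiction, reducing to the existence of a DVR inside $K$ and then exploiting the fact that real or algebraically closed fields have divisible multiplicative structure incompatible with a $\Zz$-valued valuation.

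Suppose $R$ is a noetherian domain, not a field, with $\Frac(R) = K$. First I would pass to a convenient local model. Pick a nonzero maximal ideal $\mm \subseteq R$; then $R_\mm$ is a noetherian local domain, still with fraction field $K$ and dimension $\geq 1$. Choose any nonzero $t \in \mm R_\mm$ and let $\pp$ be a minimal prime over $(t)$; by Krull's Hauptidealsatz $\pp$ has height $1$, so localizing at $\pp$ reduces me to the case where $R$ itself is a $1$-dimensional noetherian local domain with $\Frac(R) = K$.

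Next I would invoke the Krull--Akizuki theorem: the integral closure $\tilde R$ of $R$ inside $K$ is a $1$-dimensional noetherian domain, hence a Dedekind domain. I must check $\tilde R \neq K$. If $1/t \in \tilde R$ for some $0 \neq t \in \mm$, then an integral relation
\[
(1/t)^n + a_{n-1}(1/t)^{n-1} + \cdots + a_0 = 0, \qquad a_i \in R,
\]
multiplied by $t^n$ gives $1 = -t(a_{n-1} + a_{n-2}t + \cdots + a_0 t^{n-1})$, making $t$ a unit in $R$, a contradiction. So $\tilde R$ is a genuine Dedekind domain, and localizing at any nonzero maximal ideal $\mathfrak{q}$ produces a DVR $V = \tilde R_{\mathfrak{q}} \subsetneq K$ with $\Frac(V) = K$.

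Finally I would derive a contradiction from the hypothesis on $K$. Let $v \colon K^\times \to \Zz$ be the valuation of $V$ and pick $t \in K^\times$ with $v(t) = 1$. If $K$ is algebraically closed, write $t = s^2$ for some $s \in K$; then $1 = v(t) = 2v(s)$, impossible in $\Zz$. If $K$ is real closed, note $v(-1) = 0$ (since $(-1)^2 = 1$), so exactly one of $t, -t$ is positive and still has valuation $1$; positive elements of a real closed field are squares, giving the same $2v(s) = 1$ contradiction.

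The only nontrivial ingredient is Krull--Akizuki, which is standard; the main conceptual point, and what I would expect to need most care in writing up, is the reduction to a $1$-dimensional local setting so that Krull--Akizuki is actually applicable.
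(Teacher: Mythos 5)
Your argument is correct and follows the same essential route as the paper: produce a discrete valuation ring inside $K$ with fraction field $K$, and then derive a contradiction from the near-divisibility of $K^\times$ (no nontrivial homomorphism to $\Zz$). The only difference is that the paper cites a Stacks Project lemma for the existence of the discrete valuation, while you re-derive it from scratch via Krull's principal ideal theorem and Krull--Akizuki, and you phrase the final contradiction concretely through square roots rather than abstractly via a divisible subgroup of finite index.
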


\begin{proof}
Otherwise, $K$ admits a non-trivial discrete valuation $K^\times \to \Zz$ by \cite[Lemma~00PH]{stacks-project}.
However $K^\times$ has a divisible subgroup of finite index, so no such valuation can exist.
\end{proof}

In fact, this generalizes to PRC fields:

\begin{lemma}\label{lem:noetherian}
A $\prc$ field cannot be the fraction field of a non-field noetherian domain
\end{lemma}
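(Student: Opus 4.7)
The plan is to adapt the proof of Fact~\ref{fact:rcf-noe}. Suppose towards a contradiction that the $\prc$ field $K$ equals $\Frac R$ for a noetherian domain $R$ which is not a field. As in Fact~\ref{fact:rcf-noe}, \cite[Lemma~00PH]{stacks-project} then furnishes a non-trivial discrete valuation $v \colon K^\times \twoheadrightarrow \Zz$, and the task reduces to ruling this out from the $\prc$ hypothesis alone.

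The contradiction follows from the structural claim that every valuation on a $\prc$ field has divisible value group, so $v(K^\times)$ cannot be $\Zz$. When $K$ has no orderings, $K$ is $\pac$ and this is classical: every valuation on a $\pac$ field has divisible value group and algebraically closed residue field (the proof combining the $\pac$-property with a reduction of the equation $b^n = a u$, $u$ a $v$-unit, to a geometrically integral variety with a $\bar K$-point of the correct reduction). For the general $\prc$ case one augments this with the local--global principle of Prestel: given $a \in K^\times$ and $n \geq 1$, one constructs a smooth, geometrically integral $K$-variety $V_{a,n}$ whose $K$-points parametrize decompositions $a = b^n u$ with $u$ a $v$-unit; one verifies that $V_{a,n}$ has a point over every real closure $K_i$ of $(K,<_i)$ (using that $v$ extends to each $K_i$, whose value group is divisible, and that $a$ becomes an $n$-th power there up to sign); and one concludes $V_{a,n}(K)\neq\varnothing$ from Prestel's local--global theorem. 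Since $a$ and $n$ were arbitrary, $v(K^\times)$ is divisible, contradicting $v(K^\times)=\Zz$.

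The main obstacle is the geometric construction of $V_{a,n}$: it must be simultaneously smooth, geometrically integral, equipped with explicit real-closed points carrying the correct $v$-reduction data, and, most importantly, its function field must be a regular extension of $K$ into which every ordering of $K$ extends, so that the $\prc$ local--global principle actually applies. Natural candidates are norm-one tori or suitable twisted forms of $\mathbb{G}_m$ cut out by the multiplicative relation $a = b^n u$; verifying the ordering-extension property simultaneously for all orderings of $K$ (which may be infinite in number) is the delicate point. Once $V_{a,n}$ is in hand the rest of the argument is routine.
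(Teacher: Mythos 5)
Your high-level strategy—produce a discrete valuation from the non-field noetherian domain via \cite[Lemma~00PH]{stacks-project} and then show that PRC fields cannot carry a valuation with value group $\Zz$—is sound, but the execution has a genuine gap, which you yourself flag: you never construct the variety $V_{a,n}$, and you never verify the ordering-extension hypothesis needed to invoke Prestel's local--global principle. Indeed "$u$ is a $v$-unit" is not an algebraic condition, so producing a geometrically integral $K$-variety that witnesses $v(a)\in n\,v(K^\times)$ and for which the PRC hypothesis applies is the whole problem, not a routine detail. Even the PAC case is only gestured at ("this is classical").

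The paper avoids all of this with a two-step reduction that you miss. First, it reduces PRC to PAC by replacing $K$ with $K[\sqrt{-1}]$: if $K$ is PRC then $K[\sqrt{-1}]$ is PAC (Prestel), and if $R$ is a non-field noetherian domain with $\Frac(R)=K$ then $R[\sqrt{-1}]$ is again a non-field noetherian domain with fraction field $K[\sqrt{-1}]$ (noetherian by Hilbert's basis theorem, non-field since the extension is integral). This single observation lets you dispense entirely with the PRC local--global argument; if you wanted to salvage your approach you could do the same reduction and then only need divisibility of value groups for PAC fields. Second, for the PAC case the paper does not argue directly about the value group at all: having the discrete valuation $v$, it cites Fact~\ref{fact:old-EO}(\ref{old:dvr}) (non-divisible value group $\Rightarrow$ $\cE_K$ refines the $v$-topology) and Fact~\ref{old:pac} (for PAC $K$, $\cE_K$ cannot refine any Hausdorff field topology), getting an immediate contradiction. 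Your plan to reprove divisibility of value groups for PAC fields by a direct geometric argument is a legitimate alternative route, but it is precisely the harder thing the paper packages into those two topological facts.
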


\begin{proof}
Recall that $\pac$ fields are $\prc$ by definition.
Reduce to the $\pac$ case by recalling that $K[\sqrt{-1}]$ is $\pac$ when $K$ is $\prc$ and $R[\sqrt{-1}]$ is noetherian when $R$ is noetherian.
Let $R$ be a non-field noetherian domain.
As in the proof of Fact~\ref{fact:rcf-noe} $K = \Frac(R)$ is the fraction field of a DVR, so $K$ admits a non-trivial discrete valuation $v$.
By Fact~\ref{fact:old-EO}(\ref{old:dvr}) the $\cE_K$-topology refines the $v$-topology.
Now apply~Fact~\ref{old:pac} below.
\end{proof}

\begin{fact}\label{old:pac}
If $K$ is $\pac$ then the $\cE_K$-topology cannot refine a Hausdorff~field~topology~on~$K$.
\end{fact}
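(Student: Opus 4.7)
The plan is to argue by contradiction: suppose $\tau$ is a Hausdorff field topology on $K$ that is refined by $\cE_K$. First I would dispose of the case where $K$ is separably closed. A $\pac$ field is infinite, so if $K$ were separably closed, then by Fact~\ref{fact:old-EO}(\ref{old:sep}) the $\cE_K$-topology on $K$ would agree with the Zariski topology, which on an infinite field is the cofinite topology. The cofinite topology on an infinite set cannot refine any Hausdorff topology, since any two non-empty cofinite subsets intersect, so no two disjoint non-empty open sets exist. Hence $K$ is not separably closed, and in particular $\cE_K$ on $K$ is already Hausdorff by Fact~\ref{fact:old-EO}(\ref{old:sep}).

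Next I would bootstrap the refinement from $K$ to all $K^n$ via Fact~\ref{fact:prod}: the $\cE_K$-topology on $K^n$ refines the $n$-fold product of the $\cE_K$-topology on $K$, which in turn refines $\tau^n$. So $\cE_K$ refines the entire system $\cT_\tau$ of topologies on $K$-varieties induced by the field topology $\tau$ (product topology on $K^n$, subspace topology on subvarieties). The goal is to upgrade this refinement to an equality on $K$, yielding that $\cE_K$ on $K$ is induced by the field topology $\tau$; this would directly contradict Fact~\ref{fact:old-EO}(\ref{old:pacv2}), which asserts that $\cE_K$ on a $\pac$ field is not induced by any field topology on $K$.

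The main obstacle is establishing the reverse containment: every non-empty $\cE_K$-open subset of $K$ contains a $\tau$-open neighborhood of each of its points. Here the $\pac$ hypothesis is essential. Given a basic $\cE_K$-neighborhood $U = f(V(K))$ of a point $p \in K$ with $f \colon V \to \Aa^1$ \'etale and $V(K)$ containing a preimage of $p$, I would factor $f$ through a finite morphism via Zariski's main theorem (Fact~\ref{fact:zmt}). The $\pac$ hypothesis provides Zariski-dense $K$-rational points on every absolutely irreducible $K$-variety with a smooth $K$-point; combined with the $\tau$-continuity of polynomial maps that follows from $\tau$ being a field topology, this should force $f(V(K))$ to contain a $\tau$-neighborhood of $p$. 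The subtle step will be handling \'etale covers whose geometric components are permuted non-trivially by $\Gal(\bar K/K)$, where one must descend through the Galois action to transfer the absolute density statement to $K$-rational points on the relevant components.
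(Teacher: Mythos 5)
The paper gives no self-contained proof of this fact; it is cited as following ``immediately from the proof'' of \cite[Prop.~7.1]{field-top-2}. So there is no in-paper argument to compare against, and your sketch must be judged on its own merits.

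Your overall strategy is to assume $\cE_K$ refines a Hausdorff field topology $\uptau$, deduce that $\uptau$ also refines $\cE_K$, conclude $\cE_K=\uptau$ on $K$, and then contradict Fact~\ref{fact:old-EO}(\ref{old:pacv2}). The logic is fine, and the disposal of the separably closed case and the bootstrap to $K^n$ via Fact~\ref{fact:prod} are both correct. But the key step --- showing that every E-set $f(V(K))$ contains a $\uptau$-neighborhood of each of its points --- is not established, and the ingredients you invoke cannot establish it. The $\pac$ hypothesis gives Zariski-density of $V(K)$ in $V$, which is a statement about the Zariski topology and carries no information about $\uptau$; and $\uptau$-continuity of the polynomial map $V(K)\to K$ tells you that \emph{preimages} of $\uptau$-opens are $\uptau$-open, not that \emph{images} of $\uptau$-opens (or of $V(K)$ itself) are $\uptau$-open. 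Openness of a continuous image is an additional hypothesis, precisely the kind of inverse-function/implicit-function input one has for t-henselian topologies but has no reason to expect for an arbitrary Hausdorff field topology on a $\pac$ field. The ``Galois descent'' concern you flag is a red herring compared to this. Note also that, given Fact~\ref{fact:old-EO}(\ref{old:pacv2}), the intermediate claim ``$\cE_K$ refines $\uptau$ $\Rightarrow$ $\uptau$ refines $\cE_K$'' is logically equivalent to the target statement, so the reduction does not actually make the problem easier; the real work --- the content of the cited Prop.~7.1 of \cite{field-top-2} --- is still missing.
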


Fact~\ref{old:pac} is not explicitly stated in \cite[Prop.~7.1]{field-top-2}, but is immediate from its proof.

\subsection{A proper morphism over pseudofinite $K$}
The \'etale-open topology is the coarsest system of topologies such that \'etale morphisms induce open maps.
If $V \to W$ is smooth then $V(K) \to W(K)$ is also $\cE_K$-open~\cite[Prop.~3.2]{secondpaper}, so the \'etale-open topology is also the coarsest system of topologies such that smooth morphisms induce open maps.
It is natural to ask if something similar happens for the $\cF_K$-topology, with respect to \emph{proper} morphisms.

\medskip
A morphism $V \to \Aa^n$ is finite if and only if it is proper and $V$ is affine; see Fact~\ref{fact:basic-f}(\ref{afp}).
Hence the $K$-rational image of $V \to \Aa^n$ is trivially $\Sa F_K$-closed when $V$ is affine and $V \to \Aa^n$ is proper.
It is also natural to ask if this generalizes to general proper $V \to \Aa^n$.
This is true in some cases:

\begin{proposition}
Suppose that $K$ is perfect, $f \colon V \to W$ is proper, and one of the following holds:
\begin{enumerate}[leftmargin=*]
\item $K$ is algebraically closed.
\item $K$ admits a non-trivial henselian valuation.
\item $K$ is t-henselian.
\end{enumerate}
Then $f(V(K))$ is both $\cE_K$-closed and $\cF_K$-closed.
\end{proposition}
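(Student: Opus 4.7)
Case (1) is essentially formal. Since $f\colon V\to W$ is proper, the set-theoretic image $f(V)\subseteq W$ is Zariski closed. Over an algebraically closed field the scheme-theoretic fibre above any $K$-point $w\in f(V)(K)$ is a non-empty proper $K$-scheme and therefore contains a $K$-point by the Nullstellensatz; hence $f(V(K))=f(V)(K)$. This set is Zariski closed in $W(K)$, and both $\cE_K$ and $\cF_K$ coincide with the Zariski system when $K$ is algebraically closed (the former by Fact~\ref{fact:old-EO}(\ref{old:sep}), the latter because Zariski closed sets are F-sets via closed immersions and because $\cF_K$ refines Zariski).

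For cases (2) and (3), note that a non-trivial henselian valuation induces a t-henselian field topology, so (2) is a special case of (3). Assume therefore that $K$ is perfect and t-henselian. By Theorem~\ref{thm:new-t-hensel}, the $\cE_K$- and $\cF_K$-topologies agree, so it suffices to prove $f(V(K))$ is $\cE_K$-closed. If $K$ is separably closed then, being perfect, $K$ is algebraically closed and we reduce to case (1). Otherwise, by Fact~\ref{fact:old-EO}(\ref{old:hnsl}), the $\cE_K$-topology is the topology induced by the canonical t-henselian V-topology $\uptau$ on $K$.

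The plan in this setting is to use Chow's lemma to produce a surjection $g\colon V'\twoheadrightarrow V$ with $V'\to W$ projective; then $f(V(K))\supseteq (f\circ g)(V'(K))$, and one checks that passing to the Zariski closure of the image is harmless, so we may assume $f$ factors as a closed immersion $V\hookrightarrow \mathbb{P}^n_W$ followed by the structure projection $\pi\colon\mathbb{P}^n_W\to W$. Closed immersions give closed embeddings in any system of topologies by Definition~\ref{sys-def}, so the task reduces to showing that $\pi\colon \mathbb{P}^n(K)\times W(K)\to W(K)$ is a closed map in $\cE_K$. Covering $\mathbb{P}^n(K)$ by the standard affine charts and using that any element of $\mathbb{P}^n(K)$ lies in the chart where a given homogeneous coordinate has maximal absolute value (so its other coordinates lie in the valuation ring), a limit $w$ of a net $w_i=\pi(p_i,w_i)$ may, after refining, be assumed to come from a net of $p_i$'s in a fixed ``bounded'' chart; henselian lifting of simple roots (equivalently, the t-henselian implicit function theorem; see \cite[Props.~8.3, 8.6]{field-top-2}) then produces the desired lift of $w$ inside $V(K)$.

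The main obstacle is this last closedness of the projection $\pi$. When $K$ is locally compact it is immediate because $\mathbb{P}^n(K)$ is compact; but for general t-henselian $K$ the valuation ring need not be compact, and one must substitute a genuinely henselian/valuative argument for sequential compactness. Properly organising this henselian lifting step—essentially transferring the valuative criterion of properness from the scheme level to the level of $K$-points in the $\uptau$-topology—is where the real work lies; once available, it combines with Theorem~\ref{thm:new-t-hensel} and the closed-immersion step above to finish the proof.
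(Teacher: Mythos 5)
Your treatment of case (1) is fine and matches the paper's. However, for cases (2) and (3) you have a genuine gap, which you candidly flag yourself: you never actually establish the closedness of the projection $\Pp^n(K)\times W(K)\to W(K)$, and you acknowledge that ``the real work lies'' in organising the henselian lifting argument. In fact this step is not a routine exercise; the paper instead invokes a theorem of Moret-Bailly (\cite[Thm.~1.3]{Moret_Bailly}) which says directly that the $K$-rational image of a proper morphism is closed in the valuation topology when $K$ carries a non-trivial henselian valuation. The paper uses this for case (2) and then deduces case (3) by an elementary-equivalence transfer: a t-henselian field is elementarily equivalent to a field admitting a genuine henselian valuation, the t-henselian topologies are uniformly definable, and the $v$-closedness of the rational image is a first-order property. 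Your reduction goes in the opposite direction---folding (2) into (3)---which is logically harmless, but it means you must prove the t-henselian case from scratch, and you do not.

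There is also a subtler flaw in the Chow's-lemma reduction. If $g\colon V'\twoheadrightarrow V$ is a projective surjection provided by Chow's lemma, then indeed $V'\to W$ factors through $\Pp^n_W$; but a $K$-point of $V$ need not lift to a $K$-point of $V'$, so $(f\circ g)(V'(K))$ can be a \emph{proper} subset of $f(V(K))$. Showing the former is closed would not show the latter is closed, and the phrase ``one checks that passing to the Zariski closure of the image is harmless'' does not repair this. Moret-Bailly's theorem is stated for arbitrary proper morphisms precisely so that one does not need such a reduction.
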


Recall that the $\cF_K$-topology agrees with the $\cE_K$-topology by Theorem~\ref{thm:new-t-hensel}, so it is enough to show that $f(V(K))$ is $\cE_K$-closed.

\begin{proof}
Suppose $K$ is algebraically closed.
Then the $\cE_K$-topology is the Zariski topology by Fact~\ref{fact:old-EO}(\ref{old:sep}).
The image variety $f(V)$ is Zariski closed as $f$ is proper and the $K$-rational image of $f$ agrees with the set of $K$-points of $f(V)$ as $K$ is algebraically closed.

\medskip
Suppose that $K$ is not algebraically closed. 
Suppose furthermore that $K$ admits a non-trivial henselian valuation $v$.
By Fact~\ref{fact:old-EO}(5) the $\cE_K$-topology agrees with the $v$-topology.
By a theorem of Moret-Bailly $f(V(K))$ is $v$-closed~\cite[Thm.~1.3]{Moret_Bailly}.

\medskip
Case (3) follows from Moret-Bailly's result by elementary transfer.
In more detail, suppose that $K$ is t-henselian and note that $K$ is elementarily equivalent to a field $L$ admitting a henselian valuation $v$ \cite[Theorem~7.2]{Prestel1978}.
By a routine argument we may suppose that $L$ is an elementary extension of $K$.
By the proof of \cite[Remark~7.11]{Prestel1978}, the formulas that define the (unique) t-henselian topology $\uptau$ on $K$ also define the (unique) t-henselian topology on $L$, i.e., the $v$-topology.
By case (2), the $L$-rational image $f_L(V(L))$ is $v$-closed. 
As $K$ is an elementary substructure, the $K$-rational image $f(V(K))$ is also closed.
\end{proof}

We give an example showing that this does not generalize to large perfect fields.

\begin{proposition}
\label{prop:counter example}
Suppose that one of the following holds.
\begin{enumerate}[leftmargin=*]
\item $K$ is pseudofinite of characteristic not two.
\item $K$ is $\pac$ of characteristic not two and some $\beta \in K$ is not a square.
\item $K$ is $\prc$ and admits at least two distinct field orders.
\end{enumerate}
Then there is a proper morphism $f \colon V \to \Aa^1$  such that $f(V(K)) = K^\times$.
\end{proposition}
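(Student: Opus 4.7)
The plan is to handle the three cases separately, producing in each an explicit proper morphism $V\to\Aa^1$ whose $K$-rational image is exactly $K^\times$. First I would reduce case~(1) to case~(2): a pseudofinite field $K$ of characteristic $\ne 2$ is $\pac$ by definition, and since $\Gal(K)\cong\hat{\Zz}$ has a unique open subgroup of each finite index, $K^\times/K^{\times 2}$ has order two and contains a non-square $\beta\in K^\times$.

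For case~(2), with a non-square $\beta$ and $L=K(\sqrt\beta)$, triviality of $\mathrm{Br}(K)$ for $\pac$ $K$ combined with Hilbert~90 gives surjectivity of the norm $N_{L/K}\colon L^\times\to K^\times$, so every $t\in K^\times$ is represented as $u^2-\beta v^2$ with $(u,v)\ne(0,0)$. The naive conic-bundle compactification $V'\subseteq\Pp^2\times\Aa^1$ given by $X^2-\beta Y^2=tZ^2$ is proper and carries $K$-points over each $t\in K^\times$, but its fiber over $0$ is the singular conic $X^2=\beta Y^2$ whose only $K$-rational point is the node $[0{:}0{:}1]$; so $V'$ itself has image $K$, not $K^\times$. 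My plan is to replace the defining rational map $[X{:}Y{:}Z]\mapsto[X^2-\beta Y^2:Z^2]$ by one of the form $[X{:}Y{:}Z]\mapsto[X^2-\beta Y^2:G(X,Y,Z)]$ for a carefully chosen auxiliary quadratic form $G$, so that the base locus is the single $K$-point $[0{:}0{:}1]$ and the induced morphism on the exceptional divisor above that point lands at $[0{:}1]\in\Pp^1$ only at a Galois-conjugate pair of $L$-points. Blowing up $[0{:}0{:}1]$ then yields a proper morphism $\tilde V_G\to\Pp^1$ whose fiber over $[0{:}1]$ consists of the two disjoint Galois-conjugate strict transforms of the lines $X=\pm\sqrt\beta\,Y$ together with two Galois-conjugate $L$-points on the exceptional $\Pp^1$, hence contains no $K$-rational point. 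Pulling back along $\Aa^1\hookrightarrow\Pp^1$ produces a proper morphism to $\Aa^1$ whose fiber over $0$ has no $K$-points while fibers over $K^\times$ retain $K$-points from norm representations.

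For case~(3), the analogous plan uses a totally positive non-square $\alpha\in K^\times$ in place of $\beta$; its existence follows from independence of the two orderings. The binary form $X^2-\alpha Y^2$ is indefinite over each real closure $K_{<_i}$, so by the $\prc$ local-global principle every $t\in K^\times$ is represented, while the anisotropy over $K$ ensures the same birational surgery removes the unwanted $K$-rational node. The main obstacle in both (2) and (3) is the combinatorial bookkeeping: a single modification $\tilde V_G$ realizes only a union of square classes of $K^\times$ in its image, so one must take a finite disjoint union of such resolved models for varying auxiliary forms $G$ (resp.~varying $\alpha$) and verify that the union of their $K$-rational images exhausts $K^\times$. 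The key geometric input is the Galois-equivariant blowup that replaces the $K$-rational node in the fiber over $0$ by a Galois-conjugate configuration with no $K$-rational point; this is the step that genuinely uses either the existence of a non-square in the $\pac$ setting or the presence of two independent orderings in the $\prc$ setting.
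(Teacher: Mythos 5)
Your reduction of case~(1) to case~(2) is correct, and the Brauer-group/Hilbert~90 argument that the norm $N_{L/K}$ is surjective for $\pac$ $K$ is a legitimate classical fact. Your analysis of the fiber over $0$ after the blowup is also essentially right, \emph{provided} the auxiliary form $G$ vanishes to order~$2$ at $[0{:}0{:}1]$, i.e.\ $G$ is a binary form in $X,Y$ only. (If $G(0,0,1)\ne 0$ the point $[0{:}0{:}1]$ remains a $K$-point of the fiber over $0$; if $G$ vanishes to exactly order~$1$ there, the whole exceptional $\Pp^1$ collapses onto $[0{:}1]$ and the fiber over $0$ again has many $K$-points. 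So you are forced into the binary case.)

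The genuine gap is in the direction you treat as ``combinatorial bookkeeping.'' With $G$ binary, the resolved map factors through the slope map $[X{:}Y{:}Z]\dashrightarrow[X{:}Y]$ followed by the degree-two map $[X{:}Y]\mapsto[X^2-\beta Y^2:G(X,Y)]$ of $\Pp^1$. Consequently the fiber of $\tilde V_G\to\Pp^1$ over each $\alpha\ne 0$ is a disjoint union of two $L$-conjugate lines (plus two $L$-conjugate points of $E$), i.e.\ it is \emph{geometrically reducible}. The $\pac$ hypothesis gives $K$-points only on geometrically integral varieties, so you cannot conclude anything from $\pac$ here, and the norm-surjectivity fact you invoke is a statement about the unblown-up conic bundle $V'$, not about $\tilde V_G$. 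Concretely, the fiber over $\alpha$ has a $K$-point iff the binary form $q_\alpha=X^2-\beta Y^2-\alpha G$ is isotropic over $K$, i.e.\ iff a certain quadratic polynomial $\mathrm{disc}_G(\alpha)$ is a square. For a perfect $\pac$ field $K$ the set $\{\alpha:\mathrm{disc}_G(\alpha)\in K^{\times 2}\}$ is a proper $\cF_K$-closed subset of $K$ (by Theorem~C.1 the $\cF_K$-topology on such $K$ is non-discrete), and it is not a union of square classes. There is no evident reason that finitely many such sets, as $G$ varies, cover $K^\times$; in fact you can check that for each fixed $\alpha$ you need to solve a conic in the coefficients of $G$, so the $G$ genuinely depends on $\alpha$. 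So the ``finite disjoint union'' step is not routine; it is the crux, and as stated it does not go through.

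The paper sidesteps this entirely by not producing reducible fibers: it writes down a surface $V\subset\Aa^1\times\Pp^3$ (defined by $yw=x^2-\beta w^2$, $z^2=\beta y^2+tw^2$) whose fiber over each $\alpha\ne 0$, restricted to the affine chart $w\ne 0$, is a \emph{geometrically integral} curve; $\pac$ then gives a $K$-point directly, and the fiber over $0$ is shown to have no $K$-points by a short computation using $\beta\notin K^{\times 2}$. For case~(3) the paper applies Weil restriction along $K(\sqrt{-1})/K$ to the case-(2) construction rather than reworking the argument with the $\prc$ local-global principle. If you want to pursue your blowup route you would need an additional mechanism (e.g.\ twisting the $\Pp^1$-bundle by a nontrivial Brauer class, or replacing lines by higher-genus geometrically integral curves in the fibers) to make the fibers over $\alpha\ne 0$ geometrically integral; otherwise the $\pac$ input never engages.
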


We let $\Pp^n$ be $n$-dimensional projective space over $K$.

\begin{proof}
It is easy to see that (1) is a case of (2).
Suppose we are in case (2).
Let $t$ be a coordinate on $\Aa^1$ and $x,y,z,w$ be homogeneous coordinates for $\Pp^3$.
Let $V$ be the subvariety of $\Aa^1 \times \Pp^3$ given by
\begin{align*}
yw &= x^2 - \beta w^2\\
z^2 &= \beta y^2 + tw^2
\end{align*}
We let $f$ be the projection $V \to \Aa^1$.
Then $f$ is proper as $\Pp^3$ is proper.
Let $V_p$ be the fiber of $V$ above $p \in \Aa^1$.
We need to show that if $p \in K$ then $V_p(K)$ is empty if and only if $p = 0$.

\medskip
We show that $V_0(K)$ is empty.
Now $V_0$ is the closed subvariety of $\Pp^3$ given by $yw = x^2 - \beta w^2$ and $z^2 = \beta y^2$.
Suppose that $V_0(K)\ne 0$.
Then there are $a,b,c,d \in K$, not all zero, such that $bd = a^2 - \beta d^2$ and $c^2 = \beta b^2$.
As $\beta$ is not a square we have $b = c = 0$, hence $a^2 - \beta d^2 = 0$, hence $a^2 = \beta d^2$.
For the same reason $a = d = 0$, giving a contradiction.

\medskip
We now fix $\alpha \in K^\times$ and show that $V_\alpha(K)\ne\emptyset$.
Let $U$ be the open subvariety of $\Pp^3$ given by $w \ne 0$.
It is enough to show that $(V_\alpha \cap U)(K)\ne \emptyset$.
As $K$ is $\pac$ it is enough to show that $V_\alpha \cap U$ is geometrically integral.
We identify $U$ with $\Aa^3$ in the natural way and therefore identify $V_\alpha \cap U$ with the subvariety of $\Aa^3$ given by
\begin{align*}
y &= x^2 - \beta \\
z^2 &= \beta y^2 + \alpha
\end{align*}
Now $V_\alpha \cap U$ is geometrically integral if and only if $\kalg[x,y,z]/( y - x^2 + \beta , z^2 - \beta y^2 - \alpha )$ is a domain.
Working in $\kalg$, we fix a square root $\beta^{1/2}$ of $\beta$ and make the substitutions $s = x/\beta^{1/2}$, $t = y/\beta$, and $u = z/\beta^{3/2}$ to see that $W$ is isomorphic to the $\kalg$-variety given by $t = s^2 - 1, u^2 = t^2 + \alpha/\beta^3$.
Let $\gamma = \alpha/\beta^3$, so $\gamma \ne 0$.
Thus it suffices to show that $\kalg[s,t,u]/(t - s^2 + 1, u^2 - t^2 - \gamma)$ is a domain.
This ring is isomorphic to the extension of $\kalg[s]$ given by adjoining a square root of $(s^2 - 1)^2 + \gamma$.
It is therefore enough to show that $(s^2 - 1)^2 + \gamma$ is not a square in $\kalg[s]$.\footnote{Let $R$ be an integrally closed integral domain (such as $\kalg[s]$) and let $b \in R$ be a non-square.
Then $R[x]/(x^2-b)$ is an integral domain.
To see this, let $L = \Frac(R)$.
Use the fact that $R$ is integrally closed to see that $b$ is not a square in $L$.
Then the polynomial $x^2-b$ is irreducible over $L$, so $L[x]/(x^2-b)$ is a field.
The ring $R[x]/(x^2-b)$ embeds into $L[x]/(x^2-b)$.}
Let $h(s) = (s^2 - 1)^2 + \gamma$.
Suppose that $h$ is a square.
Then any root of $h$ is also a root of $h'$.
We have $h'(s) = 2(s^2 - 1)(2s) = 4s(s - 1)(s + 1)$, so $h'(s)$ has roots $0,1,-1$.
We have $h(1) = \gamma = h(-1)$, so $1,-1$ are not roots of $h$.
Hence the only root of $h$ in $\kalg$ is $0$, so $h(s)$ is of the form $\lambda s^4$ for $\lambda \in \kalg$, which is clearly false.  This completes the proof in case (2).

\medskip
Finally, suppose that $K$ satisfies (3).
Fix $\beta \in K$ which is positive with respect to one field order and negative with respect to another.
Then $\beta$ is not a square in $K$.
Let $L = K(\sqrt{-1})$.
Then $L$ is $\pac$ of characteristic zero and $\beta$ is not a square in $L$.
Let $\Aa^1_L$ be the affine line over $L$ and let $V \to \Aa^1_L$ be the proper $L$-variety morphism defined above with $L$-rational image $L^\times$.
Let $W \to \Aa^2$ be the Weil restriction of $V \to \Aa^1_L$.
Then $W \to \Aa^2$ is a morphism of $K$-varieties with $K$-rational image $K^2 \setminus \{(0,0)\}$.
By \cite[4.10.1]{weil_reminders} $W \to \Aa^2$ is proper.
Let $\Aa^1 \to \Aa^2$ be the inclusion along the $x$-axis; this is a closed immersion and hence proper.
The pullback of $W \to \Aa^2$ by $\Aa^1 \to \Aa^2$ is a proper morphism with $K$-rational image $K^\times$.
\end{proof}

Suppose that $K$ is pseudofinite of characteristic not two.
Then $K$ is perfect and bounded.
By Proposition~\ref{prop:counter example} there is a proper morphism $f \colon V \to \Aa^1$ with $f(V(K)) = K^\times$.
But $K^\times$ is not closed in the $\cE_K$- or $\cF_K$-topologies on $K$ (which agree), because these topologies are non-discrete.

\section{Open problems} \label{sec:open}
We end with a number of open problems.

\begin{question} \label{discrete}
Can the finite-closed topology on a perfect field be discrete?
\end{question}

Such a field is non-Hilbertian by Proposition~\ref{prop:hilbertian} and non-large by Theorem~C(1).
Thus a positive answer to Question~\ref{discrete} in characteristic zero gives a positive answer to Question~\ref{obnoxious}.
 
\begin{question} \label{obnoxious}
Is there a characteristic zero field which is neither large nor Hilbertian?
\end{question}

The answer should ``obviously'' be yes, and yet Question~\ref{obnoxious} seems out of reach.  Question~\ref{discrete} is also related to Podewski's conjecture:

\begin{conjecture}[Podewski]
Let $K$ be an infinite field which is model-theoretically minimal, i.e., any definable subset of $K$ is finite or cofinite. 
Then $K$ is algebraically closed.
\end{conjecture}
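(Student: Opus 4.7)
The plan is to apply the \'etale-open topology and use minimality to pin down its structure on $K$. Every E-subset of $K$ is definable, so by minimality every such subset is finite or cofinite. If some nonempty E-subset is finite, it is a nonempty finite $\cE_K$-open subset of $K$, forcing the $\cE_K$-topology on $K$ to be discrete by Fact~\ref{fact:sys-dis}; otherwise every nonempty E-set is cofinite, so any nonempty $\cE_K$-open set (being a union of E-sets, at least one of which is nonempty and hence cofinite) is itself cofinite.

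Next, I would reduce to the perfect case. In characteristic zero there is nothing to check. In characteristic $p$, the map $x\mapsto x^p$ is a $K$-morphism $\Aa^1\to\Aa^1$ (the $K$-algebra map $K[x]\to K[x]$ sending $x$ to $x^p$ and fixing $K$), with $K$-rational image $K^p$. This image is definable, hence finite or cofinite by minimality. Since $(a+b)^p=a^p+b^p$, the set $K^p$ is an additive subgroup, so if $K^p\ne K$ then any nontrivial coset of $K^p$ is disjoint from $K^p$ and has cardinality $|K^p|$, giving $|K\setminus K^p|\ge|K^p|$; were $K^p$ cofinite this would force $K$ to be finite, a contradiction. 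Hence $K^p=K$ and $K$ is perfect.

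In the non-discrete case for $\cE_K$, any two nonempty $\cE_K$-open subsets of $K$ are cofinite and therefore intersect, so the $\cE_K$-topology on $K$ is irreducible. By Fact~\ref{fact:HD} it is not Hausdorff, and since $\Aa^1$ is quasi-projective, Fact~\ref{fact:old-EO}(\ref{old:sep}) then forces $K$ to be separably closed. Combined with perfection, $K$ is algebraically closed, as desired.

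The main obstacle is the remaining case in which the $\cE_K$-topology on $K$ is discrete. By Fact~\ref{fact:old-EO}(\ref{old:lrge}) this makes $K$ non-large, and so the conjecture reduces to showing that every infinite minimal field must be large. This appears to lie outside the purely topological framework of this paper: the machinery developed here lets one translate largeness into topological statements about $K$, but does not easily let one derive largeness from a model-theoretic hypothesis like minimality. A successful attack would need either to exploit the strong definability restriction of minimality in a new way — for example, to rule out any finite morphism $V\to\Aa^1$ with proper cofinite $K$-rational image on a minimal field, which is essentially the content of Question~\ref{discrete} and closely tied to Question~\ref{obnoxious} — or to invoke deeper model-theoretic input of the kind used by Wagner in the positive-characteristic case.
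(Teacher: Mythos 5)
Podewski's conjecture is an open problem that the paper does \emph{not} prove; it is stated to motivate Questions~\ref{discrete} and~\ref{obnoxious}, and the only settled case is Wagner's positive-characteristic theorem, which the paper cites. So there is no ``paper's own proof'' to check your proposal against — only the short discussion immediately following the conjecture. Your proposal correctly recognizes this situation: it reduces the conjecture to the assertion that every infinite minimal field is large, and honestly flags the remaining (non-large, i.e.\ discrete $\cE_K$) case as beyond reach. The partial argument you give is sound: definability of E-sets under minimality, the dichotomy via Fact~\ref{fact:sys-dis}, the perfection argument from the definability of $K^p$ (using that Frobenius is injective so $K^p$ is infinite and its nontrivial additive cosets would force an infinite complement), and the deduction of separable closure from irreducibility via Facts~\ref{fact:HD} and~\ref{fact:old-EO}(\ref{old:sep}) all hold up.

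However, your route is longer than necessary and reaches a slightly weaker conclusion than the paper's. The paper's observation (the paragraph after the conjecture) is: a counterexample $K$ is not algebraically closed, so some non-constant polynomial map $f\colon K\to K$ is non-surjective; the image of $f$ is infinite, hence cofinite by minimality; hence $\cF_K$ is discrete by Remark~\ref{rem:lampe-discrete}. This one-step argument bypasses the Hausdorff/irreducibility machinery entirely and gives $\cF_K$ discrete, which (for perfect $K$, via Theorem~C.1) is a priori strictly stronger than your conclusion that $\cE_K$ is discrete. It also sidesteps the characteristic-$p$ perfection argument, since Wagner's theorem lets one restrict to characteristic zero from the outset, where perfection is automatic. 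In short: your analysis is correct and identifies the genuine gap (ruling out an infinite non-large minimal field, essentially the characteristic-zero case of Question~\ref{discrete}), but the paper arrives at a sharper reduction by a more direct path.
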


Podewski's conjecture was proven in positive characteristic by Wagner~\cite{wagner-minimal-fields}.
Suppose $K$ is a counterexample to Podewski's conjecture.
Then some non-constant polynomial map $f \colon K \to K$ is non-surjective.
The image of $f$ can't be finite, so it is cofinite by minimality.
Hence the $\cF_K$-topology is discrete by Remark~\ref{rem:lampe-discrete}.
Thus a negative answer to Question~\ref{discrete} in characteristic zero would prove Podewski's conjecture.

\begin{conjecture}[Koenigsmann~\cite{JK-slim}]\label{conj:K}
Any infinite bounded field is large.
\end{conjecture}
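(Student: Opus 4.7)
The natural first attempt is to argue by contradiction using the comparison theorems of this paper. Suppose $K$ is infinite and bounded but not large; equivalently, the $\cE_K$-topology on $K$ is discrete. Since $K$ is bounded, Theorem~\ref{66} says the $\cF_K$-topology refines the $\cE_K$-topology, so the $\cF_K$-topology on $K$ is also discrete. By Fact~\ref{fact:sys-dis} there is then a nonempty finite $\cF_K$-open subset of $K$, which by Fact~\ref{fact:basic-f}(\ref{poly:fini}) and the description of F-subsets of $K$ produces a polynomial $f \in K[x]$ with $f(K)$ a cofinite proper subset of $K$. The goal is to exclude the existence of such an $f$ using boundedness together with $|K| = \infty$.

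I would first treat the perfect case, which is essentially Question~\ref{discrete} for bounded fields. Assume $K$ is perfect, bounded, and infinite, and write $f(K) = K \setminus \{a_1,\ldots,a_k\}$. Fix by boundedness a finite Galois extension $L/K$ containing every separable extension of $K$ of degree at most $\deg f$. For each $b \in K$ with $b \ne a_i$ for all $i$, the polynomial $f(x) - b$ has a root in $K$ and splits completely in $L$; in particular its Galois orbit structure over $K$ contains at least one fixed point. Using the explicit machinery developed in Lemmas~\ref{confusing-v2} and~\ref{lem:bnddkey}, this translates ``$f^{-1}(b)(K) \ne \varnothing$'' into the solvability of a finite system of equations in a fixed finite $K[y]$-algebra $K[y,z]/I$. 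Demanding that this system be solvable for cofinitely many $b$ should force, via a careful $\Gal(L/K)$-orbit count, $K$ to be finite; this orbit-counting step is where I expect the real work to lie, and where boundedness needs to be exploited to its full strength. A negative answer to Question~\ref{discrete} for bounded perfect fields would yield Koenigsmann's conjecture in the perfect case as an immediate corollary.

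The decisive difficulty is the \emph{imperfect} case. Theorem~D shows that the $\cF_K$-topology can be discrete even on large (bounded, $\pac$) fields in characteristic $p$, so no argument confined to $\cF_K$ can decide largeness in general. One possible workaround is to pass to the perfect closure: $K$ and $K^{\mathrm{perf}}$ have isomorphic absolute Galois groups, so if Koenigsmann holds for perfect bounded fields then $K^{\mathrm{perf}}$ is large, and by Fact~\ref{fact:kins} the $\cE_K$-topology on $K$ embeds homeomorphically into the (non-discrete) $\cE_{K^{\mathrm{perf}}}$-topology. Transferring non-discreteness back down to $K$ would, however, require an ``\'etale-open density of $K$ inside $K^{\mathrm{perf}}$'' statement that does not follow from anything in this paper; establishing it (or finding a substitute) seems to be the main obstacle. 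Failing such a reduction, the conjecture likely needs a genuinely new arithmetic input --- along the lines of Koenigsmann's work on slim fields \cite{JK-slim} --- producing directly from boundedness and infinitude either a nontrivial henselian valuation or another structural feature already known to imply largeness.
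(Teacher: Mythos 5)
This statement is an open conjecture (Koenigsmann's), not a theorem proved in the paper. The paper offers no proof; it only remarks, immediately after the statement, that Theorem~C makes the perfect case of Conjecture~\ref{conj:K} equivalent to a negative answer to the bounded case of Question~\ref{discrete}. Your proposal is honest in not claiming a proof, and your first paragraph correctly reproduces this reduction: if $K$ is bounded and not large, then $\cE_K$ on $K$ is discrete, hence by Theorem~\ref{66} so is $\cF_K$, and for perfect $K$ this contradicts Theorem~\ref{thm:etale-refine}. So the perfect case really is the bounded case of Question~\ref{discrete}.

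Two inaccuracies are worth flagging. First, from ``$\cF_K$ on $K$ is discrete'' you conclude there is a polynomial $f \in K[x]$ with $f(K)$ cofinite and proper. That does not follow: discreteness gives (by Fact~\ref{fact:sys-dis}) a nonempty finite $\cF_K$-open set, hence a cofinite proper F-subset of $K$, but an F-subset of $K$ is the $K$-rational image of an arbitrary finite morphism $W \to \Aa^1$, not necessarily a polynomial self-map of $\Aa^1$. The converse direction (Remark~\ref{rem:lampe-discrete}) is what uses Fact~\ref{fact:basic-f}(\ref{poly:fini}); you have it backwards. So the object you would need to exclude in the perfect case is a cofinite finite-morphism image, and the ``orbit counting'' you sketch via Lemmas~\ref{confusing-v2} and~\ref{lem:bnddkey} would have to handle that generality. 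Second, in the imperfect case you say that transferring non-discreteness from $K^{\mathrm{perf}}$ down to $K$ would require an ``\'etale-open density of $K$ inside $K^{\mathrm{perf}}$'' statement. That is not the right missing ingredient. The reduction goes through the known fact (not recorded in this paper, but standard; see the large-field literature cited in the introduction) that $K$ is large if and only if $K^{\mathrm{perf}}$ is large, combined with the observation that $K$ and $K^{\mathrm{perf}}$ have the same absolute Galois group, hence $K^{\mathrm{perf}}$ is bounded whenever $K$ is. Granting that, Conjecture~\ref{conj:K} does reduce cleanly to the perfect case, so your worry about the imperfect case being ``decisive'' is overstated; the genuinely open part is precisely Question~\ref{discrete} for perfect bounded fields, as the paper says.
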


Theorem~C shows that if $K$ is perfect and bounded then $\cF_K$-topology is discrete if and only if $K$ is not large.
Thus the perfect case of Conjecture~\ref{conj:K} is true if and only if the bounded case of Question~\ref{discrete} has a negative answer.

\bibliographystyle{amsalpha}
\bibliography{ref}

\providecommand{\bysame}{\leavevmode\hbox to3em{\hrulefill}\thinspace}
\providecommand{\MR}{\relax\ifhmode\unskip\space\fi MR }
% \MRhref is called by the amsart/book/proc definition of \MR.
\providecommand{\MRhref}[2]{%
  \href{http://www.ams.org/mathscinet-getitem?mr=#1}{#2}
}
\providecommand{\href}[2]{#2}
\begin{thebibliography}{JTWY24}

\bibitem[BCR98]{real-algebraic-geometry}
J.~Bochnak, M.~Coste, and M-F. Roy, \emph{Real algebraic geometry}, Springer,
  1998.

\bibitem[BSF13]{open-problems-ample}
Lior Bary-Soroker and Arno Fehm, \emph{Open problems in the theory of ample
  fields}, Geometric and differential {G}alois theories, S\'{e}min. Congr.,
  vol.~27, Soc. Math. France, Paris, 2013, pp.~1--11. \MR{3203546}

\bibitem[BSGJ18]{bsgj}
Lior Bary-Soroker, Wulf-Dieter Geyer, and Moshe Jarden, \emph{Morphisms of
  varieties over ample fields}, Bull. Korean Math. Soc. \textbf{55} (2018),
  no.~4, 1023--1035. \MR{3845944}

\bibitem[Cha99]{Chatzidakis}
Zo\'{e} Chatzidakis, \emph{Simplicity and independence for pseudo-algebraically
  closed fields}, Models and computability ({L}eeds, 1997), London Math. Soc.
  Lecture Note Ser., vol. 259, Cambridge Univ. Press, Cambridge, 1999,
  pp.~41--61. \MR{1721163}

\bibitem[DWY25]{field-top-2}
Philip Dittmann, Erik Walsberg, and Jinhe Ye, \emph{When is the {\'e}tale open
  topology a field topology?}, Israel Journal of Mathematics \textbf{269}
  (2025), 67--102.

\bibitem[Efr72]{efroymson}
Gustave Efroymson, \emph{Henselian fields and solid $k$-varieties. ii},
  Proceedings of the American Mathematical Society \textbf{35} (1972), no.~2,
  362--366.

\bibitem[EP05]{EP-value}
Antonio~J. Engler and Alexander Prestel, \emph{Valued fields}, Springer
  Monographs in Mathematics, Springer-Verlag, Berlin, 2005. \MR{2183496}

\bibitem[FJ05]{field-arithmetic}
Michael~D. Fried and Moshe Jarden, \emph{Field arithmetic}, Springer Berlin
  Heidelberg, 2005.

\bibitem[FP11]{arithmetic-power-series}
Arno Fehm and Elad Paran, \emph{Galois theory over rings of arithmetic power
  series}, Advances in Mathematics \textbf{226} (2011), no.~5, 4183--4197.

\bibitem[Gro67]{EGA-IV-4}
A.~Grothendieck, \emph{\'{E}l\'{e}ments de g\'{e}om\'{e}trie alg\'{e}brique.
  {IV}. \'{E}tude locale des sch\'{e}mas et des morphismes de sch\'{e}mas
  {IV}}, Inst. Hautes \'{E}tudes Sci. Publ. Math. (1967), no.~32, 361.
  \MR{238860}

\bibitem[Hod93]{Hodges}
Wilfrid Hodges, \emph{Model theory}, Encyclopedia of mathematics and its
  applications, vol.~42, Cambridge University Press, 1993.

\bibitem[Jar03]{Jarden-pro-p}
Moshe Jarden, \emph{On ample fields}, Arch. Math. \textbf{80} (2003), no.~5,
  475--477.

\bibitem[JK10]{JK-slim}
Markus Junker and Jochen Koenigsmann, \emph{Schlanke {K}\"{o}rper (slim
  fields)}, J. Symbolic Logic \textbf{75} (2010), no.~2, 481--500. \MR{2648152}

\bibitem[JTWY24]{firstpaper}
Will Johnson, Chieu-Minh Tran, Erik Walsberg, and Jinhe Ye, \emph{The
  \'etale-open topology and the stable fields conjecture}, J. Eur. Math. Soc.
  (JEMS) \textbf{26} (2024), no.~10, 4033--4070. \MR{4768414}

\bibitem[JWY23]{field-top-1}
Will Johnson, Erik Walsberg, and Jinhe Ye, \emph{The \'{e}tale open topology
  over the fraction field of a {H}enselian local domain}, Math. Nachr.
  \textbf{296} (2023), no.~5, 1928--1937. \MR{4608837}

\bibitem[JY25]{cxf-paper}
Will Johnson and Jinhe Ye, \emph{Curve-excluding fields}, J. Eur. Math. Soc.
  (JEMS) (2025).

\bibitem[Kos16]{kosters}
Michiel Kosters, \emph{Images of polynomial maps on ample fields}, Funct.
  Approx. Comment. Math. \textbf{55} (2016), no.~1, 23--30. \MR{3549010}

\bibitem[Kuh04]{kuhlmann}
F.-V. Kuhlmann, \emph{On places of algebraic function fields in arbitrary
  characteristic}, Advances in Mathematics \textbf{188} (2004), 399--424.

\bibitem[Lam]{6820}
Philipp Lampe, \emph{Can a non-surjective polynomial map from an infinite field
  to itself miss only finitely many points?}, MathOverflow,
  URL:https://mathoverflow.net/q/6820 (version: 2013-11-06).

\bibitem[MB11]{Moret_Bailly}
Laurent Moret-Bailly, \emph{An extension of {G}reenberg’s theorem to general
  valuation rings}, Manuscripta Math. \textbf{139} (2011), no.~1–2,
  153–166.

\bibitem[Mon17]{Samaria-2017}
Samaria Montenegro, \emph{Pseudo real closed fields, pseudo {$p$}-adically
  closed fields and {${\rm NTP}_2$}}, Ann. Pure Appl. Logic \textbf{168}
  (2017), no.~1, 191--232. \MR{3564381}

\bibitem[Mum04]{red-book}
David Mumford, \emph{The red book of varieties and schemes}, Lecture Notes in
  Mathematics, vol. 1358, Springer Berlin / Heidelberg, 2004.

\bibitem[Mun00]{Munkres}
James Munkres, \emph{Topology}, second ed., Prentice Hall, 2000.

\bibitem[Nag75]{nagata-local}
Masayoshi Nagata, \emph{Local rings}, Robert E. Krieger Publishing Co.,
  Huntington, N.Y., 1975, Corrected reprint. \MR{0460307}

\bibitem[Poo17]{poonen-qpoints}
Bjorn Poonen, \emph{Rational points on varieties}, Graduate Studies in
  Mathematics, vol. 186, American Mathematical Society, Providence, RI, 2017.
  \MR{3729254}

\bibitem[Pop96]{pop-embedding}
Florian Pop, \emph{Embedding problems over large fields}, Ann. of Math. (2)
  \textbf{144} (1996), no.~1, 1--34. \MR{1405941}

\bibitem[Pop10]{Pophenselian}
\bysame, \emph{Henselian implies large}, Ann. of Math. (2) \textbf{172} (2010),
  no.~3, 2183--2195. \MR{2726108}

\bibitem[Pop14]{Pop-little}
\bysame, \emph{Little survey on large fields---old \& new}, Valuation theory in
  interaction, EMS Ser. Congr. Rep., Eur. Math. Soc., Z\"{u}rich, 2014,
  pp.~432--463. \MR{3329044}

\bibitem[Pre81]{prestel-prc}
Alexander Prestel, \emph{Pseudo real closed fields}, Set theory and model
  theory ({B}onn, 1979), Lecture Notes in Math., vol. 872, Springer, Berlin-New
  York, 1981, pp.~127--156. \MR{645909}

\bibitem[PW23]{with-anand}
Anand Pillay and Erik Walsberg, \emph{Galois groups of large simple fields},
  Model Theory \textbf{2} (2023), 357--380.

\bibitem[PZ78]{Prestel1978}
Alexander Prestel and Martin Ziegler, \emph{Model theoretic methods in the
  theory of topological fields.}, J. Reine Angew. Math. \textbf{0299\_0300}
  (1978), 318--341.

\bibitem[Qui62]{Quigley}
Frank Quigley, \emph{Maximal subfields of an algebraically closed field not
  containing a given element}, Proc. Amer. Math. Soc. \textbf{13} (1962),
  no.~4, 562.

\bibitem[Rob87]{robinson-edmund}
Edmund Robinson, \emph{The geometric theory of {$p$}-adic fields}, J. Algebra
  \textbf{110} (1987), no.~1, 158--172. \MR{904186}

\bibitem[Rob96]{a_course_in_the_theory_of_groups}
Derek J.~S. Robinson, \emph{A course in the theory of groups}, Springer New
  York, 1996.

\bibitem[San96]{Sander-thesis}
Tomas Sander, \emph{Effektive {A}lgebraische {G}eometrie \"uber nicht
  algebraisch abgeschlossenen {K}\"orpern}, 1996, PhD Thesis, Universit\"at
  Dortmund.

\bibitem[Sch94]{weil_reminders}
Claus Scheiderer, \emph{Some reminders on {W}eil restrictions}, Real and
  {\'E}tale Cohomology, Lecture Notes in Mathematics, vol. 1588, Springer
  Berlin Heidelberg, Berlin, Heidelberg, 1994, pp.~30--41.

\bibitem[Ser97]{Serre_gcoho}
Jean-Pierre Serre, \emph{Galois cohomology}, Springer Berlin Heidelberg, 1997.

\bibitem[Sie20]{Sierpinski}
Wacław Sierpiński, \emph{Sur une propriété topologique des ensembles
  dénombrables denses en soi}, Fund. Math. \textbf{1} (1920), no.~1, 11–16.

\bibitem[SS78]{counterexamples_in_topology}
Lynn~Arthur Steen and J.~Arthur Seebach, \emph{Counterexamples in topology},
  Springer New York, 1978.

\bibitem[{Sta}25]{stacks-project}
The {Stacks Project Authors}, \emph{\textit{Stacks Project}},
  \url{https://stacks.math.columbia.edu}, 2025.

\bibitem[SY25]{vxf-paper}
Micha\l{} Szachniewicz and Jinhe Ye, \emph{Hyperbolicity and model-complete
  fields}, Int. Math. Res. Not. IMRN (2025), no.~4, Paper No. rnaf019, 21.
  \MR{4862093}

\bibitem[Wag00]{wagner-minimal-fields}
Frank~O. Wagner, \emph{Minimal fields}, J. Symbolic Logic \textbf{65} (2000),
  no.~4, 1833--1835. \MR{1812183}

\bibitem[Wal22]{topological_proofs}
Erik Walsberg, \emph{Topological proofs of results on large fields}, C.R. Math.
  \textbf{360} (2022), no.~G11, 1187–1192.

\bibitem[Wei82]{Weissauer}
Rainer Weissauer, \emph{Der hilbertsche irreduzibilitätssatz.}, Journal für
  die reine und angewandte Mathematik \textbf{334} (1982), 203--220 (ger).

\bibitem[WY23]{secondpaper}
Erik Walsberg and Jinhe Ye, \emph{\'{E}z fields}, J. Algebra \textbf{614}
  (2023), 611--649. \MR{4499357}

\end{thebibliography}
\end{document}